\newcommand\xleftrightarrow[2][]{%
  \ext@arrow 9999{\longleftrightarrowfill@}{#1}{#2}}
\newcommand\longleftrightarrowfill@{%
  \arrowfill@\leftarrow\relbar\rightarrow}
\theoremstyle{plain}
\newtheorem{theorem}{Theorem}
\newtheorem{definition}[theorem]{Definition}
\newtheorem{lemma}[theorem]{Lemma}
\newtheorem{proposition}[theorem]{Proposition}
\newtheorem{corollary}[theorem]{Corollary}
\newtheorem{example}[theorem]{Example}
\newtheorem{assumption}[theorem]{Assumption}
\newtheorem{remark}[theorem]{Remark}
\newtheorem{claim}[theorem]{Claim}
\newcommand\ol{\overline}
\newcommand \sA{\mathcal{A}}
\newcommand\sQ{{\mathcal Q}}
\newcommand\EE{{\mathbb E}}
\newcommand\RR{{\mathbb R}}
\newcommand\ZZ{{\mathbb Z}}
\newcommand\NN{{\mathbb N}}
\newcommand\PP{{\mathbb P}}
\newcommand\HH{{\mathbb H}}
\renewcommand\ell{l}
\newcommand\CC{\mathbb{C}}
\newcounter{mycount}
\numberwithin{equation}{section}
\numberwithin{theorem}{section}
\numberwithin{figure}{section}
\title{Perfect Matchings and Essential Spanning Forests in Hyperbolic Double Circle Packings}
\author{Zhongyang Li}
\address{Department of Mathematics,
University of Connecticut,
Storrs, Connecticut 06269-3009, USA}
\email{zhongyang.li@uconn.edu}
\urladdr{\url{https://mathzhongyangli.wordpress.com}}
\begin{document}

\maketitle

\begin{abstract}We investigate perfect matchings and essential spanning forests in planar hyperbolic graphs via circle packings. 

We prove the existence of nonconstant harmonic Dirichlet functions that vanish in a closed set of the boundary, generalizing a result in \cite{bsinv}. We then prove the existence of extremal infinite volume measures for uniform spanning forests with partially wired boundary conditions and partially free boundary conditions, generalizing a result in \cite{BLPS01}.

Using the double circle packing for a pair of dual graphs, we relate the inverse of the weighted adjacency matrix to the difference of Green's functions plus an explicit harmonic Dirichlet function. This gives explicit formulas for the probabilities of any cylindrical events.

We prove that the infinite-volume Gibbs measure obtained from approximations by finite domains with exactly two convex white corners converging to two distinct points along the boundary is extremal, yet not invariant with respect to a finite-orbit subgroup of the automorphism group. We then show that under this measure, a.s.~there are no infinite contours in the symmetric difference of two i.i.d.~random perfect matchings.

As an application, we prove that the variance of the height difference of two i.i.d.~uniformly weighted perfect matchings under the boundary condition above on a transitive nonamenable planar graph is always finite; in contrast to the 2D uniformly weighted dimer model on a transitive amenable planar graph as proved in \cite{RK01,KOS06}, where the variance of height difference grows in the order of $\log n$, with $n$ being the graph distance to the boundary. This also implies that a.s.~each point is surrounded by finitely many cycles in the symmetric difference of two i.i.d.~perfect matchings, again in contrast to the 2D Euclidean case.
\end{abstract}

\section {Introduction}

A perfect matching, or a dimer cover of a graph is a subset of edges satisfying the constraint that each vertex is incident to exactly one edge. Perfect matchings are natural models in statistical mechanics, for example, dimer configurations on the hexagonal lattice model the molecule structure of graphite. See \cite{RK10,GV21} for more details.

Perfect matchings on 2-dimensional Euclidean lattice have been studied extensively. Among a number of interesting topics including phase transition (\cite{KOS06}), limit shape (\cite{CKP01,ko07,BL17,ZL20,kr20,ZL18,Li21,ZL201,LV21,ZL22,BD23,BB23}), conformal invariance (\cite{RK00,RK01,JD15,Li13,DJ19,BC21}), etc, one fundamental question is to classify the infinite volume probability measures. It was proved in \cite{SS03} that for any given slope in the Newton polygon (which is the same as the set of all the possible slopes), there is a unique translation-invariant, ergodic Gibbs measure for the uniform dimer model on the 2-dimensional square grid. More recently it was proved in \cite{AA23} that for the uniformly weighted random lozenge tilings of arbitrary domains approximating a closed, simply-connected subset of $\RR^2$ with piecewise smooth, simple boundary in the scaling limit, around any point in the liquid region of its limit shape  the local statistics are given by the infinite-volume, translation-invariant, extremal Gibbs measure of the appropriate slope.

Lately the dimer models on general lattices beyond the 2D Euclidean lattice have attracted substantial interest; see for example \cite{CSW23} which proves a large deviation principle for dimers on $\ZZ^3$; see also \cite{LP21} for a generalization of the honeycomb dimer model to higher dimensions. In this paper we focus on the infinite-volume Gibbs measures for the dimer model on hyperbolic planar graphs. Other statistical mechanical models in the hyperbolic plane, such as the percolation model (see \cite{grgP,bsjams,ZL17,GrZL22,GrZL221,ZL231,ZL232,ZLI23}), have been studied extensively; the main aim of this paper is to pursue this direction for the dimer model; and we aspire to open up a new field in the research of the dimer model in the hyperbolic plane.

For statistical mechanical models on hyperbolic graphs, the structures of the space of infinite-volume Gibbs measures are known to be significantly different from the Euclidean case; for instance, although it is known from \cite{MA80} that any Gibbs measure for the 2D ferromagnetic Ising model is a convex combination of the two extremal ones: the one with ``$+$" boundary condition and the one with ``$-$" boundary condition (see also \cite{GM23} for a similar result for the 2D Potts model); however the Gibbs measure for the Ising model on vertex-transitive tilings of the hyperbolic plane could be more complicated; in particular it is known in \cite{NW90,WC97,WC00} when the hyperbolic graph is self-dual or when the vertex-degree is at least 35, under certain temperature, unlike the Euclidean case, the free-boundary Gibbs measure is not $1/2$ of the sum of the ``$+$"-boundary Gibbs measure and ``$-$" boundary Gibbs measure.

In the celebrated papers \cite{RK00,RK01}, the scaling limit of the uniformly weighted dimer model on the square grid with certain boundary conditions (Temperley boundary conditions) is proved to be conformally invariant by relating the inverse adjacency matrix to the differences of Green's functions. The result was later generalized to weighted dimer models on isoradial double graphs (\cite{Li13}). When studying the hyperbolic graph, given that the hyperbolic plane has nonzero curvature, the first question is to find a suitable embedding. It turns out the double circle packing, which is known to exist for any 3-connected, transient, simple, proper planar graph with bounded vertex degree and locally finite dual (see \cite{hn18}) provides a nice framework, given that when vertices of the pair of dual graphs are located at the circle centers, each pair of dual edges are orthogonal. This critical property allows to obtain a relation between the inverse adjacency matrices and the inverse Laplacian.  For the double circle packing graph, in the parabolic case (when the weighted random walk on the graph is recurrent; see \cite{GS22}) it has been proved in \cite{MS13,BW15,GJN20} that the discrete harmonic function converges to the continuous harmonic function in the scaling limit. In this paper we shall focus on the hyperbolic case.

Essentially we use a Temperley bijection between perfect matchings and essential spanning forests, and study the distributions of perfect matchings by studying the corresponding essential spanning forests. For the case of a hypercubic graph $\ZZ^d$, this has been studied in \cite{BP93}, where the celebrated transfer impedance theorem was proved. Cycle rooted spanning forest on curved surface were studied in \cite{KK13}. It was later proved in \cite{KW14} that the transfer current matrix converges to the differential of Green’s function. The analysis of uniform spanning forest on non-amenable graphs was done in \cite{BLPS01}.

The main technique to study the dimer model in the hyperbolic plane is the circle packing theorem \cite{HS1,HS2},  the Kasteleyn-Temperley-Fisher theory  \cite{Kas61,Kas67,TF61} for the dimer model and the analysis of uniform spanning forest on non-amenable graphs (\cite{BLPS01}).   The main results proved in this paper are summarized below:

\begin{theorem}\label{t11}Let $\HH^2$ be the Poincare unit disk model of the hyperbolic plane. Let $G=(V,E)$ be a 3-connected, transient, simple, proper planar graph with bounded vertex degree and locally finite dual.
\begin{enumerate}
\item  For any nonempty proper closed set $\sA\subset\partial\HH^2$, if the subgraph of $G$ obtained by removing an open neighborhood of $\sA$ is transient, then there exists a non-constant harmonic Dirichlet function whose values are constant restricted to $\sA$; see Theorem \ref{le39} for a precise statement.
\item For any closed subset $\sA\subset\partial\HH^2$, there exists an extremal probability measure on weighted essential spanning forests of $G$, with wired boundary condition on $\sA$ and free boundary condition on $\partial\HH^2\setminus \sA$; see Theorem \ref{t314} for a precise statement.
\item  Consider perfect matchings on the bipartite graph $\ol{G}$ obtained from the superpersion of $G$ and $G^+$.
Under the Temperley boundary conditions, we show that the limits of the entries of the inverse weighted adjacency matrix on one class of black vertices are difference of the Dirichlet Green's functions; while limits of the entries of the inverse weighted adjacency matrix on the other class of black vertices are difference of the Dirichlet Green's functions plus an explicit harmonic Dirichlet function; see Theorem \ref{l422} for a precise statement. We also show that this infinite volume Gibbs measure is extremal under natural technical assumptions; see Propositions \ref{lle614} and \ref{p620} for precise statements. 
\item  Using a sequence of finite graphs with two convex white corners and no concave white corners, such that the two boundaries portions divided by the two convex corners converge to two closed subsets $\sA_0$ and $\sA_1$ of $\partial\HH^2$, the limits of the entries of the inverse of the weighted adjacency matrix converge to the difference of the Dirichlet Green's functions plus explicit harmonic Dirichlet functions; see Theorem \ref{le67} for a precise statement. Moreover, this gives an extremal infinite-volume Gibbs measure; see Proposition \ref{lle83}. 
\item For the infinite-volume Gibbs measure given in Theorem \ref{le67}; or in Theorem \ref{l422}, for uniformly weighted perfect matchings on $\ol{G}$, if $G$ satisfies certain isoperimetric inequalities, then the variance of the height difference of two i.i.d.~perfect matchings is always finite, given that the the height difference on the boundary is 0.  See Theorem \ref{le96} for precise statements. We also prove the absence of doubly infinite self-avoiding paths in the symmetric difference of two i.i.d. dimer configurations in the infinite volume Gibbs measure given in Theorem \ref{le67}, or the measure given in Theorem \ref{l422} under additional technical assumptions which is not necessarily nonamenable. See Theorem \ref{le85} for a precise statement.
\end{enumerate}
\end{theorem}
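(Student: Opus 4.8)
The plan is to prove Theorem~\ref{t11} by establishing its five items as the self-contained statements Theorems~\ref{le39}, \ref{t314}, \ref{l422}, \ref{le67}, \ref{le96}, \ref{le85} and Propositions~\ref{lle614}, \ref{p620}, \ref{lle83}; the unifying device is the double circle packing of $G$ and its dual, which embeds both graphs in $\HH^2$ so that dual edges cross orthogonally and the accumulation set of the packing is $\partial\HH^2$. For item~(1) I would fix the packing, let $G'$ be the (transient, by hypothesis) subgraph obtained after deleting an open neighborhood of the closed set $\sA\subset\partial\HH^2$, and use transience of $G'$ to produce a finite-energy unit current to infinity inside $G'$. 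Integrating this current --- equivalently, taking the harmonic part in the orthogonal decomposition of the Dirichlet space of a finite-energy function that is constant on every vertex whose packing disk meets $\sA$ and is forced off that constant somewhere in $G'$ --- yields a harmonic Dirichlet function; transience of $G'$ is exactly what prevents it from collapsing to a constant, while the construction keeps its boundary trace constant on $\sA$. This is the mixed-boundary, hyperbolic strengthening of the argument of \cite{bsinv}.

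For items~(2)--(4) I would run the standard exhaustion scheme: take finite subgraphs $G_n\uparrow G$, impose the boundary condition of interest --- wire the boundary vertices whose packing disks approach $\sA$ and leave free those approaching $\partial\HH^2\setminus\sA$, which in the dimer picture is realized by the two convex white corners of item~(4) --- and take the weak limit of the weighted uniform spanning forest measures, respectively of the weighted dimer measures obtained from them by the Temperley bijection. Existence of the limit comes from monotonicity (Rayleigh/negative association) together with the determinantal structure. In finite volume the entries of the inverse weighted adjacency matrix $K^{-1}$ are computed from the matrix--tree theorem using the orthogonality of dual edges, expressing them through finite Dirichlet Green's functions; passing to the limit uses convergence of Green's functions plus, on one class of black vertices, exactly the harmonic Dirichlet function supplied by item~(1) --- this extra term is the genuinely new hyperbolic phenomenon, absent from \cite{RK01}. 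Extremality (Propositions~\ref{lle614}, \ref{p620}, \ref{lle83}) I would obtain by identifying the limiting dimer measure as a determinantal process whose correlation kernel is an orthogonal projection and invoking tail-triviality of such processes; the failure of invariance under a finite-orbit subgroup follows from the fact that the two distinct accumulation points $\sA_0\ne\sA_1$ on $\partial\HH^2$ can be permuted by no such subgroup.

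For item~(5) I would pass to the double-dimer configuration $\om_1\triangle\om_2$. Since the height difference vanishes on the boundary, the height difference at $v$ is, up to normalization, a signed count of the contours of $\om_1\triangle\om_2$ that surround $v$, so $\mathrm{Var}(h(v))\le\EE[N_v^2]$ where $N_v$ is the number of such contours. A contour surrounding the ball $B(v,r)$ must meet a cut-set of size at least $c\,e^{c'r}$ by the isoperimetric hypothesis, and a Kasteleyn/BK-type estimate --- powered by the exponential decay of $K^{-1}$ on a graph satisfying such an inequality --- shows that the probability that $\om_1\triangle\om_2$ contains a contour surrounding $B(v,r)$ is summable in $r$; hence $\EE[N_v^2]<\infty$, giving finite variance and, a fortiori, finitely many surrounding cycles a.s., in sharp contrast with the logarithmic growth of \cite{RK01,KOS06}. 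The absence of a doubly infinite self-avoiding path in $\om_1\triangle\om_2$ I would deduce either from the same exponential crossing estimates, or by transferring the statement through the Temperley bijection to the two spanning forests and invoking (and adapting to the mixed boundary condition) the one-endedness of the wired uniform spanning forest on a transient graph from \cite{BLPS01}.

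The main obstacle is twofold: first, pinning down the precise harmonic Dirichlet correction term in the scaling limit of $K^{-1}$ in items~(3)--(4) and proving the convergence rigorously in the presence of non-unique harmonic functions on transient graphs; and second, the geometric--probabilistic estimate in item~(5) showing that long surrounding contours of the hyperbolic double-dimer model are exponentially unlikely, which is precisely where the isoperimetric hypothesis on $G$ enters. The extremality arguments, while technical, should be more routine once the relevant determinantal kernel is shown to be a projection.
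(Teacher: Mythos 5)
Your overall plan tracks the paper's architecture — exhaustion by finite subgraphs, double circle packing for orthogonality of dual edges, Royden decomposition for item~(1), Temperley bijection and transfer-current-type identities for items~(3)--(4), and isoperimetric decay of the Green's function for item~(5) — so the strategic skeleton matches. But several of the steps you sketch differ from what the paper actually does, and a few of them are not routine.

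For item~(1), the paper's construction is quite specific: it takes $f_0(v)=d_m(v,\sA)$ where $m$ is the $L^2$-metric built from the packing radii, shows $f_0$ has finite Dirichlet energy precisely because $m\in L^2(E)$, then Royden-decomposes $f_0=g_0+f$ and uses Lemma~\ref{le38} (transience $\Leftrightarrow$ non-null path family) together with the boundary-value criterion of Lemma~\ref{le35}(2) to see $f_0\notin\mathbf{D}_0(G)$, hence $f\neq 0$. Your description of ``integrating a finite-energy current'' is vaguer; the single key idea you'd need is that the distance function to $\sA$ in the packing metric is already Dirichlet, which is what makes the Royden decomposition bite.

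For the extremality claims your proposed shortcut — identify the dimer measure as a determinantal process with a self-adjoint projection kernel and cite abstract tail-triviality — is not automatic. The dimer correlation kernel is built from $\ol{D}^{-1}$, which is not a Hermitian projection, and the passage from the spanning-forest transfer-current picture (where one does have a projection) to the dimer tail $\sigma$-field requires tracking orientations of tree branches, which do not localize. This is exactly why the paper does the hands-on variance estimate: it constructs finite-energy unit flows $i_b$ from each black vertex to infinity (Lemma~\ref{le87} for one boundary class, the more delicate Lemma~\ref{lle615} and Assumption~\ref{ap615} for the other), proves that those flows have finite energy, and then pushes the BLPS Theorem~8.4-style bound through Claims~\ref{cl89}, \ref{cl612}, \ref{cl613}. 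If you want the abstract determinantal route, you have to first prove the kernel is (conjugate to) a self-adjoint projection, which is nontrivial here; as written there is a gap.

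For item~(5) your BK/contour-counting approach would need a quantitative estimate that the probability a contour of $M_1\triangle M_2$ surrounds $B(v,r)$ is summable in $r$; this is plausible under exponential kernel decay but you don't supply the combinatorial bound on the number of candidate contours, and on a nonamenable graph that count also grows exponentially. The paper sidesteps this entirely: Theorem~\ref{le96} expands $\mathrm{Var}[h_{M_1,M_2}(f)]$ as a double sum over edges along two paths from $f$ to the two boundary arcs, uses Corollary~\ref{c18}(2) and the polynomial/exponential decay of $\ol{D}_{\tilde G_{j,s}}^{-1}$ from Lemmas~\ref{l83} and~\ref{le426} to bound the covariances, and then deduces finiteness of the number of surrounding cycles a posteriori from finite variance, not the other way around. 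For absence of doubly-infinite paths, the paper's Theorem~\ref{le85} is a Burton--Keane-style symmetry argument using tail-triviality: infinite level-$m$ and level-$(-m)$ clusters would both occur with probability one, and swapping $M_1,M_2$ along their boundary contours destroys the level-$m$ cluster, a contradiction. Your alternative via one-endedness of the wired uniform spanning forest is a reasonable idea, but the measure here is a mixed wired/free forest, so you would need a one-endedness statement that the paper never proves in that generality; again a gap rather than a different complete proof.

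Finally, your justification for non-invariance under a finite-orbit subgroup (``$\sA_0\neq\sA_1$ can be permuted by no such subgroup'') is not the paper's argument and does not by itself rule out a measure that happens to be invariant under a subgroup fixing $\sA_0,\sA_1$. The paper's Remark~\ref{rk7} instead gives a direct counting contradiction: on the degree-7 transitive triangulation any automorphism-invariant dimer measure would force edge probabilities $1/7$ and $1/3$ that cannot sum to $1$ at a white vertex.

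In summary: the framing and the high-level tools are right, but the extremality argument (determinantal shortcut) and the item~(5) arguments (BK summability and WSF one-endedness) are not completed as stated and diverge from the paper's actual, more explicit flow-and-covariance machinery.
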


Theorem \ref{t11}(1) generalizes results in \cite{bsinv}; Theorem \ref{t11}(2) generalizes results in \cite{BLPS01}. 
Theorem \ref{t11}(3) and (4), as far as we know, for the first time give explicit formulas for the probabilities of cylinder events with respect to infinite-volume Gibbs measures of perfect matchings in the hyperbolic plane. 
It is worth noting that although the infinite-volume Gibbs measure in Theorem \ref{t11}(4) is extremal, it is in general not invariant with respect to a finite-orbit subgroup of the automorphism group of $G$, even if we assume that both $G$ and $G^+$ are vertex transitive; see Remark \ref{rk7}. 

Theorem \ref{t11}(5) is in contrast with the 2D Euclidean case, where variance of the difference at two points of height differences of two i.i.d. uniformly weighted perfect matchings grows like $\log n$, where $n$ is the distance between two points.
While in the hypercubic lattice $d\geq 3$, whether the associated flow has Gaussian fluctuation is an open problem (\cite{CSW23}). For the infinite volume Gibbs measure obtained from Temperley boundary conditions, the nonexistence of infinite double dimer contours was also obtained in \cite{gr24}
when $G$ is a reversible random rooted graph which is one ended,
bounded degree, nonamenable triangulation almost surely;
and exponential tail of height functions were proved for nonamenable planar triangulations \cite{gr24} by considering the dimer height function as the winding of the branch of the wired spanning forest.

The organization of the paper is as follows. 
\begin{itemize}
\item In Section \ref{sect:DOD}, we first examine definitions and established properties in graph theory; then introduce the Dirac operator on a bipartite graph formed through the superposition of a pair of dual graphs characterized by perpendicular dual edges;  subsequently, several properties associated with this construction are established. 
\item In Section \ref{sect:esf}, we discuss harmonic Dirichlet functions and essential spanning forest on transient planar graphs $G$ circle packed to the hyperbolic plane $\HH^2$; and prove Theorem \ref{t11}(1) (Lemma \ref{le39}) and (2) (Theorem \ref{t314}).
\item In Section \ref{dtbc}, we introduce Temperley boundary conditions for finite subgraphs derived from the superposition of a pair of dual graphs. We then establish connections between Dirac operators, Laplacian operators, and random walks on finite graphs with Temperley boundary conditions and prove Theorem \ref{t11}(3). 
\item
In Section \ref{sect:gbc}, we discuss the infinite volume Gibbs measure of perfect matchings on $\ol{G}$ obtained by using sequences of finite graphs exhausting the infinite graph with boundary conditions other than the Temperley boundary conditions. We prove Theorem \ref{t11}(4).

\item In Section \ref{sect:ex}, we prove that the limit measure for dimer configurations on $\ol{G}$ obtained as in Theorem 1.1(4) is extreme; we also prove that the limit measure for dimer configurations on $\ol{G}$ obtained as in Theorem 1.1(3) is extreme under certain natural technical assumptions.
We first show that the extremity of the infinite-volume measure is equivalent to the tail-triviality; then we show that the infinite-volume measure is tail-trivial. The proof of the tail-triviality of the infinite volume measure is inspired by the proof of Theorem 8.4 in \cite{BLPS01}. One of the major differences is instead of considering uniform spanning trees, we are considering perfect matchings. The corresponding anti-symmetric function for the perfect matchings is a ``discrete integral" version of the anti-symmetric function for the spanning trees. Although it is straightforward to see that the anti-symmetric function for spanning trees has finite energy, the fact that the anti-symmetric function for perfect matchings has finite energy is not so obvious; but we still manage to prove it by using the transience property of the graph.
\item 
In Section \ref{sect:ddc},  We consider the weak limit of probability measures on finite graphs with the boundary conditions in Theorem \ref{t11}(3)(4). We  establish the a.s. non-existence of infinite double dimer contours; and that each point is almost surely enclosed by a finite number of cycles from the symmetric difference of the two i.i.d. dimer configurations with either prescribed distribution when the graph satisfies certain isoperimetric inequalities. The proof depends on the fact that the decay rate of the Green's functions on such graphs  with respect to the distance of two points due to isoperimetric inequalities.
\end{itemize}

\section{Dirac Operators, Orthodiagonal Graphs and Double Circle Packings}\label{sect:DOD}

In this section, we begin by examining definitions and established properties in graph theory. Following that, we introduce the Dirac operator on a bipartite graph formed through the superposition of a pair of dual graphs. These dual graphs feature perpendicular dual edges. Subsequently, we establish several properties associated with this construction.

\subsection{Graph theory}

We begin this section by reviewing definitions and established properties in graph theory.

\begin{definition}\label{df14}
\begin{enumerate}
\item A graph is called vertex-transitive (resp. quasi-transitive) when there is a unique orbit
(at most finitely many orbits) of vertices under the action of its automorphism group.

\item A graph is called planar if it can be drawn in the plane such that vertices are represented by points and edges are represented by continuous curves, and no two edges can intersect at a point which is not a vertex.

We call a drawing of the graph into the plane $\RR^2$ a proper embedding in $\RR^2$ if any compact subset $K$ in $\RR^2$ intersects at most finitely many edges and vertices.

\item The number of ends of a connected graph is the supreme over its finite
subgraphs of the number of infinite components that remain after removing the subgraph.


\item Let $G=(V,E)$ be a graph. Let $\Gamma$ be a subgroup of the automorphism group of $G$ acting quasi-transitively on $G$. Let $\pi$ be a probability measure on subgraphs of $G$.
\begin{enumerate}
\item We say $\pi$ is $\Gamma$-invariant if for any event $\mathcal{A}$ and $\gamma\in \Gamma$,
\begin{align*}
\pi(\mathcal{A})=\pi(\gamma\mathcal{A}).
\end{align*}
\item We say $\mu$ is $\Gamma$-ergodic if for any event $\mathcal{A}$ satisfying $\mathcal{A}=\gamma \mathcal{A}$ for all $\gamma\in \Gamma$
\begin{align*}
\pi(\mathcal{A})\in\{0,1\}.
\end{align*}
\end{enumerate}
\item A graph $G=(V,E)$ is called amenable if
\begin{align*}
i_{E}(G):=\inf_{K\subset V;0<|K|<\infty}\frac{|\partial_{E}K|}{|K|}=0
\end{align*}
where $\partial_{E}K$ consists of all the edges with exactly one vertex in $K$ and one vertex not in $K$. If $i_{E}(G)>0$, the graph is called non-amenable.
\item Let $G=(V,E)$ be an infinite, connected, locally finite graph. Let $\{X_n\}_{n\geq 0}$ be a simple random walk on $G$  and let $p_n(y,x)=\mathbb{P}(X_n=y|X_0=x)$. Define the spectral radius $\rho$ of $G$ by
\begin{align*}
\rho(G):=\limsup_{n\rightarrow\infty}p_n(x,y)^{\frac{1}{n}}.
\end{align*}
Given that $G$ is connected $\rho(G)$ does not depend on the choices of $x,y$.
\item (Theorem 6.7 in \cite{ly16})Let $G=(V,E)$ be an infinite, connected, locally finite graph, then $G$ is non-amenable if and only if $\rho(G)<1$.
\end{enumerate}
\end{definition}


\subsection{Dirac operators}

In this section, we introduce the Dirac operator on a bipartite graph constructed from the superposition of a pair of dual graphs with perpendicular dual edges and subsequently establish several of its properties.

\begin{definition}\label{df32}
\begin{enumerate}
\item Let $G=(V,E)$ be a graph, and let $\Lambda\subset V$ be a subset of vertices. The subgraph $G_{\Lambda}$ of $G$ induced by $\Lambda$ has vertex set $\Lambda$; and two vertices in $\Lambda$ are joined by an edge in $G_{\Lambda}$ if and only in if they are joined by an  edge in $G$.
\item 
Let $G=(V,E)$ be a graph. We say $G_j$ is a subgraph of $G$ consisting of faces of $G$ if $G_j$ is the union of faces in $G$. 
\item Let $G=(V,E)$ be a one-ended, planar graph, properly embedded in to $\HH^2$ with no infinite faces. Let $G^+$ be the dual graph of $G$. Let $G_j$ be a finite subgraph of $G$. Let $\partial^+{G}_j$ consist of all the edges in $G^+$, whose dual edge joins exactly one vertex in $\Lambda$ and one vertex in $V\setminus \Lambda$. We say $G_j$ is simply-connected if it is connected and $\partial^+ G_{\Lambda}$ is a simple closed curve.
\item 
Let $G=(V,E)$ be a graph. Let $G_j$ is a subgraph of $G$ consisting of faces of $G$. Define $\partial G_j$ to be the set of all the vertices in $G_j$ adjacent to at least one vertices no in $G_j$; as well as edges joining two vertices in $G_j$, each of which is adjacent to at least one vertices no in $G_j$.
\end{enumerate}
\end{definition}

\begin{definition}\label{dfpds}Let $G=(V,E)$ be an infinite, connected graph in the hyperbolic plane in which each face is bounded and finite. Let $G^{+}=(V^+,E^+)$ be the planar dual graph of $G$; i.e. each vertex of $G^+$ corresponds to a face of $G$; two vertices in $G^+$ are joined by an edge in $G^+$ if their corresponding faces in $G$ share an edge.

Let $\overline{G}$ be a bipartite graph obtained from the superposition of $G$ and $G^+$. A black vertex of $\overline{G}$ is either a vertex of $G$ or a vertex of $G^{+}$; a white vertex corresponds to an edge in $G$ (or equivalently, a dual edge in $G^+$). A black vertex $b$ is joined to a white vertex $w$ by an edge in $\overline{G}$ if and only if one of the following two cases occur
\begin{itemize}
\item $b$ is a vertex of $G$, and $w$ corresponds to edge of $G$ incident to $b$; or 
\item $b$ is a vertex of $G^+$, and $w$ corresponds to an edge of $G^+$ incident to $b$.
\end{itemize}
\end{definition}

Throughout this paper, we make the following assumption
\begin{assumption}\label{ap24}Let $G=(V,E)$ be an infinite, connected graph properly embedded into the hyperbolic plane such that each face is bounded and finite,  with edge weights $\{\nu(e)>0\}_{e\in E}$. Let $G^+=(V^+,
E^+)$ be the dual graph with edge weights $\{\nu(f)>0\}_{f\in E^+}$. Assume there exists a proper embedding of $(G,G^+)$ into the hyperbolic plane, such that 
\begin{itemize}
\item each vertex of $G^+$ is placed in the interior of a face of $G$; and
\item each edge of $G$ and $G^+$ is represented by line segments in the Euclidean plane joining adjacent vertices; and
\item each pair of dual edges $(e,e^+)$ are perpendicular. 
\end{itemize}
Let $\ol{G}$ be the bipartite graph obtained from the superposition of $G$ and $G^+$ as in Definition \ref{dfpds}.
\end{assumption}
 Note that each face of $\ol{G}$ is a quadrangle and can be inscribed in a circle. Therefore $\ol{G}$ is a circle pattern graph; see \cite{KLRR22} for more about dimer models on circle pattern graphs.

 Define the $\overline{\partial}$ operator as a matrix with rows and columns indexed by vertices of $\ol{G}$. Then entries of $\ol{\partial}$ are defined as follows:
\begin{itemize}
\item If two vertices $u,v$ are not adjacent in $\ol{G}$, let $\overline{\partial}(u,v)=0$. 
\item If $w$ is a white vertex and $b$ is a vertex of $\ol{G}$ and $w$ and $b$ are adjacent vertices in $\ol{G}$, let $f$ be the edge of $G$ or $G^+$ containing $b$ as an endpoint and $w$ as an interior point; define $\overline{\partial}(w,b)=\overline{\partial}(b,w)$ be the complex number with length $\nu(f)$ and direction the same as the ray in the Euclidean plane pointing from $w$ to $b$.
\end{itemize}

\begin{lemma}\label{le14}Suppose Assumption \ref{ap24} holds. Let $w_1,b_1,w_2,b_2$ be all the vertices along a face of $\ol{G}$ in cyclic order. Then 
\begin{align*}
(-1)\prod_{i=1}^2\frac{\ol{\partial}(w_{i},b_{i})}{\ol{\partial}(b_{i},w_{i+1})}>0
\end{align*}
\end{lemma}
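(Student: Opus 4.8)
The plan is to evaluate the left-hand side directly from the definition of $\ol{\partial}$, keeping track of modulus and argument separately. The first step is to describe the face $F$ of $\ol{G}$ bounded by $w_1, b_1, w_2, b_2$. By the construction of $\ol{G}$ from the superposition of $G$ and $G^+$, such a face is a quadrilateral in which, after possibly interchanging the roles of $G$ and $G^+$, the black vertex $b_1$ is a vertex of $G$, the black vertex $b_2 \in V^+$ corresponds to the face of $G$ containing $F$, and $w_1, w_2$ are the white vertices lying on the two edges $e, e'$ of $G$ that emanate from $b_1$ and bound that face; equivalently $w_1, w_2$ lie on the two dual edges $e^+, (e')^+$ emanating from $b_2$. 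In particular, in the cyclic order $w_1, b_1, w_2, b_2$ around $F$ the two black vertices occupy opposite corners and so do the two white vertices. Using the symmetry $\ol{\partial}(b,w) = \ol{\partial}(w,b)$ and reading indices cyclically ($w_3 = w_1$), I would rewrite
\begin{equation*}
(-1)\prod_{i=1}^2\frac{\ol{\partial}(w_{i},b_{i})}{\ol{\partial}(b_{i},w_{i+1})}
\;=\; (-1)\,\frac{\ol{\partial}(w_1,b_1)\,\ol{\partial}(w_2,b_2)}{\ol{\partial}(w_1,b_2)\,\ol{\partial}(w_2,b_1)} .
\end{equation*}

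Next I would split the right-hand side into modulus and argument. Each entry $\ol{\partial}(w,b)$ is a strictly positive weight (one of $\nu(e), \nu(e^+), \nu(e'), \nu((e')^+)$) times the unit complex number $(b-w)/|b-w|$ pointing from $w$ to $b$. Hence the modulus of the product above equals the positive number $\nu(e)\nu((e')^+)/\bigl(\nu(e^+)\nu(e')\bigr)$, and it remains only to show that the argument is $\pi$, i.e. that
\begin{equation*}
\lambda \;:=\; \frac{(b_1-w_1)(b_2-w_2)}{(b_1-w_2)(b_2-w_1)}
\end{equation*}
is a negative real number. This is where Assumption \ref{ap24} is used: perpendicularity of each pair of dual edges forces a right angle at each of the white corners $w_1, w_2$ of $F$ (at $w_1$ the two sides of $F$ lie along $e$ and $e^+$, at $w_2$ along $e'$ and $(e')^+$); since $w_1$ and $w_2$ are opposite corners, $F$ is a cyclic quadrilateral with $b_1 b_2$ a diameter — consistent with the circle-pattern remark following Assumption \ref{ap24} — and $w_1, b_1, w_2, b_2$ lie on that circle in the stated cyclic order.

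Finally I would invoke the elementary fact that for four points $z_1, z_2, z_3, z_4$ on a circle the quantity $\tfrac{(z_1 - z_3)(z_2 - z_4)}{(z_1 - z_4)(z_2 - z_3)}$ is real, and is negative precisely when the pairs $\{z_1, z_2\}$ and $\{z_3, z_4\}$ interleave around the circle (a one-line consequence of the M\"obius invariance of the cross ratio). Taking $(z_1, z_2, z_3, z_4) = (b_1, b_2, w_1, w_2)$ and using that the cyclic order around $F$ is $b_1, w_2, b_2, w_1$ — so the two black vertices separate the two white ones — gives $\lambda < 0$ and hence the lemma. The same fact can be verified by hand after normalizing the circumcircle of $F$ to the unit circle with $b_1 = 1$, $b_2 = -1$, $w_1 = e^{i\theta_1}$, $w_2 = e^{i\theta_2}$, $\theta_1 \in (\pi, 2\pi)$, $\theta_2 \in (0,\pi)$, in which case $\lambda$ reduces to $\tan(\theta_1/2)\cot(\theta_2/2) < 0$. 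I expect the only genuinely delicate point to be the bookkeeping in the first two steps: matching the four $\ol{\partial}$-entries to the correct half-edges of $F$, and fixing the cyclic order of $w_1, b_1, w_2, b_2$ on the circumcircle so that the sign of $\lambda$ comes out right; after that the modulus and argument computations are routine.
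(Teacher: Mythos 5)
Your proof is correct, but it takes a genuinely different route from the paper's. The paper argues via the circumcenter of the quadrilateral face: it writes the argument of each $\ol{\partial}$-entry as the argument of the corresponding perpendicular bisector from the circumcenter, shifted by $\pm\pi/2$, and concludes from the fact that the two alternate angular gaps $\arg(l_{2i})-\arg(l_{2i-1})$ sum to $\pi$. Your proof instead repackages the product, via $\ol{\partial}(b,w)=\ol{\partial}(w,b)$, as a positive weight ratio times the unit complex number $\lambda/|\lambda|$ where $\lambda$ is the cross-ratio $\frac{(b_1-w_1)(b_2-w_2)}{(b_1-w_2)(b_2-w_1)}$ of the four concyclic vertices, and then invokes the M\"obius-invariant fact that the cross-ratio of four concyclic points is negative real exactly when the two pairs interleave --- which they do here because whites and blacks alternate around the face. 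The two arguments rely on the same two geometric inputs (the face is inscribed in a circle, which the paper itself notes right after Assumption \ref{ap24}, and the alternation of the four vertices around it), but your cross-ratio formulation is arguably cleaner: it isolates the sign question in one classical statement rather than tracking four arguments and a $\pi/2$-rotation bookkeeping. The minor caveats are that your assertion "$b_1 b_2$ is a diameter" is not actually needed --- concyclicity alone suffices for the cross-ratio argument --- and that the reduction to $\lambda$ involves an extra positive factor $|b_1-w_2||b_2-w_1|/(|b_1-w_1||b_2-w_2|)$ beyond the weight ratio you quote, though of course this does not affect the sign. Your closing explicit check $\lambda=\tan(\theta_1/2)\cot(\theta_2/2)<0$ is correct and is a nice sanity check on the sign convention.
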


\begin{proof}Note that the polygon formed by $w_1,b_1,w_2,b_2$ in cyclic order is inscribed to a circle $\mathcal{C}$ in the Euclidean plane. Assume $\mathcal{C}$, as a Euclidean circle has center $v_0$. Let $l_1$, $l_2$, $l_{3}$, $l_{4}$ be the  (oriented) perpendicular bisectors to the line segments $w_1b_1$, $b_1w_2$, $w_2b_2$, $b_2w_1$, respectively such that each one of $l_1$, $l_2$, $l_3$, $l_4$ has starting point $v_0$. Then for each $i\in\{1,2\}$;
\begin{align*}
\arg(\ol{\partial}(w_i,b_i))=\arg(l_{2i-1})-\frac{\pi}{2}\\
\arg(\ol{\partial}(b_i,w_{i+1}))=\arg(l_{2i})+\frac{\pi}{2}.
\end{align*}
Note also that 
\begin{align*}
\sum_{i=1}^2\left[\arg(l_{2i})-\arg(l_{2i-1})\right]=\pi.
\end{align*}
Then the lemma follows.
\end{proof}

\subsection{Double circle packing graphs}

In this section, we examine the established facts concerning how a double circle packing graph inherently leads to an orthodiagonal graph.

\begin{definition}\label{df26}
\begin{enumerate}
\item A circle packing is a collection of discs $P=\{C_v\}$ in the plane $\CC$ such that any two distinct discs in $P$ have disjoint interiors.

\item Given a circle packing $P$, the nerve $G(P)$ of $P$ is the graph with vertex set $P$ and two vertices are joined by an edge if and only if their corresponding circles are tangent. The tangency graph $G(P)$ have a planar embedding by placing a vertex at the center of each circle, and each edge is represented by a line segment between centers of tangent circles.

\item A double circle packing of $G$ is a pair of circle
packings $(P,P^+)$, such that
\begin{enumerate}
\item $G$ is the nerve of $P$, and the dual graph $G^+$ is the nerve of $P^+$.

\item Primal and dual circles are perpendicular. More precisely, for each vertex $v$ and face $f$
of G, the discs $P^{+}(f)$ and $P(v)$ have nonempty intersection if and only if
$f$ is incident to $v$, and in this case the boundary circles of $P^{+}(f)$  and $P(v)$ intersect at right angles.
\end{enumerate}
\end{enumerate}
\end{definition}
See Figure \ref{fig:dcp} for an example of double circle packing.

\begin{figure}
\centering
\includegraphics[width=.6\textwidth]{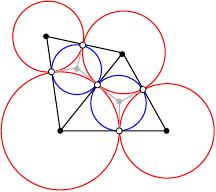}
\caption{Double circile packing: the primal graph is represented by black lines; the dual graph is represented by gray lines; the cirle packing for the primal graph is represented by red circles; the circle packing for the dual graph is represented by blue circles.}
\label{fig:dcp}
\end{figure}

From Definition \ref{df26}, it is straightforward to check the following
\begin{itemize}
\item Let $(P,P^+)$ be a double circle packing of $G$; let $e=(b_3,b_4)$ be an edge of $G$ and $e^+=(b_1,b_2)$ be the dual edge of $G$ in $G^+$. For $i\in\{3,4\}$ (resp.\ $j\in\{1,2\}$), let $C_i$ (resp.\ $C_j$) be the circle in $P$ corresponding to $b_i$ (resp.\ $b_j$). Then the tangent point of $C_3$ and $C_4$ in $P$ is the same as the tangent point of $C_1$ and $C_2$ in $P^+$.
\end{itemize}

\begin{proposition}(\cite{hz99}; see also \cite{hn18})\label{p27} Every infinite, polyhedral (meaning that is it both simple (i.e., not containing any loops or multiple edges) and 3-connected
(i.e. the subgraph induced by $V \ {u, v}$ is connected for each $u, v\in V$))
proper plane graph G with locally finite dual admits a double circle packing in either the Euclidean plane (CP parabolic) or the hyperbolic plane (CP hyperbolic), but not both, and that this packing is unique up to M\"obius
transformations and reflections. In particular, in the hyperbolic case, the packing is unique up to
isometries of the hyperbolic plane; and $G$ is CP hyperbolic if and only if it is transient
for simple random walk.
\end{proposition}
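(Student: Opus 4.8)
Since Proposition~\ref{p27} is classical (it is the content of \cite{hz99}, see also \cite{hn18}), I only sketch the strategy one would follow, which ultimately rests on the Koebe--Andreev--Thurston circle packing theorem for finite graphs and on the work of He--Schramm. The plan is to realize the infinite double circle packing as a limit of finite ones. First I would exhaust $G$ by an increasing sequence $G_1\subset G_2\subset\cdots$ of finite, simply connected subgraphs with $\bigcup_n G_n=G$; for each $n$ one passes to a finite polyhedral plane graph $G_n^\diamond$ obtained from $G_n$ by a standard closure of the outer face (e.g.\ adjoining one outer vertex adjacent to the boundary, after first triangulating the boundary cycle if necessary) and invokes the finite circle packing theorem to get a double circle packing of $G_n^\diamond$ in $\hat\CC$, with primal and dual circles meeting orthogonally, unique up to M\"obius maps. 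Normalizing so that a fixed base circle $C_{v_0}$ is centered at $0$ with unit radius, and killing the residual rotational freedom, yields a sequence of packings $(P_n,P_n^+)$.

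Second, I would pass to a limit. The key geometric input is the Rodin--Sullivan ring lemma: since $G$ has bounded degree and locally finite dual, the ratio of the radii of any two tangent circles in $P_n$ is bounded below by a universal constant depending only on the degree bound. This prevents collapse near the base circle, and a Cantor diagonal extraction then produces subsequential limits $C_v=\lim_n P_n(v)$ for every $v\in V$ and $C_f^+=\lim_n P_n^+(f)$ for every face $f$, with $(P,P^+)=(\{C_v\},\{C_f^+\})$ a double circle packing of all of $G$, still orthogonal.

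Third, the dichotomy, the rigidity, and the probabilistic characterization. The carrier of $P$ is a simply connected planar domain $\Omega$; a discrete maximum-principle / Perron argument on the radius function shows that, after a M\"obius change of coordinates, $\Omega$ is either $\CC$ (the CP parabolic case) or the open unit disk (the CP hyperbolic case), with no other possibility and with only one of the two realizable for a given $G$. Uniqueness up to M\"obius transformations --- and, allowing orientation reversal, reflections; in the hyperbolic case up to isometries of $\HH^2$ --- is the He--Schramm rigidity theorem, proved by a discrete Schwarz--Pick lemma together with a length--area / bounded-ratio argument excluding nontrivial deformations of an infinite packing with parabolic or hyperbolic carrier. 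Finally, for ``CP hyperbolic iff $G$ is transient'' one compares the network random walk on $G$ with the conformal type of the carrier: the ring lemma makes the packing a rough isometry between $G$ and $\Omega$, so recurrence of simple random walk on $G$ is equivalent to parabolicity of $\Omega$; concretely, via the vertex-extremal-length criterion of He--Schramm, $P$ is CP parabolic iff the vertex extremal length from $v_0$ to $\infty$ is infinite iff the walk is recurrent.

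The main obstacle is the second step, non-degeneracy of the limit: it is exactly here that the hypotheses (polyhedrality, bounded degree, locally finite dual) are indispensable --- without the ring lemma the normalized finite packings could collapse and the limit would fail to pack all of $G$ --- and also here that one must control the varying, a priori unknown boundary geometry of the auxiliary graphs $G_n^\diamond$. The type dichotomy and the extremal-length comparison are then the technical heart; the remaining assertions follow formally from the cited rigidity theory.
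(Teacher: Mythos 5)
The paper does not actually prove Proposition~\ref{p27}; it is stated as a citation to \cite{hz99} (He) and \cite{hn18} (Hutchcroft--Nachmias), so there is no in-paper argument to compare your sketch against. With that understood, your outline is a reasonable account of the standard route: exhaust by finite pieces, pack those via Koebe--Andreev--Thurston, extract a subsequential limit, then invoke He--Schramm-type rigidity and the extremal-length/type dichotomy.

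One caveat worth flagging. Your step two leans on the Rodin--Sullivan ring lemma, which requires a degree bound (and, for double circle packings, a bound on face degrees) to prevent collapse of the normalized finite packings. The proposition as quoted assumes only polyhedrality, proper planarity, and local finiteness of the dual --- not bounded degree. For existence and rigidity in this generality, He's actual argument in \cite{hz99} does not go through the ring lemma; it uses Perron families of disk patterns and a discrete maximum principle, which survive unbounded degree. The ring lemma route you describe is the one used in the bounded-degree setting (and indeed elsewhere in this paper bounded vertex degree is assumed, so for the paper's applications your sketch is fine). By contrast, the final assertion --- CP hyperbolic iff transient --- genuinely does require bounded degree, as in He--Schramm; without it, the extremal-length comparison between the graph and the carrier can fail, and the dichotomy need not align with recurrence/transience. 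So your sketch is correct under the additional hypothesis of bounded degree (which is what the paper actually uses), but as written it does not prove the proposition in the stated generality, and the existence part would need to be replaced by He's Perron-family argument.
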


\section{Essential Spanning Forest}\label{sect:esf}

In this section, we discuss harmonic Dirichlet functions and essential spanning forest on transient planar graphs $G$ circle packed to the hyperbolic plane $\HH^2$. Generalizing results in \cite{bsinv}, in Lemma \ref{le39} we show that for any nonempty proper closed set $\sA\subset\partial\HH^2$, if the subgraph of $G$ obtained by removing an open neighborhood of $G$ is transient, then there exists a non-constant harmonic Dirichet function whose values are constant restricted to $\sA$. Generalizing results in \cite{BLPS01}, we show that for any closed subset $\sA\subset\partial\HH^2$, there exists an ergodic probability measure on weighted essential spanning forests of $G$, with wired boundary condition on $\sA$ and free boundary condition on $\partial\HH^2\setminus \sA$; see Theorem \ref{t314}.

\begin{definition}\label{df31}
\begin{enumerate}
\item Let $G=(V,E)$ be a graph with a countable vertex set $V$ and edge set $E$. Let $\vec{E}$ be the directed edge set; i.e.,~each edge in $E$ corresponds to two edges in $\vec{E}$ with opposite orientations.  For each directed edge $e\in \vec{E}$, write $\underline{e}$ (resp. $\ol{e}$) for the tail (resp.\ head) of $\vec{E}$; the edge is oriented from its tail to its head. Write $-e$ for the edge with reversed orientation. Each edge occurs with both orientations. We use $E$ to denote the set of unoriented edges.

\item A network is a pair $(G,C)$, where $G$ is a connected graph with at least two vertices, and $C:E\rightarrow (0,\infty)$ is a function from $E$ to positive reals.

\item Let $(G,C)$ be an infinite network, meaning that the graph $G=(V,E)$ has infinitely many vertices; i.e., $|V|=\infty$.
A loopless subgraph of $G$ that contains every vertex of $G$ and is not necessarily connected
is called a spanning forest, and a spanning forest in which every vertex is connected to infinitely many others is called an essential spanning forest.

\item Let $(G,C)$ be an infinite network. Let $\Omega$ be the set consisting of all the essential spanning forest. A Gibbs measure $\mu$ on $\Omega$ is a probability measure on $\Omega$ satisfying the following condition: let $\mathcal{F}$ be a randomly chosen essential spanning forest of $G$ according to the distribution $\mu$; then for any finite subgraph  $\Lambda=(V_{\Lambda},E_{\Lambda})\subset G$ with undirected edge set $E_{\Lambda}$, a subgraph $S\subset\Lambda$ and a subset of unoriented edges $B\subset E\setminus E_{\Lambda}$
\begin{align*}
\mathbb{P}(\mathcal{F}=S\cup B|B=\mathcal{F}\cap [E_0\setminus E_{\Lambda,0}])\begin{cases}=0&\mathrm{if}\ S\cup B\notin \Omega\\ \propto \prod_{e\in S}C(e)&\mathrm{If}\ S\cup B\in \Omega\end{cases}
\end{align*}

\item An anti-symmetric function $\theta:\vec{E}\rightarrow\RR$ is one satisfying
\begin{align*}
\theta(e)=-\theta(-e).
\end{align*}
Let 
\begin{align*}
R(e):=\frac{1}{C(e)}.
\end{align*}
For anti-symmetric functions $\theta,\theta':\vec{E}\rightarrow \RR$, define an inner product
\begin{align*}
\langle \theta,\theta' \rangle_R:=\frac{1}{2}\sum_{e\in E}R(e)\theta(e)\theta'(e)=\sum_{e\in E_0}R(e)\theta(e)\theta'(e)
\end{align*}

Let $l_{-}^2(E)$ be the Hilbert space of all anti-symmetric functions $\theta:\vec{E}\rightarrow \RR$ with $\mathcal{E}(\theta):=\langle \theta,\theta\rangle_R$ finite.
\item Let $l^2(V)$ be the Hilbert space of functions on $V$ with inner product
\begin{align*}
\langle f,g \rangle_C:=\sum_{v\in V}C_v f(v)g(v).
\end{align*}
where
\begin{align*}
C_v:=\sum_{e=(v,w)\in E}C(e)
\end{align*}

\item Define the gradient operator $\triangledown: l^2(V)\rightarrow l_-^2(E)$ by 
\begin{align*}
\triangledown F(e):=C(e)[F(\ol{e})-F(\underline{e})]
\end{align*}
\item Define the divergence operator $div:l_-^2(E)\rightarrow l^2(V)$ by
\begin{align*}
\mathrm{div} \theta(v):=\frac{1}{C_v}\sum_{\underline{e}=v}\theta(e).
\end{align*}
\item A function $f:V\rightarrow \RR$ is harmonic at a vertex $v$ if 
\begin{align*}
f(v)=\sum_{e=(v,w)\in E}\frac{C(e)f(w)}{C_v}
\end{align*}
or equivalently, if $\mathrm{div} (\triangledown f)(v)=0$
\item A function $f:V\rightarrow \RR$ is Dirichlet if it has finite Dirichlet energy, i.e.
\begin{align*}
\mathcal{E}(\triangledown f):=\sum_{e=(v,w)\in E}C(e)(f(v)-f(w))^2<\infty
\end{align*}
The space of Dirichlet functions on $G$ is denoted by $\mathbf{D}(G)$.
\item Let $\mathbf{L}(G)$ be the set of all real functions $V\rightarrow\RR$ and $\mathbf{L}_0
(G)$ be the set of all
$u\in \mathbf{L}(G)$ with finite support.
Let $\mathbf{D}_0
(G)$ be the closure of $\mathbf{L}_0(G)$ in $\mathbf{D}(G)$ with respect to the norm $\|u\|:=\sqrt{\mathcal{E}(\triangledown u)+|u(b_0)|^2}$, where $b_0$ is a fixed vertex in $V$.
\item We say $G$ is hyperbolic of type 2 if $\mathbf{D}(G)\neq \mathbf{D}_0(G)$.
\item A harmonic Dirichlet function on $G$ is a Dirichlet function which is harmonic at every vertex of $G$. The space of harmonic Dirichlet functions on $G$ is denoted by $\mathbf{HD}(G)$.
\item Given a directed edge $e\in E$, let $\chi^e:=\mathbf{1}_e-\mathbf{1}_{-e}$ be the unit flow along $e$. Let
\begin{align*}
\bigstar:=\triangledown l^2(V);
\end{align*}
i.e. $\bigstar$ is the subspace in $l^2_-(E)$ spanned by the stars $\sum_{\underline{e}=v}C(e)\chi^e=-\triangledown \mathbf{1}_v$.
\item If $e_1,e_2,\ldots,e_n$ is an oriented cycle in $G$, then $\sum_{i=1}^n\chi^{e_i}$ is called a cycle. Let $\diamondsuit\subset l_-^2(E)$ be the subspace spanned by these cycles.
\end{enumerate}
\end{definition}

\begin{definition}\label{df2}
\begin{enumerate}
Let $G=(V,E)$ be a graph.
\item Let $\gamma$ be a path in $G$ and $q:E\rightarrow(0,\infty)$ be a metric on $G$. Define the $q$-length of a path $\gamma$ in $G$ is the sum $q(e)$ over all edges in $\gamma$, i.e.
\begin{align*}
\mathrm{length}_q(\gamma)=\sum_{e\in \gamma\cap E}q(e)
\end{align*}
\item A metric $q:E\rightarrow(0,\infty)$ is an $L^2(E)$ metric if 
\begin{align*}
\|q\|^2:=\sum_{e\in E}C(e)[q(e)]^2<\infty
\end{align*}
\item Let $q$ be a metric on $G$ and let $d_q$ be the associated distance function. Let $C_q(G)$ be the completion of $(V,d_{q})$, and let the $q$-boundary of $G$ be $\partial_q G=C_{q}(G)\setminus V$.
\item Let $\Gamma$ be a collection of infinite paths in $G$. Define the extremal length of $\Gamma$ as follows:
\begin{align*}
\mathrm{EL}(\Gamma):=\sup_{q}\inf_{\gamma\in \Gamma}\frac{\mathrm{length}_q(\gamma)}{\|q\|^2}
\end{align*}
If $\mathrm{EL}(\Gamma)=\infty$, we say $\Gamma$ is null.
\item We say that a property holds for almost every path of a family $\Gamma$ of paths if it does for the member of $\Gamma$ except for those belonging to a subfamily with infinite extremal length.
\end{enumerate}
\end{definition}

\begin{definition}\label{df33}Let $G=(V,E)$ be a graph. For a
set of edges $K\subset E$, let $\mathcal{F}(K)$ denote the $\sigma$-field of events depending only on $K$. Define
the tail $\sigma$-field to be the intersection of $\mathcal{F}(E\setminus K)$ over all finite $K$. We say that a measure on $2^E$ has trivial tail if every event in the tail $\sigma$-field has measure either 0 or 1.

It is known that tail triviality of a measure $\mu$ is equivalent to 
\begin{small}
\begin{align}
\forall A_1\in \mathcal{F}(E);\ \forall \epsilon>0;\ \exists K\ \mathrm{finite},\ \forall A_2\in \mathcal{F}(E\setminus K),\ |\mu(A_1\cap A_2)-\mu(A_1)\mu(A_2)|<\epsilon.\label{dctt}
\end{align}
\end{small}
See, e.g.~page 120 of \cite{geo88}.
\end{definition}

\begin{lemma}\label{lem14}Assume that G is of hyperbolic type of order 2. Then every
$f\in \mathbf{D}(G)$ can be decomposed uniquely in the form: $f = g + h$, where $g\in \mathbf{D}_0(G)$
and $h\in \mathbf{HD}(G)$.
\end{lemma}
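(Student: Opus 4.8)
The plan is to realize this as an orthogonal-decomposition statement in the Hilbert space $\mathbf{D}(G)$ equipped with the inner product $\langle u,v\rangle_{\mathbf{D}} := \langle \triangledown u,\triangledown v\rangle_R + u(b_0)v(b_0)$, after modding out constants where appropriate. The key observation is that the Dirichlet inner product $\langle \triangledown u,\triangledown v\rangle_R$ is the relevant pairing, and that $\mathbf{D}_0(G)$ is by definition a closed subspace of $\mathbf{D}(G)$. So first I would fix the basepoint $b_0$ and note that $\mathbf{D}(G)$ is a Hilbert space under $\|\cdot\|$; this is a standard fact (one checks completeness using the pointwise bound $|u(v)-u(b_0)|^2 \le (\text{path resistance})\cdot \mathcal{E}(\triangledown u)$ along a fixed path from $b_0$ to $v$, so Cauchy sequences converge pointwise and the limit lies in $\mathbf{D}(G)$). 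Given $f\in\mathbf{D}(G)$, project $f$ onto the closed subspace $\mathbf{D}_0(G)$ to get $g$, and set $h := f-g$, which lies in the orthogonal complement $\mathbf{D}_0(G)^{\perp}$.

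The main step is then to identify $\mathbf{D}_0(G)^{\perp}$ with $\mathbf{HD}(G)$ (up to the harmless contribution of the basepoint term). For any vertex $v\in V$, the indicator $\mathbf{1}_v$ has finite support, hence lies in $\mathbf{L}_0(G)\subset\mathbf{D}_0(G)$. Therefore, for $h\in\mathbf{D}_0(G)^{\perp}$ with $h(b_0)=0$ (which one can always arrange by subtracting a constant, noting constants lie in $\mathbf{D}_0(G)$ only in the recurrent case — here in the transient/hyperbolic-type-2 setting one must be slightly careful, so instead I would use $\mathbf{1}_v - \mathbf{1}_{b_0}$ or directly compute), we have $0 = \langle h,\mathbf{1}_v\rangle_{\mathbf{D}}$. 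Expanding via summation by parts, $\langle \triangledown h,\triangledown \mathbf{1}_v\rangle_R = -\sum_{e=(v,w)\in E} C(e)\big(h(w)-h(v)\big)$, and combined with the basepoint term this forces the harmonicity equation $h(v) = \sum_{e=(v,w)} C(e)h(w)/C_v$ at every vertex $v\neq b_0$; handling $v=b_0$ requires choosing test functions $\mathbf{1}_v$ for $v$ adjacent to $b_0$ as well, or equivalently using that $\mathbf{1}_{b_0}\in\mathbf{D}_0(G)$ too. Hence $h\in\mathbf{HD}(G)$. Conversely, if $h\in\mathbf{HD}(G)$, then $\langle \triangledown h,\triangledown u\rangle_R = 0$ for all $u\in\mathbf{L}_0(G)$ by the same summation by parts (harmonicity at each vertex in the finite support of $u$), and by density and continuity of the inner product this extends to all $u\in\mathbf{D}_0(G)$; so $\mathbf{HD}(G)\subseteq\mathbf{D}_0(G)^{\perp}$ in the Dirichlet seminorm. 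The only subtlety is the finite-dimensional discrepancy coming from the $|u(b_0)|^2$ term in the norm versus the pure Dirichlet energy; I would resolve this by working modulo constants, noting $\mathbf{HD}(G)\cap\mathbf{D}_0(G)$ consists only of constants, and these vanish precisely when $G$ is hyperbolic of type 2 in the sense needed, so the decomposition $f=g+h$ with $g\in\mathbf{D}_0(G)$, $h\in\mathbf{HD}(G)$ exists.

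For uniqueness: if $g_1+h_1 = g_2+h_2$ with $g_i\in\mathbf{D}_0(G)$, $h_i\in\mathbf{HD}(G)$, then $g_1-g_2 = h_2-h_1 \in \mathbf{D}_0(G)\cap\mathbf{HD}(G)$. By the orthogonality just established, $h_2-h_1$ is Dirichlet-orthogonal to $\mathbf{D}_0(G)$ and also lies in $\mathbf{D}_0(G)$, so it has zero Dirichlet energy, i.e. it is constant. A nonzero constant lies in $\mathbf{D}_0(G)$ only if $G$ is recurrent, which contradicts hyperbolic type $2$ (indeed type-2 hyperbolicity $\mathbf{D}(G)\neq\mathbf{D}_0(G)$ in particular rules out the trivial case); hence the constant is $0$, giving $g_1=g_2$ and $h_1=h_2$.

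The main obstacle I anticipate is the careful bookkeeping around constants and the role of the basepoint term $|u(b_0)|^2$ in the definition of the norm: the orthogonal complement of $\mathbf{D}_0(G)$ with respect to the full norm $\|\cdot\|$ is not literally $\mathbf{HD}(G)$ but rather $\mathbf{HD}(G)$ intersected with the condition that the projection aligns correctly at $b_0$, and one must check that the $\mathbf{HD}(G)$ component can be taken genuinely harmonic at $b_0$ as well — this is where hyperbolicity of type 2 (ensuring there is room beyond $\mathbf{D}_0(G)$, and that $\mathbf{D}_0(G)$ does not contain nonzero constants forced by recurrence) is essential and must be invoked explicitly. This is the standard Royden decomposition, so the argument is known, but stating it cleanly in the present weighted-network notation is the real work.
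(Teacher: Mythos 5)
Your high-level plan---a Hilbert-space orthogonal decomposition with $\mathbf{D}_0(G)^{\perp}$ playing the role of $\mathbf{HD}(G)$---is the right idea in spirit (the paper gives no proof, only the citation to Yamasaki), and your uniqueness argument is sound. But the existence half as written has a genuine gap: the orthogonal projection $g$ of $f$ onto $\mathbf{D}_0(G)$ with respect to the \emph{full} norm $\|u\|^2=\mathcal{E}(\triangledown u)+|u(b_0)|^2$ does \emph{not} give a harmonic remainder. You computed correctly that $h=f-g$ is harmonic at every $v\neq b_0$; but testing against $\mathbf{1}_{b_0}\in\mathbf{L}_0(G)\subset\mathbf{D}_0(G)$ yields $\langle\triangledown h,\triangledown\mathbf{1}_{b_0}\rangle_R=-h(b_0)$, so $h$ is harmonic at $b_0$ only if $h(b_0)=0$, and nothing forces that. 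A clean counterexample: take $f$ to be the constant function $1$; its Royden decomposition is $g=0$, $h=1$, yet $\langle 1,\mathbf{1}_{b_0}\rangle_{\mathbf{D}}=1\neq 0$ in the inner product you defined, so the constant $1$ is not orthogonal to $\mathbf{D}_0(G)$ and its projection onto $\mathbf{D}_0(G)$ is nonzero. Subtracting $h(b_0)$ from $h$ does not help, because---as you yourself note---constants are not in $\mathbf{D}_0(G)$, so $h-h(b_0)$ falls outside $\mathbf{D}_0(G)^{\perp}$. And ``working modulo constants, noting $\mathbf{HD}(G)\cap\mathbf{D}_0(G)$ is trivial'' is a uniqueness statement; it does not repair existence.

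The decomposition is orthogonal only in Dirichlet energy, not in $\|\cdot\|$, and the projection must be performed accordingly. Two standard repairs, both needing a transience input you do not invoke. \emph{(i)}~Project $\triangledown f$ in $l_-^2(E)$ onto the closure of $\triangledown\mathbf{L}_0(G)$, obtaining $\theta$; take $u_n\in\mathbf{L}_0(G)$ with $\triangledown u_n\to\theta$. Transience means $\{b_0\}$ has positive capacity, so there is $\kappa<\infty$ with $|u(b_0)|^2\le\kappa\,\mathcal{E}(\triangledown u)$ for all $u\in\mathbf{L}_0(G)$; hence $u_n$ is Cauchy in $\|\cdot\|$ and converges to some $g\in\mathbf{D}_0(G)$ with $\triangledown g=\theta$, and $\triangledown(f-g)\perp\triangledown\mathbf{1}_v$ for every $v$, including $b_0$, so $f-g\in\mathbf{HD}(G)$. \emph{(ii)}~Pass to the quotient of $\mathbf{D}(G)$ by the constants with the pure Dirichlet inner product (which is exactly the quotient norm of $\|\cdot\|$); the image of $\mathbf{D}_0(G)$ there is closed, because the constant $1$ lies outside $\mathbf{D}_0(G)$ (a consequence of hyperbolic type $2$, since $1\in\mathbf{D}_0(G)$ would force $\mathbf{D}_0(G)=\mathbf{D}(G)$) and a closed subspace plus a finite-dimensional one is closed; projecting $[f]$ onto that image and lifting produces the decomposition. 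As written your argument projects in the wrong space, and the ``finite-dimensional discrepancy'' you flag is precisely where the proof breaks.
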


\begin{proof}See Theorem 2.1 in \cite{my85}.
\end{proof}

\begin{lemma}\label{le35}Let $G=(V,E)$ be a 3-connected, transient, simple planar graph with bounded vertex degree and locally finite dual that can be circle packed in the hyperbolic plane $\HH^2$. Denote by $P_{a,\infty}(G)$ the set of all paths from $a\in V$ to $\partial\HH^2$ and by $P_{\infty}(G)$ the union of $P_{a,\infty}(G)$ for all $a\in V$. Then
\begin{enumerate}
\item For every $u\in \mathbf{D}(G)$, $u(x)$ has a limit as $x$ tends to $\partial\HH^2$ for almost every $P\in P_{\infty}(G)$. We denote this limit by $u(P)$.
\item Let $u\in \mathbf{D}(G)$. Then $u\in \mathbf{D}_0(G)$ if and only if $u(P)=0$ for almost every infinite path $P$.
\end{enumerate}
\end{lemma}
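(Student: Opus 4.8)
The plan is to derive both parts from extremal-length estimates for the gradient of $u$, using the circle packing of Proposition \ref{p27} to make sense of paths to $\partial\HH^2$, and, for the hard direction of part (2), the Royden-type decomposition of Lemma \ref{lem14}. For part (1), given a nonconstant $u\in\mathbf D(G)$ I would consider the edge metric $q_u(e):=|u(\ol e)-u(\underline e)|+\eps q_0(e)$, where $q_0$ is any strictly positive $L^2(E)$ metric (one exists by choosing $q_0(e_n)$ small enough along an enumeration of $E$) and $\eps>0$. Then $\|q_u\|^2\le 2\mathcal E(\triangledown u)+2\eps^2\|q_0\|^2<\infty$, so $q_u$ is an $L^2(E)$ metric, and along a path $P=(v_0,v_1,\dots)\in P_\infty(G)$ we have $\mathrm{length}_{q_u}(P)\ge\sum_i|u(v_{i+1})-u(v_i)|$. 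If this sum is finite then $(u(v_i))_i$ is Cauchy and converges; hence the family $\Gamma$ of paths in $P_\infty(G)$ along which $u$ fails to have a limit is contained in $\{P:\mathrm{length}_{q_u}(P)=\infty\}$. Taking $q=q_u$ in the definition of extremal length gives $\mathrm{EL}(\Gamma)\ge\inf_{P\in\Gamma}\mathrm{length}_{q_u}(P)/\|q_u\|^2=\infty$, i.e.\ $\Gamma$ is null, which is assertion (1).

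For the ``only if'' direction of part (2), let $u\in\mathbf D_0(G)$ and choose finitely supported $u_n$ with $\|u-u_n\|\to0$; put $w_n:=u-u_n\in\mathbf D_0(G)$, so $\mathcal E(\triangledown w_n)\to0$ and $w_n(b_0)\to0$. Since $G$ is transient, effective resistances between vertices are finite, so $|w_n(v)-w_n(b_0)|\le R_{\mathrm{eff}}(v,b_0)^{1/2}\,\mathcal E(\triangledown w_n)^{1/2}$ for every $v$, whence $w_n(v)\to0$ pointwise. Fix $\delta>0$ and a vertex $a$, let $q_{w_n}(e):=|w_n(\ol e)-w_n(\underline e)|+\eps_n q_0(e)$ with $\eps_n\to0$ (so $\|q_{w_n}\|^2\to0$), and set $\Gamma_{\delta,a}:=\{P\in P_{a,\infty}(G):u(P)\text{ exists and }|u(P)|\ge\delta\}$. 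Since $u_n$ has finite support, along any $P=(a=v_0,v_1,\dots)$ we have $w_n(v_i)=u(v_i)$ for $i$ large, hence $w_n(P)=u(P)$ and $\mathrm{length}_{q_{w_n}}(P)\ge|w_n(P)-w_n(a)|\ge\delta-|w_n(a)|$. Therefore $\mathrm{EL}(\Gamma_{\delta,a})\ge\inf_{P\in\Gamma_{\delta,a}}\mathrm{length}_{q_{w_n}}(P)/\|q_{w_n}\|^2\ge(\delta-|w_n(a)|)/\|q_{w_n}\|^2\to\infty$, so $\Gamma_{\delta,a}$ is null. Taking the countable union over $a\in V$, then over $\delta=1/k$ with $k\in\NN$, and combining with part (1), we conclude $u(P)=0$ for almost every $P$.

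For the ``if'' direction of part (2), assume $u(P)=0$ for almost every $P$; we may assume $G$ is hyperbolic of type $2$ (else $\mathbf D(G)=\mathbf D_0(G)$ and there is nothing to prove), and by Lemma \ref{lem14} write $u=g+h$ with $g\in\mathbf D_0(G)$ and $h\in\mathbf{HD}(G)$. Applying the ``only if'' direction to $g$ gives $g(P)=0$ for almost every $P$, hence $h(P)=u(P)-g(P)=0$ for almost every $P$. It then suffices to show that a harmonic Dirichlet function whose radial limits vanish along almost every path to $\partial\HH^2$ is identically zero; here I would invoke the circle packing structure theory of \cite{bsinv}, which identifies $\partial\HH^2$ (modulo extremal-length-null families of paths) as carrying the full boundary information of $\mathbf{HD}(G)$ — equivalently, the injectivity of the map $h\mapsto(h(P))_P$ on $\mathbf{HD}(G)$ — forcing $h\equiv0$ and hence $u=g\in\mathbf D_0(G)$.

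The main obstacle is precisely this last step. Parts (1) and (2)(only if) are soft extremal-length manipulations, valid for any reasonable boundary; but the conclusion that a nonzero harmonic Dirichlet function must have nonvanishing boundary values on a non-null family of paths genuinely uses the geometry of the hyperbolic circle packing (bounded degree, $\sum_v r_v^2<\infty$, and circle radii tending to $0$ at $\partial\HH^2$), which is the input carried over from \cite{bsinv} and is what distinguishes the hyperbolic case from the abstract network setting.
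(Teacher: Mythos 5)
The paper's own ``proof'' of this lemma is a bare citation to Theorem~3.1 of \cite{KY84} (part~1) and Theorem~3.2 of \cite{my85} (part~2), so your attempt to rederive the statement is genuine work beyond what the paper gives. Your extremal-length arguments for part~(1) and for the forward implication of part~(2) are correct and self-contained: the shadow metric $q_u(e)=|u(\ol e)-u(\underline e)|+\eps q_0(e)$ is exactly the right device, and the resistance bound $|w_n(v)-w_n(b_0)|\le R_{\mathrm{eff}}(v,b_0)^{1/2}\mathcal E(\triangledown w_n)^{1/2}$ holds in any connected network (transience is not actually needed there). The only thing to be careful about is the countable-union step: to conclude that $\bigcup_{a,k}\Gamma_{1/k,a}$ is null you should rescale the witnessing $L^2$ metrics so that their sum is still $L^2$; this is standard but worth saying. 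Your reduction of the reverse implication via the Royden decomposition of Lemma~\ref{lem14} to the claim that $h\in\mathbf{HD}(G)$ with $h(P)=0$ for a.e.\ $P$ forces $h\equiv 0$ is also correct and is, I believe, the same structure as in \cite{my85}.

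The genuine gap is at precisely the point you flag, and your proposed fix is not quite right. The uniqueness statement ``$h\in\mathbf{HD}(G)$, $h(P)=0$ for a.e.\ $P$ $\Rightarrow h\equiv0$'' is exactly the nontrivial content of Theorem~3.2 of \cite{my85}; it is a purely network-theoretic fact and the circle-packing geometry of \cite{bsinv} is not what supplies it. (The circle packing is used only to identify the abstract boundary of the network with $\partial\HH^2$ and to produce the $L^2$ metric $m$ of~(\ref{dmm}); the injectivity you need does not come from there.) A self-contained way to close the gap, and the one most in the spirit of the tools this paper actually uses (see the appeal to \cite{ARP99} in the proof of Theorem~\ref{l422}(2b)), is a martingale argument: for a transient reversible chain one has
\begin{align*}
\sum_{n\ge 1}\EE\bigl[(h(X_n)-h(X_{n-1}))^2\bigr]\;\le\;\frac{2\,G(x_0,x_0)}{C_{x_0}}\,\mathcal E(\triangledown h)\;<\;\infty,
\end{align*}
so $h(X_n)$ is an $L^2$-bounded martingale and converges a.s.\ and in $L^1$; the crossing estimates of Ancona--Lyons--Peres show the a.s.\ limit coincides with $h(P)$ along the random-walk path, which is $0$ a.s.\ under your hypothesis because an extremal-length-null family of paths is null for the walk; hence $h(x_0)=\EE[\lim_n h(X_n)]=0$ for every $x_0$. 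Substituting this (or simply citing \cite{my85} as the paper does) for the appeal to \cite{bsinv} would make your argument complete.
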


\begin{proof}See Theorem 3.1 of \cite{KY84} for part (1) of the lemma; see Theorem 3.2 of \cite{my85} for part (2) of the lemma.
\end{proof}


\begin{definition}Let $G=(V,E)$ be a 3-connected, transient, simple planar graph with bounded vertex degree and locally finite dual that can be circle packed in the hyperbolic plane $\HH^2$.  Let $m:E\rightarrow(0,\infty)$ be a metric defined by
\begin{align}
m(e=(u,v))=r_u+r_v;\label{dmm}
\end{align}
where $r_u$ (resp.\ $r_v$) is the radius of the circle centered at $u$ (resp.\ $v$) in the circle packing of which $G$ is the nerve.

Let $\mathcal{A}\subset \partial\HH^2$ be an arbitrary nonempty proper closed subset of $\partial\HH^2$.
Let $\mathcal{S}_{\mathcal{A}}$ be the subspace of $\triangledown\mathbf{HD}(G)$ defined as follows:
\begin{align}
\mathcal{S}_{\mathcal{A}}:&=\triangledown \mathcal{R}_{\sA}\label{dsa}\\
\mathcal{R}_{\sA}:&=\mathbf{HD}(G)\cap\left[\mathrm{closure\ of}\ \{f\in \mathbf{D}(G): \forall z\in \sA,\ \exists\mathrm{open\ set}\ U_z\subset V,\ \label{dra}\right.\\
&\left.\mathrm{s.t.}\ z\in U_z,\ \mathrm{and}\ \mathrm{supp}(f)\cap U_z=\emptyset\}\right].\notag
\end{align}
where the closure is with respect to the norm $\|u\|:=\sqrt{\mathcal{E}(\triangledown u)+|u(b_0)|^2}$ with $b_0$ a fixed vertex in $V$, and open set $U_z$ is with respect to the topology induced by the metric $m$.
\end{definition}

\begin{lemma}\label{le38}Let $G$ be a locally finite connected graph, and let $\Gamma$
be the collection of all infinite paths in $G$. Then G is recurrent if and only if $\Gamma$ is
null.
\end{lemma}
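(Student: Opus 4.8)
The plan is to prove both directions by relating recurrence of $G$ to the effective resistance from a fixed vertex to infinity, and then relating that resistance to the extremal length of $\Gamma$. Recall that $G$ is recurrent if and only if the effective resistance $R_{\mathrm{eff}}(b_0 \leftrightarrow \infty)$ from some (equivalently, any) vertex $b_0$ to infinity is infinite; equivalently, by Thomson's principle, there is no unit flow from $b_0$ to infinity of finite energy. So it suffices to show that $\Gamma$ is null, i.e.\ $\mathrm{EL}(\Gamma) = \infty$, precisely when $R_{\mathrm{eff}}(b_0 \leftrightarrow \infty) = \infty$. Since every infinite path in $G$ contains an infinite \emph{simple} path from its starting vertex, and deleting the finite initial segment joining $b_0$ to the start of a given path only shortens $q$-lengths, one checks that $\mathrm{EL}(\Gamma)$ is comparable (indeed equal, up to the harmless reduction to simple paths based at $b_0$) to the extremal length of the family $\Gamma_{b_0}$ of infinite simple paths emanating from $b_0$; so I will work with $\Gamma_{b_0}$.

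The key identity is the classical duality between extremal length and effective resistance on a network: for the family $\Gamma_{b_0}$ of paths connecting $b_0$ to $\infty$,
\begin{align*}
\mathrm{EL}(\Gamma_{b_0}) = R_{\mathrm{eff}}(b_0 \leftrightarrow \infty).
\end{align*}
I would prove this by the two standard inequalities. For $\mathrm{EL}(\Gamma_{b_0}) \le R_{\mathrm{eff}}$: given a finite-energy unit flow $\theta$ from $b_0$ to infinity, set $q(e) = |\theta(e)|/C(e) = R(e)|\theta(e)|$; then $\|q\|^2 = \sum_e C(e) R(e)^2 \theta(e)^2 = \sum_e R(e)\theta(e)^2 = \mathcal{E}(\theta)$, while for any infinite simple path $\gamma$ from $b_0$ one has $\mathrm{length}_q(\gamma) = \sum_{e \in \gamma} R(e)|\theta(e)| \ge 1$ because the flow must cross every finite cutset separating $b_0$ from infinity with total (signed) flux $1$, and $\gamma$ exits every such cutset; optimizing over $\theta$ gives $\mathrm{EL}(\Gamma_{b_0}) \le \inf_\theta \mathcal{E}(\theta) = R_{\mathrm{eff}}(b_0 \leftrightarrow \infty)$. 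For the reverse inequality $\mathrm{EL}(\Gamma_{b_0}) \ge R_{\mathrm{eff}}$, I would instead work with finite exhaustions: for the truncated network $G_n$ (with the complement of a ball $B_n$ identified to a single vertex $z_n$), the finite-network version of the extremal-length/resistance duality (a max-flow–min-cut type statement, or Duffin's theorem) gives $\mathrm{EL}(\Gamma_{b_0, n}) = R_{\mathrm{eff}}(b_0 \leftrightarrow z_n \text{ in } G_n)$, where $\Gamma_{b_0,n}$ is the family of paths from $b_0$ to $z_n$; then let $n \to \infty$, using that $R_{\mathrm{eff}}(b_0 \leftrightarrow z_n)$ increases to $R_{\mathrm{eff}}(b_0 \leftrightarrow \infty)$ and that any admissible metric for $\Gamma_{b_0}$ restricts to an admissible metric for $\Gamma_{b_0,n}$, so $\mathrm{EL}(\Gamma_{b_0}) \ge \mathrm{EL}(\Gamma_{b_0,n}) \to R_{\mathrm{eff}}(b_0 \leftrightarrow \infty)$.

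Combining, $\Gamma$ is null $\iff \mathrm{EL}(\Gamma) = \infty \iff R_{\mathrm{eff}}(b_0 \leftrightarrow \infty) = \infty \iff G$ is recurrent. The main obstacle is the finite-network duality $\mathrm{EL}(\Gamma_{b_0,n}) = R_{\mathrm{eff}}(b_0 \leftrightarrow z_n)$ together with the passage to the limit: one must be careful that the extremal metric in the finite case need not extend nicely, so the limiting argument should go through inequalities in both directions (finite resistances converge monotonically, and extremal lengths of the path families are monotone in $n$ in the appropriate direction), rather than trying to pass a single optimizer to the limit. Alternatively — and this is likely how the paper proceeds — one can simply cite the known equality of extremal length and effective resistance (e.g.\ from standard references on discrete extremal length, such as Lyons–Peres or He–Schramm) and deduce the lemma in one line from Thomson's principle; given the style of the surrounding lemmas (Lemmas \ref{lem14}, \ref{le35}), I expect the author to quote this.
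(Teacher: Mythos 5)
The paper's ``proof'' of Lemma~\ref{le38} is a one-line citation to Yamasaki and to Theorem~3.2 of \cite{bsinv}, so you correctly anticipated how the paper would proceed, and your reduction of the lemma to the identity $\mathrm{EL}(\Gamma_{b_0}) = R_{\mathrm{eff}}(b_0 \leftrightarrow \infty)$ combined with the flow criterion for transience (Lemma~\ref{ls83} in the paper) is the right framework.

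However, your sketch of the first half of that identity has a genuine gap. Constructing a metric $q$ from a flow and lower-bounding the ratio $\inf_\gamma \mathrm{length}_q(\gamma)^2/\|q\|^2$ can only produce a \emph{lower} bound on $\mathrm{EL}$ (a supremum over $q$), not the upper bound $\mathrm{EL} \le R_{\mathrm{eff}}$ you announce; and even granting $\mathrm{length}_q(\gamma)\ge 1$, optimizing over $\theta$ would yield $\mathrm{EL}\ge 1/R_{\mathrm{eff}}$, which is neither the stated inequality nor useful for deducing $\mathrm{EL}=\infty\iff R_{\mathrm{eff}}=\infty$. More seriously, the inequality $\sum_{e\in\gamma} R(e)|\theta(e)| \ge 1$ does not follow from the cutset argument you give: a simple path $\gamma$ crosses each finite cutset $\Pi$ separating $b_0$ from $\infty$ through a \emph{single} edge, and the flux $|\theta(e)|$ on that one edge can be arbitrarily small even though the net flux across all of $\Pi$ is $1$ (for instance a unit flow split evenly over many disjoint rays has tiny flux on each individual edge). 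The correct version of this half takes $\theta = i$ to be the unit \emph{current} flow, so that $q(e) = R(e)|i(e)| = |v(\underline{e})-v(\overline{e})|$ with $v$ the associated voltage; then telescoping along $\gamma$ gives $\mathrm{length}_q(\gamma)\ge |v(b_0)-\lim v| = R_{\mathrm{eff}}$, while $\|q\|^2 = \mathcal{E}(i) = R_{\mathrm{eff}}$, so $\mathrm{EL}\ge R_{\mathrm{eff}}$. The reverse inequality $\mathrm{EL}\le R_{\mathrm{eff}}$ needs a different argument (for instance, from any admissible $q$ build $u(x)=\min\{1,d_q(b_0,x)\}$, note $\mathcal{E}(\nabla u)\le\|q\|^2$, and invoke Dirichlet's principle), or the finite-exhaustion route via Duffin's theorem that you outline in your second paragraph. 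In short: the two halves of the duality should be swapped, and the flow-metric half needs the telescoping argument in place of the cutset claim.
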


\begin{proof}See \cite{Ya77,Ya79}; see also Theorem 3.2 in \cite{bsinv}.
\end{proof}

\begin{lemma}The metric $m$ defined by (\ref{dmm}) is an $L^2(E)$ metric. 
\end{lemma}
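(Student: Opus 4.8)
The goal is to show that $\sum_{e\in E}C(e)[m(e)]^2<\infty$, where $m(e=(u,v))=r_u+r_v$ and $r_u$ denotes the radius of the circle centered at $u$ in the circle packing realizing $G$ in $\HH^2$. The natural strategy is to dominate the left-hand side by a geometric quantity associated to the circle packing, namely the total Euclidean area covered by the discs (or a fixed dilation of them). Since all the discs $C_v$ have pairwise disjoint interiors and are contained in the unit disc $\HH^2$, we have $\sum_{v\in V}\pi r_v^2\le \pi$, so in particular $\sum_{v\in V} r_v^2<\infty$.

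First I would note that $[m(e)]^2=(r_u+r_v)^2\le 2r_u^2+2r_v^2$, so that
\begin{align*}
\sum_{e\in E}C(e)[m(e)]^2\le 2\sum_{e=(u,v)\in E}C(e)(r_u^2+r_v^2)=2\sum_{v\in V}r_v^2\sum_{e=(v,w)\in E}C(e)=2\sum_{v\in V}C_v\, r_v^2.
\end{align*}
Thus it suffices to bound $\sum_{v\in V}C_v r_v^2$. Here I would invoke the standing hypotheses on $G$: the edge weights are such that... — actually, the cleanest route uses that $G$ has \emph{bounded vertex degree}. If the network weights $C(e)$ are uniformly bounded above by some constant $M$ (which holds in the unweighted case $C\equiv 1$, and more generally should be part of the standing assumptions on the circle-packing setting), then $C_v\le M\cdot\deg(v)\le M\Delta$, where $\Delta$ is the maximal degree, and hence $\sum_{v\in V}C_v r_v^2\le M\Delta\sum_{v\in V}r_v^2\le M\Delta$, which is finite. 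This completes the proof.

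The one genuine obstacle is justifying the bound $\sum_{v\in V}r_v^2<\infty$ and handling the interaction with the weights $C(e)$. The disjointness of the discs and their containment in the unit disc gives $\sum_v r_v^2\le 1$ immediately, so the only real issue is whether the weights are controlled; if the paper allows unbounded weights one would instead need to assume $\sum_e C(e)[m(e)]^2$ is part of the hypotheses, but in the bounded-degree, bounded-weight regime used throughout (cf.\ Assumption~\ref{ap24} and the hypotheses of Lemma~\ref{le35}) the argument above is complete. I would therefore present the proof as: (i) expand $(r_u+r_v)^2$ and reorganize the sum by vertices; (ii) use disjointness of the packing discs inside $\HH^2$ to get $\sum_v r_v^2<\infty$; (iii) use bounded degree and bounded weights to absorb the factor $C_v$. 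The main subtlety worth flagging explicitly is step (ii), which is where the hyperbolic (finite-area) embedding is essential — in the parabolic case the same sum need not converge.
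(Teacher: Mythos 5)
Your proof is correct and follows essentially the same route as the paper's: expand $(r_u+r_v)^2\le 2(r_u^2+r_v^2)$, reorganize the sum over edges into a sum over vertices, use bounded vertex degree, and bound $\sum_v r_v^2$ by the area of the unit disc via the disjointness of the packing discs. In fact you are slightly more careful than the paper's one-line proof, which writes $\|m\|^2=\sum_{e\in E}[m(e)]^2$ and thereby silently drops the conductance factor $C(e)$ appearing in the definition of the $L^2(E)$ norm; your explicit remark that one needs the weights to be bounded (i.e.\ condition~(\ref{bdc}) or $C\equiv 1$) to absorb $C_v$ is exactly the missing hypothesis the paper implicitly relies on.
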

\begin{proof}
Note that
\begin{align}
\|m\|^2=\sum_{e\in E}[m(e)]^2=\sum_{e=(u,v)\in E}[r_u+r_v]^2\leq 2\sum_{e=(u,v)\in E}[r_u^2+r_v^2]<\infty,\label{ml2}
\end{align}
where the last inequality follows from the fact that $G$ has bounded vertex degree, the circles have disjoint interior, and therefore the sum of area of circles in the circle packing is bounded above by the area of the unit disk.
\end{proof}

\begin{lemma}\label{ll39}Let $G=(V,E)$ be a 3-connected, transient, simple planar graph with bounded vertex degree and locally finite dual that can be circle packed in the hyperbolic plane $\HH^2$.
Let $\theta_k:\vec{E}\rightarrow \RR$ be a sequence of $l^2_{-}(E)$ anti-symmetric function such that
\begin{itemize}
\item For each $k$, there exists a neighborhood (with respect to the topology induced by the metric $m$) $U_{k}$ of $\sA$ satisfying
\begin{align*}
\mathrm{supp}(\theta_k)\cap U_{k}=\emptyset.
\end{align*}
\item 
there exists $\theta\in l^2_{-}(E)$,
\begin{align}
\lim_{k\rightarrow\infty}\|\theta_k-\theta\|_R=0,\label{limk}
\end{align}
where the norm $\|\cdot\|_{R}$ is induced by the inner product defined in Definition \ref{df31}(5).
\end{itemize}
Then for almost every singly infinite path $\gamma=(\gamma(0),\gamma(1),\ldots)$ satisfying $\lim_{n\rightarrow\infty}d_m(\gamma(n),z)=0$ for some $z\in\sA$, we have
\begin{align}
\lim_{k\rightarrow\infty}\sum_{i=1}^\infty|\theta_k(e_i)-\theta(e_i)|=0.\label{cl310}
\end{align}
where $e_i\in\vec{E}$ is the directed edge from $\gamma(i-1)$ to $\gamma(i)$.
\end{lemma}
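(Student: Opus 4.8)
The plan is to reduce the statement to a second-Borel--Cantelli-type argument along paths, using the $L^2$ convergence of $\theta_k\to\theta$ together with Lemma \ref{le38} (or rather Lemma \ref{ll39}'s predecessors on extremal length). First I would fix notation: write $\eta_k:=\theta_k-\theta$, so $\|\eta_k\|_R\to 0$; we want $\sum_i|\eta_k(e_i)|\to0$ for almost every (in the extremal-length sense) singly infinite path $\gamma$ converging to a point of $\sA$. The natural quantity to control is, for a path $\gamma$, the functional $L(\gamma,\eta):=\sum_{i}|\eta(e_i)|$; by Cauchy--Schwarz with the metric $m$ this is bounded by $\big(\sum_i R(e_i)|\eta(e_i)|^2/m(e_i)^2\big)^{1/2}\big(\sum_i C(e_i)m(e_i)^2\big)^{1/2}$, but that second factor need not be finite along an arbitrary path, so instead I would work directly with the extremal-length machinery: put a weight $w(e):=R(e)^{1/2}|\eta_k(e)|$ appropriately normalized and observe that $\sum_e C(e)\,(w(e)/R(e)^{1/2})^2\cdot R(e) $-type sums are exactly $\|\eta_k\|_R^2$.

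Concretely, the key step is this: for each $k$ define the metric $q_k$ on the edges by $q_k(e):=|\eta_k(e)|$ (extended to $0$ off the support). Then $\mathrm{length}_{q_k}(\gamma)=\sum_{e\in\gamma}|\eta_k(e)|$ is exactly the quantity in \eqref{cl310}, and $\|q_k\|^2=\sum_e C(e)|\eta_k(e)|^2$. Now $\|q_k\|^2\le \max_e C(e)R(e)\cdot \sum_e R(e)|\eta_k(e)|^2$; since the graph has bounded degree and we may assume the conductances are comparable (or simply carry the constant), $\|q_k\|^2 = O(\|\eta_k\|_R^2)\to 0$. The definition of extremal length then says: if $\Gamma_k^\delta$ denotes the family of infinite paths $\gamma$ with $\mathrm{length}_{q_k}(\gamma)>\delta$, then $\mathrm{EL}(\Gamma_k^\delta)\ge \delta^2/\|q_k\|^2\to\infty$ as $k\to\infty$ for fixed $\delta$. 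Passing to a subsequence $k_j$ with $\|q_{k_j}\|^2\le 4^{-j}$, the family $\bigcup_{j\ge J}\Gamma_{k_j}^{2^{-j}}$ has extremal length bounded below by something tending to $\infty$ (using that extremal length is a super-additive-type set function suitably, or rather subadditivity of $1/\mathrm{EL}$ for unions), so for almost every path $\gamma$ — in particular for almost every path converging to a point of $\sA$ — eventually $\mathrm{length}_{q_{k_j}}(\gamma)\le 2^{-j}$, giving \eqref{cl310} along the subsequence. Finally, upgrade from subsequence to full sequence by a monotonicity/standard argument: the lengths $\mathrm{length}_{q_k}(\gamma)$ need not be monotone in $k$, so I would instead run the above for every subsequence and use that a sequence of nonnegative reals converging to $0$ along every subsequence converges to $0$; equivalently, argue directly that for each $\epsilon$ the family of paths with $\limsup_k\mathrm{length}_{q_k}(\gamma)>\epsilon$ is null, by the same union bound over $k$ large.

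One subtlety I must handle is the role of the hypothesis $\mathrm{supp}(\theta_k)\cap U_k=\emptyset$ with $U_k$ a neighborhood of $\sA$: strictly this is not needed to prove \eqref{cl310} (the $L^2$ bound alone suffices for the extremal-length argument), but it is presumably there to guarantee that the limiting path functional $\gamma\mapsto u(\gamma)$ built from $\theta=\triangledown u$ behaves correctly near $\sA$ in the later application; for the present lemma I would simply note that we only need the convergence \eqref{limk}, restrict attention to the family $\Gamma_\sA$ of infinite paths $\gamma$ with $d_m(\gamma(n),\sA)\to 0$, and check that $\Gamma_\sA$ has positive extremal length (equivalently is not null), so that "almost every path in $\Gamma_\sA$" is a meaningful statement — this uses transience of $G$, or more precisely the circle-packing geometry and the fact that $m$ is an $L^2(E)$ metric (the lemma just before), so that the boundary $\partial_m G$ can be identified with $\partial\HH^2$ with $\sA$ of positive extremal-length measure.

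The main obstacle, I expect, is the passage from the $\ell^2$ (energy) convergence to the $\ell^1$ (absolute-sum-along-a-path) convergence uniformly enough to survive the almost-every-path quantifier: naively $\ell^2$ control gives nothing pointwise along a single path, and the trick is precisely to encode $|\eta_k|$ as a metric and invoke the definition of extremal length, so that the "bad paths" at scale $\delta$ form a family whose extremal length blows up. Getting the quantifiers right — choosing the subsequence, taking the union over $j$, and then removing the subsequence — is the part that needs care; everything else (Cauchy--Schwarz, bounded degree, $\|m\|^2<\infty$) is routine. I would model the bookkeeping on the proof of Theorem 3.2 in \cite{bsinv} and on Lemma \ref{le35}(1), which is exactly the $\theta_k\equiv$ const, $\theta=\triangledown u$ special case of the present statement.
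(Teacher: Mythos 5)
Your overall strategy---encode $|\theta_k-\theta|$ as a metric $q_k$ and invoke the definition of extremal length to show that paths with large $\sum_i |\theta_k(e_i)-\theta(e_i)|$ form a null family---is in the right spirit and mirrors the paper's first step (there used to prove that $\{\gamma: \sum_i|\theta_k(e_i)-\theta(e_i)|=\infty\}$ is null). However, the proposal has a genuine gap at the subsequence-to-full-sequence step, and that gap is exactly where you dismissed the support hypothesis.

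Set $\eta_k=\theta_k-\theta$. Taking $q_k(e)=|\eta_k(e)|$ (and granting the conductance normalization, which needs $\sup_e C(e)<\infty$ as in (\ref{bdc}) so that $\|q_k\|^2\leq C_0^2\|\eta_k\|_R^2$), the family $\Gamma^\delta_k:=\{\gamma:\sum_i|\eta_k(e_i)|>\delta\}$ satisfies $\mathrm{EL}(\Gamma^\delta_k)\geq \delta/\|q_k\|^2$. Your ``union bound over $k$ large'' would need $\sum_k\|q_k\|^2<\infty$ to conclude that $\mathrm{EL}\bigl(\bigcup_{k\geq K}\Gamma^\delta_k\bigr)\to\infty$ as $K\to\infty$; the hypothesis only gives $\|q_k\|^2\to0$. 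Passing to a subsequence $(k_j)$ with $\sum_j\|q_{k_j}\|^2<\infty$ repairs the union but yields convergence only along $(k_j)$, for a.e.\ $\gamma$. Your proposed fix---run the construction for every subsequence and then use that a nonnegative sequence converging to $0$ along every subsequence converges to $0$---does not close this gap either: the co-null set of good paths depends on the chosen subsequence, there are uncountably many subsequences, and the intersection of all the resulting co-null sets is not controlled.

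This is precisely where the hypothesis $\mathrm{supp}(\theta_k)\cap U_k=\emptyset$, which you declared ``strictly not needed'' for \eqref{cl310}, becomes essential. The paper uses it to establish the uniform-in-$k$ tail estimate (\ref{rle}): for a.e.\ path $\gamma$ converging to a point of $\sA$ and every $\epsilon>0$, there exist $N$ and $K$ such that $\sum_{i\geq n}|\theta_k(e_i)|<\epsilon$ for all $n>N$ and all $k>K$. Concretely, for a path $\gamma$ with $d_m(\gamma(n),z)\to0$ for some $z\in\sA$ and any fixed $k$, the support hypothesis forces $\theta_k(e_i)=0$ for all sufficiently large $i$; it is this vanishing-tail structure---which the limit $\theta$ itself need not enjoy---that drives the iterated metric construction behind (\ref{rle}). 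Once (\ref{rle}) is in hand, \eqref{cl310} for the \emph{full} sequence follows by a three-$\epsilon$ split combining (\ref{rle}), the tail bound for $\theta$ along $\gamma\in\Gamma_0$, and the pointwise convergence $\theta_k(e)\to\theta(e)$. In short, the uniform tail estimate is the quantitative substitute for summability of $\|q_k\|^2$ that your argument lacks, and it is the support hypothesis that supplies it.
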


\begin{proof}Let 
\begin{align*}
\Gamma:=\{\gamma=(\gamma(0),\gamma(1),\ldots)\in P_{\infty}:\sum_{i=0}^{\infty}|\theta_k(e_i)-\theta(e_i)|=\infty\}
\end{align*}
Since $\theta_k-\theta\in l^2_{-}(E)$, there exists an $L^2(E)$ metric under which every path in $\Gamma$ has infinite length; and therefore $\Gamma$ is null. Therefore, for almost every singly infinite path $\gamma=(\gamma(0),\gamma(1),\ldots)$ , $\sum_{i=0}^{\infty}|\theta_k(\gamma(i))-\theta(\gamma(i))|<\infty$.

From (\ref{limk}), we infer that 
\begin{align}
\lim_{k\rightarrow\infty}\theta_k(e)= \theta(e);\ \forall e\in \vec{E}.\label{gcv}
\end{align}

For each $a\in V$, define
\begin{align*}
P_{a,\infty,\sA}:=\{\gamma\in P_{b_0,\infty}: \lim_{n\rightarrow\infty}d_m(\gamma(n),z)=0, \mathrm{for\ some}\ z\in\sA\}.
\end{align*}
Since for any singly infinite path in $G$ starting from some vertex $a\in V$, we may add finitely many different steps at the beginning to obtain an singly infinite path in $P_{b_0,\infty}$ without changing the limiting behavior, it suffices to show that for almost every singly infinite path $\gamma\in P_{b_0,\infty,\sA}$, $\lim_{n\rightarrow\infty}\sum_{i=1}^{\infty}|\theta_k(e_i)-\theta(e_i)|=0$.

Let
\begin{align}
\Gamma_0:=\{\gamma\in P_{b_0,\infty,\sA}(G): \sum_{i=0}^{\infty}|\theta_k(e_i)-
\theta(e_i)|<\infty\}.\label{dg0}
\end{align}
then it is straightforward to check that $P_{b,\infty}\setminus \Gamma_0$ is null by the same arguments as above.

Let $\eta\in \Gamma_0$.
Assume there exist $\epsilon_1>0$, such that for any $N,K$, there exist $n>N$ and $k>K$, satisfying 
\begin{align}
\sum_{i=n}^{\infty}|\theta_k(e)|>\epsilon_1\label{nkg}
\end{align}
Then let $n_1,k_1$ be such that (\ref{nkg}) holds with $(n,k)$ replaced by $(n_1,k_1)$ and 
\begin{align}
\sqrt{\sum_{i=n_1}^{\infty}C(e_i)|\theta(e)|^2}<\frac{1}{2};\label{cc1}
\end{align}
and
\begin{align}
\sqrt{\sum_{i=1}^{\infty}C(e_i)|\theta_{k_1}(e_i)-\theta(e_i)|^2}<\frac{1}{2};\label{cc2}
\end{align}
This is possible since $\theta\in l_{-}^2(E)$, and $\theta_k\rightarrow \theta$ with respect to the norm $\|\cdot\|_R$. Note that (\ref{cc1}) and (\ref{cc2}) implies 
\begin{align*}
\sqrt{\sum_{i=n_1}^{\infty}C(e_i)|\theta_{k_1}(e_i)|^2}<1;
\end{align*}

Let $N_2>n_1$ be such that $\mathrm{supp}(\theta_{k_1})\cap \{\eta(n):n\geq N_2\}=\emptyset$, this is possible since $\lim_{n\rightarrow\infty}\eta(n)\in \sA$. Let $n_2>N_2,k_2>k_1$ be  such that (\ref{nkg}) holds with $(n,k)$ replaced by $(n_2,k_2)$, and
\begin{align}
\sqrt{\sum_{i=n_2}^{\infty}C(e_i)|\theta(e_i)|^2}<\frac{1}{2^2};\label{cc3}
\end{align}
and
\begin{align}
\sqrt{\sum_{i=1}^{\infty}C(e_i)|\theta_{k_2}(e_i)-\theta(e_i)|^2}<\frac{1}{2^2};\label{cc4}
\end{align}
Then (\ref{cc1}) and (\ref{cc2}) implies
\begin{align*}
\sqrt{\sum_{i=n_2}^{\infty}C(e_i)|\theta_{k_2}(e_i)|^2}<\frac{1}{2}.
\end{align*}

Continuing this process one can find a sequence $(n_i,k_i)_{i=1}^{\infty}$ satisfying all the following conditions
\begin{itemize}
\item $n_i<n_{i+1}$, $k_i<k_{i+1}$ for each $i\in \NN$; and
\item $\mathrm{supp}(\theta_{k_i})\cap \{\eta(n):n\geq n_{i+1}\}=\emptyset$; and
\item (\ref{nkg}) holds with $(n,k)$ replaced by $(n_i,k_i)$.
\item $\sqrt{\sum_{j=n_2}^{\infty}C(e_j)|\theta_{k_i}(e_j)|^2}<\frac{1}{2^{i-1}}$, for each $i\in \NN$.
\end{itemize}
Then one can find an $L^2(E)$ metric $q$, such that $\mathrm{length}_q(\eta)=\infty$, for any $\eta$ satisfying (\ref{nkg}). Hence the collection of all $\eta$ satisfying (\ref{nkg}) is null. We infer that for almost every $\gamma\in P_{b,\infty,\sA}$, for any $\epsilon>0$, there exists $N$, $K$ (depending only on $\epsilon$ but is independent of each other), such that for all $n>N$ an $k>K$, 
\begin{align}
\sum_{i=n}^{\infty}|\theta_k(e_i)|<\epsilon\label{rle}
\end{align}

We shall prove (\ref{cl310}) for almost every $\gamma\in \Gamma_0$.
Assume there exists $\phi\in \Gamma_0$ such that (\ref{rle}) holds and (\ref{cl310}) does not hold; then there exists $\epsilon_0>0$, such that for any $K$, there exists $k>K$, with 
\begin{align}
\sum_{i=1}^{\infty}|\theta_k(r_i)-\theta(r_i)|\geq \epsilon_0\label{ff1}
\end{align}
where $r_i$ is the directed edge from $\phi(i-1)$ to $\phi(i)$.

Since (\ref{rle}) holds for $\phi$, let $N_1,K_1$ positive integers such that for all $n
\geq N_1$ and $k
\geq K_1$,
\begin{align}
\sum_{i=n}^{\infty}|\theta_k(r_i)|<\frac{\epsilon_0}{4}\label{ff2}
\end{align}

From the definition of $\Gamma_0$ in (\ref{dg0}), we see that there exists $N_0>0$, such that 
\begin{align}
\sum_{i=N_0}^{\infty}|\theta(r_i)|<\frac{\epsilon_0}{4}\label{ff3}
\end{align}
Since $N_2=\max\{N,N_0,N_1\}$ is finite, by (\ref{gcv}) there exists $K_0>0$, such that for any $k\geq K_0$,
\begin{align}
\sum_{i=1}^{N_2}|\theta_k(r_i)-\theta(r_i)|<\frac{\epsilon_0}{4}\label{ff4}
\end{align}

Let $K_2=\max\{K,K_0,K_1\}+1$. We have for any $n>N_2$
\begin{align*}
\sum_{i=n}^{\infty}|\theta_{K_2}(r_i)|&
\geq \sum_{i=1}^{\infty}|\theta_k(r_i)-\theta(r_i)|-\sum_{i=1}^{N_2}|\theta_{K_2}(r_i)-\theta(r_i)|\\
&-\sum_{i=N_2}^{\infty}|\theta(r_i)|-
\sum_{i=N_2}^{\infty}|\theta_{K_2}(r_i)|\\
&\geq \epsilon_0-\frac{\epsilon_0}{4}-\frac{\epsilon_0}{4}-\frac{\epsilon_0}{4}=\frac{\epsilon_0}{4}.
\end{align*}
where the last inequality follows from (\ref{ff1}), (\ref{ff2}), (\ref{ff3}), (\ref{ff4}). But this is a contradiction to the fact that $\lim_{n\rightarrow\infty}\phi(n)\in \sA$, and $\mathrm{supp}(\theta_{K_2})\cap U_{K_2}=\emptyset$, for some open set $U_{K_2}\supset \sA$, which implies $\sum_{i=n}^{\infty}|\theta_{K_2}(r_i)|=0$. This completes the proof.
\end{proof}

\begin{corollary}Let $G=(V,E)$ be a 3-connected, transient, simple planar graph with bounded vertex degree and locally finite dual that can be circle packed in the hyperbolic plane $\HH^2$.
Let $\mathcal{A}\subset \partial\HH^2$ be an arbitrary nonempty proper closed subset of $\partial\HH^2$.
Let $\mathcal{R}_{\mathcal{A}}$ be defined as in (\ref{dra}). Let $f\in  \mathbf{HD}$. If $f\in \mathcal{R}_{\sA}$ then for almost every singly infinite path $\gamma=(\gamma(0),\gamma(1),\ldots)$ satisfying $\lim_{n\rightarrow\infty}d_m(\gamma(n),z)=0$ for some $z\in \sA$, $\lim_{n\rightarrow\infty}f(\gamma(n))=0$.
\end{corollary}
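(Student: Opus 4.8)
The plan is to deduce this from Lemma~\ref{ll39} applied to the gradients of an approximating sequence for $f$, converting the summability-along-$\gamma$ conclusion of that lemma into convergence of the boundary values $f(\gamma(n))$ by telescoping.

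First I would unpack the hypothesis. Since $f\in\mathcal R_{\sA}$, by~(\ref{dra}) there are functions $f_k\in\mathbf D(G)$ with $\|f_k-f\|\to0$, each $f_k$ vanishing on an open (for the $m$-metric) neighborhood $U_k$ of $\sA$ — take $U_k=\bigcup_{z\in\sA}U_z^{(k)}$ with the $U_z^{(k)}$ supplied by~(\ref{dra}), compactness of $\sA$ allowing a finite subunion. Set $\theta_k:=\triangledown f_k$ and $\theta:=\triangledown f$; then $\theta_k,\theta\in l^2_-(E)$ and $\|\theta_k-\theta\|_R=\sqrt{\mathcal E(\triangledown(f_k-f))}\le\|f_k-f\|\to0$. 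To make $\theta_k$, not merely $f_k$, vanish near $\sA$, I would shrink $U_k$ to $U_k':=\{x:d_m(x,\sA)<\delta_k\}$ with $\delta_k>0$ small enough that no vertex of $U_k'$ has a neighbor outside $U_k$; such $\delta_k$ exists because an edge joining a vertex $m$-close to $\sA$ to a vertex outside $U_k$ has $m$-length bounded below by a positive constant, hence one of its two packing circles has radius bounded below, and, by disjointness of the packing circles inside the unit disk together with the bounded vertex degree of $G$, there are only finitely many such edges. For $u\in U_k'$ all neighbors of $u$ lie in $U_k$, where $f_k\equiv0$, so $\triangledown f_k$ vanishes on every edge incident to $u$; thus $\mathrm{supp}(\theta_k)\cap U_k'=\emptyset$. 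Hence $(\theta_k)$ and $\theta$ satisfy the hypotheses of Lemma~\ref{ll39}.

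Next I would invoke Lemma~\ref{ll39}: for almost every singly infinite path $\gamma=(\gamma(0),\gamma(1),\dots)$ with $d_m(\gamma(n),z)\to0$ for some $z\in\sA$, writing $e_i$ for the directed edge from $\gamma(i-1)$ to $\gamma(i)$, we have $\varepsilon_k:=\sum_{i\ge1}|\theta_k(e_i)-\theta(e_i)|\to0$ as $k\to\infty$, each $\varepsilon_k$ finite. Since $R(e)\theta(e)=f(\ol e)-f(\underline e)$ (and likewise for $f_k$), telescoping gives, for every $n$,
\begin{align*}
\bigl(f(\gamma(n))-f_k(\gamma(n))\bigr)-\bigl(f(\gamma(0))-f_k(\gamma(0))\bigr)=\sum_{i=1}^{n}R(e_i)\bigl(\theta(e_i)-\theta_k(e_i)\bigr),
\end{align*}
so the left side is at most $\bigl(\sup_{e}R(e)\bigr)\varepsilon_k$ in absolute value. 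For $n$ large we have $\gamma(n)\in U_k'$, hence $f_k(\gamma(n))=0$, so $|f(\gamma(n))|\le|f(\gamma(0))-f_k(\gamma(0))|+\bigl(\sup_{e}R(e)\bigr)\varepsilon_k$; taking $\limsup$ over $n$ yields $\limsup_n|f(\gamma(n))|\le|f(\gamma(0))-f_k(\gamma(0))|+\bigl(\sup_{e}R(e)\bigr)\varepsilon_k$ for every $k$. Since $\|f_k-f\|\to0$ forces $f_k\to f$ pointwise on $V$ (Dirichlet-norm convergence plus convergence at $b_0$), in particular $f_k(\gamma(0))\to f(\gamma(0))$, and since $\varepsilon_k\to0$, the right side tends to $0$; hence $\lim_n f(\gamma(n))=0$.

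The substantive input is entirely Lemma~\ref{ll39}; the delicate bookkeeping I expect to be the main obstacle is (i) transferring ``vanishes near $\sA$'' from $f_k$ to its gradient $\theta_k=\triangledown f_k$, which relies on finiteness of the total area of the packing, and (ii) passing from the unweighted path sum of Lemma~\ref{ll39} to the $R$-weighted one, which is immediate once $\sup_{e}R(e)<\infty$, i.e.\ once the conductances are bounded below (in particular for the simple-random-walk network, where $R\equiv1$).
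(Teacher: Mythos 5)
Your argument is essentially the paper's proof: extract an approximating sequence $f_k$ from the definition of $\mathcal R_{\sA}$, apply Lemma~\ref{ll39} to the gradients (you take $\theta_k=\triangledown f_k$ and $\theta=\triangledown f$; the paper takes the rescaled $\theta_k=R\,\triangledown f_k$, i.e.\ the raw differences $f_k(\ol e)-f_k(\underline e)$, which telescope without the extra $R(e_i)$ factor), and then telescope along $\gamma$ using that $f_k$ eventually vanishes on $\gamma$ near $\sA$. The boundedness $\sup_e R(e)<\infty$ that your telescoping step needs is the same requirement the paper's normalization implicitly needs for $R\triangledown f_k,\,R\triangledown f\in l^2_-(E)$ and for $\|R\triangledown(f_k-f)\|_R\to0$, so the two choices are interchangeable; your explicit step transferring ``vanishes near $\sA$'' from $f_k$ to $\triangledown f_k$ just fills in a detail the paper leaves implicit.
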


\begin{proof}If $f\in \mathcal{R}_{\sA}$, then there exist a sequence $f_k\in \mathbf{D}(G)$ satisfying for any $k\in \NN$ there exists a neighborhood $U_k$ of $\sA$, with $\mathrm{supp}(f_k)\cap U_k=\emptyset$; and $f_k$ converge to $f$ with respect to the norm $\|u\|:=\sqrt{\mathcal{E}(\triangledown u)+|u(b_0)|^2}$. This implies
\begin{align*}
\lim_{k\rightarrow\infty}f_k(b_0)=f(b_0).
\end{align*}
Then the corollary follows from Lemma \ref{ll39} by letting
\begin{align*}
\theta(e):=R(e)\triangledown f(e);\  \mathrm{and}\ \theta_k(e):=R(e)\triangledown f_k(e)\ \forall e\in \vec{E}\ \mathrm{and}\ k\geq 1.
\end{align*}
\end{proof}

\begin{definition}\label{df311}Given two subsets $A$ and $Z$ of vertices of a network, we call $\theta\in l_{-}(E)$ a flow between $A$ and $Z$ if $\mathrm{div}\theta$ is 0 off of $A$ and $Z$; if it is nonnegative on $A$ and nonpositive on $Z$, then we say that $\theta$ is a flow from $A$ to $Z$.

Let $\theta$ be a flow from $A$ to $Z$. We call
\begin{align*}
\mathrm{Strength}(\theta):=\sum_{a\in A}\mathrm{div}\theta (a)
\end{align*}
the strength of the flow $\theta$. A flow of strength 1 is called a unit flow.

A voltage function is a function on the vertices of the network that is harmonic at all $x\notin A\cup Z$. Given a voltage function $v$, we define the associated current flow function $i$ on the edges by
\begin{align*}
i(x,y)=c(x,y)[i(x)-i(y)].
\end{align*}
A unit current flow is current flow associated to a voltage function which also has strength 1.
\end{definition}

\begin{lemma}(Theorem 2.11 in \cite{ly16})\label{ls83}Let $G$ be a denumerable connected network. Random walk on G is transient if and only if there is a unit flow on $G$ of finite energy from some (every) vertex to $\infty$.
\end{lemma}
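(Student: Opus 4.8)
The plan is to pass through electrical-network theory: I would show that $G$ is transient if and only if the effective resistance from a vertex to infinity is finite, and that this latter condition is equivalent to the existence of a finite-energy unit flow to infinity. The two key inputs are Thomson's (Dirichlet's) principle on finite networks and a compactness argument for flows. I will carry out the argument in the locally finite setting relevant to this paper, the general denumerable case requiring only minor additional care.

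First I would fix a vertex $o\in V$ and pick finite sets $V_1\subset V_2\subset\cdots$ exhausting $V$ with $o\in V_1$. Let $G_n$ be the finite network obtained from $G$ by contracting $V\setminus V_n$ to a single vertex $z_n$, retaining the edge resistances $R(e)=1/C(e)$ and any resulting parallel edges. Since passing from $G_n$ to $G_{n+1}$ only ``unwires'' $z_n$, Rayleigh monotonicity gives that $\mathcal{R}_{\mathrm{eff}}(o\leftrightarrow z_n)$ is nondecreasing in $n$; hence $\mathcal{R}_{\mathrm{eff}}(o\leftrightarrow\infty):=\lim_n\mathcal{R}_{\mathrm{eff}}(o\leftrightarrow z_n)$ exists in $(0,\infty]$ and is independent of the exhaustion. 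I would then invoke the classical electrical characterization of transience: $G$ is transient if and only if $\mathcal{R}_{\mathrm{eff}}(o\leftrightarrow\infty)<\infty$, equivalently the escape probability of the walk from $o$ is positive.

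For the ``if'' direction, given a unit flow $\theta$ of finite energy from $o$ to $\infty$ (so $\mathrm{div}\,\theta(o)=1$, $\mathrm{div}\,\theta(v)=0$ for $v\ne o$, and $\mathcal{E}(\theta)<\infty$), I would restrict $\theta$ to $G_n$ — keeping its values on edges internal to $V_n$ and on edges crossing out of $V_n$ (now incident to $z_n$), and discarding the rest — to obtain a unit flow $\theta_n$ from $o$ to $z_n$ in $G_n$ with $\mathcal{E}_{G_n}(\theta_n)\le\mathcal{E}(\theta)$. Thomson's principle then gives $\mathcal{R}_{\mathrm{eff}}(o\leftrightarrow z_n)\le\mathcal{E}(\theta)$ for every $n$, so $\mathcal{R}_{\mathrm{eff}}(o\leftrightarrow\infty)\le\mathcal{E}(\theta)<\infty$ and $G$ is transient. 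For the ``only if'' direction, assume $G$ transient, let $i_n$ be the unit current flow from $o$ to $z_n$ in $G_n$ extended by $0$ off $G_n$, and note $\mathcal{E}(i_n)=\mathcal{R}_{\mathrm{eff}}(o\leftrightarrow z_n)\le\mathcal{R}_{\mathrm{eff}}(o\leftrightarrow\infty)=:M<\infty$ by Thomson's principle. Since $R(e)i_n(e)^2\le\mathcal{E}(i_n)\le M$ gives $|i_n(e)|\le\sqrt{MC(e)}$ for every edge, a diagonal extraction produces a subsequence with $i_{n_k}(e)\to\theta(e)$ for every $e\in\vec E$; by local finiteness the node laws $\sum_{\underline e=v}i_{n_k}(e)=0$ for $v\ne o$ and $\sum_{\underline e=o}i_{n_k}(e)=C_o$ (valid once $v$, resp.\ $o$, is an interior vertex of $G_{n_k}$) pass to the limit, so $\mathrm{div}\,\theta(v)=0$ for $v\ne o$ and $\mathrm{div}\,\theta(o)=1$; and Fatou's lemma gives $\mathcal{E}(\theta)=\sum_e R(e)\theta(e)^2\le\liminf_k\mathcal{E}(i_{n_k})\le M<\infty$. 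Thus $\theta$ is a finite-energy unit flow from $o$ to $\infty$.

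Finally, for the ``some (every)'' clause: given a finite-energy unit flow $\theta$ from $o$ and any other vertex $o'$, I would fix a finite path from $o'$ to $o$, let $\psi$ be the unit flow supported along it (finitely many edges, hence $\mathcal{E}(\psi)<\infty$), and observe that $\theta+\psi$ has divergence $1$ at $o'$ and $0$ elsewhere, so it is a finite-energy unit flow from $o'$ to $\infty$. I expect the main obstacle to be the ``only if'' direction: making the limiting argument rigorous — namely checking that the edgewise limit $\theta$ is a genuine flow with the stated divergences and with energy still bounded by $M$ — and, should one want a fully self-contained treatment, establishing the effective-resistance-to-infinity criterion for transience, which is the heart of the electrical-network dictionary for recurrence versus transience.
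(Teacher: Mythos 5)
The paper offers no proof of its own here — the lemma is stated with a parenthetical citation to Theorem 2.11 of \cite{ly16} — and your proposal is a correct reconstruction of the standard argument from that source: wired exhaustion $G_n$, Rayleigh monotonicity, Thomson's principle for the ``if'' direction, and a bounded-energy diagonal extraction plus Fatou for the ``only if'' direction, with the finite-path correction handling the ``some (every)'' clause. Your one shortcut, taking ``transient iff $\mathcal{R}_{\mathrm{eff}}(o\leftrightarrow\infty)<\infty$'' as a classical black box, is precisely the hinge Lyons--Peres establish from the escape-probability formula $\mathbb{P}_o[\text{no return}]=\bigl(\pi(o)\,\mathcal{R}_{\mathrm{eff}}(o\leftrightarrow\infty)\bigr)^{-1}$ via the finite-network Dirichlet principle, so, as you yourself flag at the end, a fully self-contained treatment would need to fill in that step.
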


\begin{corollary}\label{c313}Let $G=(V,E)$ be a 3-connected, transient, simple proper planar graph with bounded vertex degree and locally finite dual that can be circle packed in the hyperbolic plane $\HH^2$.

Let $\mathcal{A}\subset \partial\HH^2$ be an arbitrary nonempty proper closed subset of $\partial\HH^2$. Then the following two statements are equivalent.
\begin{enumerate}
\item There exists an open set $\mathcal{O}_{\mathcal{A}}$ satisfying $\sA\subset \mathcal{O}_{\mathcal{A}}$, such that for any open set $U_{\mathcal{A}}$ satisfying $\mathcal{A}\subset U_{\mathcal{A}}\subset \mathcal{O}_{\mathcal{A}}$, $G\setminus U_{\mathcal{A}}$ is transient.
\item There exists an open set $U_{\mathcal{A}}$ satisfying $\sA\subset \mathcal{O}_{\mathcal{A}}$, such that, $G\setminus U_{\mathcal{A}}$ is transient.
\end{enumerate}
\end{corollary}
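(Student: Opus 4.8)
The plan is to prove the two implications separately: $(1)\Rightarrow(2)$ is immediate, and $(2)\Rightarrow(1)$ is a Rayleigh-type monotonicity statement deduced from the flow characterization of transience in Lemma~\ref{ls83}.

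For $(1)\Rightarrow(2)$ I would simply note that, given an open set $\mathcal{O}_{\mathcal{A}}\supset\sA$ as in (1), the set $U_{\mathcal{A}}:=\mathcal{O}_{\mathcal{A}}$ itself satisfies $\sA\subset U_{\mathcal{A}}\subset\mathcal{O}_{\mathcal{A}}$, so the hypothesis of (1) applied to this $U_{\mathcal{A}}$ says $G\setminus U_{\mathcal{A}}$ is transient, which is precisely (2).

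For $(2)\Rightarrow(1)$, let $W$ be an open set with $\sA\subset W$ and $G\setminus W$ transient, and take $\mathcal{O}_{\mathcal{A}}:=W$. Given any open $U$ with $\sA\subset U\subset W$, shrinking the removed neighbourhood only enlarges the network: the vertex set of $G\setminus W$ is contained in that of $G\setminus U$, and every edge of $G\setminus W$ survives in $G\setminus U$, so $G\setminus W$ is a subnetwork of $G\setminus U$. By Lemma~\ref{ls83}, transience of $G\setminus W$ provides a unit flow $\theta$ of finite energy from some vertex $a\notin W$ (hence $a\notin U$) to $\infty$ inside $G\setminus W$. I would extend $\theta$ by $0$ on all directed edges of $G\setminus U$ absent from $G\setminus W$; the resulting antisymmetric $\theta'$ has the same (finite) energy, its divergence vanishes at every vertex of $G\setminus U$ other than $a$ — at the vertices coming from $W\setminus U$ no incident edge lies in $G\setminus W$, so $\theta'\equiv 0$ near them — and it is positive at $a$. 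After rescaling to unit strength, $\theta'$ is a finite-energy unit flow from $a$ to $\infty$ in $G\setminus U$, so Lemma~\ref{ls83} gives that $G\setminus U$ is transient (having each component transient suffices, and in the intended application $G\setminus U\supset G\setminus W$ remains connected). This yields (1) with $\mathcal{O}_{\mathcal{A}}=W$.

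Since the whole argument is essentially Rayleigh monotonicity, I do not expect a genuine obstacle; the one point deserving a line of care is verifying that extension by zero does not create a sink in the larger network, i.e.\ that the extended flow still escapes to infinity — which holds because extension by zero leaves the divergence unchanged at the old vertices and equal to zero at the newly included ones.
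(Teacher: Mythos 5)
Your proof is correct and takes essentially the same route as the paper: the forward implication is trivial, and the converse is Rayleigh monotonicity applied via the flow characterization of transience in Lemma~\ref{ls83}, which is exactly the lemma the paper cites. Your fleshing out of the zero-extension argument (checking that divergence vanishes at vertices of $W\setminus U$ because no edge of $G\setminus W$ touches them) is the right way to make the one-line reference precise.
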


\begin{proof}It is straightforward to see that $(1)\Rightarrow (2)$. The fact that $(2)\Rightarrow(1)$ follows from Lemma \ref{ls83}.
\end{proof}

\begin{theorem}\label{le39}Let $G=(V,E)$ be a 3-connected, transient, simple planar graph with bounded vertex degree and locally finite dual that can be circle packed in the hyperbolic plane $\HH^2$. Assume
\begin{align}
\sup_{e\in E}C(e)=C_0<\infty.\label{bdc}
\end{align}
Let $\mathcal{A}\subset \partial\HH^2$ be an arbitrary nonempty proper closed subset of $\partial\HH^2$ satisfying the following conditions
\begin{enumerate}[label=(\alph*)]
\item There exists an open set $U_{\mathcal{A}}\supseteq \sA$, such that   $G\setminus U_{\mathcal{A}}$ is transient.
\end{enumerate}

Then there exists a nonconstant harmonic Dirichlet function $f:V\rightarrow \RR$ such that $\triangledown f\in \mathcal{S}_{\mathcal{A}}$.
\end{theorem}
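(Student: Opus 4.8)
\emph{Plan.} Since $\mathcal{S}_{\mathcal{A}}=\triangledown\mathcal{R}_{\mathcal{A}}$, it suffices to produce a non-constant $h\in\mathcal{R}_{\mathcal{A}}$. Write $X$ for the pre-closure space in $(\ref{dra})$, so that $\mathcal{R}_{\mathcal{A}}=\mathbf{HD}(G)\cap\ol X$. A finitely supported function vanishes near every point of $\sA$, hence $\mathbf{L}_0(G)\subseteq X$ and so $\mathbf{D}_0(G)\subseteq\ol X$; also $X$, hence $\ol X$, is a linear subspace. Thus it is enough to build $g\in X\setminus\mathbf{D}_0(G)$ whose harmonic part is non-constant: writing $g=g_0+h$ with $g_0\in\mathbf{D}_0(G)$, $h\in\mathbf{HD}(G)$ (Lemma \ref{lem14}, which applies once we know $\mathbf{D}(G)\neq\mathbf{D}_0(G)$), we get $h=g-g_0\in\ol X$, so $h\in\mathbf{HD}(G)\cap\ol X=\mathcal{R}_{\mathcal{A}}$ and $\triangledown h\in\mathcal{S}_{\mathcal{A}}$.

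\emph{The cut-off $g$.} By condition (a) and Corollary \ref{c313}, fix an open $U\supseteq\sA$ with $W:=G\setminus U$ transient; as $\sA$ is compact and $C_m(G)\setminus U$ is closed and disjoint from $\sA$, $\de:=d_m(\sA,\,C_m(G)\setminus U)>0$. Let
$g(x):=\min\bigl\{1,\ \max\{0,\ \tfrac4\de(d_m(x,\sA)-\tfrac\de4)\}\bigr\}$,
so $g\equiv 0$ on the $m$-neighborhood $\{d_m(\cdot,\sA)\le\de/4\}$ of $\sA$ (hence $g\in X$) and $g\equiv 1$ on $\{d_m(\cdot,\sA)\ge\de/2\}$, which contains $V(W)$. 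Since $g$ is $\tfrac4\de$-Lipschitz for $d_m$ and $m$ is an $L^2(E)$ metric (here $\sup_eC(e)=C_0<\oo$ is used), $\mathcal{E}(\triangledown g)\le\tfrac{16}{\de^2}\|m\|^2<\oo$, so $g\in\mathbf{D}(G)$. Moreover $g\notin\mathbf{D}_0(G)$: by Lemma \ref{le38} the family of infinite paths of the transient graph $W$ is non-null, hence (restricting competing metrics to $W$) so is the corresponding family of infinite paths of $G$; $g\equiv 1$ along each of them, and since $\|m\|^2<\oo$ almost every infinite path of $G$ converges in $C_m(G)$, exactly as in Lemma \ref{le35}; so $g$ has path-limit $1$ along a non-null family, and Lemma \ref{le35}(2) gives $g\notin\mathbf{D}_0(G)$. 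In particular $\mathbf{D}(G)\neq\mathbf{D}_0(G)$, Lemma \ref{lem14} applies, $h\neq 0$, and by the previous paragraph $\triangledown h\in\mathcal{S}_{\mathcal{A}}$.

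\emph{Non-constancy of $h$.} Suppose $h\equiv c$. Along almost every infinite path $P$ one has $h(P)=g(P)-g_0(P)=g(P)$ (Lemma \ref{le35}(2) for $g_0$), so $c=g(P)$ a.e.; since $g\equiv 1$ on the non-null family of $W$-paths, $c=1$. Then $g_0=g-1$ vanishes on $V(W)$ and equals the non-zero constant $-1$ on $\{d_m(\cdot,\sA)\le\de/4\}\cap V$; restricting the finitely supported functions approximating $g_0$ to the subgraph of $G$ induced on this $m$-neighborhood of $\sA$ shows $g_0$ restricted there lies in $\mathbf{D}_0$ of that subgraph. But a non-zero constant never lies in $\mathbf{D}_0$ of a transient graph, so we obtain a contradiction as soon as the portion of $G$ near $\sA$ carries a transient component --- and this is where $3$-connectedness, bounded degree and the circle-packed structure of $G$ near the boundary set $\sA$ must be used. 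Granting it, $h$ is non-constant, and $f:=h$ is the required function.

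\emph{Main obstacle.} The crux is precisely the last implication: showing that the trace of $G$ on (small) $m$-neighborhoods of $\sA$ is transient, so that the forced constant $-1$ cannot be absorbed into $\mathbf{D}_0$. In the degenerate case --- $m$-neighborhoods of $\sA$ containing no vertices, or only a recurrent piece of $G$ --- the constraint defining $X$ is vacuous, $\mathcal{R}_{\mathcal{A}}=\mathbf{HD}(G)$, and one instead invokes the (here generalized) base case that a $3$-connected, transient, simple, bounded-degree planar graph with locally finite dual admits a non-constant harmonic Dirichlet function, checking $h'\in\ol X$ for $h'\in\mathbf{HD}(G)$ by approximating with $h'(1-f_\eps)$, where $f_\eps$ is a cut-off equal to $1$ near $\sA$ with $\mathcal{E}(\triangledown f_\eps)\to 0$.
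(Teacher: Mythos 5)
Your strategy mirrors the paper's (produce a Dirichlet function that vanishes near $\sA$, lies in $\mathbf{D}(G)\setminus\mathbf{D}_0(G)$, and take its harmonic part via Lemma \ref{lem14}), but you swap the paper's choice of test function $f_0(v)=d_m(v,\sA)$ for a Lipschitz cut-off $g$ that only takes the two boundary values $0$ (near $\sA$) and $1$ (on $W=G\setminus U$). The verification that $g\in\mathbf{D}(G)$, that $g$ lies in the pre-closure space, and that $g\notin\mathbf{D}_0(G)$ via Lemma \ref{le38} and Lemma \ref{le35}(2) are all correct, and match the paper's use of these lemmas.

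The genuine gap is the non-constancy of $h$, and you identify it yourself. Because $g$ is bimodal (path limits $\in\{0,1\}$), once you force $c=1$ you are exactly in the situation where $g-1\in\mathbf{D}_0(G)$ is ruled out \emph{only if} the family of singly-infinite paths of $G$ converging to $\sA$ is non-null. Condition (a) gives transience of $G\setminus U_\sA$, hence non-nullity of paths escaping to $\sA^c$; it says nothing about paths \emph{into} $\sA$, and your invocation of ``the trace of $G$ near $\sA$ is transient'' is an unproved auxiliary claim that does not follow from the hypotheses (a thin recurrent ``tentacle'' pointing to a single boundary point $\sA$ would satisfy (a) and yet make that trace recurrent). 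Your fallback for the ``degenerate case'' --- claiming $\mathcal{R}_\sA=\mathbf{HD}(G)$ and citing the Benjamini--Schramm existence result --- would need a precise dichotomy (either paths to $\sA$ are non-null, or $\mathcal{R}_\sA=\mathbf{HD}(G)$) together with verifying $h'\in\ol X$ for every $h'\in\mathbf{HD}(G)$ in that case, and neither is carried out; the proposal stops at ``Granting it''. This is precisely what the paper's choice of $f_0=d_m(\cdot,\sA)$ is designed to avoid: the boundary values of $f_0$ form a continuum of distances on $\sA^c$ rather than a single value, so the argument for non-constancy is driven by the variability of $f_0$ along non-null paths to $\sA^c$ (which \emph{is} guaranteed by (a) via Lemma \ref{le38}), not by the existence of non-null paths into $\sA$.

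In short: the construction and the $\mathbf{D}_0$-membership calculations are a valid variant, but the step-function $g$ makes the concluding non-constancy argument strictly harder than it needs to be, and the auxiliary transience/dichotomy claims you would need to complete it are not established. Replacing $g$ with $d_m(\cdot,\sA)$, as in the paper, is what closes the gap.
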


\begin{proof}We identify each vertex of $G$ as a point in the unit disk which is the center of the corresponding circle in the circle packing of $G$. By Corollary \ref{c313}, condition (a) implies that there exists an open set $\mathcal{O}_{\mathcal{A}}$ satisfying $\sA\subset \mathcal{O}_{\mathcal{A}}$, such that for any open set $U_{\mathcal{A}}$ satisfying $\mathcal{A}\subset U_{\mathcal{A}}\subset \mathcal{O}_{\mathcal{A}}$, $G\setminus U_{\mathcal{A}}$ is transient.

Define $f:V\rightarrow\RR$ by
\begin{align*}
f_0(v)=d_m(v,\mathcal{A});\ \forall v\in V
\end{align*}
Then we claim that $f_0$ is a nonconstant Dirichlet function in the closure (with respect to the norm $\|u\|:=\sqrt{\mathcal{E}(\triangledown u)}$) of the set of functions 
\begin{align}
\{g\in \mathbf{D}(G):\forall z\in \sA,\ \exists\mathrm{open\ set}\ U_z\subset V,\ 
\mathrm{s.t.}\ u\in U_z,\ \mathrm{and}\ \mathrm{supp}(g)\cap U_z=\emptyset\}.\label{dst}
\end{align}

To see why that is true, note that the function $f_0$ has finite Dirichlet energy because for each $e=(u,v)\in E$, we have
\begin{align*}
|f_0(u)-f_0(v)|=|d_m(u,\mathcal{A})-d_m(v,\mathcal{A})|\leq m(e);
\end{align*}
then
\begin{align*}
&\mathcal{E}(\triangledown f_0)=\sum_{e=(u,v)\in E_0}C(e)(f_0(u)-f_0(v))^2\leq C_0 \sum_{e=(u,v)\in E_0}[m(e)]^2
<\infty;
\end{align*}
where the last inequality follows from (\ref{ml2}). The fact that $f$ is nonconstant follows directly from its definition.

Now we show that $f_0$ is in the closure of the set of functions defined by (\ref{dst}). For each $\epsilon,\delta>0$, define
\begin{align*}
U_{\sA,\epsilon}:&=\{v\in V:d_{m}(v,\sA)<\epsilon\};\qquad U_{\partial \HH^2,\delta}:=\{v\in V:d_{m}(v,\partial\HH^2)<\delta\}.
\end{align*}
and
\begin{align*}
Q_{\sA,\epsilon}:=\{z\in \HH^2,d_{\RR^2}(z,\sA)<\epsilon\}
\end{align*}
where $d_{\RR^2}$ is the distance in the Euclidean plane.
Then one can see that 
\begin{align*}
U_{\sA,\epsilon}\subset Q_{\sA,\epsilon}
\end{align*}
For each $k\geq 1$, define $f_k:V\rightarrow\RR$ by
\begin{align*}
f_k(v):=d_m(v,U_{\sA,\frac{1}{k}})
\end{align*}
Then for each edge $e=(u,v)\in E\setminus U_{\partial \HH^2,\delta}$,
\begin{align*}
C(e)([f_k(u)-f_k(v)]-[f_0(u)-f_0(v)])^2\leq  C_0[|f_k(u)-f_0(u)|+|f_k(v)-f_0(v)|]^2\leq \frac{4C_0}{k^2}
\end{align*}
Choose $\delta_k>0$ such that
\begin{align*}
\lim_{k\rightarrow\infty}\frac{4C_0}{k^2}|E\setminus U^{\circ}_{\partial\HH^2,\delta_k}|=0;\ \mathrm{and}\ \lim_{k\rightarrow\infty}\delta_k\rightarrow 0.
\end{align*}
where
\begin{align*}
 U^{\circ}_{\partial\HH^2,\delta_k}=\{v\in  U_{\partial\HH^2,\delta_k}: v'\in U_{\partial\HH^2,\delta_k};\forall v'\sim v\ \mathrm{in}\ G\}
\end{align*}
Then we have 
\begin{align*}
\|f_k-f\|^2&=\sum_{e=(u,v)\in E}C(e)([f_k(u)-f_k(v)]-[f_0(u)-f_0(v)])^2+(f_k(b_0)-f(b_0))^2\\
&\leq \frac{C_0}{k^2}+\sum_{e=(u,v)\in E\setminus U^{\circ}_{\partial\HH^2,\delta_k}}C(e)([f_k(u)-f_k(v)]-[f_0(u)-f_0(v)])^2\\
&+\sum_{e=(u,v)\in E\cap U^{\circ}_{\partial\HH^2,\delta_k}}C(e)([f_k(u)-f_k(v)]-[f_0(u)-f_0(v)])^2\\
&\leq \frac{C_0}{k^2}+\frac{4C_0}{k^2}|E\setminus U^{\circ}_{\partial\HH^2,\delta_k}|+4C_0\mathrm{area}(Q_{\sA,\delta_k})\\
&\leq \frac{C_0}{k^2}+\frac{4C_0}{k^2}|E\setminus U^{\circ}_{\partial\HH^2,\delta_k}|+8\pi C_0\delta_k
\rightarrow 0
\end{align*}
as $k\rightarrow\infty$. It follows that $f_0$ is in the closure of the set of functions defined by (\ref{dst}).

Since $\mathcal{A}$ is an non-empty, proper, closed subset of $\partial\mathbb{H}^2$, from the definition of $f_0$, we see that $f_0$ is nonconstant on $\partial \HH^2$.

Moreover, since $\sA$ satisfies conditions (a), we claim that and $f_0\notin  \mathbf{D}_0(G)$. 
To see why that is true, let $\sA^c=\partial\HH^2\setminus \sA$. Note that by Lemma \ref{le35}(2), $f_0\in \mathbf{D}_0(G)$ if and only if for almost every singly infinite path $\gamma=(\gamma(0),\gamma(1),\ldots,)$, $\lim_{n\rightarrow\infty}f_0(\gamma(n))=0$. However, from the definition of $f_0$ we see that for any $z\in \sA^c$, and any path $l:=(l(0),l(1),\ldots)$ satisfying $\lim_{n\rightarrow\infty}d_m(l(n),z)=0$, we have $\lim_{n\rightarrow\infty}f_0(l(n))\neq 0$.

Let $\mathcal{O}_{\sA}$ be a neighborhood of $\sA$ given as in condition (a). Then for any neighborhood $U_{\sA}$ satisfying $\sA\subset U_{\sA}\subset\mathcal{O}_{\sA}$, and almost every singly infinite path $\zeta=(\zeta(0),\zeta(1),\ldots,)$ in $G\setminus U_{\sA}$, $\zeta(n)$ must converge to a point $x\in \sA^c$ with respect to the metric $m$; since those do not converge must have infinite lengths in $m$.
By condition (a) and Lemma \ref{le38}, the collection of all the singly infinite paths in $G\setminus U_{\sA}$ is not null. Therefore the statement that for almost every singly infinite path $\gamma=(\gamma(0),\gamma(1),\ldots,)$, $\lim_{n\rightarrow\infty}f_0(\gamma(n))=0$ does not hold. We infer that $f\notin \mathbf{D}_0(G)$.

Therefore $\mathbf{D}(G)\neq \mathbf{D}_0(G)$, and $G$ is hyperbolic type of order 2. By Lemma \ref{lem14}, $f_0$ has the following decomposition
\begin{align*}
f_0=g_0+f
\end{align*}
where $g_0\in \mathbf{D}_0(G)$, and $f\in \mathbf{HD}(G)$. Since $f_0\notin \mathbf{D}_0(G)$, $f\neq 0$.
From the identity $f=f_0-g_0$, we see that for almost every singly infinite path $P$ converging to a point in $\sA$, $f(P)=0$, while it is not true that for almost every singly finite path $P$ converging to a point in $\sA^c$, $f(P)=0$. Therefore $f$ is nonconstant. $f$ is in the closure of the set defined by (\ref{dst}) because both $f_0$ and $g_0$ are in the closure of the set defined by (\ref{dst}).
Then the conclusion of the lemma follows.
\end{proof}

\begin{assumption}\label{ap311}Let $G=(V,E)$ be a 3-connected, transient, simple planar graph with bounded vertex degree and locally finite dual that can be circle packed in the hyperbolic plane $\HH^2$. Suppose (\ref{bdc}) holds. Let $\mathcal{A}\subset \partial\HH^2 $ be an arbitrary closed subset of $\partial\HH^2$. Let
\begin{align*}
 V_1\subseteq V_2\subseteq\ldots \subseteq V_j\subseteq\ldots
\end{align*}
be an increasing sequence of sets of vertices such that all the following conditions hold
\begin{enumerate}
\item 
$\cup_{j=1}^{\infty} V_j=V$;
and
\item let $G_j$ be the subgraph of $G$ induced by $V_j$; i.e.~the vertex set of $G_j$ is $V_j$, two vertices in $V_j$ are joined by an edge in $G_j$ if and only if they are joined by an edge in $G$. Then for each point $z\in \mathcal{A}$, there exists a singly infinite path $\gamma=(\gamma(0),\gamma(1),\ldots,\gamma(n),\ldots)$ in $G_j$ such that
\begin{align*}
\lim_{n\rightarrow\infty}d_m(\gamma(n),z)=0.
\end{align*}
\end{enumerate}
where $m$ is the metric defined as in (\ref{dmm}).
Let
\begin{align*}
V_j\supseteq U_{j,1}\supseteq U_{j,2}\supseteq\ldots\supseteq U_{j,s}\supseteq \ldots
\end{align*}
be a decreasing subset of vertices such that
\begin{itemize}
\item $\cap_{s=1}^{\infty}\ol{U}_{j,s}=\mathcal{A}$; where $\ol{U}_{j,s}$ is the closure of $U_{j,s}$ under the metric $m$; and
\item for each finite $s$, $V_j\setminus U_{j,s}$ is a finite set of vertices.
\end{itemize}

Let $G_{j,s}^{\bullet}$ be the graph obtained from $G_j$ by contracting all the vertices in $U_{j,s}$ and edges joining two vertices in $U_{j,s}$ into one vertex $z_{j,s}$.
\end{assumption}

\begin{assumption}\label{ap312}Let $G=(V,E)$ be a 3-connected, transient, simple proper planar graph with bounded vertex degree and locally finite dual that can be circle packed in the hyperbolic plane $\HH^2$. Suppose (\ref{bdc}) holds. Let $\mathcal{A}\subset \partial\HH^2 $ be an arbitrary closed subset of $\partial\HH^2$. Let
\begin{align*}
V\supseteq V_1\supseteq V_2\supseteq\ldots V_j\supseteq \ldots
\end{align*}
be a decreasing subset of vertices such that
\begin{itemize}
\item $\cap_{j=1}^{\infty}\ol{V}_j=\sA$; where $\ol{V}_j$ is the closure of $V_j$ under the metric $m$. 
\end{itemize}
Let $G_j^{\bullet}$ be graph obtained from $G$  by contracting all the vertices in $V_j$ and edges joining two vertices in $V_j$ into one vertex denoted by $\mathbf{a}_j$.
For each $j\in \NN$, let
\begin{align*}
 U_{j,1}\subseteq U_{j,2}\subseteq\ldots \subseteq U_{j,s}\subseteq\ldots
\end{align*}
be an increasing sequence of finite sets of vertices such that all the following conditions hold
\begin{enumerate}
\item $\mathbf{a}_j\in U_{j,s}$; and
\item 
$\cup_{j=1}^{\infty} U_{j,s}=[V\setminus V_j]\cup\{\mathbf{a}_j\}$;
\end{enumerate}

Let $G_{j,s}^{\bullet}$ be the subgraph of $G_j^{\bullet}$ induced by $U_{j,s}$; i.e.~the vertex set of $G_{j,s}$ is $U_{j,s}$, two vertices in $U_{j,s}$ are joined by an edge in $G_{j,s}$ if and only if they are joined by an edge in $G$. 
\end{assumption}

\begin{theorem}\label{t314}Suppose either Assumption \ref{ap311} or Assumption \ref{ap312} holds. 
Let $\mu_{j,s}$ be the Gibbs measure for spanning trees on $G_{j,s}$, then $\lim_{j\rightarrow\infty}\lim_{s\rightarrow\infty}\mu_{j,s}$ converges weakly to a  measure $\mu_{\mathcal{A}}$ on essential spanning forest on $G$. Moreover,
\begin{enumerate}
\item $\mu_{\mathcal{A}}$ has trivial tail.
\item If $\mathcal{A}=\emptyset$, $\mu_{\mathcal{A}}=FSF$, where FSF represents the limit measure of free spanning trees on finite graphs;
\item If $\mathcal{A}=\partial\HH^2$, $\mu_{\mathcal{A}}=WSF$, where WSF represents the limit measure of wired spanning trees on finite graphs, i.e., identifying all the vertices on the boundary as one vertex;
\item Let $\mathfrak{F}$ be the random essential spanning forest on $G$, then for any $e\in E$,
\begin{align*}
WSF(e\in \mathfrak{F})\leq \mu_{\mathcal{A}}(e\in \mathfrak{F})\leq FSF(e\in \mathfrak{F}).
\end{align*}
\item If $G$ is quasi-transitive, $\mu_{\mathcal{A}}$ is ergodic but not necessarily automorhpism-invariant.
\item If $\sA$ satisfies condition (a) in Lemma \ref{le39}, then $\mu_{\sA}\neq WSF$.
\end{enumerate}
\end{theorem}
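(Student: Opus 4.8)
The plan is to realize every measure in sight as a determinantal (transfer--current) measure on $2^{E}$ whose kernel is the orthogonal projection of $l^2_-(E)$ onto an explicit closed subspace, and to reduce weak convergence to convergence of these subspaces. On a finite connected network $H$ the weighted UST is determinantal: $\mathbb P(e_1,\dots,e_k\in T)=\det[Y_H(e_a,e_b)]$, with $Y_H$ the matrix of the projection of $l^2_-(E_H)$ onto the star space (Burton--Pemantle, \cite{BP93}; see also \cite{BLPS01}). Viewing each of the finite graphs $G_{j,s}$, $G^\bullet_{j,s}$ inside $G$ (edges of $G$ absent from the finite graph or internal to the contracted set are never in the forest, and the ``contracted star'' is redundant since the stars of a finite graph sum to $0$), the measure $\mu_{j,s}$ is determinantal with kernel $P_{H_{j,s}}$, where $H_{j,s}=\operatorname{span}\{-\triangledown\mathbf{1}_v:\,v\text{ a non-contracted vertex}\}\subset l^2_-(E)$. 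Strong-operator convergence of projection kernels forces convergence of every cylinder probability, hence weak convergence of the measures, and nested projections give stochastically ordered determinantal measures (\cite{BLPS01}); so it suffices to prove $P_{H_{j,s}}\to P_{H_\sA}$, where $H_\sA:=\bigstar\oplus\mathcal{S}_\sA$ with $\bigstar$ the closed span of the finite stars of $G$ and $\mathcal{S}_\sA=\triangledown\mathcal{R}_\sA$ as in \eqref{dsa}; then $\mu_\sA$ is the determinantal measure with kernel $P_{H_\sA}$.

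For the iterated limit I proceed by monotonicity. With $j$ fixed and $s\to\infty$: under Assumption \ref{ap311} the non-contracted set $V_j\setminus U_{j,s}$ increases to $V_j$ while each $-\triangledown_{G_j}\mathbf{1}_v$ is $s$-independent, so $H_{j,s}$ is an increasing chain and $P_{H_{j,s}}\uparrow P_{K_j}$ with $K_j=\overline{\bigstar(G_j)}$; under Assumption \ref{ap312} the inner limit is a finite-exhaustion UST limit, hence the free spanning forest of $G^\bullet_j$. Either way $\lim_s\mu_{j,s}$ is the spanning-forest measure wired along $\sA$ and free on the rest of $\partial\HH^2$. Letting $j\to\infty$ (so $G_j\uparrow G$, resp.\ the contracted neighbourhoods of $\sA$ shrink to $\sA$), the outer limit exists by the monotonicity arguments of \cite{BLPS01}, and the crux is
\begin{align}
\lim_{j\to\infty}K_j=\bigstar\oplus\mathcal{S}_\sA.\label{keyid}
\end{align}
That $\bigstar$ is contained in the right side is clear, each finite star of $G$ lying eventually in $\bigstar(G_j)$. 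For the inclusion of $\mathcal{S}_\sA$: given $h\in\mathcal{R}_\sA$, write $h=\lim_k f_k$ with $f_k\in\mathbf{D}(G)$ and $\operatorname{supp}(f_k)$ avoiding a neighbourhood of $\sA$; then $f_k|_{V_j}$ has finite support in $V_j$ (since $V_j$ minus a neighbourhood of $\sA$ is finite), so $\triangledown_{G_j}(f_k|_{V_j})\in\bigstar(G_j)$, whence $\triangledown_{G_j}(h|_{V_j})\in\overline{\bigstar(G_j)}$ and $\triangledown_{G_j}(h|_{V_j})\to\triangledown h$ as $j\to\infty$. The reverse inclusion --- no extra harmonic directions in the limit --- is where Lemma \ref{ll39} and the Corollary following it enter: if $\triangledown h$ lies in the left side of \eqref{keyid}, approximate it by elements of the $\bigstar(G_j)$, which are supported off neighbourhoods of $\sA$, and conclude via Lemmas \ref{ll39} and \ref{le35} that $h$ vanishes along almost every path to $\sA$, so $h\in\mathcal{R}_\sA$. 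This gives \eqref{keyid}, hence $P_{H_{j,s}}\to P_{H_\sA}$. Finally $\mu_\sA$ is supported on essential spanning forests: on forests since $H_\sA\perp\diamondsuit$, and with all components infinite since $H_\sA\supseteq\bigstar$ gives $\mu_\sA\succeq\mathrm{WSF}$ on increasing events and the WSF of the transient graph $G$ has only infinite components.

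The listed items are then short. (2) For $\sA=\emptyset$ the condition in \eqref{dra} is vacuous, $\mathcal{R}_\emptyset=\mathbf{HD}(G)$, so $H_\emptyset=\bigstar\oplus\triangledown\mathbf{HD}(G)$ is the FSF kernel (\cite{BLPS01}). (3) For $\sA=\partial\HH^2$, a Dirichlet function whose support avoids a neighbourhood of every boundary point has finite support (compactness of $\partial\HH^2$), so the closure in \eqref{dra} is $\mathbf{D}_0(G)$, $\mathcal{R}_{\partial\HH^2}=\mathbf{HD}(G)\cap\mathbf{D}_0(G)=\{0\}$, and $H_{\partial\HH^2}=\bigstar$ is the WSF kernel. (4) $0\preceq P_{\mathcal{S}_\sA}\preceq P_{\triangledown\mathbf{HD}(G)}$ gives $P_\bigstar\preceq P_{H_\sA}\preceq P_{\bigstar\oplus\triangledown\mathbf{HD}(G)}$, and reading diagonal entries against $\chi^e$ yields $\mathrm{WSF}(e\in\mathfrak{F})\le\mu_\sA(e\in\mathfrak{F})\le\mathrm{FSF}(e\in\mathfrak{F})$. (6) If $\sA$ satisfies condition (a), Theorem \ref{le39} gives a nonconstant $f\in\mathbf{HD}(G)$ with $\triangledown f\in\mathcal{S}_\sA$, so $P_{\mathcal{S}_\sA}\ne0$ and $\mu_\sA(e\in\mathfrak{F})=(P_\bigstar+P_{\mathcal{S}_\sA})(e,e)>P_\bigstar(e,e)$ for some $e$, so $\mu_\sA\ne\mathrm{WSF}$. (1) Since $\mu_\sA$ is determinantal with a projection kernel, tail-triviality is obtained by adapting the proof that the wired and free uniform spanning forests are tail-trivial in \cite{BLPS01}: the determinantal representation together with the monotone convergence of kernels above lets one verify the $\epsilon$-characterization \eqref{dctt}. (5) $\Gamma$-ergodicity follows from (1) by a standard argument (every $\Gamma$-invariant event lies, modulo $\mu_\sA$-null sets, in the tail $\sigma$-field); $\mu_\sA$ need not be $\Gamma$-invariant because, for quasi-transitive $G$, a proper closed $\sA\subset\partial\HH^2$ is generically not fixed by the induced boundary action, so $\mathcal{S}_\sA$, hence $P_{H_\sA}$ and $\mu_\sA$, is not $\Gamma$-equivariant (cf.\ Remark \ref{rk7}).

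I expect the main obstacle to be \eqref{keyid}: this is precisely where the partially wired / partially free boundary condition is actually encoded, and the ``no extra harmonic directions'' half genuinely needs the boundary-trace description of harmonic Dirichlet functions together with the extremal-length estimates of Lemmas \ref{le35} and \ref{ll39} (and their Corollary), rather than soft operator theory. The tail-triviality in (1), which has to be obtained by adapting the argument of \cite{BLPS01} rather than quoted directly, is the second delicate point.
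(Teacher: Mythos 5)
You take essentially the same route as the paper: encode $\mu_{j,s}$ as a determinantal measure with projection kernel on $l^2_-(E)$ (as in Lemmas \ref{lem16}--\ref{lem17}), pass the kernels through the iterated limit $s\to\infty$ then $j\to\infty$ using Propositions \ref{pp17} and \ref{pp16}, identify the limit kernel with $P_{\bigstar\oplus\mathcal S_\sA}$, read off items (2)--(4),(6) from the explicit projection formula, and deduce (1),(5) from the quantitative tail-triviality estimate of Lemma \ref{l112}. The one substantive point where you diverge is that you isolate the identity \eqref{keyid}, $\lim_j\overline{\bigstar(G_j)}=\bigstar\oplus\mathcal S_\sA$, as the crux and try to prove it directly; the paper instead obtains the outer limit from Proposition \ref{pp16}. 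Your instinct here is sound --- Proposition \ref{pp16} as stated is exhaustion-independent and gives the free current $P_{\diamondsuit^\perp}\chi_e$, so citing it alone does not explain where the $\sA$-dependence in $\mathcal S_\sA$ comes from; making \eqref{keyid} explicit is the more honest account of the key step. However your reverse inclusion of \eqref{keyid} is not yet a proof: the approximating elements $\theta_k\in\bigstar(G_{j_k})$ are combinations of \emph{partial} stars at $G_{j_k}$-boundary vertices and hence are not of the form $\triangledown g_k$ for functions $g_k:V\to\RR$, so you cannot telescope $\sum_i R(e_i)\theta_k(e_i)$ along a path, and the assertion that ``$h$ vanishes along a.e.\ path to $\sA$'' requires a separate argument that this path-sum has a path-independent limit; and even granting that, passing from ``vanishes along a.e.\ path to $\sA$'' to ``$h\in\mathcal R_\sA$'' needs the converse of the Corollary following Lemma \ref{ll39}, which is stated only in the forward direction. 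These are exactly the delicate points the paper compresses into the citation of Proposition \ref{pp16}, so the two arguments are on comparable footing, with yours the more transparent.
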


\begin{proof}We only prove the conclusion under Assumption \ref{ap311}; the conclusion under Assumption \ref{ap312} can be proved similarly.

Note that $\triangledown\mathbf{HD}(G)$ is a subspace of $l_-^2(E)$. It is straightforward to check that
\begin{align*}
\triangledown \mathbf{HD}(G)\perp \bigstar;\qquad \triangledown \mathbf{HD}(G)\perp\diamondsuit; \qquad \diamondsuit\perp\bigstar;
\end{align*}
where $\perp$ is with respect to the inner product $\langle \cdot,\cdot \rangle_R $ in $l_-^2(E)$. Indeed, $l^2_{-}(E)$ has a decomposition as follows:
\begin{align*}
l_-^2(E)=\bigstar\oplus\diamondsuit\oplus \triangledown\mathbf{HD}(G)
\end{align*}

For each $e\in E$, define
\begin{align}
I_{e,\mathcal{A}}=P_{\bigstar\oplus \mathcal{S}_{\mathcal{A}}}\chi_{e};\label{diea}
\end{align}
i.e. $I_e$ is the orthogonal projection of $\chi_e$ to the subspace $\bigstar\oplus \mathcal{S}_{\mathcal{A}}$ of $l_{-}^2(E)$. Let $E_{\frac{1}{2}}\subset \vec{E}$ be a subset of $\vec{E}$ such that each edge in $E$ appears exactly once in $E_{\frac{1}{2}}$ with a fixed orientation. Then define a matrix $Y$ with rows and columns indexed by $E_{\frac{1}{2}}$ by
\begin{align*}
Y_{\mathcal{A}}(e,f)=I_{e,\mathcal{A}}(f).
\end{align*}
Then the weak convergence of $\lim_{j\rightarrow\infty}\lim_{s\rightarrow\infty}\mu_{j,s}$ follows from Lemmas \ref{lem16} and \ref{lem17}.

Let $\langle\chi_{F} \rangle $ be the linear space spanned by $\{\chi_f:f\in F\}$.
The fact that $\mu_{\mathcal{A}}$ has trivial tail follows from Lemma \ref{l112} by letting $s\rightarrow\infty$ and then letting $j\rightarrow\infty$, and the fact that
\begin{align*}
\|P_{\langle\chi_{F} \rangle}I_{e,j,s}\|^2\rightarrow 0
\end{align*}
given that $d_{G}(F,K)\rightarrow 0$ and $e\in K$. Then (1) follows.

Since tail triviality implies mixing, when $G$ is quasi-transitive and infinite, $\mathrm{Aut}(G)$ has an infinite orbit, we infer that in this case $\mu_{\mathcal{A}}$ is ergodic. If $\mathcal{A}$ is not automorphism invariant, then $\mu_{\mathcal{A}}$ is not automorphism invariant. Then (5) follows.

(2)-(3) are straightforward to verify from the deterministic expression of $\mu_{\mathcal{A}}$. To see why (4) is true, note that by Lemma \ref{lem16} we have
\begin{align*}
\mu_{\mathcal{A}}(e\in \mathfrak{F})=I_{e,\mathcal{A}}(e)=C(e)\langle P_{\bigstar\oplus \mathcal{S}_{\mathcal{A}}}\chi_{e},\chi_e\rangle_{R}=C(e)\|P_{\bigstar\oplus\mathcal{S}_{\mathcal{A}}}\chi_{e}\|_R^2.
\end{align*}
Moreover, it is known from \cite{BLPS01} that
\begin{align*}
&FSF(e\in \mathfrak{F})=C(e)\|P_{\bigstar}\chi_{e}\|_R^2;\qquad WSF(e\in \mathfrak{F})=C(e)\|P^{\perp}_{\diamondsuit}\chi_{e}\|_R^2;
\end{align*}
Then (4) follows from the fact that 
\begin{align*}
\bigstar\subset [\bigstar\oplus \mathcal{S}_{\mathcal{A}}]\subset [\bigstar\oplus \triangledown \mathbf{HD}(G)]= [\diamondsuit^{\perp}].
\end{align*}

When $\sA$ satisfies condition (a) in Lemma \ref{le39}, then by Lemma \ref{le39} we see that $\mathcal{S}_{\sA}$ is non-trivial linear space. Therefore $\bigstar$ is a proper subset of $\bigstar\oplus \mathcal{S}_{\sA}$. Then (6) follows.
\end{proof}

Let $G$ be an infinite graph. We define two
possibly different currents,
\begin{align*}
I_{e,F}
:= P^{\perp}_{\diamondsuit}\chi_e,
\end{align*} 
the free current between the endpoints of $e$ (also called the “limit current”), and
\begin{align*}
I_{e,W}
:= P_{\bigstar}\chi_e ,
\end{align*}
the wired current between the endpoints of $e$ (also called the “minimal current”). Recall the following two propositions

\begin{proposition}\label{pp16}(Free Currents) Let G be an infinite graph exhausted by 
subgraphs $\{G_n=(V_n,E_n)\}_{n=1}^{\infty}$. Let e be an edge in $G_1$ and let
\begin{align*}
\bigstar_{G_n}:=\triangledown l^2(V_n)
\end{align*}
$I_n := P_{\bigstar{G_n}}\chi_e$
. Then $\|I_n-I_{e,F}\|_{R}
\rightarrow 0$ as $n\rightarrow\infty$
and $\mathcal{E}(I_{e,F}
)=I_{e,F}
(e)$.
\end{proposition}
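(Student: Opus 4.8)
The plan is to reduce the whole statement to a fact about the \emph{cycle} spaces of the finite graphs $G_n$, rather than arguing directly with the star spaces $\bigstar_{G_n}$ (which, for a free exhaustion, need not be nested inside $l_-^2(E)$). First I would record the discrete Hodge decomposition for each finite subgraph $G_n=(V_n,E_n)$: since a finite graph carries no nonconstant harmonic function, one has the orthogonal splitting $l_-^2(E_n)=\bigstar_{G_n}\oplus\diamondsuit_{G_n}$ (the two subspaces are orthogonal, and $\dim\bigstar_{G_n}+\dim\diamondsuit_{G_n}=|E_n|$). Because $e\in G_1\subseteq G_n$, the vector $\chi_e$ and both subspaces $\bigstar_{G_n},\diamondsuit_{G_n}$ lie inside $l_-^2(E_n)$, and $l_-^2(E)=l_-^2(E_n)\oplus l_-^2(E\setminus E_n)$ is orthogonal; hence the projection $I_n=P_{\bigstar_{G_n}}\chi_e$ is the same whether computed in $l_-^2(E_n)$ or in $l_-^2(E)$, and from the Hodge splitting it equals $I_n=\chi_e-P_{\diamondsuit_{G_n}}\chi_e$, the last projection now read in $l_-^2(E)$.

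Next I would pass to the limit. The subspaces $\diamondsuit_{G_1}\subseteq\diamondsuit_{G_2}\subseteq\cdots$ form an increasing chain of closed subspaces of $l_-^2(E)$ whose union is exactly $\diamondsuit$: every finite cycle of $G$ uses only finitely many edges and therefore lies in some $G_n$, so the span of the cycles of $G$ coincides with $\bigcup_n\diamondsuit_{G_n}$. By the standard Hilbert-space fact that orthogonal projections onto an increasing chain of closed subspaces converge, in the strong operator topology, to the projection onto the closure of the union, we get $\|P_{\diamondsuit_{G_n}}\chi_e-P_{\overline{\diamondsuit}}\chi_e\|_R\to 0$. Therefore $\|I_n-(\chi_e-P_{\overline{\diamondsuit}}\chi_e)\|_R=\|P_{\diamondsuit_{G_n}}\chi_e-P_{\overline{\diamondsuit}}\chi_e\|_R\to 0$, and since $\chi_e-P_{\overline{\diamondsuit}}\chi_e=P_{(\overline{\diamondsuit})^{\perp}}\chi_e=P_{\diamondsuit}^{\perp}\chi_e=I_{e,F}$, this is precisely the claimed convergence $\|I_n-I_{e,F}\|_R\to 0$.

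For the energy identity, note $I_{e,F}\in\diamondsuit^{\perp}$ while $\chi_e-I_{e,F}=P_{\overline{\diamondsuit}}\chi_e\in\overline{\diamondsuit}$, so these two vectors are orthogonal; Pythagoras then gives $\mathcal{E}(I_{e,F})=\langle I_{e,F},I_{e,F}\rangle_R=\langle I_{e,F},\chi_e\rangle_R=I_{e,F}(e)$, where the last equality is a direct evaluation of $\langle\cdot,\cdot\rangle_R$ using $\chi_e=\mathbf{1}_e-\mathbf{1}_{-e}$ and the antisymmetry of $I_{e,F}$.

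There is no serious obstacle here; the only point requiring care is to resist the natural-looking argument ``$\bigstar_{G_n}$ is increasing, so $I_n\to P_{\overline{\bigstar}}\chi_e$'', which would instead produce the \emph{wired} current. For a free exhaustion the star spaces are genuinely not nested in $l_-^2(E)$, and the convergence must be routed through the cycle spaces as above; this is also precisely the place where the harmonic Dirichlet functions of $G$ (which make $\overline{\bigstar}\subsetneq\diamondsuit^{\perp}$, and hence make the free and wired currents differ) enter the picture implicitly. The argument is the esssentially the one in \cite{BLPS01}.
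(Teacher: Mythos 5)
Your argument is a correct and careful reconstruction of the proof of Proposition~7.1 in \cite{BLPS01}, and the remark at the end about resisting the tempting but wrong ``the $\bigstar_{G_n}$ are increasing'' step is exactly the right thing to flag. However, there is a real mismatch in scope between what you prove and what the paper's Proposition~\ref{pp16} asserts, and the paper itself calls it out: the paper's proof of this proposition is a citation to Corollary~3.17 of \cite{Sor94}, with BLPS01 mentioned only as a source for the \emph{additional} assumption that each $G_n$ is finite, and the paper explicitly says that Soardi's version does \emph{not} require finiteness.

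Your proof genuinely needs that finiteness. The pivotal identity $I_n=\chi_e-P_{\diamondsuit_{G_n}}\chi_e$ comes from the orthogonal splitting $l_-^2(E_n)=\bigstar_{G_n}\oplus\diamondsuit_{G_n}$, which holds because a finite connected graph has no nonconstant harmonic Dirichlet functions. If $G_n$ is an infinite (in particular transient) subgraph, the decomposition of $l_-^2(E_n)$ acquires a third orthogonal summand $\triangledown\mathbf{HD}(G_n)$, and then $P_{\bigstar_{G_n}}\chi_e\neq\chi_e-P_{\diamondsuit_{G_n}}\chi_e$; the argument routing convergence through the cycle spaces no longer closes. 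This is not a cosmetic issue for the paper: in Lemma~\ref{lem16} the proposition is applied with $G_j$ induced by the vertex sets $V_j$ of Assumption~\ref{ap311}, and condition~(2) of that assumption forces each $V_j$ to be infinite (each $G_j$ must contain singly infinite paths to every point of $\sA$). So the paper really does lean on the Soardi-type statement with infinite exhausting subgraphs, and a BLPS-style proof alone does not establish the version it is using. If you want to keep the proof direct rather than cite Soardi, you would have to argue separately that the harmonic contribution $P_{\triangledown\mathbf{HD}(G_n)}\chi_e$ from each $G_n$ tends to $0$ in $\|\cdot\|_R$, which requires additional work and does not follow from the increasing-subspaces lemma you invoke.

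One further, lower-order point worth double-checking against whatever conventions you finally adopt: with the paper's normalizations (where $\triangledown F(e)=C(e)[F(\ol e)-F(\underline e)]$ and $\langle\theta,\theta'\rangle_R=\tfrac12\sum_f R(f)\theta(f)\theta'(f)$) the evaluation at the last step gives $\langle I_{e,F},\chi_e\rangle_R=R(e)I_{e,F}(e)$, not $I_{e,F}(e)$; this is the same bookkeeping convention as is used in Lemma~\ref{lem16}, where $I_{e,\sA}(e)=C(e)\langle I_{e,\sA},\chi_e\rangle_R$. So the Pythagoras step is fine, but the final equality needs the extra conductance factor to match the statement as written.
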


\begin{proof}See Corollary 3.17 of \cite{Sor94}; see also Proposition 7.1 of \cite{BLPS01}; in the latter further assumption that each subgraph $G_n$ is finite was made; while in Corollary 3.17 of \cite{Sor94} $G_n$ is not required to be finite.
\end{proof}

\begin{proposition}\label{pp17}
Let $G$ be an infinite graph exhausted by
subnetworks $\{G_n\}_{n=1}^{\infty}$. Let $G_{n,W}=(V_{n,W},E_{n,W})$
be formed by identifying the complement of $G_n$ in $G$ to a single
vertex. Let $e$ be an edge in $G_1$ and
\begin{align*}
\bigstar_{G_{n,W}}:=\triangledown l^2(V_{n,W})
\end{align*}
Define
$I_n := P_{\bigstar_{G_{n,W}}}\chi_e$. Then $\|I_n-I_{e,W}
 \|_{R}\rightarrow 0$ as $n\rightarrow\infty$
and $\mathcal{E}(I_{e,W}
) = I_{e,W}
(e)$, which is the minimal energy among all $\theta\in l^2_{-}(E)$ satisfying
$\mathrm{div}\theta = \mathrm{div} \chi_e$.
\end{proposition}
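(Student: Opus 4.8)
The plan is to realize $\bigstar_{G_{n,W}}$ as an increasing chain of closed subspaces of $l^2_-(E)$ whose union is dense in $\bigstar$, so that the convergence $\|I_n-I_{e,W}\|_R\to0$ becomes an instance of the general fact that orthogonal projections onto an increasing, eventually dense family of closed subspaces converge strongly; the energy identity and the minimality statement then follow formally from the orthogonality $\chi_e-I_{e,W}\perp\bigstar$.

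First I would make the identification precise. Write $\mathbf{z}_n$ for the contracted vertex. The edge set $E_{n,W}$ is in natural bijection with the set of edges of $G$ having at least one endpoint in $V_n$ (an edge with exactly one endpoint in $V_n$ becomes an edge to $\mathbf{z}_n$; an edge inside $V_n$ is unchanged; an edge inside $V\setminus V_n$ becomes a loop at $\mathbf{z}_n$ and is discarded), conductances being preserved; hence extension by $0$ on the edges of $G$ lying inside $V\setminus V_n$ embeds $l^2_-(E_{n,W})$ isometrically as a closed subspace of $l^2_-(E)$. Under this embedding $\triangledown_{G_{n,W}}\mathbf{1}_v=\triangledown_G\mathbf{1}_v$ for each $v\in V_n$, since every edge of $G$ at $v$ survives with the same weight, while the star at the wired vertex is $\triangledown_{G_{n,W}}\mathbf{1}_{\mathbf{z}_n}=\triangledown_G\mathbf{1}_{V\setminus V_n}=-\sum_{v\in V_n}\triangledown_G\mathbf{1}_v$, the last equality because $\triangledown_G\mathbf{1}_V=0$; so the star at $\mathbf{z}_n$ adds nothing new and
\[
\bigstar_{G_{n,W}}=\mathrm{span}\{\triangledown_G\mathbf{1}_v:v\in V_n\}\subseteq\bigstar
\]
(finite-dimensional when $G_n$ is finite; in general one takes the closure, with the same conclusion). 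This is increasing in $n$ since $V_n\subseteq V_{n+1}$, and $\bigcup_n\bigstar_{G_{n,W}}=\mathrm{span}\{\triangledown_G\mathbf{1}_v:v\in V\}$, whose closure is $\bigstar$ by definition. Because $\chi_e\in l^2_-(E_{n,W})$ (as $e\in E_1\subseteq E_n$ has both endpoints in $V_n$) and $\bigstar_{G_{n,W}}\subseteq l^2_-(E_{n,W})$, the vector $P_{\bigstar_{G_{n,W}}}\chi_e$ is the same whether computed inside $l^2_-(E_{n,W})$ or inside $l^2_-(E)$.

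Next I would invoke the monotone convergence of projections: for an increasing chain of closed subspaces $W_n$ with $\overline{\bigcup_n W_n}=W$ and any vector $x$, the quantity $\|x-P_{W_n}x\|$ decreases to $\mathrm{dist}(x,W)=\|x-P_Wx\|$, and the parallelogram identity then forces $\|P_{W_n}x-P_Wx\|\to0$; taking $W_n=\bigstar_{G_{n,W}}$, $W=\bigstar$, $x=\chi_e$ gives $\|I_n-I_{e,W}\|_R\to0$. For the remaining assertions, $I_{e,W}=P_\bigstar\chi_e$ gives $\chi_e-I_{e,W}\perp\bigstar\ni I_{e,W}$, so $\mathcal{E}(I_{e,W})=\langle I_{e,W},I_{e,W}\rangle_R=\langle I_{e,W},\chi_e\rangle_R=I_{e,W}(e)$ (unwinding the pairing with $\chi_e$ in the normalization of the inner product). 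Finally, the adjointness $\langle\triangledown f,\theta\rangle_R=-\langle f,\mathrm{div}\,\theta\rangle_C$ shows that $\mathrm{div}\,\theta=\mathrm{div}\,\chi_e$ is equivalent to $\theta-\chi_e\perp\bigstar$; for such $\theta$ one has $P_\bigstar\theta=P_\bigstar\chi_e=I_{e,W}$, hence $\mathcal{E}(\theta)=\|I_{e,W}\|_R^2+\|\theta-I_{e,W}\|_R^2\ge\mathcal{E}(I_{e,W})$, with equality precisely when $\theta=I_{e,W}$, so $I_{e,W}$ is the unique flow of minimal energy with the prescribed divergence.

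The step I expect to require the most care is the identification of $\bigstar_{G_{n,W}}$ inside $l^2_-(E)$: in particular, checking that the star at the wired vertex $\mathbf{z}_n$ is redundant — this is exactly where the wiring, i.e.\ $\triangledown\mathbf{1}=0$, enters — and that the discarded loops and the possible multi-edges created at $\mathbf{z}_n$ cause no difficulty. Once the subspaces are correctly matched up, both the convergence and the energy–minimality statements reduce to standard Hilbert-space facts; this parallels the proof of the analogous statement in \cite{BLPS01}.
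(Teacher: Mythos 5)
Your argument is correct and is essentially the one in Proposition 7.2 of \cite{BLPS01} (and in the corresponding passage of Lyons--Peres), which is exactly what the paper cites in lieu of a proof; the crucial observations you identify --- that the star at the wired vertex is redundant because $\triangledown\mathbf{1}=0$, and that the resulting subspaces $\bigstar_{G_{n,W}}$ form an increasing chain with closure $\bigstar$ so that Lemma~\ref{ls84} applies --- are precisely the content of that source.

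One caveat worth flagging: your identification $\bigstar_{G_{n,W}}=\mathrm{span}\{\triangledown_G\mathbf{1}_v: v\in V_n\}$ via the redundancy of $\triangledown_{G_{n,W}}\mathbf{1}_{\mathbf{z}_n}=-\sum_{v\in V_n}\triangledown_G\mathbf{1}_v$ really does use that $V_n$ is finite, since otherwise the sum is infinite and the star at $\mathbf{z}_n$ (which equals $\triangledown_G\mathbf{1}_{V\setminus V_n}$ on the surviving edges) need not lie in the closed span of the stars at $V_n$-vertices. The paper is careful to distinguish these cases, attributing the finite-$G_n$ version to \cite{BLPS01} and the general version to Theorem~3.25 of \cite{Sor94}; your parenthetical ``in general one takes the closure, with the same conclusion'' is not a substitute for Soardi's argument. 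Since the proposition is only applied in the paper with finite exhausting subgraphs (the $G_{j,s}^{\bullet}$ of Assumption~\ref{ap311}), this does not affect anything downstream, but the scope of your proof should be stated as the finite-$G_n$ case.
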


\begin{proof}See Theorem 3.25 of \cite{Sor94}; see also Proposition 7.2 of \cite{BLPS01} with the further assumption that each $G_n$ is finite.
\end{proof}

\begin{lemma}\label{lem16}Suppose (\ref{bdc}) holds. Let $\mathcal{T}$ be the weighted spanning tree on $G_{j,s}$, and let $e\in E_{\frac{1}{2}}$. Then
\begin{align*}
\lim_{j\rightarrow\infty}\lim_{s\rightarrow\infty}\mu_{j,s}(e\in \mathcal{T}):=I_{e,\mathcal{A}}(e);
\end{align*}
where $I_{e,\mathcal{A}}$ is defined by (\ref{diea}).
\end{lemma}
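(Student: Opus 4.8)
Rewrite $\mu_{j,s}(e\in\mathcal{T})$ as the value at $e$ of an orthogonal projection in $l^2_-(E)$ via the transfer‑current theorem, dispatch the inner limit $s\to\infty$ with the wired‑current approximation of Proposition \ref{pp17}, and identify the outer limit $j\to\infty$ with $I_{e,\mathcal{A}}$ using $l^2_-(E)=\bigstar\oplus\diamondsuit\oplus\triangledown\mathbf{HD}(G)$ together with the Royden decomposition and the path‑convergence statements for Dirichlet functions. To begin, each $G_{j,s}$ (the graph obtained from $G_j$ by contracting $U_{j,s}$) is a \emph{finite} network, since $V_j\setminus U_{j,s}$ is finite by Assumption \ref{ap311} and all of $U_{j,s}$ is collapsed to the single vertex $z_{j,s}$. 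By the transfer‑impedance theorem of \cite{BP93}, for $e\in E_{\frac12}$ present in $G_{j,s}$ one has $\mu_{j,s}(e\in\mathcal{T})=I^{(j,s)}_e(e)$ with $I^{(j,s)}_e:=P_{\bigstar_{G_{j,s}}}\chi_e$, where $\bigstar_{G_{j,s}}=\triangledown\,l^2(V(G_{j,s}))$ is the star subspace of $G_{j,s}$, viewed inside $l^2_-(E)$ by extending its elements by $0$ on the edges of $G$ that $G_{j,s}$ omits; equivalently $\bigstar_{G_{j,s}}$ is the span of the stars of $G_j$ at the vertices of $V_j\setminus U_{j,s}$, since contracting $U_{j,s}$ makes the star at $z_{j,s}$ a combination of those. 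As $e$ is fixed, $\theta\mapsto\theta(e)$ is bounded on $l^2_-(E)$: $|\theta(e)|=|\langle\theta,\chi_e\rangle_R|/R(e)\le\sqrt{C(e)}\,\|\theta\|_R\le\sqrt{C_0}\,\|\theta\|_R$ by Cauchy–Schwarz and (\ref{bdc}), so it suffices to prove $I^{(j,s)}_e\to I_{e,\mathcal{A}}$ in $\|\cdot\|_R$ under $\lim_j\lim_s$.

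For the inner limit, fix $j$. The finite subgraphs of $G_j$ induced on the sets $V_j\setminus U_{j,s}$, $s\ge1$, exhaust $G_j$, because $\bigcap_s U_{j,s}\subseteq\bigcap_s\overline U_{j,s}=\mathcal{A}\subseteq\partial\HH^2$ contains no vertex, and $G_{j,s}$ is exactly the wired graph associated with this exhaustion in the sense of Proposition \ref{pp17}. Applying that proposition to $G_j$ gives $\|I^{(j,s)}_e-I^{(j)}_{e,W}\|_R\to0$ as $s\to\infty$, where $I^{(j)}_{e,W}=P_{\overline{\bigstar(G_j)}}\chi_e$ is the wired current of $G_j$ ($\overline{\bigstar(G_j)}$ being the $\|\cdot\|_R$‑closure in $l^2_-(E)$ of the star span of $G_j$) and $\|I^{(j)}_{e,W}\|_R^2=R(e)\,I^{(j)}_{e,W}(e)$. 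Hence $\lim_s\mu_{j,s}(e\in\mathcal{T})=I^{(j)}_{e,W}(e)$, and it remains to prove $I^{(j)}_{e,W}\to I_{e,\mathcal{A}}=P_{\bigstar\oplus\mathcal{S}_{\mathcal{A}}}\chi_e$ in $\|\cdot\|_R$ as $j\to\infty$.

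For the outer limit, $\mathcal{E}(I^{(j)}_{e,W})\le\mathcal{E}(\chi_e)=R(e)$ (as $\chi_e$ is a flow on $G_j$ with the required divergence once $e\in E(G_j)$), so $(I^{(j)}_{e,W})_j$ is bounded; let $\theta$ be a weak subsequential limit. Then: (i) $\mathrm{div}\,\theta=\mathrm{div}\,\chi_e$, since at a fixed vertex $w$ the divergence is a finite, hence weakly continuous, functional and $\mathrm{div}(I^{(j)}_{e,W})(w)=\mathrm{div}(\chi_e)(w)$ once every $G$‑neighbour of $w$ lies in $V_j$; in particular $\chi_e-\theta\perp\bigstar$. (ii) $\theta\perp\diamondsuit$, since every cycle of $G$ lies in some $G_j$ and $I^{(j')}_{e,W}\perp\diamondsuit(G_{j'})$ for $j'\ge j$; combining (ii) with the decomposition we may write $\theta=\theta_\bigstar+\triangledown\Theta$ with $\theta_\bigstar\in\overline{\bigstar}$ and $\Theta\in\mathbf{HD}(G)$. (iii) $\triangledown\Theta=P_{\mathcal{S}_{\mathcal{A}}}\chi_e$; granting this, $\theta=\theta_\bigstar+\triangledown\Theta\in\overline{\bigstar\oplus\mathcal{S}_{\mathcal{A}}}$ and moreover $\chi_e-\theta\perp\mathcal{S}_{\mathcal{A}}$, so together with (i) one gets $\theta=P_{\bigstar\oplus\mathcal{S}_{\mathcal{A}}}\chi_e=I_{e,\mathcal{A}}$. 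As the limit is subsequence‑independent, $I^{(j)}_{e,W}\rightharpoonup I_{e,\mathcal{A}}$, and since $\|I^{(j)}_{e,W}\|_R^2=R(e)I^{(j)}_{e,W}(e)\to R(e)I_{e,\mathcal{A}}(e)=\|I_{e,\mathcal{A}}\|_R^2$ the weak convergence upgrades to norm convergence; combining the three steps yields $\lim_j\lim_s\mu_{j,s}(e\in\mathcal{T})=I_{e,\mathcal{A}}(e)$.

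I expect step (iii) to be the main obstacle: one must show that the iterated wiring–then–exhaustion produces exactly the projection onto $\bigstar\oplus\mathcal{S}_{\mathcal{A}}$, rather than onto $\bigstar$ (which would give the wired forest $WSF$) or onto $\bigstar\oplus\triangledown\mathbf{HD}(G)=\diamondsuit^\perp$ (the free forest $FSF$). Here the geometry of Assumption \ref{ap311} is essential: the limit set of each $G_j$ is $\mathcal{A}$, so every element of $\overline{\bigstar(G_j)}=\overline{\triangledown\mathbf{D}_0(G_j)}$ is the gradient of a function vanishing along almost every path of $G_j$, all of which converge into $\mathcal{A}$ (Lemma \ref{le35}(2)); passing to the limit, $\Theta$ vanishes along almost every $G$‑path converging into $\mathcal{A}$, so $\Theta\in\mathcal{R}_{\mathcal{A}}$ by (\ref{dra}) and the Royden decomposition (Lemma \ref{lem14}), while the harmonic‑Dirichlet energy that only ``sees'' $\partial\HH^2\setminus\mathcal{A}$ cannot survive the wiring of the $U_{j,s}$, which upgrades $\triangledown\Theta\in\mathcal{S}_{\mathcal{A}}$ to the precise identity $\triangledown\Theta=P_{\mathcal{S}_{\mathcal{A}}}\chi_e$. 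The tools needed are exactly the path‑convergence results of Lemmas \ref{le35} and \ref{ll39} (and the corollary following the latter) and the definition of $\mathcal{R}_{\mathcal{A}}$; the remainder is routine Hilbert‑space bookkeeping built on Definition \ref{df31} and Propositions \ref{pp16}–\ref{pp17}.
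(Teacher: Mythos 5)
Your inner‑limit step matches the paper exactly: the transfer current theorem on the finite contracted graph $G_{j,s}^{\bullet}$, then Proposition \ref{pp17} applied to the wired exhaustion of the infinite subgraph $G_j$ as $s\to\infty$, yielding the wired current of $G_j$. For the outer limit $j\to\infty$ you deliberately avoid the paper's appeal to Proposition \ref{pp16} and mount a weak‑compactness argument in $l^2_-(E)$ instead. The instinct is understandable: Proposition \ref{pp16} as literally stated, applied to the exhaustion $\{G_j\}$, produces the free current $I_{e,F}=I_{e,\emptyset}$ rather than $I_{e,\mathcal{A}}$, and it is exactly the constraint in Assumption \ref{ap311} that each $G_j$ accumulates on $\partial\HH^2$ only along $\mathcal{A}$ which shifts the limit; the paper's one‑line appeal to \ref{pp16} compresses this geometric input. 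Your replacement is therefore a genuinely different route, and would in principle be more transparent — but it is incomplete in exactly the place you flag, and the incompleteness is substantive.

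To conclude $\theta=P_{\bigstar\oplus\mathcal{S}_{\mathcal{A}}}\chi_e$ from (i) and (ii) you must prove \emph{both} $\triangledown\Theta\in\mathcal{S}_{\mathcal{A}}$ (i.e.\ $\Theta\in\mathcal{R}_{\mathcal{A}}$) \emph{and} the orthogonality $\chi_e-\theta\perp\mathcal{S}_{\mathcal{A}}$; the two together are what the single identity in your step (iii) encodes. Your sketch speaks only to the first, and treats the second (``the harmonic‑Dirichlet energy cannot survive the wiring'') as a slogan rather than a computation. What would actually be needed is, for each $g\in\mathcal{R}_{\mathcal{A}}$, to show $\langle\chi_e-\theta_j,\triangledown g\rangle_R\to 0$; after summation by parts this reduces to controlling the boundary contribution $\sum_{v}F_j(v)\,\mathrm{div}_{G_j}\bigl(\triangledown g|_{E(G_j)}\bigr)(v)$ over the boundary vertices of $G_j$, which is nonzero and not obviously small. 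Even for the first half, passing ``$F_j$ vanishes along almost every $G_j$‑path into $\mathcal{A}$'' through a weak $l^2_-$ limit to a statement about $\Theta$ is not automatic, and you would then need the converse of the Corollary following Lemma \ref{ll39} (a harmonic Dirichlet function vanishing along almost every path into $\mathcal{A}$ belongs to $\mathcal{R}_{\mathcal{A}}$), which the paper proves only in the forward direction. So the proposal correctly locates the crux of the outer limit, and its Hilbert‑space scaffolding is sound — boundedness, weak subsequential limit, divergence and cycle checks, norm‑of‑norms upgrade are all fine — but the identification step is left open, and what is missing is precisely the content the paper absorbs into its citation of Proposition \ref{pp16}.
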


\begin{proof}Consider the weighted spanning tree model on the finite graph $G_{j,s}=(V_{j,s},E_{j,s})$. By the transfer current theorem on finite graphs (see \cite{BP93}; see also Theorem 4.1 of \cite{BLPS01}), we obtain
\begin{align*}
\mu_{j,s}(e\in \mathcal{T})
=I_{e,j,s}(e)
\end{align*}
where we let $\diamondsuit_{j,s}$ be the subspace of $l^2_{-}(E_{j,s})$ generated by $\sum_{i=1}^n\chi^{e_i}$  for all the oriented cycles $e_1,e_2,\ldots,e_n$ in $G_{j,s}$, and assume
\begin{align*}
I_{e,j,s}:=P_{\bigstar_{j,s}}\chi_e.
\end{align*}
and $P_{\bigstar_{j,s}}\chi_e$ is the orthogonal projection of $\chi_e\in l^2_{-}(E_{j,s})$ to the subspace $\bigstar_{j,s}:=\triangledown l^2(V_{j,s})$.
Note that for each fixed $e,j$ by Proposition \ref{pp17} we obtain
\begin{align*}
\lim_{s\rightarrow\infty}\|I_{e,j,s}-I_{e,j,W_j}\|_R=0;
\end{align*}
where
\begin{align*}
I_{e,j,W_j}=P_{\bigstar_j}\chi_e;
\end{align*}
and $\bigstar_j:=\triangledown l^2(V_j)$.

Moreover, by Proposition \ref{pp16} we obtain
\begin{align*}
\lim_{j\rightarrow\infty}\|I_{e,j,W_j}-I_{e,\mathcal{A}}\|_R=0;
\end{align*}
then the lemma follows.

\end{proof}

Let $F$ be a set of edges. The contracted graph $G/F$ is defined by by identifying
every pair of vertices that are joined by edges in $F$. We identify the set of edges in $G$ and in $G/F$. Let $T_G$ (resp.\ $T_{G/F}$) denote a subtree of $G$ (resp.\ $G/F$). Recall that $\langle\chi_{F} \rangle $ be the linear space spanned by $\{\chi_f:f\in F\}$. Let 
\begin{align*}
Z:=P_{\bigstar\oplus\mathcal{S}_{\mathcal{A}}}\langle\chi_{F} \rangle. 
\end{align*}
Let
\begin{align*}
\bigstar\oplus \mathcal{S}_{\mathcal{A}}=\widehat{\bigstar}\oplus Z;
\end{align*}
Let $e$ be an edge that does not form a cycle together with edges in $F$. Let
$\widehat{I}_{e}:=P_{\widehat{\bigstar}}\chi_e$. Then
\begin{align}
\widehat{I}_{e}:=P_{\widehat{\bigstar}}\chi_e=P_{Z}^{\perp}P_{\bigstar\oplus \mathcal{S}_{\mathcal{A}}}\chi_e=
P_{\langle \chi_{F}\rangle}^{\perp}P_{\bigstar\oplus \mathcal{S}_{\mathcal{A}}}\chi_e
=P_{\langle \chi_F\rangle}^{\perp} I_{e,\mathcal{A}}\label{php}
\end{align}
where the identity $P_{Z}^{\perp}P_{\bigstar\oplus \mathcal{S}_{\mathcal{A}}}\chi_e=
P_{\langle \chi_{F}\rangle}^{\perp}P_{\bigstar\oplus \mathcal{S}_{\mathcal{A}}}\chi_e$ follows from the fact that $P_{\bigstar\oplus \mathcal{S}_{\mathcal{A}}}\chi_e\in \bigstar\oplus \mathcal{S}_{\mathcal{A}}$.

Recall the following lemma:
\begin{lemma}\label{le19}(Proposition 4.2 in \cite{BLPS01})Let $G$ be a finite connected graph. Assuming that there is no cycle of $G$ in $F$, the distribution of $T_{G}$ conditioned on $F\subset T_G$ is equal to the distribution of $T_{G/F}\cup F$ when we think of $T_G$ and $T_{G/F}$ as sets of edges.
\end{lemma}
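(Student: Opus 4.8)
The plan is to construct an explicit bijection between $\{T_G : F\subseteq T_G\}$, the set of spanning trees of $G$ containing $F$, and the set of spanning trees of $G/F$, and to check that this bijection multiplies the weight $\prod_{e\in T}C(e)$ by the single constant $c_F:=\prod_{e\in F}C(e)$ that is independent of the tree; the statement then follows by normalizing. Since $F$ contains no cycle of $G$, it is a forest, so contracting it one edge at a time never creates a loop, $G/F$ is a genuine (multi)graph with $|V|-|F|$ vertices, and the contraction map is a bijection from $E\setminus F$ onto the edge set of $G/F$; I freely identify the two edge sets, as in the statement.

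First I would define $\Phi(T_G):=T_G\setminus F$, viewed as a set of edges of $G/F$, for $T_G\supseteq F$, and $\Psi(T_{G/F}):=T_{G/F}\cup F$, viewed as a set of edges of $G$. One checks that $\Phi$ takes values among the spanning trees of $G/F$: the set $T_G\setminus F$ has $(|V|-1)-|F|=|V(G/F)|-1$ edges, it spans and is connected in $G/F$ because contracting the subforest $F\subseteq T_G$ keeps the connected spanning subgraph $T_G$ connected and spanning, and no edge of $T_G\setminus F$ becomes a loop (that would force a cycle in $T_G$). Symmetrically, $\Psi(T_{G/F})$ has $(|V(G/F)|-1)+|F|=|V|-1$ edges; it is connected in $G$ since any two vertices are joined through $T_{G/F}$ after contraction and through the connected components of $F$ inside each contracted blob; hence it is a spanning tree of $G$, and it contains $F$. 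Finally $\Phi$ and $\Psi$ are mutually inverse because $F$ and $E\setminus F$ are disjoint, so $(T_G\setminus F)\cup F=T_G$ and $(T_{G/F}\cup F)\setminus F=T_{G/F}$.

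Second, for any spanning tree $T_G$ of $G$ with $F\subseteq T_G$,
\[
\prod_{e\in T_G}C(e)=\Big(\prod_{e\in F}C(e)\Big)\prod_{e\in T_G\setminus F}C(e)=c_F\prod_{e\in\Phi(T_G)}C(e).
\]
Hence, conditioning the $C$-weighted spanning-tree measure on $G$ on the event $\{F\subseteq T_G\}$, the tree $T_G$ receives probability proportional to $\prod_{e\in\Phi(T_G)}C(e)$, which is exactly the $C$-weighted spanning-tree measure on $G/F$ evaluated at $\Phi(T_G)$; pushing this forward along $\Psi$ identifies the conditional law of $T_G$ with the law of $T_{G/F}\cup F$, as claimed. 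The argument is essentially bookkeeping, and the only point where the hypothesis is actually used — the place where any genuine care is needed — is the passage to the quotient $G/F$: acyclicity of $F$ is exactly what makes $G/F$ loop-free, makes the edge-count identities above hold, and prevents edges of $T_G\setminus F$ from collapsing to loops. Everything else is routine, and the whole argument is insensitive to whether $G/F$ has multiple edges, since the measures are defined edge-wise.
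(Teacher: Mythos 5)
Your proof is correct. The paper itself gives no argument here--it simply cites Proposition~4.2 of \cite{BLPS01}--and your bijection $\Phi(T_G)=T_G\setminus F$, $\Psi(T')=T'\cup F$ together with the observation that the tree-weight factors as $c_F\prod_{e\in T_G\setminus F}C(e)$ is precisely the standard proof of that result; you correctly identify acyclicity of $F$ as the one hypothesis that makes the quotient well-behaved (no edge of $T_G\setminus F$ collapses to a loop, and the vertex/edge counts match).
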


\begin{lemma}\label{lem17}Suppose (\ref{bdc}) holds. Let $\mathcal{T}$ be the weighted random spanning tree on $G_{j,s}$, and let $e_1,\ldots,e_k\in E_{\frac{1}{2}}$. Then
\begin{align*}
\lim_{j\rightarrow\infty}\lim_{s\rightarrow\infty}\mu_{j,s}(e_1,\ldots,e_k\in \mathcal{T}):=\det[Y_{\mathcal{A}}(e_i,e_j)]_{1\leq i,j\leq k}
\end{align*}
\end{lemma}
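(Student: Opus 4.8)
\textbf{Proof proposal for Lemma \ref{lem17}.}
The plan is to establish the determinantal formula by induction on $k$, following the classical Burton--Pemantle/BLPS scheme but now pushing the double limit $\lim_{j\to\infty}\lim_{s\to\infty}$ through at each step. First I would recall that for the finite graph $G_{j,s}$ the transfer current theorem (Theorem 4.1 of \cite{BLPS01}) already gives
\begin{align*}
\mu_{j,s}(e_1,\ldots,e_k\in\mathcal{T})=\det\bigl[I_{e_i,j,s}(e_j)\bigr]_{1\le i,j\le k},
\end{align*}
where $I_{e,j,s}=P_{\bigstar_{j,s}}\chi_e$ as in Lemma \ref{lem16}. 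So the whole task is to show that the $k\times k$ determinant converges to $\det[Y_{\mathcal A}(e_i,e_j)]$, i.e.\ to $\det[I_{e_i,\mathcal A}(e_j)]$. Since a $k\times k$ determinant is a fixed polynomial in the entries, it would suffice to prove entrywise convergence $I_{e_i,j,s}(e_j)\to I_{e_i,\mathcal A}(e_j)$ together with uniform boundedness of the entries; but the entrywise convergence is exactly Lemma \ref{lem16} applied to each pair (the proof there gives $\|I_{e,j,s}-I_{e,\mathcal A}\|_R\to 0$ under the iterated limit, which dominates pointwise evaluation because $R$ is bounded below on any fixed edge, or more cleanly because $I_{e,j,s}(f)=C(f)^{-1}\langle I_{e,j,s},\chi_f\rangle_R$ and the inner products converge by Cauchy--Schwarz). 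This already closes the lemma, but I would also present the inductive argument because it is the conceptually correct one and is robust.

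For the inductive step: condition on the event $e_1\in\mathcal T$. By Lemma \ref{le19}, the law of $\mathcal T$ on $G_{j,s}$ conditioned on $\{e_1\in\mathcal T\}$ equals the law of $\mathcal T_{G_{j,s}/e_1}\cup\{e_1\}$. Hence
\begin{align*}
\mu_{j,s}(e_1,\ldots,e_k\in\mathcal T)=\mu_{j,s}(e_1\in\mathcal T)\cdot \mu_{j,s}^{/e_1}(e_2,\ldots,e_k\in\mathcal T_{G_{j,s}/e_1}).
\end{align*}
The first factor converges to $I_{e_1,\mathcal A}(e_1)=Y_{\mathcal A}(e_1,e_1)$ by Lemma \ref{lem16}. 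For the second factor I would apply the induction hypothesis to the contracted graphs $G_{j,s}/e_1$: the point is that contracting a single edge commutes with taking the iterated limit (the contracted exhaustion $G_{j,s}/e_1$ of $G/e_1$ still satisfies Assumption \ref{ap311}), and the relevant current on $G/e_1$ is the projection of $\chi_{e_i}$ onto $\widehat{\bigstar}\oplus\mathcal S_{\mathcal A}$ after quotienting by $\langle\chi_{e_1}\rangle$, which by \eqref{php} is precisely $P^\perp_{\langle\chi_{e_1}\rangle}I_{e_i,\mathcal A}$ in the limit. Thus
\begin{align*}
\lim_{j\to\infty}\lim_{s\to\infty}\mu_{j,s}^{/e_1}(e_2,\ldots,e_k\in\mathcal T_{G/e_1})=\det\Bigl[\bigl(P^\perp_{\langle\chi_{e_1}\rangle}I_{e_i,\mathcal A}\bigr)(e_j)\Bigr]_{2\le i,j\le k}.
\end{align*}
Finally I would verify the linear-algebra identity that multiplying $Y_{\mathcal A}(e_1,e_1)$ by this $(k-1)\times(k-1)$ determinant reproduces $\det[Y_{\mathcal A}(e_i,e_j)]_{1\le i,j\le k}$; this is the standard Schur-complement/cofactor computation, using that $I_{e_i,\mathcal A}(e_j)=C(e_j)^{-1}\langle I_{e_i,\mathcal A},\chi_{e_j}\rangle_R$ and $I_{e_i,\mathcal A}$, $I_{e_j,\mathcal A}$ both lie in $\bigstar\oplus\mathcal S_{\mathcal A}$ so the Gram-matrix manipulations go through verbatim as in \cite{BLPS01}, \cite{BP93}.

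The main obstacle I anticipate is bookkeeping around the \emph{order} of the two limits: unlike the single-exhaustion setting of \cite{BLPS01}, here one must first send $s\to\infty$ (contracting the neighborhoods $U_{j,s}$ down to $\mathcal A$, which corresponds to a partial wiring and is handled by Proposition \ref{pp17}) and only then $j\to\infty$ (exhausting $G$, handled by Proposition \ref{pp16}). I need to make sure the induction hypothesis for $G/e_1$ is stated for this same iterated limit and that contraction of $e_1$ does not interfere with either the decreasing family $U_{j,s}$ or the increasing family $V_j$ — this is true because $e_1$ is a fixed edge lying in $V_j\setminus U_{j,s}$ for all large $j$ and small-index $s$, so for all sufficiently advanced stages of both limits $e_1$ is an ordinary (uncontracted, non-wired) edge of $G_{j,s}$. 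Once that compatibility is checked, everything reduces to the finite-graph identities already quoted, and the argument is routine.
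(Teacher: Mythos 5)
Your primary argument (entrywise convergence of the determinant) is correct and is actually \emph{more direct} than the paper's proof. The paper does not invoke the full $k$-edge Burton--Pemantle transfer current theorem on $G_{j,s}$ and then pass entries to the limit; instead it only uses the $k=1$ case (Lemma~\ref{lem16}), applies the chain rule
\begin{align*}
\mu_{j,s}(e_1,\ldots,e_k\in\mathcal T)=\prod_{i=1}^{k}\mu_{j,s}(e_i\in\mathcal T\mid e_1,\ldots,e_{i-1}\in\mathcal T),
\end{align*}
identifies each conditional limit with $\widehat I_{e_i}(e_i)=C(e_i)\|P_{Z_i}^{\perp}I_{e_i,\mathcal A}\|_R^2$ via Lemma~\ref{le19} and equation~\eqref{php}, and then recognizes the product of squared projection-norms as the Gram determinant $\det[Y_{\mathcal A}(e_i,e_j)]$ by the parallelepiped-volume identity. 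So the two proofs split the work differently: the paper moves the contraction/conditioning to the finite graph and does the Gram linear algebra in the limit; you move the full determinantal identity to the finite graph and handle only entrywise limits. Yours avoids altogether the subtlety about contraction commuting with the iterated limit that you correctly flag at the end — in your Plan~A, $G/e_1$ never appears, so nothing needs to be verified for contracted graphs. Your backup inductive argument (Plan~B) is essentially what the paper does, collapsed into the chain rule, and the commuting-with-limit concern you raise is genuinely present there; the paper handles it by the identity~\eqref{php}, which re-expresses the contracted projection inside the uncontracted Hilbert space, but it does not re-verify Assumption~\ref{ap311} for $G/F$. One cosmetic slip in Plan~A: with the paper's inner product $\langle\theta,\chi_f\rangle_R=R(f)\theta(f)$, one has $I_{e,j,s}(f)=C(f)\langle I_{e,j,s},\chi_f\rangle_R$, not $C(f)^{-1}\langle\cdot,\cdot\rangle_R$; this does not affect the conclusion that $\ell^2$ convergence implies edgewise convergence. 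Also note that for a fixed $k\times k$ determinant, entrywise convergence alone suffices (no uniform bound is needed), so that remark in your proposal is superfluous though harmless.
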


\begin{proof}The proof is an adaptation of the proof of (4.4) in \cite{BLPS01}.

If some cycle can be formed from the edges $e_1,\ldots,e_k$, then the linear combination of corresponding columns of $[Y_{\mathcal{A}}(e_i,e_j)]$ is zero. Therefore, we assume that there are no such cycles for the remainder of the proof.

Note that
\begin{align*}
Y_{\mathcal{A}}(e,f)=I_{e,\mathcal{A}}(f)&=C(e)\langle I_{e,\mathcal{A}},\chi_f \rangle
=C(e)\langle P_{\bigstar\oplus \mathcal{S}_{\mathcal{A}}}\chi_e,\chi_f\rangle_R\\
&=C(e)\langle P_{\bigstar\oplus \mathcal{S}_{\mathcal{A}}}\chi_e,P_{\bigstar\oplus \mathcal{S}_{\mathcal{A}}}\chi_f\rangle_R=C(e)\langle I_{e,\mathcal{A}},I_{f,\mathcal{A}} \rangle_{R}
\end{align*}
Therefore
\begin{align*}
\det [Y_{\mathcal{A}}(e_i,e_j)]_{1\leq i,j\leq k}=\left[\prod_{1\leq i\leq k} C(e_i)\right]\det Y_k;
\end{align*}
where $Y_k$ is the Gram matrix with entries $\langle I_{e_i,\mathcal{A}}, I_{e_j,\mathcal{A}}\rangle_R$. The determinant of a Gram matrix is the squared volume of the parallelepiped spanned by its determining vectors; therefore
\begin{align*}
\det [Y_{\mathcal{A}}(e_i,e_j)]_{1\leq i,j\leq k}=\prod_{i=1}^k \left[C(e_i)
\|P_{Z_i}^{\perp}I_{e_i,\mathcal{A}}\|^2_R\right]
\end{align*}
where $Z_i$ is the linear space spanned by $\{I_{e_1,\mathcal{A}},\ldots, I_{e_{i-1},\mathcal{A}}\}$.

By Lemmas \ref{lem16}, \ref{le19}, we have
\begin{align*}
\lim_{j\rightarrow\infty}\lim_{s\rightarrow\infty}\mu_{j,s}(e_i\in \mathcal{T}|e_1,\ldots,e_{i-1}\in \mathcal{T})=\widehat{I}_{e_i}(e_i)
\end{align*}
with $F=\{e_1,\ldots,e_{i-1}\}$. Moreover
\begin{align*}
\widehat{I}_{e_i}(e_i)=C(e_i)\langle \widehat{I}_{e_i},\chi_{e_i} \rangle_{R}
=C(e_i)\langle P_{\widehat{\bigstar}}\chi_{e},P_{\widehat{\bigstar}}\chi_{e}
\rangle_R=C(e_i)\|P_{Z_i}^{\perp}I_{e,\mathcal{A}}\|^2.
\end{align*}
where the last identity follows from (\ref{php}).
Therefore we have
\begin{align*}
\lim_{j\rightarrow\infty}\lim_{s\rightarrow\infty}\mu_{j,s}(e_1,\ldots,e_k\in \mathcal{T})&=\prod_{i=1}^k\lim_{j\rightarrow\infty}\lim_{s\rightarrow\infty} \mu_{j,s}(e_i\in \mathcal{T}|e_1,\ldots,e_{i-1}\in \mathcal{T})\\
&=\prod_{i=1}^k\|P_{Z_i}^{\perp}\left[C(e_i)I_{e_i,\mathcal{A}}\|^2\right].
\end{align*}
Then  the lemma follows.
\end{proof}

\begin{lemma}\label{l112}Let $T$ be the weighted random spanning tree on the finite graph $G_{j,s}$. Let $F$ and $K$ be disjoint nonempty sets of edges. If $A_1\in \mathcal{F}(K)$ and $A_2\in \mathcal{F}(F)$, then
\begin{align*}
|\mu_{j,s}[A_1\cap A_2]-\mu_{j,s}[A_1]\mu_{j,s}[A_2]|\leq \left(2^{2|K|}|K|\sum_{e\in K}C(e)\|P_{\langle \chi_{F}\rangle}I_{e,j,s}\|_R^2\right)^{\frac{1}{2}}
\end{align*}
\end{lemma}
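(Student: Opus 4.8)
The plan is to follow the pattern of the tail‑triviality proof in \cite{BLPS01}, reducing the covariance first to a single variance of a conditional expectation, then to cylinder events, then to single edges, and finally to a projection estimate via the transfer‑current theorem. Since $\mathbf{1}_{A_2}$ is $\mathcal{F}(F)$‑measurable, pulling it out of the conditional expectation gives $\mathrm{Cov}_{\mu_{j,s}}(\mathbf{1}_{A_1},\mathbf{1}_{A_2})=\mathrm{Cov}_{\mu_{j,s}}(\mathbb{E}[\mathbf{1}_{A_1}\mid\mathcal{F}(F)],\mathbf{1}_{A_2})$, so by Cauchy--Schwarz and $\mathrm{Var}(\mathbf{1}_{A_2})\le 1$ it suffices to bound $\mathrm{Var}_{\mu_{j,s}}(\mathbb{E}[\mathbf{1}_{A_1}\mid\mathcal{F}(F)])$ by $2^{2|K|}|K|\sum_{e\in K}C(e)\|P_{\langle\chi_F\rangle}I_{e,j,s}\|_R^2$. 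The atoms of $\mathcal{F}(K)$ are the cylinder events $B=\{T\cap K=S\}$, $S\subseteq K$, of which there are $2^{|K|}$, so $A_1$ is a disjoint union of at most $2^{|K|}$ of them (those of probability $0$ being discarded); by the triangle inequality in $L^2(\mu_{j,s})$ this reduces the problem to a single cylinder $B$, with the bound $\sqrt{|K|\sum_{e\in K}C(e)\|P_{\langle\chi_F\rangle}I_{e,j,s}\|_R^2}$.

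Next I reduce a cylinder to single edges. Enumerate $K=\{k_1,\dots,k_m\}$ and write $\mathbf{1}_B=\prod_{i=1}^m Z_i$ with each $Z_i\in\{0,1\}$ equal to $\mathbf{1}_{k_i\in T}$ or $1-\mathbf{1}_{k_i\in T}$. The chain rule for conditional probabilities gives $\mathbb{E}[\mathbf{1}_B\mid\mathcal{F}(F)]=\prod_{i=1}^m A_i$ and $\mu_{j,s}(B)=\prod_{i=1}^m B_i$, where $A_i:=\mathbb{P}(Z_i=1\mid Z_1=\dots=Z_{i-1}=1,\mathcal{F}(F))$ is $\mathcal{F}(F)$‑measurable and $[0,1]$‑valued and $B_i$ is its unconditioned analogue. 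Since all factors lie in $[0,1]$, $|\prod_i A_i-\prod_i B_i|\le\sum_i|A_i-B_i|$ pointwise, hence $\sqrt{\mathrm{Var}(\mathbb{E}[\mathbf{1}_B\mid\mathcal{F}(F)])}\le\sum_i\|A_i-B_i\|_{L^2}\le\sqrt{|K|\sum_i\|A_i-B_i\|_{L^2}^2}$ by the $L^2$ triangle and Cauchy--Schwarz inequalities. So it remains to prove $\|A_i-B_i\|_{L^2}^2\le C(k_i)\|P_{\langle\chi_F\rangle}I_{k_i,j,s}\|_R^2$ for each $i$. By Lemma \ref{le19} (together with its companion statement, that conditioning on absent edges amounts to deletion), conditioning on $\{Z_1=\dots=Z_{i-1}=1\}$ is the weighted spanning tree on a graph $\widehat G$ obtained from $G_{j,s}$ by contracting/deleting $k_1,\dots,k_{i-1}$, in which $k_i$ and all of $F$ survive; thus, up to comparison of the $\mathcal{F}(F)$‑laws of $\mu_{j,s}$ and of $\mu_{j,s}(\cdot\mid Z_{<i}=1)$, $\|A_i-B_i\|_{L^2}^2$ becomes $\mathrm{Var}_{\widehat G}(\mathbb{E}[\mathbf{1}_{k_i\in T}\mid\mathcal{F}(F)])$, and the task comes down to a single‑edge estimate.

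That single‑edge estimate is: for the weighted spanning tree $T$ on any finite graph $H$, an edge $e$, and a finite edge set $F\not\ni e$, $\mathrm{Var}(\mathbb{E}[\mathbf{1}_{e\in T}\mid\mathcal{F}(F)])\le C(e)\|P_{\langle\chi_F\rangle}I_e\|_R^2$ with $I_e=P_{\bigstar}\chi_e$. By the transfer‑current theorem on $H$ (cf.\ the proof of Lemma \ref{lem16}), $\mathbb{E}[\mathbf{1}_{e\in T}\mid\mathcal{F}(F)=\omega]=C(e)\|P_{\bigstar^{(\omega)}}\chi_e\|_R^2$, and applying Lemma \ref{le19} again, conditioning on $\mathcal{F}(F)=\omega$ (contract the $\omega$‑present edges $F_1(\omega)\subseteq F$, delete the $\omega$‑absent ones $F_0(\omega)$) replaces $\bigstar$ by $\bigstar^{(\omega)}=P_{\langle\chi_{F_0(\omega)}\rangle^\perp}(\bigstar\cap\langle\chi_{F_1(\omega)}\rangle^\perp)$, which for every $\omega$ is sandwiched between $\bigstar\cap\langle\chi_F\rangle^\perp$ and $P_{\langle\chi_F\rangle^\perp}\bigstar$. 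Since $\chi_e\perp\langle\chi_F\rangle$, the value $\|P_{\bigstar^{(\omega)}}\chi_e\|_R^2$ then ranges over an interval containing $\|P_{\bigstar}\chi_e\|_R^2$ whose endpoint gaps are governed by the $\langle\chi_F\rangle$‑component of $I_e$, and a short Hilbert‑space computation bounds the resulting variance by $C(e)\|P_{\langle\chi_F\rangle}I_e\|_R^2$. (For $|F|=1$ this is explicit: $\mathrm{Var}(\mathbb{E}[\mathbf{1}_{e\in T}\mid\mathcal{F}(\{f\})])=\kappa^2/(p(1-p))$ with $p=\mathbb{P}(f\in T)$, $q=\mathbb{P}(e\in T)$ and $\kappa=|\mathrm{Cov}(\mathbf{1}_{e\in T},\mathbf{1}_{f\in T})|=C(e)\|P_{\langle\chi_f\rangle}I_e\|_R^2$, while $\kappa\le p(1-p)$ follows from the two $2\times2$ transfer‑current determinant inequalities $\kappa\le pq$ — from $\mathbb{P}(e,f\in T)=pq-\kappa\ge0$ — and $\kappa\le(1-p)(1-q)$ — from $\mathbb{P}(e,f\notin T)=(1-p)(1-q)-\kappa\ge0$ — since one of $p+q\le1$, $p+q\ge1$ always holds.) Collecting the three reductions yields the stated bound.

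The main obstacle is precisely this single‑edge conditional‑variance estimate in the general case $|F|\ge2$: proving that passing from $\bigstar$ to the conditioned star subspace $\bigstar^{(\omega)}$ perturbs $\|P_{\bigstar}\chi_e\|_R^2$ by at most $\|P_{\langle\chi_F\rangle}I_e\|_R^2$ uniformly in $\omega$, and with numerical constant $1$; closely related is the bookkeeping of Step~2 — controlling the change of measure between $\mu_{j,s}$ and the measures conditioned on local edge events, and comparing the currents of $\widehat G$ with those of $G_{j,s}$ — which can be organized by a secondary induction on $|K|$. These are exactly the points at which one must reproduce and slightly extend the energy/projection estimates underlying the tail‑triviality proof of \cite{BLPS01}; the remaining ingredients (the cylinder decomposition, the chain rule, the two applications of Cauchy--Schwarz and the resulting combinatorial factor $2^{2|K|}|K|$) are routine.
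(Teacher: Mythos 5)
The paper offers no proof of Lemma~\ref{l112}; it simply cites Theorem~8.4 of \cite{BLPS01}, whose argument works through the transfer-current determinant formula together with the two ingredients that do appear in this paper as Lemma~\ref{l810} (Lemma~8.7 of \cite{BLPS01}: variance of a determinant bounded by the sum of variances of entries) and Claim~\ref{cl613} (from Lemma~8.6 of \cite{BLPS01}: the exact identity $\mathbb{E}\|Q^{F}\chi_e-\mathbb{E}Q^{F}\chi_e\|_R^2=\|P_{\langle\chi_F\rangle}P_{\bigstar}P_{\langle\chi_F\rangle}^\perp\chi_e\|_R^2$ for the conditioned projection). Your route --- cylinder decomposition, chain-rule telescoping, and a single-edge conditional-variance estimate --- is genuinely different from this determinant-variance scheme, and the preparatory steps (the $\mathcal{F}(F)$-measurability reduction, the cylinder decomposition, the chain rule, and the two Cauchy--Schwarz applications that yield the combinatorial factor $2^{2|K|}|K|$) are all fine. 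Your verification of the single-edge estimate for $|F|=1$ is also correct and is a nice illustration.

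The gaps, which you already flag as ``the main obstacle,'' are, however, real and not routine to fill. First, in the telescoping step $A_i-B_i$ is mean-zero under $\mu_{j,s}(\cdot\mid Z_{<i}=1)$, not under $\mu_{j,s}$, so $\|A_i-B_i\|_{L^2(\mu_{j,s})}^2$ is not $\mathrm{Var}_{\mu_{j,s}(\cdot\mid Z_{<i}=1)}(A_i)$; the two $\mathcal{F}(F)$-marginals differ by a Radon--Nikodym factor that is not uniformly bounded, so the change of measure cannot simply be absorbed. Second, even granting that reduction, the single-edge estimate would be applied on the contracted/deleted graph $\widehat G$, producing $\|P_{\langle\chi_F\rangle}I_{k_i}^{\widehat G}\|_R^2$, and there is no monotonicity comparing this with $\|P_{\langle\chi_F\rangle}I_{k_i,j,s}\|_R^2$: contracting or deleting edges of $K$ can increase the overlap of $I_{k_i}$ with $\langle\chi_F\rangle$. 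Third, the ``short Hilbert-space computation'' finishing the single-edge estimate for general $F$ does not exist as stated. The sandwiching $\bigstar\cap\langle\chi_F\rangle^\perp\subseteq\bigstar^{(\omega)}\subseteq P_{\langle\chi_F\rangle^\perp}\bigstar$ is correct and gives a deterministic range for the conditional probability, but a range alone cannot be converted to the asserted variance bound without using the probability weights attached to each $\omega$: your own $|F|=1$ computation already needs the probabilistic inequality $\kappa\le p(1-p)$, which has no Hilbert-space analogue, and indeed there is no Hilbert-space inequality of the required type relating $\|P_{P_{\langle\chi_F\rangle^\perp}\bigstar}\chi_e\|^2-\|P_{\bigstar\cap\langle\chi_F\rangle^\perp}\chi_e\|^2$ to $\|P_{\langle\chi_F\rangle}I_e\|^2$. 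The single-edge estimate is true, but the clean route to it is exactly Claim~\ref{cl613}: from $\mathbb{E}\|Q^F\chi_e-\mathbb{E}Q^F\chi_e\|_R^2=\|P_{\langle\chi_F\rangle}I_e\|_R^2$ and Cauchy--Schwarz against $\chi_e$ (using $\|\chi_e\|_R^2=R(e)$) one gets
\begin{align*}
\mathrm{Var}\big(\mathbb{P}(e\in T\mid\mathcal{F}(F))\big)\le C(e)^2\,R(e)\,\mathbb{E}\|Q^F\chi_e-\mathbb{E}Q^F\chi_e\|_R^2=C(e)\|P_{\langle\chi_F\rangle}I_e\|_R^2,
\end{align*}
and BLPS01 then feeds this variance formula directly into Lemma~\ref{l810} on the cylinder determinants, thereby sidestepping both the change-of-measure and the $\widehat G$-comparison issues entirely. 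If you want a self-contained argument, you should adopt that path rather than the telescoping one.
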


\begin{proof}See Theorem 8.4 of \cite{BLPS01}.
\end{proof}

\section{Dimers and Temperley Boundary Conditions}\label{dtbc}

In this section, we introduce Temperley boundary conditions for finite subgraphs derived from the superposition of a pair of dual graphs. We then establish connections between Dirac operators, Laplacian operators, and random walks on finite graphs with Temperley boundary conditions. We prove explicit expressions of the limits of entries of the inverse Dirac operator under Temperley boundary conditions, which give a unique infinite-volume Gibbs measure for perfect matchings on the bipartite graph obtained from the superposition of a transient planar graph and its dual graph; see Lemma \ref{l422}. 

\subsection{Perfect matchings on finite subgraphs: probability measure and local statistics}

\begin{definition}Let $G=(V,E)$ be a graph. A perfect matching, or dimer configuration $M$ on $G$ is a subset of edges such that each vertex in $V$ is incident to exactly one edge in $M$.
\end{definition}

We define a probability measure for perfect matchings on  finite subgraphs of $\ol{G}$ as follows:

\begin{definition}Suppose Assumption \ref{ap24} holds. Let $\ol{G}_0$ be a finite subgraph of $\ol{G}$.
Let $\mathcal{M}$
be the collection of all the perfect matchings on $\ol{G}_0$. Define the partition function for perfect matchings on $\ol{G}_0$ by
\begin{align*}
Z(\ol{G}_0):=\sum_{M\in \mathcal{M}}\prod_{e=(w,b)\in M}|\ol{\partial}(w,b)|
\end{align*}
For each $M\in \mathcal{M}$, let the probability of $M$ be
\begin{align*}
\mathbb{P}(M):=\frac{1}{Z(\ol{G}_0)}\prod_{e=(w,b)\in M}|\ol{\partial}(w,b)|
\end{align*}
\end{definition}

From Lemma \ref{le14}, we obtain the following corollary:

\begin{corollary}\label{c18}Suppose Assumption \ref{ap24} holds. Let $\ol{G}_0$ be the finite subgraph of $\ol{G}$ admitting a perfect matching. Then
\begin{enumerate}
\item
\begin{align*}
Z(\ol{G}_0)=\left|\det\left[\ol{\partial}|_{W_{\ol{G}_0},B_{\ol{G}_0}}\right]\right|
\end{align*}
where $\ol{\partial}|_{W_{\ol{G}_0},B_{\ol{G}_0}}$ is the submatrix of $\ol{\partial}$ with rows restricted to white vertices of $\ol{G}_0$ and columns restricted to black vertices of $\ol{G}_0$.
\item Let $t\geq 1$ be a positive integer, and $e_1,\ldots,e_{t}$ be $t$ distinct edges of $\mathcal{G}_{j}$ with no common vertices. Let $e_1,\ldots,e_{t}\in M$ be the event that all the edges $e_1,\ldots,e_t$ appear in a random perfect matching. Then 
\begin{align*}
\mathbb{P}(e_1,\ldots,e_t\in M)=\prod_{i=1}^{t}\nu(e_i)\left|\det [\ol{\partial}|_{W_{\ol{G}_0},B_{\ol{G}_0}}]^{-1}_{\{w_1,\ldots,w_t\},\{b_1,\ldots,b_t\}}\right|
\end{align*}
where $e_i=(w_i,b_i)$ has white vertex $w_i$ and black vertex $b_i$ for all $1\leq i\leq t$.
\end{enumerate}
\end{corollary}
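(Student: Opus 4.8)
The plan is to derive both statements from the Kasteleyn-type formalism. The key input is Lemma \ref{le14}, which asserts that around every face $w_1,b_1,w_2,b_2$ of $\ol G$ one has $(-1)\prod_{i=1}^2 \ol\partial(w_i,b_i)/\ol\partial(b_i,w_{i+1})>0$; in the standard Kasteleyn terminology (after the usual reduction to the bipartite case where each quadrilateral face has four boundary edges), this says precisely that the matrix $\ol\partial$, with the complex weights assigned by Assumption \ref{ap24}, is a Kasteleyn matrix: the alternating product of edge weights around each face is negative real (equivalently, after multiplying by $\pm i$ on appropriate rows/columns one gets a real antisymmetric sign pattern). Consequently, all nonzero terms in the expansion of $\det[\ol\partial|_{W_{\ol G_0},B_{\ol G_0}}]$ over perfect matchings carry the same complex phase, so that
\begin{align*}
\left|\det\left[\ol\partial|_{W_{\ol G_0},B_{\ol G_0}}\right]\right|=\sum_{M\in\mathcal M}\prod_{e=(w,b)\in M}|\ol\partial(w,b)|=Z(\ol G_0),
\end{align*}
which is part (1). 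Here I would spell out the Leibniz expansion $\det[\ol\partial|_{W,B}]=\sum_{\sigma}\mathrm{sgn}(\sigma)\prod_w \ol\partial(w,\sigma(w))$, noting that a nonzero term corresponds to a bijection $W\to B$ with $w$ adjacent to $\sigma(w)$, i.e.\ a perfect matching, and then invoke the classical fact (Kasteleyn; cf.\ \cite{Kas61,Kas67,TF61}) that the face condition of Lemma \ref{le14} forces $\mathrm{sgn}(\sigma)\prod_w\ol\partial(w,\sigma(w))$ to be independent of $M$ up to a global unimodular constant; this is proved by checking that exchanging two matchings changes the configuration along a set of disjoint cycles, and each cycle has even length enclosing faces whose weights contribute the required sign.

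For part (2), the argument is the standard ``inverse Kasteleyn matrix computes correlations'' identity. Fix disjoint edges $e_i=(w_i,b_i)$, $i=1,\dots,t$. The matchings containing all of $e_1,\dots,e_t$ are in bijection with matchings of the graph $\ol G_0\setminus\{w_1,b_1,\dots,w_t,b_t\}$, so
\begin{align*}
\PP(e_1,\dots,e_t\in M)=\prod_{i=1}^t|\ol\partial(w_i,b_i)|\cdot\frac{Z\big(\ol G_0\setminus\{w_i,b_i\}_{i=1}^t\big)}{Z(\ol G_0)}=\prod_{i=1}^t\nu(e_i)\cdot\frac{\big|\det \ol\partial^{\widehat{\{w_i\}},\widehat{\{b_i\}}}\big|}{\big|\det\ol\partial|_{W,B}\big|},
\end{align*}
where $\ol\partial^{\widehat{\{w_i\}},\widehat{\{b_i\}}}$ denotes the matrix $\ol\partial|_{W,B}$ with the rows $w_1,\dots,w_t$ and columns $b_1,\dots,b_t$ deleted, and I have used part (1) applied to both $\ol G_0$ and the reduced graph, together with $|\ol\partial(w_i,b_i)|=\nu(e_i)$ by the definition of $\ol\partial$. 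The Jacobi complementary-minor identity then gives
\begin{align*}
\det\ol\partial^{\widehat{\{w_i\}},\widehat{\{b_i\}}}=\pm\,\det(\ol\partial|_{W,B})\cdot\det\Big[(\ol\partial|_{W,B})^{-1}_{\{b_1,\dots,b_t\},\{w_1,\dots,w_t\}}\Big],
\end{align*}
and taking absolute values yields exactly the claimed formula $\PP(e_1,\dots,e_t\in M)=\prod_{i=1}^t\nu(e_i)\,\big|\det[\ol\partial|_{W,B}]^{-1}_{\{w_1,\dots,w_t\},\{b_1,\dots,b_t\}}\big|$ (the transpose/index ordering being immaterial under the absolute value). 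I should also record the degenerate case: if $\ol G_0\setminus\{w_i,b_i\}$ admits no perfect matching, both sides vanish, consistent with the determinant formula.

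The main obstacle — really the only nontrivial point — is establishing that the face condition of Lemma \ref{le14} does propagate to a global sign cancellation in the full determinant, i.e.\ verifying the Kasteleyn sign-consistency on $\ol G_0$. Since $\ol G_0$ is an arbitrary finite subgraph rather than a simply connected piece of the plane, one must be slightly careful: the clean statement ``face condition $\Rightarrow$ all matchings equi-phased'' is standard for planar graphs embedded in the disk (every matching-difference cycle bounds a region whose faces supply the sign), and $\ol G_0\subset\ol G\subset\HH^2\subset\RR^2$ is such an embedded planar graph, so the classical argument of \cite{Kas67,TF61} applies verbatim. I would therefore phrase this step as an invocation of the planar Kasteleyn theorem with Lemma \ref{le14} furnishing its hypothesis, and keep the novel content confined to the bookkeeping in parts (1) and (2).
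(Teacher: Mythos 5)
Your argument follows the same route as the paper: for part (1) the Leibniz expansion plus the Kasteleyn sign-consistency coming from Lemma \ref{le14}, and for part (2) the standard inverse-Kasteleyn computation via the Jacobi complementary-minor identity, which the paper merely cites (Kenyon \cite{RK97}) rather than spelling out. Your observation that the cycle-sign argument requires a simply connected $\ol{G}_0$ (so that matching-difference cycles bound regions composed entirely of quadrilateral faces of $\ol{G}$) is a genuine caveat the paper glosses over, but it holds in all the situations where the corollary is later invoked.
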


\begin{proof}Part (1) of the corollary is inspired by the Kasteley-Temperley-Fisher algorithm to count the number of perfect matchings in a finite planar graph; see \cite{Kas61,TF61,Kas67}. Assume
\begin{align*}
|W_{\ol{G}_0}|=|B_{\ol{G}_0}|=k
\end{align*}
and write
\begin{align*}
A:=\ol{\partial}|_{W_{\ol{G}_0},B_{\ol{G}_0}}.
\end{align*}
Let $S_k$ be the symmetric group of $k$ elements. Then we have 
\begin{align}
\det\left[A\right]:=\sum_{\sigma\in S_k}(-1)^{I(\sigma)}\prod_{i=1}^k A_{i,\sigma(i)};\label{dt}
\end{align}
where $I(\sigma)$ is the inversion number of the permutation $\sigma$. Each nonzero term on the right hand side of (\ref{dt}) corresponds to a dimer configuration; Lemma \ref{le14} implies that each nonzero term in the right hand side of (\ref{dt}) has the same argument and the modulus equal to the product of weights of edges present in the dimer configuration.

Part (2) of the corollary is inspired by the work of Kenyon to compute local statistics of lattice dimers; see \cite{RK97}.
\end{proof}

Let $\overline{D}$ be obtained from $\overline{\partial}$ by multiplying the weight of each edge $e$ by $\frac{1}{\sqrt{\nu(e)\nu(e^{+})}}$, where $e^{+}$ is the dual edge of $e$. More precisely we have
\begin{align*}
\overline{D}=S\overline{\partial}S^*
\end{align*}
where for each black vertex $b$ we have
\begin{align*}
S(b,b)=1;
\end{align*}
and for each white vertex $w$, let $e$, $e^+$ be the pair of edges in $\overline{G}$ and $\overline{G}^+$ in which $w$ lies, we have
\begin{align*}
S(w,w)=\frac{1}{\sqrt{\nu(e)\nu(e^{+})}};
\end{align*}
and $S^*$ is the transpose conjugate of $S$.

\subsection{Temperley Boundary Conditions}\label{sect42}

Suppose Assumption \ref{ap24} holds.
Assume that $G_{j}$ is a finite simply-connected subgraph of $G$ drawn on $\HH^2$ consisting of faces of $G$ and is induced by a finite subset of vertices.

Let $G_{j}^+$ be the interior dual graph of $G_{j}$, i.e., each vertex of $G_{j}^{+}$ corresponds to a finite face of $G_{j}$; and two vertices of $G_{j}^+$ are joined by an edge $e^+$ of $G_{j}^+$ if and only if the corresponding faces in $G$ share an edge $e$. Let $\ol{G}_j$ be the bipartite graph obtained from the superposition of $G_{j}$ and $G_{j}^+$ as in Definition \ref{dfpds}.

\begin{figure}
\centering
\includegraphics[width=.35\textwidth]{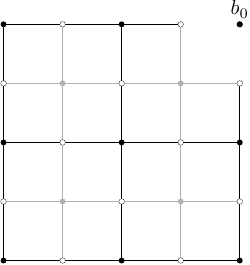}
\caption{Temperley Boundary Condition: the primal graph is represented by black lines; the dual graph is represented by gray lines.}
\label{fig:tpl}
\end{figure}

\begin{lemma}\label{le33}Suppose Assumption \ref{ap24} holds. Let $\mathcal{G}_{j}$ be the graph obtained from $\ol{G}_{j}$ by removing a a fixed vertex $b_0$ on the boundary. Then $\mathcal{G}_{j}$ admits a perfect matching.
\end{lemma}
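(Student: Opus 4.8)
The claim to prove is Lemma \ref{le33}: after deleting one boundary vertex $b_0$ from the bipartite superposition graph $\ol G_j$, the resulting graph $\sG_j$ admits a perfect matching. This is the classical Temperley trick, so I would organize the proof around the bijection between perfect matchings of $\sG_j$ and spanning trees of an associated planar graph.

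\textbf{Setup and the bijection.} First I would recall the bipartite structure: the black vertices of $\ol G_j$ are the vertices of $G_j$ together with the vertices of $G_j^+$ (interior faces of $G_j$), and the white vertices are the edges of $G_j$. The vertex $b_0$ to be removed is a boundary vertex; I would take it to be a black vertex lying on the outer face — concretely either a boundary vertex of $G_j$ or (in the standard Temperley setup) one corresponds to the outer face. The plan is to exhibit an explicit bijection between perfect matchings of $\sG_j = \ol G_j \setminus \{b_0\}$ and spanning trees of $G_j$ (rooted appropriately at the boundary), following Temperley and its generalization by Kenyon--Propp--Wilson. In a perfect matching $M$ of $\sG_j$: each white vertex $w$ (an edge $e$ of $G_j$) is matched either to one of its primal endpoints (a vertex of $G_j$) or to one of the two faces adjacent to $e$ (a vertex of $G_j^+$). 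Matching $w$ to a primal endpoint $v$ orients the edge $e$ away from $v$; matching $w$ to a dual vertex pairs it up the dual side. The constraint that every primal vertex except $b_0$ is matched exactly once means each primal vertex has exactly one outgoing edge — i.e., we get a functional graph on $V(G_j) \setminus \{b_0\}$ pointing toward $b_0$ — and the dual constraint forces this functional graph to be acyclic, hence a spanning tree of $G_j$ rooted at $b_0$. Conversely every such spanning tree yields a unique matching.

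\textbf{Key steps in order.} (1) Identify $b_0$ precisely and the adjacent structure so the counting argument works; note $\ol G_j$ is bipartite with $|W| = |E(G_j)|$ white vertices and $|B| = |V(G_j)| + |V(G_j^+)|$ black vertices, and Euler's formula for the simply-connected planar graph $G_j$ gives $|E(G_j)| = |V(G_j)| + |V(G_j^+)| - 1$ once we count interior faces only (this is exactly why deleting one black vertex balances the bipartition — I would make this Euler-characteristic count explicit, using that $G_j$ is simply-connected as in Definition \ref{df32}). (2) Define the map from perfect matchings of $\sG_j$ to orientations/subgraphs of $G_j$ as above. (3) Show the image consists of spanning trees: use the primal-matching condition to get out-degree one at every vertex except $b_0$, and the dual-matching condition (every face-vertex matched) plus a planarity/Jordan-curve argument to rule out cycles — a directed cycle in $G_j$ would enclose a region whose interior dual vertices cannot all be matched. (4) Show the map is a bijection by constructing the inverse from any spanning tree. (5) Since $G_j$ is connected (it is simply-connected, hence in particular connected), it has at least one spanning tree, so $\sG_j$ has at least one perfect matching.

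\textbf{Main obstacle.} The delicate point is step (3): proving that the dual-side matching condition, together with planarity, forces the primal functional graph to be a tree rather than merely a collection of "rho-shaped" components each containing one cycle. I expect to handle this by the standard Temperley argument: a minimal directed cycle in the primal configuration bounds a disk; the white vertices (edges) strictly inside that disk must be matched to dual vertices inside, and a parity/counting argument on that enclosed subregion — again via Euler's formula applied to the enclosed piece — yields a contradiction, since the number of enclosed dual vertices exceeds the number of available enclosed white vertices once the cycle edges are spoken for. An alternative, cleaner route would be to invoke the generalized Temperley bijection of Kenyon--Propp--Wilson (for the superposition of a planar graph and its dual, with one boundary vertex removed) as a citable black box and simply note that connectedness of $G_j$ supplies a spanning tree; given the paper's style of citing \cite{Kas61,TF61,Kas67,RK97} for related facts, I would state the bijection, sketch the cycle-exclusion argument in one paragraph, and reference the literature for the full details.
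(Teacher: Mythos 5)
Your proposal is on the right track and uses the same underlying idea as the paper (Temperley's correspondence between spanning trees of $G_j$ rooted at $b_0$ and perfect matchings of $\sG_j$), but it is organized in a way that spends most of its effort on the direction that is not needed. To prove \emph{existence} of a perfect matching, you only need the map spanning tree $\mapsto$ perfect matching, plus the trivial fact that a connected finite graph has a spanning tree. The paper's proof does exactly this and nothing more: it takes a directed spanning tree $T_{b_0}$ of $G_j$ rooted at $b_0$, forms the complementary set $R_{b_0}^{+}$ of dual edges, shows $R_{b_0}^{+}$ is a forest in which every component contains exactly one dual vertex outside the interior dual, orients each such component toward that exterior vertex, and then reads off a perfect matching of $\sG_j$ from the out-edges of $T_{b_0}$ and $R_{b_0}^{+}$. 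Your steps (2) and (3) — extracting a functional graph from a matching and the Jordan-curve/parity argument excluding cycles — establish injectivity and surjectivity of the map in the other direction, which is needed to count matchings but not to produce one. Meanwhile, your step (4), "show the map is a bijection by constructing the inverse from any spanning tree," is precisely the content that carries the existence proof, and it is the step you leave at one sentence. To make your write-up match what the lemma actually requires, you should expand step (4) (essentially reproducing the paper's argument about $R_{b_0}^{+}$ being a forest with one external root per component, so the dual-side half-edges can be matched consistently) and you may then drop the Euler count and the cycle-exclusion argument entirely, or keep them as optional context.
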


\begin{proof}A directed spanning tree of a graph is a connected acyclic collection of edges of the graph, where each edge has a chosen direction such that each vertex but one has exactly one outgoing edge. The single vertex with no outgoing edges is called the root of the tree.

Recall that $b_0$ be the unique black vertex of $\ol{G}_{j}$ that is not in $\mathcal{G}_{j}$. Consider a directed spanning tree $T_{b_0}$ of $G_{j}$ rooted at $b_0$. Let $R_{b_0}^{+}$ be the  collection of dual edges of $\mathcal{G}_{j}$, such that a dual edge is present in $R_{b_0}^{+}$ if and only if its corresponding primal edge is absent in $T_{b_0}$.

We claim that each connected component of $R_{b_0}^{+}$ contains exactly one dual vertex not in the interior dual graph  $\ol{G}_{j}^{+}$. Indeed, if a component of $R_{b_0}^{+}$ contains at least two dual vertices not in $\ol{G}_{j}^{+}$, then $T_{b_0}$ will not be connected. If a component of $R_{b_0}^{+}$ contains no dual vertices not in $\ol{G}_{j}^{+}$, then $T_{b_0}$ will contain at least one cycle.
Also because $T_{b_0}$ is connected, $R_{b_0}^{+}$ is acyclic; hence $R_{b_0}^{+}$ is a forest; i.e., each component of $R_{b_0}^{+}$ is a tree. Moreover, $R_{b_0}^{+}$ contains all the dual vertices in $\ol{G}_{j}^{+}$  because $T_{b_0}$ is acyclic.

We may let the unique dual vertex in one component of $R_{b_0}^{+}$ not in $\ol{G}_{j}^{+}$ be the root of the tree,  and orient the edges of $R_{b_0}^+$ such that every dual vertex in $R_{b_0}^+\cap\ol{G}_{j}^{+}$ has exactly one outgoing edge; and each dual vertex in $R_{b_0}^+\setminus\ol{G}_{j}^{+}$ has no outgoing edges.

We can obtain a dimer configuration $M$ on $\mathcal{G}_{j}$ from $T_{b_0}$ as follows: an edge $e=(u,v)\in M$ if and only if one of the following two cases occurs
\begin{enumerate}
\item there exists an edge $f=(u,w)\in T_{b_0}$, such that $e\subset f$, and $f$  is the unique outgoing edge at $u$ in $T_{b_0}$; or 
\item there exists an edge $g=(z,v)\in R_{b_0}^{+}$ such that $e\subset g$, and $g$ is the unique outgoing edge at $v$ in $R_{b_0}^+$.
\end{enumerate}
\end{proof}

\begin{remark}The finite graph $\mathcal{G}_j$ constructed as above is said to exhibit a Temperley boundary condition; see Figure \ref{fig:tpl} for an example.
\end{remark}

\subsection{Discrete Laplacian Operator and Discrete Harmonic Functions}

\begin{definition}\label{df17}Suppose Assumption \ref{ap24} holds. Define
\begin{align}
\Delta:=\overline{D}^*\overline{D}\label{dfd}
\end{align}
Let
\begin{align*}
\ol{D}_{j}:=\ol{D}|_{W(\mathcal{G}_j),B(\mathcal{G}_j)}
\end{align*}
i.e., $\ol{D}_{j}$ is the submatrix of $\ol{D}$ with rows restricted to white vertices of $\mathcal{G}_j$ and columns restricted to black vertices of $\mathcal{G}_j$. Note also that the white vertices of $\mathcal{G}_j$ are identified with edges of $G_j$ or edges of $G_j^+$, and black vertices of $\mathcal{G}_j$ are identified with $[V(G_j)\cup V(G_j^+)]\setminus\{b_0\}$, where $b_0$ be the vertex removed from $\ol{G}_j$ to obtain $\mathcal{G}_j$.
 Define the discrete Laplacian operator $\Delta_{G_{j}^+}$ on $G_{j}^+$ by
\begin{align}
\Delta_{G_{j}^+}:=\left.\ol{D}_j^*\ol{D}_j\right|_{V(G_j^+),V(G_j^+)}=\left.
\Delta\right|_{V(G_{j}^+),V(G_{j}^+)}.\label{ddhd}
\end{align}
 
Define the discrete Laplacian operator $\Delta_{G_{j}}$ on $G_{j}$ by
\begin{align}
&\Delta_{G_{h}}:=\left.\ol{D}_j^*\ol{D}_j\right|_{V(G_j)\setminus\{b_0\},V(G_j)\setminus\{b_0\}};
\label{ddhp}
\end{align}

Here $V(G_{j})$ (resp.\ $V(G_{j}^+)$) is the set consisting of all the vertices of $G_{j}$ (resp.\ $G_{j}^+$).
\end{definition}

The next two lemma gives the asymptotics of the entries of $\Delta_{G_{j}}$ and $\Delta_{G_{j}^+}$ as $\epsilon \rightarrow 0$. We first make the following assumption:

\begin{lemma}\label{le11}Suppose Assumption \ref{ap24} holds.
\begin{enumerate}
\item If $b_1,b_2$ are adjacent vertices in $G$, we have
\begin{align*}
\Delta(b_1,b_2)=\Delta_{G}(b_1,b_2)=-\frac{\nu(e)}{\nu(e^+)};
\end{align*}
where $e=\langle b_1,b_2 \rangle$ is the edge in $G$ with endpoints $b_1,b_2$.
\item If $b_1,b_2$ are adjacent vertices in $G^+$, we have
\begin{align*}
\Delta(b_1,b_2)=\Delta_{G^{+}}(b_1,b_2)=-\frac{\nu(e)}{\nu(e^+)};
\end{align*}
where $e=\langle b_1,b_2 \rangle$ is the edge in $G^+$ with endpoints $b_1,b_2$.
\item If $d_{\overline{G}}(b_1,b_2)>2$, then
\begin{align*}
\Delta(b_1,b_2)=0.
\end{align*}
where $d_{\overline{G}}(\cdot,\cdot)$ is the graph distance between two vertices.
\item If $d_{\overline{G}}(b_1,b_2)=2$ and $b_1\in V$, $b_2\in V^+$, then
\begin{align*}
\Delta(b_1,b_2)=0.
\end{align*}
\item For each vertex $b$ of $G$
\begin{align*}
\Delta(b,b)=\Delta_{G}(b,b)=\sum_{e\in E(G):e\sim b}\frac{\nu(e)}{\nu(e^+)}
\end{align*}
where the sum is over all the incident edges of $b$ in $G$.
\item For each vertex $b$ of $G^+$
\begin{align*}
\Delta(b,b)=\Delta_{G^+}(b,b)=\sum_{e\in E(G^+):e\sim b}\frac{\nu(e)}{\nu(e^+)}
\end{align*}
where the sum is over all the incident edges of $b$ in $G^+$.
\end{enumerate}
\end{lemma}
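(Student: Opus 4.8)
The plan is to compute the matrix $\Delta = \overline D^* \overline D$ directly from the definition of $\overline D$, and extract each entry $\Delta(b_1,b_2)$ as a sum over white vertices $w$ adjacent to both $b_1$ and $b_2$ in $\overline G$. The starting point is the formula
\begin{align*}
\Delta(b_1,b_2) = \sum_{w} \overline{\overline D(b_1,w)}\,\overline D(w,b_2),
\end{align*}
where the sum runs over all white vertices $w$, but only those $w$ adjacent to both $b_1$ and $b_2$ contribute. Since $\overline D = S\overline\partial S^*$ and each white vertex $w$ lying on the pair of dual edges $(e,e^+)$ carries the factor $S(w,w) = 1/\sqrt{\nu(e)\nu(e^+)}$, the contribution of a white vertex $w$ (sitting on the crossing of the primal edge $e$ and dual edge $e^+$) to $\Delta(b_1,b_2)$ is
\begin{align*}
\frac{1}{\nu(e)\nu(e^+)}\,\overline{\overline\partial(b_1,w)}\,\overline\partial(w,b_2).
\end{align*}
Here $\overline\partial(b_i,w)$ is a complex number of modulus $\nu(e)$ or $\nu(e^+)$ (depending on whether $b_i \in V$ or $b_i \in V^+$) pointing along the Euclidean ray from $w$ to $b_i$; the key geometric input from Assumption \ref{ap24} is that the primal edge $e$ and dual edge $e^+$ through $w$ are perpendicular.

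First I would handle the diagonal entries (5) and (6). If $b \in V$, the white vertices adjacent to $b$ are exactly the midpoints... rather, the crossing points $w$ of each primal edge $e$ incident to $b$ with its dual $e^+$. Each such $w$ contributes $\frac{1}{\nu(e)\nu(e^+)}\,\overline{\overline\partial(b,w)}\,\overline\partial(w,b) = \frac{1}{\nu(e)\nu(e^+)}\,|\overline\partial(b,w)|^2$. Since $b \in V$, the edge of which $b$ is an endpoint and $w$ an interior point is the primal edge $e$, so $|\overline\partial(b,w)| = \nu(e)$, giving $\frac{\nu(e)^2}{\nu(e)\nu(e^+)} = \frac{\nu(e)}{\nu(e^+)}$. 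Summing over all $e \sim b$ in $G$ gives (5); (6) is symmetric with the roles of $G$ and $G^+$ swapped.

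Next, for the off-diagonal entries: (3) is immediate, since two black vertices at $\overline G$-distance $> 2$ share no common white neighbor, so the sum is empty. For (4), if $b_1 \in V$ and $b_2 \in V^+$ are at distance exactly $2$, there is exactly one white vertex $w$ adjacent to both — namely the crossing of the primal edge $e \ni b_1$ with its dual $e^+ \ni b_2$. Its contribution is $\frac{1}{\nu(e)\nu(e^+)}\,\overline{\overline\partial(b_1,w)}\,\overline\partial(w,b_2)$; the vector $\overline\partial(w,b_1)$ points along $e$ and $\overline\partial(w,b_2)$ points along $e^+$, and these are perpendicular, so the product $\overline{\overline\partial(b_1,w)}\,\overline\partial(w,b_2)$ is purely imaginary. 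I must then argue this single imaginary contribution is actually zero, or — more likely in the intended setup — that the precise claim is that $\Delta$ restricted to $V \times V^+$ entries vanishes by an orientation/sign cancellation argument (perhaps the operator $\overline D^*\overline D$ is block-diagonal with respect to the $V$/$V^+$ splitting by construction of the Dirac operator on a bipartite double graph). This is the step I expect to be the main obstacle: making precise why the $V$--$V^+$ cross terms vanish. I would resolve it by invoking Lemma \ref{le14}: the four arguments $\overline\partial(w_i,b_i)$, $\overline\partial(b_i,w_{i+1})$ around each quadrilateral face are constrained, and the perpendicularity forces the single contribution at $w$ to cancel against — here one must be careful — in fact for $b_1\in V,b_2\in V^+$ at distance $2$ there is only one $w$, so the cancellation must be internal: $\overline{\overline\partial(b_1,w)}\,\overline\partial(w,b_2)$ has zero real part, and $\Delta(b_1,b_2)$ as a Laplacian entry in the Temperley correspondence is real, forcing it to vanish.

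Finally, for (1) and (2): if $b_1, b_2 \in V$ are adjacent in $G$ via the edge $e = \langle b_1,b_2\rangle$, then in $\overline G$ they are at distance $2$, joined through the single white vertex $w$ sitting on $e$ (and on the dual edge $e^+$). The contribution is $\frac{1}{\nu(e)\nu(e^+)}\,\overline{\overline\partial(b_1,w)}\,\overline\partial(w,b_2)$. Now $\overline\partial(w,b_1)$ and $\overline\partial(w,b_2)$ both point along the line $e$ but in \emph{opposite} directions, and each has modulus $\nu(e)$ (since $b_1, b_2 \in V$ and $e$ is the primal edge). Hence $\overline{\overline\partial(b_1,w)}\,\overline\partial(w,b_2) = -\nu(e)^2$, so the contribution is $-\nu(e)^2/(\nu(e)\nu(e^+)) = -\nu(e)/\nu(e^+)$, which is (1); (2) is the dual statement. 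The equalities $\Delta(b_1,b_2) = \Delta_G(b_1,b_2)$ etc.\ are then immediate from the definitions \eqref{ddhd}, \eqref{ddhp} of $\Delta_{G_j}$ and $\Delta_{G_j^+}$ as the appropriate diagonal blocks of $\Delta$, together with parts (3) and (4) which show there are no cross terms to worry about. I would organize the writeup by first recording the master formula for $\Delta(b_1,b_2)$ as a sum of per-white-vertex contributions, then dispatching the six cases in the order (3), (4), (1)/(2), (5)/(6).
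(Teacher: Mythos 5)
Your setup is sound: writing $\Delta(b_1,b_2)=\sum_w \overline{\overline D(w,b_1)}\,\overline D(w,b_2)$ with the sum over common white neighbors, factoring out the $S$-weights, and reading off parts (1), (2), (3), (5), (6) from the geometry of $\overline\partial$ is exactly the right computation, and those five cases are handled correctly. The equalities $\Delta(b_1,b_2)=\Delta_G(b_1,b_2)$ etc.\ also follow as you say from the block definitions \eqref{ddhd}, \eqref{ddhp} once the cross terms are known to vanish.

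The gap is in part (4), and it is a real one. You assert that if $b_1\in V$ and $b_2\in V^+$ are at $\overline G$-distance $2$ there is exactly one common white neighbor $w$; this is false. A white vertex adjacent to $b_1$ corresponds to a primal edge $e$ incident to $b_1$, and it is also adjacent to $b_2$ exactly when $e$ lies on the boundary of the face $b_2$. If $d_{\overline G}(b_1,b_2)=2$, then $b_1$ is a corner vertex of the face $b_2$, and there are precisely \emph{two} boundary edges of that face meeting at $b_1$, hence two common white neighbors $w_1,w_2$ (they are the two white corners of the quadrilateral face of $\overline G$ having $b_1$ and $b_2$ as its diagonally opposite black vertices). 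Each contribution $\overline{\overline\partial(w_i,b_1)}\,\overline\partial(w_i,b_2)/\bigl(\nu(e_i)\nu(e_i^+)\bigr)$ is, as you observe, a unit complex number equal to $\pm\mathbf{i}$ because of perpendicularity; the point is that the two contributions carry opposite signs. Going around the quadrilateral $w_1 b_1 w_2 b_2$, the right angles at $w_1$ and $w_2$ open in opposite rotational senses (the quadrilateral is inscribed in a circle with diameter $b_1 b_2$), so one contribution is $+\mathbf{i}$ and the other is $-\mathbf{i}$, and they cancel. That two-term cancellation is the entire content of (4) and is what the paper proves.

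Your proposed workaround --- that the single (in your count) imaginary contribution must vanish because ``$\Delta(b_1,b_2)$ as a Laplacian entry in the Temperley correspondence is real'' --- is circular: the realness and block-diagonality of $\Delta=\overline D^*\overline D$ with respect to the $V$/$V^+$ splitting is precisely what part (4) establishes, so you cannot assume it. Replace that paragraph with the two-white-vertex cancellation above and the proof is complete.
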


\begin{proof}
(1), (2) and (3) (5) (6) are straightforward to be verified; it remains to prove (4).

Assume $d_{\overline{G}}(b_1,b_2)=2$ and $b_1,b_2$ are not adjacent in $G$. In this case exactly one of $b_1$ and $b_2$ is a vertex of $G$ and the other is a vertex of $G^+$. Without loss of generality, assume $b_1$ is a vertex of $G$ and $b_2$ is a vertex of $G^+$. Then we can find two white vertices $w_1$ and $w_2$, such that each one of $w_1$ and $w_2$ is adjacent to both $b_1$ and $b_2$ in $\overline{G}$.
Assume $(b_1,w_1)$ is part of an edge $(b_1,b_1')$ in $G$, $(b_2,w_1)$ is part of an edge $(b_2,b_2')$ in $G^+$, $(b_1,w_2)$ is part of an edge $(b_1,b_1'')$ in $G$, $(b_2,w_2)$ is part of an edge $(b_2,b_2'')$ in $G^+$. Let $\alpha_1$ (resp.\ $\alpha_2$) be the unit vector pointing from $b_1$ to $b_1'$ (resp.\ $b_2$ to $b_2'$); let $\alpha_1'$ (resp.\ $\alpha_2'$) be the unit vector pointing from $b_1$ to $b_1''$ (resp.\ $b_2$ to $b_2''$).

Then we have
\begin{align*}
\Delta(b_1,b_2)&=[\overline{D}^*(b_1,w_1)][\overline{D}(w_1,b_2)]+
[\overline{D}^*(b_1,w_2)][\overline{D}(w_2,b_2)]\\
&=\sqrt{\frac{\nu(b_1,b_1')}{\nu(b_2,b_2')}}\cdot 
\sqrt{\frac{\nu(b_2,b_2')}{\nu(b_1,b_1')}}\overline{\alpha}_1\alpha_2+
\sqrt{\frac{\nu(b_1,b_1'')}{\nu(b_2,b_2'')}}\cdot 
\sqrt{\frac{\nu(b_2,b_2'')}{\nu(b_1,b_1'')}}\overline{\alpha}_1'\alpha_2'\\
&=\overline{\alpha}_1\alpha_2+\overline{\alpha}_1'\alpha_2'
\end{align*}
Note also that either
\begin{align*}
\alpha_2=\mathbf{i}\alpha_1;\qquad \alpha_2'=-\mathbf{i}\alpha_1'
\end{align*}
or 
\begin{align*}
\alpha_2=-\mathbf{i}\alpha_1;\qquad \alpha_2'=\mathbf{i}\alpha_1'.
\end{align*}
where $\mathbf{i}^2=-1$ is the imaginary unit. Then (4) follows.
\end{proof}

\begin{lemma}Suppose Assumption \ref{ap24} holds. Let $\Delta_{G_{j
}}$, $\Delta_{G_{j}^+}$ be defined as in (\ref{ddhd}) and (\ref{ddhp}). Then $\Delta_{G_{j}}$ is the Neumann Laplacian operator on $G_{j
}$ and $\Delta_{G_{j}^+}$ is the Dirichlet Laplacian operator on $G_{j}^+$.
\end{lemma}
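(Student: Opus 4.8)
The plan is to unwind the definitions and identify the diagonal and off-diagonal entries of $\Delta_{G_j}$ and $\Delta_{G_j^+}$ computed in Lemma \ref{le11} with the standard Neumann (reflecting) and Dirichlet (absorbing) Laplacians on the respective graphs. First I would recall that by Definition \ref{df17}, $\Delta_{G_j^+} = \ol{D}_j^*\ol{D}_j|_{V(G_j^+),V(G_j^+)}$ and $\Delta_{G_j} = \ol{D}_j^*\ol{D}_j|_{V(G_j)\setminus\{b_0\},V(G_j)\setminus\{b_0\}}$, where $\ol{D}_j$ is the restriction of $\ol{D}$ to white vertices of $\mathcal{G}_j$ and black vertices of $\mathcal{G}_j$; crucially, $\mathcal{G}_j$ is obtained from $\ol{G}_j$ by deleting the boundary black vertex $b_0$, and $\ol{G}_j$ uses the \emph{interior} dual graph $G_j^+$ (each dual vertex corresponds to a bounded face of $G_j$, so no ``outer face'' vertex is present).

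The key point for the dual graph is a bookkeeping observation: for a dual vertex $b\in V(G_j^+)$, the entry $(\ol{D}_j^*\ol{D}_j)(b,b) = \sum_w |\ol{D}(w,b)|^2$ where the sum is over white vertices $w$ of $\mathcal{G}_j$ adjacent to $b$. Each such white vertex corresponds to a dual edge $e^+$ of $G_j^+$ incident to $b$ \emph{that actually appears in $G_j^+$}; a dual edge across a boundary primal edge of $G_j$ is simply not an edge of the interior dual graph, so it contributes nothing. Hence $\Delta_{G_j^+}(b,b) = \sum_{e^+\sim b, e^+\in E(G_j^+)} \nu(e)/\nu(e^+)$, which by Lemma \ref{le11}(6) is exactly what was computed, and this is the Dirichlet Laplacian: the missing contributions from boundary edges are precisely the ``deficiency'' at boundary dual vertices that makes the operator absorbing — equivalently, $\Delta_{G_j^+}$ is the restriction to interior faces of the full-plane Laplacian $\Delta$, with the rows/columns of exterior faces deleted, which is by definition the Dirichlet Laplacian on $G_j^+$. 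For the off-diagonal entries, Lemma \ref{le11}(1)--(4) already shows $\Delta_{G_j^+}(b_1,b_2) = -\nu(e)/\nu(e^+)$ when $b_1\sim b_2$ in $G_j^+$ and $0$ otherwise, matching the standard weighted Laplacian form.

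For the primal graph, I would argue symmetrically but note the asymmetry caused by deleting $b_0$ rather than deleting exterior vertices. A primal vertex $b\in V(G_j)\setminus\{b_0\}$ has $(\ol{D}_j^*\ol{D}_j)(b,b) = \sum_w |\ol{D}(w,b)|^2$ over white vertices $w$ of $\mathcal{G}_j$ adjacent to $b$, and \emph{every} primal edge of $G_j$ incident to $b$ gives rise to such a white vertex that remains in $\mathcal{G}_j$ (deleting $b_0$ only removes edges incident to $b_0$, not to $b$), so $\Delta_{G_j}(b,b) = \sum_{e\sim b, e\in E(G_j)}\nu(e)/\nu(e^+)$ with the sum over \emph{all} incident edges, by Lemma \ref{le11}(5). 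This matches the degree of $b$ in the graph $G_j$ with no deficiency — which is exactly the Neumann (reflecting) condition: no mass is lost at the boundary of $G_j$. The off-diagonal entries are again read off from Lemma \ref{le11}(1),(3),(4). Finally I would observe that $\Delta_{G_j}$ is (up to the irrelevant deletion of the row/column at $b_0$) the full primal Laplacian restricted to $V(G_j)$ with the convention that boundary vertices simply have fewer neighbors but no killing term, which is the definition of the Neumann Laplacian on the subgraph $G_j$.

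The main obstacle I anticipate is not any computation — those are all delivered by Lemma \ref{le11} — but rather pinning down precisely which edges and faces survive in the restriction: one must carefully distinguish (i) the interior dual graph $G_j^+$ versus the full dual (this is what produces the Dirichlet boundary behavior on the dual side), (ii) the effect of removing the single boundary vertex $b_0$ from $\ol{G}_j$ (which does not affect the diagonal entries at other primal vertices, hence produces Neumann behavior on the primal side), and (iii) matching these combinatorial facts to the textbook definitions of Neumann and Dirichlet Laplacians on a graph with boundary. Once the correspondence ``deleting exterior dual faces $\leftrightarrow$ Dirichlet'' and ``keeping all primal boundary vertices as reduced-degree vertices $\leftrightarrow$ Neumann'' is stated cleanly, the lemma follows immediately from Lemma \ref{le11}.
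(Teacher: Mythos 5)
Your argument on the primal side is correct, but the dual side contains a genuine error that inverts the mechanism. You claim that for an interior dual vertex $b$, the diagonal entry is $\Delta_{G_j^+}(b,b)=\sum_{e^+\sim b,\ e^+\in E(G_j^+)}\nu(e)/\nu(e^+)$, on the grounds that ``a dual edge across a boundary primal edge of $G_j$ is simply not an edge of the interior dual graph, so it contributes nothing.'' This is false. A white vertex $w$ of $\ol{G}_j$ corresponds to an \emph{edge of $G_j$}, and boundary edges of $G_j$ are still edges of $G_j$; moreover, the adjacency $w\sim b$ in $\ol{G}$ (Definition \ref{dfpds}) only requires that the dual edge of $G^+$ on which $w$ sits be incident to $b$ --- it does not require that dual edge to lie in $G_j^+$. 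So boundary-edge white vertices \emph{are} in $\mathcal{G}_j$, \emph{are} adjacent to the interior dual vertex on their interior side, and \emph{do} contribute. (If they were excluded, the count $|W(\mathcal{G}_j)|=|B(\mathcal{G}_j)|$ forced by Euler's formula would break, and Lemma \ref{le33} could not produce a perfect matching.) Consequently the correct diagonal entry is the \emph{full} degree $\sum_{e^+\sim b,\ e^+\in E(G^+)}\nu(e)/\nu(e^+)$, exactly as Lemma \ref{le11}(6) states (read it again: the sum there is over all incident edges in $G^+$, not $G_j^+$).

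This also means your intuition ``missing contributions $\Rightarrow$ deficiency $\Rightarrow$ absorbing'' is exactly backwards: a \emph{reduced} diagonal with matching off-diagonals is the Neumann/reflecting Laplacian (which is what happens on the primal side, where you correctly identify it), whereas the Dirichlet/absorbing Laplacian has the \emph{full} diagonal with the off-diagonal couplings to exterior vertices deleted, so that $\Delta_{G_j^+}f(s)$ contains a leftover killing term $\sum_{s''\in V(G^+)\setminus V(G_j^+),\,s''\sim s}\frac{\nu(ss'')}{\nu((ss'')^+)}f(s)$. Your own final sentence --- that $\Delta_{G_j^+}$ is the restriction of the full-plane $\Delta$ with exterior rows and columns deleted --- is the right characterization, but it contradicts the reduced-degree formula you wrote two sentences earlier. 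The paper's proof avoids this by directly writing out $[\Delta_{G_j^+}f](s)$ at a boundary dual vertex and exhibiting the extra killing term explicitly, and doing the analogous computation on the primal side to show no such term arises there; that is the step you need to repair.
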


\begin{proof}By Lemma \ref{le11}, it suffices to verify the boundary conditions. Let $f$ be a function defined on the vertices of $G_{j}$. Let $b$ be a boundary vertex of $G_{j}$. Explicit computations show that
\begin{align*}
[\Delta_{G_{j}}f](b)=\sum_{b'\in V(G_{j}):b'\sim b}\frac{\nu(bb')}{\nu([bb']^+)}[f(b)-f(b')]
\end{align*}
Hence in order to make $[\Delta_{G_{j}}f]=0$, it suffices that $f$ is harmonic at every vertex of $G_{j}$ and $f(u)=f(b)$ for all vertices $u\sim b$ and $u\in V(G
)\setminus V(G_{j})$. It follows that $\Delta_{G_{j}}$ is the Neumann Laplacian operator on $G_{j}$.

Let $g$ be a function defined on the vertices of $G_{j}^+$. Let $s$ be a boundary vertex of $G_{j}^+$, which corresponds to a boundary face of $G_{j}$. Explicit computations show that
\begin{align*}
[\Delta_{G_{j}^+}f](b)=\sum_{s'\in V(G_{j}^+):s'\sim s}\frac{\nu(ss')}{\nu([ss']^+)}[f(b)-f(b')]+\sum_{s''\in V(G^+)\setminus V(G_{j}^+):s''\sim s}
\frac{\nu(ss'')}{\nu([ss'']^+)}f(b)
\end{align*}
Hence in order to make $[\Delta_{G_{j}^+}f]=0$, it suffices that $f$ is harmonic at every vertex of $G_{j}^+$ and $f(u)=0$ for all vertices $u\sim s$ and $u\in V(G^+)\setminus V(G_{j}^+)$. It follows that $\Delta_{G_{j}^+}$ is the Dirichlet Laplacian operator on $G_{j
}^+$.
\end{proof}

\begin{definition}Let $G_{\Lambda}\subseteq G$ be a simply-connected subgraph of $G$ induced by a set of vertices $\Lambda\subseteq V$. Let $\Lambda_{o}$ consist of all the vertices in $\Lambda$ in which all of whose adjacent vertices in $G$ are in $\Lambda$ as well. A discrete harmonic function on $G_{\Lambda}$ is a function $f:\Lambda\rightarrow \CC$ satisfying 
\begin{align}
\Delta f(v)=0,\qquad\forall v\in \Lambda_o.\label{dhs}
\end{align}

We can similarly define the discrete harmonic function on $G^+$.
\end{definition}

\begin{lemma}\label{l310}(Maximal modulus principle for discrete harmonic functions)Let $G_{\Lambda}\subseteq G$ be a finite simply-connected subgraph of $G$ induced by a set of vertices $\Lambda\subseteq V$. Let $f:\Lambda\rightarrow \CC$ be a discrete harmonic function on $G_{\Lambda}$. Then
\begin{align*}
\max_{v\in \Lambda}|f(v)|=\max_{u\in [\Lambda\setminus \Lambda_o]}|f(u)|
\end{align*}
\end{lemma}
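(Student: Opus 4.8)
The plan is to run the classical maximum-modulus argument, the only new input being an explicit averaging identity at interior vertices extracted from Lemma \ref{le11}. First I would observe that for $v\in\Lambda_o$ every $G$-neighbour of $v$ already belongs to $\Lambda$, and that by Lemma \ref{le11}(3)--(4) the matrix $\Delta$ satisfies $\Delta(v,u)=0$ whenever $u\neq v$ is not adjacent to $v$ in $G$ (in particular $\Delta(v,u)=0$ for every dual vertex $u$, since such a $u$ is at $\ol{G}$-distance $\geq 2$ from $v$). Combining this with Lemma \ref{le11}(1) and (5), the equation $\Delta f(v)=0$ rearranges to
\begin{align*}
f(v)=\sum_{u\sim v}p_{v,u}\,f(u),\qquad p_{v,u}:=\frac{\nu(\langle v,u\rangle)}{\nu(\langle v,u\rangle^{+})\,\Delta(v,v)}>0,\qquad \sum_{u\sim v}p_{v,u}=1,
\end{align*}
so that $f(v)$ is a genuine convex combination of the values of $f$ at the neighbours of $v$, all of which lie in $\Lambda$.

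Next I would propagate the maximum. Set $M:=\max_{v\in\Lambda}|f(v)|$, which is attained because $\Lambda$ is finite, and let $S:=\{v\in\Lambda:\ |f(v)|=M\}$, a nonempty set. If some $v\in S$ were to lie in $\Lambda_o$, then $M=|f(v)|\le\sum_{u\sim v}p_{v,u}|f(u)|\le M$ forces, since all $p_{v,u}>0$, that $|f(u)|=M$ for every neighbour $u$ of $v$; hence every $G$-neighbour of a vertex of $S\cap\Lambda_o$ again lies in $S$. Assume for contradiction that $S\cap(\Lambda\setminus\Lambda_o)=\varnothing$, i.e. $S\subseteq\Lambda_o$. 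Then $S$ is stable under passing to neighbours inside $G_\Lambda$, and since $G_\Lambda$ is connected (being simply-connected) and $S\neq\varnothing$, we get $S=\Lambda$, hence $\Lambda_o=\Lambda$. But $\Lambda_o=\Lambda$ means no vertex of $\Lambda$ has a neighbour outside $\Lambda$, so $G_\Lambda$ is a union of connected components of $G$; as $G$ is connected and infinite (Assumption \ref{ap24}), this contradicts the finiteness of $\Lambda$ (and shows incidentally that $\Lambda\setminus\Lambda_o\neq\varnothing$, so the right-hand side of the statement makes sense). Therefore $S$ meets $\Lambda\setminus\Lambda_o$, giving $\max_{u\in[\Lambda\setminus\Lambda_o]}|f(u)|\ge M$; the reverse inequality is trivial.

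I do not expect a genuine obstacle here; the argument is routine once the averaging identity is in place. The two points that need a little care are: (i) in the complex-valued setting the triangle inequality only delivers $|f(u)|=M$, not that the values $f(u)$ are collinear, but only the statement about moduli is needed; and (ii) one must invoke Lemma \ref{le11}(3)--(4) precisely so that the harmonicity equation at an interior primal vertex $v\in\Lambda_o$ involves \emph{only} values of $f$ on $\Lambda$ (and in particular does not couple to dual vertices), which is what makes ``discrete harmonic on $G_\Lambda$'' a self-contained condition. The same argument, with Lemma \ref{le11}(2),(6) in place of (1),(5), proves the analogous maximal modulus principle on $G^{+}$.
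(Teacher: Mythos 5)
Your proof is correct and follows the same strategy as the paper: derive the averaging identity $f(v)=\sum_{u\sim v}p_{v,u}f(u)$ at interior vertices (using Lemma~\ref{le11}) and then propagate the maximum. The paper's proof stops after noting $|f(v)|\le\max_{u\sim v}|f(u)|$ and says ``the lemma follows,'' whereas you spell out the propagation step (the set $S$ of maximizers, its stability under neighbors, and the connectedness/finiteness argument ruling out $S\subseteq\Lambda_o$), which is a useful elaboration of the same argument rather than a different route.
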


\begin{proof}Note that for each $v\in \Lambda_o$, by (\ref{dhs}) we have
\begin{align*}
f(v)=\sum_{u\in V: u\sim v\ \mathrm{in}\ G}-\frac{\Delta(v,u)}{\Delta(v,v)}f(u)
\end{align*}
with
\begin{align*}
-\frac{\Delta(v,u)}{\Delta(v,v)}>0,\ \mathrm{and}\ 
\sum_{u\in V: u\sim v\ \mathrm{in}\ G}-\frac{\Delta(v,u)}{\Delta(v,v)}=1.
\end{align*}
Then we obtain for each $v\in \Lambda_o$,
\begin{align*}
|f(v)|\leq\max_{u\in V: u\sim v\ \mathrm{in}\ G}|f(u)|.
\end{align*}
Then the lemma follows.
\end{proof}

\begin{lemma}\label{le311}There exists at most one $\ol{D}^{-1}:E\times [V\cup V^+]\rightarrow \CC$ satisfying both of the following conditions
\begin{enumerate}
\item $\ol{D}[\ol{D}]^{-1}=I$; and
\item there exists a function $f:\NN\rightarrow \RR^+$, such that
\begin{align*}
\lim_{m\rightarrow\infty}f(m)=0;
\end{align*}
and
\begin{align*}
\left|\ol{D}^{-1}(b,w)\right|\leq f(d_{\ol{G}}(b,w))
\end{align*}
\end{enumerate}
\end{lemma}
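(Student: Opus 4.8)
The plan is to prove uniqueness by showing that the difference of two such inverses is identically zero, using the maximum modulus principle for discrete harmonic functions (Lemma \ref{l310}) together with the decay condition (2). Suppose $\ol{D}^{-1}_1$ and $\ol{D}^{-1}_2$ both satisfy (1) and (2), and set $H:=\ol{D}^{-1}_1-\ol{D}^{-1}_2:E\times[V\cup V^+]\to\CC$. From condition (1) we have $\ol{D}H=0$, i.e.\ for every white vertex $w\in E$ and every black vertex $b\in V\cup V^+$,
\begin{align*}
\sum_{b'\sim w}\ol{D}(w,b')H(b',b)=0.
\end{align*}
First I would fix the black column $b$ and regard $g(\cdot):=H(\cdot,b)$ as a function on the black vertices $V\cup V^+$. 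The relation $\ol{D}H=0$ says exactly that $g$ lies in the kernel of every row of $\ol{D}$; applying $\ol{D}^*$ on the left shows $\Delta g=\ol{D}^*\ol{D}g=0$, so $g$ is discrete harmonic at \emph{every} vertex of $G$ and of $G^+$ simultaneously (with the caveat that one must check that summing the white-vertex relations around a black vertex reproduces the Laplacian at that vertex, which follows from Definition \ref{df17} and Lemma \ref{le11}). Meanwhile, condition (2) applied to both $\ol{D}^{-1}_1$ and $\ol{D}^{-1}_2$ gives $|g(b')|\le 2f(d_{\ol G}(b',b))\to 0$ as $d_{\ol G}(b',b)\to\infty$; so $g$ is a globally harmonic function on each of $G$ and $G^+$ that tends to $0$ at infinity.

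The core step is then a Liouville-type argument: a discrete harmonic function on an infinite graph that vanishes at infinity must be identically zero. I would argue this directly from Lemma \ref{l310}: for any $\eps>0$, pick $R$ so large that $|g(u)|<\eps$ whenever $d_{\ol G}(u,b)\ge R$; then apply the maximum modulus principle to the restriction of $g$ to the finite simply-connected subgraph $G_{\Lambda}$ induced by the ball of radius $R$ around $b$ (one must take $R$ along a subsequence for which this ball is simply connected, which is possible by the planarity and local finiteness assumptions in Assumption \ref{ap24}, or alternatively exhaust by the simply-connected subgraphs $G_j$ already in play). Since $g$ is harmonic on the interior $\Lambda_o$, its maximum modulus over $\Lambda$ is attained on $\Lambda\setminus\Lambda_o$, where $|g|<\eps$; hence $|g(b)|<\eps$. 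As $\eps$ was arbitrary, $g(b)=0$, and since $b$ was arbitrary, $H\equiv 0$ on black vertices. Running the same computation on the white side—using $\ol D\ol D^{-1}=I$ to express $H(w,b)$ for a white vertex $w$ in terms of values $H(b',b)$ with $b'$ black and adjacent to $w$—forces $H\equiv 0$ entirely, so $\ol D^{-1}_1=\ol D^{-1}_2$.

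The main obstacle I anticipate is the Liouville step: the maximum modulus principle in Lemma \ref{l310} is stated only for \emph{finite} simply-connected subgraphs, and a function harmonic on all of $G$ but merely bounded need not vanish (there can be nonconstant bounded harmonic functions on transient graphs). What rescues the argument here is the stronger hypothesis that $g\to 0$ at infinity, not merely boundedness—this is precisely what lets the finite maximum principle be applied on larger and larger balls with boundary values shrinking to $0$. A secondary technical point is the passage between the white-vertex kernel relations $\ol D H=0$ and genuine discrete harmonicity of $g$ at every black vertex; this requires checking that the two "halves" of $\overline{G}$ (primal and dual) interact correctly, i.e.\ that the off-diagonal Laplacian entries between $V$ and $V^+$ vanish, which is exactly the content of Lemma \ref{le11}(3)–(4) and rests on the orthogonality of dual edge pairs in Assumption \ref{ap24}. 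Once these two points are in place the rest is routine.
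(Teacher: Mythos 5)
Your proof is correct and follows the same route as the paper: take $H=\ol{D}_1^{-1}-\ol{D}_2^{-1}$, use condition (1) to place each column of $H$ in the kernel of $\ol{D}$ so that $\Delta H=\ol{D}^*\ol{D}H=0$, and then combine the decay hypothesis (2) with the maximum modulus principle of Lemma~\ref{l310}, applied on an exhausting family of finite simply-connected subgraphs whose boundary values tend to zero, to force $H\equiv 0$. The paper's proof is essentially a three-line version of yours, and your remarks that Lemma~\ref{le11}(3)--(4) is what lets the $\Delta$-harmonicity split cleanly across $G$ and $G^+$, and that decay to zero (not mere boundedness) is what saves the Liouville step on a transient graph, are precisely the points the paper leaves implicit; the one superfluous piece is the closing ``white side'' pass, since $H$ is a rectangular matrix and once every column has been shown to vanish the two inverses already coincide.
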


\begin{proof}Assume there exist two functions $K_1:E\times [V\cup V^+]\rightarrow\CC$ and $K_2:E\times [V\cup V^+]\rightarrow\CC$ satisfying both (1) and (2). Then 
\begin{align*}
\Delta(K_1-K_2)=\ol{D}^*[\ol{D}(K_1-K_2)]=0;
\end{align*}
which implies that $K_1-K_2$ is discrete harmonic. Then $K_1-K_2=0$ follows from (2) and Lemma \ref{l310}.
\end{proof}

\subsection{Discrete Green's Function on an Infinite Transient Graph}

\begin{definition}We say a random walk on a graph is recurrent if it visits its starting position infinitely often with probability one and transient if it visits its starting point finitely often with strictly positive probability.

Let $x$ be the starting point of the random walk $\{X_n\}_{n=0}^{\infty}$. Let $N_x$ be the total time of visits of the random walk at $x$ and $\theta_x$ be the first return time to $x$, i.e.
\begin{align*}
\theta_x:=\inf\{n\geq 1: X_n=x,\mathrm{if}\ X_0=x\}
\end{align*}
By writing
\begin{align*}
&\mathbb{P}(N_x=1)=\mathbb{P}(\theta_x=\infty);\\
&\mathbb{P}(N_x=n)=\mathbb{P}(\theta_x<\infty)^{n-1}\mathbb{P}(\theta_x=\infty).
\end{align*}
We see that a random walk starting at $x$ is transient if and only if $\mathbb{E}N_x<\infty$, in this case the random walk a.s.~visits $x$ finitely many times; see also \cite{mb}.
\end{definition}

\begin{definition}\label{df12}For $u,v\in V(G^+)$, define the discrete Green's function by
\begin{align*}
G(u,v)=\sum_{n=0}^{\infty}\mathbb{P}^{u}(X_n=v)=
\mathbb{E}N_u
\end{align*}
where $X_n$ is the random walk on $G^+$ starting at $u$ with transition probability
\begin{align}
\mathbb{P}^u(X_{n+1}=v|X_n=y)=\begin{cases}0&\mathrm{If}\ y\ \mathrm{and}\ v\ \mathrm{are\ not\ adjacent\ in}\ G^+\\ \left|\frac{
\Delta(y,v)
}{\Delta(y,y)} \right|&\mathrm{otherwise}\end{cases}\label{drwd}
\end{align}
Similarly we can define a random walk and discrete Green's function on $G$.
\end{definition}

One can verify the following lemma by straightforward computations.
\begin{lemma}\label{l14}Let the discrete Green's function on $G^+$ be defined as in Definition $\ref{df12}$; write
\begin{align}
F^v(u):=\frac{G(u,v)}{|\Delta(v,v)|}.\label{dfuv}
\end{align}
then for each fixed u
\begin{align*}
[\Delta F^v](u)=\delta_v(u)
\end{align*}
where 
\begin{align*}
\delta_v(u)=\begin{cases}1&\mathrm{If}\ u=v\\0&\mathrm{Otherwise}.\end{cases}
\end{align*}
\end{lemma}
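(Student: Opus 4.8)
The plan is to reduce the claim to the first-step (Markov) decomposition of the Green's function, after recognizing that the transition matrix of the walk in Definition \ref{df12} is precisely the row-normalized negative off-diagonal part of $\Delta$. First I would record, using Lemma \ref{le11}(2),(3),(6), the entries of $\Delta$ on the index set $V(G^+)$: for $y\sim v$ in $G^+$ one has $\Delta(y,v)=-\nu(e)/\nu(e^+)<0$, where $e=\langle y,v\rangle$; $\Delta(y,v)=0$ whenever $y\ne v$ are non-adjacent in $G^+$ (two distinct non-adjacent faces of $G$ share no edge, hence no common white neighbour in $\ol G$, so $d_{\ol G}(y,v)>2$); and $\Delta(y,y)=\sum_{e\sim y}\nu(e)/\nu(e^+)=-\sum_{v\sim y}\Delta(y,v)>0$. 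Writing $D:=\mathrm{diag}\big(\Delta(y,y)\big)_{y\in V(G^+)}$ and
\[
P(y,v):=-\frac{\Delta(y,v)}{\Delta(y,y)}=\left|\frac{\Delta(y,v)}{\Delta(y,y)}\right|,
\]
the matrix $P$ is nonnegative with each row summing to $1$, so it coincides with the transition matrix of $\{X_n\}$ in Definition \ref{df12}. This yields the operator identity $\Delta=D(I-P)$ acting on functions on $V(G^+)$.

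Next I would use $G(u,v)=\sum_{n\ge 0}\mathbb{P}^u(X_n=v)$ together with the Markov property $\mathbb{P}^u(X_{n+1}=v)=\sum_{w\sim u}P(u,w)\,\mathbb{P}^w(X_n=v)$. Summing over $n\ge 0$ and isolating the $n=0$ term gives
\[
G(u,v)=\delta_v(u)+\sum_{w\sim u}P(u,w)\,G(w,v),
\]
that is, $\big[(I-P)G(\cdot,v)\big](u)=\delta_v(u)$. Applying $D$ on the left produces $\big[\Delta G(\cdot,v)\big](u)=\Delta(u,u)\,\delta_v(u)=\Delta(v,v)\,\delta_v(u)=|\Delta(v,v)|\,\delta_v(u)$, the last equality because $\Delta(v,v)>0$. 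Dividing by $|\Delta(v,v)|$ and recalling $F^v=G(\cdot,v)/|\Delta(v,v)|$ gives $[\Delta F^v](u)=\delta_v(u)$, as desired.

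The only point genuinely requiring care — and the place I expect the main subtlety — is justifying that the series above converge absolutely, so that interchanging the sum over $n$ with the finite sum over neighbours $w$, and rearranging $\Delta=D(I-P)$, are legitimate. This is where transience enters: transience of the walk on $G^+$ forces $G(u,v)=\mathbb{E}N_u<\infty$, after which every manipulation is a finite one. Alternatively, one may first prove the identity on a finite exhaustion $G^+_n\uparrow G^+$, where $\Delta_n=D_n(I-P_n)$ is invertible and $[\Delta_n F^v_n](u)=\delta_v(u)$ is pure linear algebra, and then pass to the limit using monotone convergence of $\sum_n\mathbb{P}^u(X_n=v)$ (truncated walks). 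Throughout, one keeps the cases $u=v$ and $u\ne v$ separate; the normalizing factor $|\Delta(v,v)|$ in the definition of $F^v$ is exactly what is needed to turn $\Delta(v,v)\delta_v$ into $\delta_v$.
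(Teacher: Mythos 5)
Your argument is correct and is the same one the paper has in mind (the paper only says the lemma "can be verified by straightforward computations," but the parallel Dirichlet statement, Lemma \ref{le217}, is proved exactly by the telescoping/Markov-property computation you give). Your matrix form $\Delta=D(I-P)$ on $V(G^+)$ combined with the first-step identity $(I-P)G(\cdot,v)=\delta_v$ is just a cleaner packaging of the same telescoping sum, and your remark on transience (equivalently Assumption \ref{ap16}) correctly isolates the only analytic point needed to make all sums finite; the block-diagonality of $\Delta$ across $V$ and $V^+$ (Lemma \ref{le11}(3),(4)) is what lets you treat $\Delta$ purely as an operator on functions on $V(G^+)$, and you use it implicitly in the right way.
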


\subsection{Dirichlet Green's Function on a Finite Graph}
\begin{definition}(See also Definition 1.6 in \cite{wp} and Definition 1.1 in \cite{nbl})\label{df214}Let the random walk on $G^+=(V^+,E^+)$ be defined as in (\ref{drwd}). For $u\in V(G_{j}^+)$, let $\tau^{u}_j$ be the first time when a random walk on $G^+$ visits a vertex in $V^+\setminus V(G_{j}^+)$

For $u,v\in V(G_{j}^+)$, define the discrete Dirichlet Green's function on $G_{j}^+$ to be
\begin{align*}
G_{j,d}(u,v)=\sum_{n=0}^{\infty}\mathbb{P}^u(X_n=v;\tau^{u}_j>n)
\end{align*}
\end{definition}

\begin{assumption}\label{ap16}Suppose the random walk on $G^+$ defined by (\ref{drwd}) satisfies
\begin{align*}
\sum_{n=0}^\infty \mathbb{P}^u(X_n=v)<\infty,\ \forall u,v\in V(G^+)
\end{align*}
\end{assumption}

\begin{lemma}\label{l17}Suppose Assumption \ref{ap16} holds. Let $\{G_{n}^+\}_{n=1}^{\infty}$ be a sequence of simply-connected finite subgraphs of $G^+$ induced by subset of vertices such that
\begin{align*}
G_{1}^+\subseteq G_{2}^+\subseteq \ldots\subseteq G_{n}^+\subseteq\ldots
\end{align*}
and
\begin{align}
\cup_{n=1}^{\infty}G_n^+=G^+\label{cw}
\end{align}
Then for any $u,v\in V(G_{\epsilon}^+)$
\begin{align*}
\lim_{n\rightarrow\infty}G_{n,d}(u,v)=G(u,v)
\end{align*}
where $G(u,v)$ is defined as in (\ref{df12}).
\end{lemma}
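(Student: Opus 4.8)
The plan is to couple all of the Dirichlet Green's functions $G_{n,d}$ to the \emph{same} random walk $\{X_k\}_{k\ge 0}$ on $G^+$ started at $u$, namely the one defined by (\ref{drwd}) and used in Definition \ref{df214}, so that the only object varying with $n$ is the killing time $\tau_n^u$ (the first hitting time of $V^+\setminus V(G_n^+)$), and then run a monotone convergence argument. First I would record the monotonicity of $\tau_n^u$: since $G_n^+\subseteq G_{n+1}^+$ implies $V^+\setminus V(G_{n+1}^+)\subseteq V^+\setminus V(G_n^+)$, the walk hits the larger complement no later than the smaller one, so $\tau_n^u\le\tau_{n+1}^u$, i.e.\ $n\mapsto\tau_n^u$ is non-decreasing. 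Next I would argue $\tau_n^u\to\infty$ almost surely: along any trajectory and for any fixed $m$, the set $\{X_0,\dots,X_m\}$ is finite, hence contained in $V(G_n^+)$ for all large $n$ by (\ref{cw}), so $\tau_n^u>m$ for all large $n$; since $m$ is arbitrary, $\tau_n^u\uparrow\infty$.

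Then I would rewrite $G_{n,d}$ as an expected occupation count. From Definition \ref{df214},
\begin{align*}
G_{n,d}(u,v)&=\sum_{k=0}^\infty\mathbb{P}^u(X_k=v;\tau_n^u>k)\\
&=\mathbb{E}^u\Big[\sum_{k=0}^{\tau_n^u-1}\mathbf{1}(X_k=v)\Big]=\mathbb{E}^u\big[N_v^{(n)}\big],
\end{align*}
where $N_v^{(n)}:=\#\{0\le k<\tau_n^u:X_k=v\}$ counts the visits of the walk to $v$ strictly before time $\tau_n^u$. By the monotonicity of $\tau_n^u$ in $n$ the sequence $N_v^{(n)}$ is non-decreasing, and by $\tau_n^u\uparrow\infty$ it increases pointwise, almost surely, to the total number of visits $N_v:=\sum_{k=0}^\infty\mathbf{1}(X_k=v)$. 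The monotone convergence theorem then gives $\mathbb{E}^u[N_v^{(n)}]\uparrow\mathbb{E}^u[N_v]$, and $\mathbb{E}^u[N_v]=\sum_{k=0}^\infty\mathbb{P}^u(X_k=v)=G(u,v)$ by Definition \ref{df12}. This yields $\lim_{n\to\infty}G_{n,d}(u,v)=G(u,v)$ for every pair $u,v\in V(G^+)$, the left-hand side being defined once $n$ is large enough that $u,v\in V(G_n^+)$.

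The argument has no serious obstacle; the one step that deserves a little care is the almost sure divergence $\tau_n^u\to\infty$, which is where the exhaustion hypothesis (\ref{cw}) is used. Transience, i.e.\ Assumption \ref{ap16}, enters only at the end, to guarantee that the limiting value $G(u,v)$ is finite rather than $+\infty$; the interchange of the sum over $k$ with the limit over $n$ is otherwise free by non-negativity (Beppo Levi), and in fact the whole convergence statement holds in $[0,\infty]$ even without transience.
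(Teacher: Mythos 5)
Your proof is correct and takes essentially the same approach as the paper: both couple all the $G_{n,d}$ to a single random walk, observe the monotonicity of $\tau_n^u$ and that $\tau_n^u\uparrow\infty$ from the exhaustion hypothesis, and pass to the limit of expected occupation counts. The only (cosmetic) difference is that the paper writes the remainder $G(u,v)-G_{n,d}(u,v)=\mathbb{E}\mathcal{N}_{v,[\tau_n^u,\infty)}$ and sends it to $0$ using transience, whereas you apply monotone convergence from below to $\mathbb{E}[N_v^{(n)}]\uparrow\mathbb{E}[N_v]$, which as you correctly note is slightly cleaner since transience is only needed at the end to make the limit finite.
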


\begin{proof}Note that
\begin{align*}
G(u,v)-G_{n,d}(u,v)=\sum_{j=0}^{\infty}\mathbb{P}^u(X_j=v;\tau^{u}_{n}\leq j)
\end{align*}
Let $\mathcal{N}_{v,[t,\infty)]}$ be the number of visits to $v$ of a random walk starting from $u$ in $[t,\infty)$ steps. Then
\begin{align}
G(u,v)-G_{n,d}(u,v)=\mathbb{E}\mathcal{N}_{v,[\tau_{n}^u,\infty)}\label{pc1}
\end{align}

By (\ref{cw}) we have $\tau_{n}^u\leq \tau_{n+1}^u$, and therefore
$\mathcal{N}_{v,[\tau_{n^u,\infty)}}\geq 
\mathcal{N}_{v,[\tau_{n+1^u,\infty)}}
$. Moreover, (\ref{cw}) also implies
\begin{align}
\lim_{n\rightarrow\infty}\tau_{n}^u=\infty.\label{tif}
\end{align}
Then
\begin{align}
\lim_{n\rightarrow\infty}\mathbb{E}\mathcal{N}_{v,[\tau_{n}^u,\infty)}=0.\label{pc2}
\end{align}
follows from Assumption \ref{ap16} and (\ref{tif}). Then the lemma follows from (\ref{pc1}) and (\ref{pc2}).
\end{proof}

\begin{lemma}\label{le217}Let the Dirichlet Green's function on $G_j^+$ be defined as in Definition $\ref{df214}$; write
\begin{align}
F_{j,d}^v(u):=\frac{G_{j,d}(u,v)}{\Delta(v,v)}.\label{dfuvd}
\end{align}
then for each fixed $u\in V(G_j^+)$
\begin{align*}
[\Delta_{G_j^+} F_{j,d}^v](u)=\delta_v(u).
\end{align*}
\end{lemma}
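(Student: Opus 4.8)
The plan is to mimic the verification already carried out for the (non-Dirichlet) Green's function in Lemma \ref{l14}, but to keep careful track of the stopping time $\tau_j^u$ so that the boundary contribution is exactly what is needed to turn the free Laplacian into the Dirichlet Laplacian. First I would record the one-step decomposition of the Dirichlet Green's function: for $u,v\in V(G_j^+)$ with $u$ an interior vertex of $G_j^+$ (so that $\tau_j^u\ge 1$ after the first step only if the first step lands inside $V(G_j^+)$), conditioning on the first step of the walk gives
\begin{align*}
G_{j,d}(u,v)=\delta_v(u)+\sum_{y\sim u\ \mathrm{in}\ G^+}\left|\frac{\Delta(u,y)}{\Delta(u,u)}\right|\,G_{j,d}(y,v)\,\mathbf{1}_{y\in V(G_j^+)},
\end{align*}
where the indicator accounts for the fact that a step leaving $V(G_j^+)$ contributes nothing further because $\tau_j^u$ has then been reached. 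Here I use that the transition probabilities in \eqref{drwd} are $\left|\Delta(u,y)/\Delta(u,u)\right|$ and that, by Lemma \ref{le11}(1)--(2) and (5)--(6), $\Delta(u,y)<0$ for $u\sim y$ while $\Delta(u,u)=\sum_{y\sim u}\left|\Delta(u,y)\right|>0$, so $\left|\Delta(u,y)/\Delta(u,u)\right|=-\Delta(u,y)/\Delta(u,u)$ and these weights sum to $1$ over all neighbours $y$ of $u$ in $G^+$ (including those outside $V(G_j^+)$).

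Next I would multiply through by $\Delta(u,u)$ and substitute $F_{j,d}^v(u)=G_{j,d}(u,v)/\Delta(v,v)$; this is legitimate since $\Delta(v,v)>0$. Rearranging,
\begin{align*}
\Delta(u,u)F_{j,d}^v(u)+\sum_{y\sim u\ \mathrm{in}\ G^+}\Delta(u,y)\,F_{j,d}^v(y)\,\mathbf{1}_{y\in V(G_j^+)}=\delta_v(u).
\end{align*}
The left-hand side is, by the definition \eqref{ddhd} of $\Delta_{G_j^+}=\Delta|_{V(G_j^+),V(G_j^+)}$ together with Lemma \ref{le11}(3)--(4) (entries of $\Delta$ between black vertices at $\ol G$-distance $>2$, or at distance exactly $2$ across $V$ and $V^+$, vanish — so the only nonzero off-diagonal entries of $\Delta$ on rows indexed by $V(G_j^+)$ are the $\Delta(u,y)$ with $y$ a $G^+$-neighbour of $u$), precisely $[\Delta_{G_j^+}F_{j,d}^v](u)$: the terms with $y\notin V(G_j^+)$ are simply dropped when restricting the matrix to $V(G_j^+)$, which is exactly the Dirichlet boundary behaviour. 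Hence $[\Delta_{G_j^+}F_{j,d}^v](u)=\delta_v(u)$ for every $u\in V(G_j^+)$, as claimed.

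The one subtlety — and the step I would be most careful about — is the bookkeeping at the boundary: I must be sure that a walk step from an interior-of-$G_j^+$ vertex $u$ to a vertex $y\notin V(G_j^+)$ genuinely contributes zero to $G_{j,d}(u,v)$ for all later times, i.e.\ that such a step realizes the event $\tau_j^u\le 1$ and hence kills all further visits counted in Definition \ref{df214}. This is immediate from the definition of $\tau_j^u$ as the first time the walk visits $V^+\setminus V(G_j^+)$, but it is the place where the Dirichlet (absorbing) boundary condition enters, and it is exactly why the restricted matrix $\Delta_{G_j^+}$ — rather than some Neumann-type modification — is the correct operator. Everything else is the routine first-step analysis used for Lemma \ref{l14}, and the convergence statement of Lemma \ref{l17} is not needed here. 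I would also remark that the argument does not require Assumption \ref{ap16}: $G_j^+$ is finite, so $G_{j,d}(u,v)$ is manifestly finite for every $u,v$, and all the sums above are finite sums.
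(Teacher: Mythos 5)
Your proof is correct and uses essentially the same idea as the paper's: the Markov property of the stopped walk gives the harmonicity of the Dirichlet Green's function. The paper carries this out term-by-term in $n$ as a telescoping sum, whereas you first package it into the one-step recurrence $G_{j,d}(u,v)=\delta_v(u)+\sum_{y}p(u,y)G_{j,d}(y,v)\mathbf{1}_{y\in V(G_j^+)}$ and then translate that algebraically into the Laplacian identity; the restriction to ``interior'' $u$ in your opening sentence is unnecessary (the one-step decomposition and all subsequent steps hold verbatim for every $u\in V(G_j^+)$), but this does not affect the argument.
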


\begin{proof}Note that
\begin{align*}
[\Delta_{G_j^+} F_{j,d}^v](u)&=\frac{1}{\Delta(v,v)}\left[\Delta(u,u)F^{v}(u)+\sum_{y\in V(G_h^+):y\sim u)}\Delta(u,y)F^v(y)\right]\\
&=\frac{1}{\Delta(v,v)}\sum_{n=0}^{\infty}\left[\Delta(u,u)\mathbb{P}^u(X_n=v,\tau_j^u>n)+\sum_{y\in V(G_h^+):y\sim u}\Delta(u,y)\mathbb{P}^y(X_n=v,\tau_j^u>n)\right]\\
&=\sum_{n=0}^{\infty}\frac{\Delta(u,u)}{\Delta(v,v)}[\mathbb{P}^u(X_n=v,\tau_h^u>n)-\mathbb{P}^u(X_{n+1}=v,\tau_j^u>n+1)]\\
&=\begin{cases}0&\mathrm{If}\ u\neq v\\
1&\mathrm{If}\ u=v.
\end{cases}
\end{align*}
\end{proof}

\subsection{Neumann Green's Function on a Finite Graph}
\begin{definition}\label{df218} Let $b_0\in V(G_{j})$ be a fixed vertex on the boundary of $G_{j}$.
For $u,v,x,y\in V(G_{j})$ and $u\neq b_0$, let $W_{n,j}$ is the random walk on $T_{j}$ starting from $u$ defined by
\begin{align*}
\mathbb{P}_h^u(W_{n+1,j}=v|W_{n,j}=y)=\begin{cases}0&\mathrm{If}\ y\ \mathrm{and}\ v\ \mathrm{are\ not\ adjacent\ in}\ T_{h}\\ \left|\frac{
\Delta(y,v)
}{\sum_{x\in V(G_{j
}):x\sim y}\Delta(y,x)} \right|&\mathrm{otherwise}\end{cases}
\end{align*}
Let $\eta$ be the first hitting time of $W_{n,j}$ to $b_0$, i.e,
\begin{align*}
\eta=\inf\{m\geq 0:W_{m,j}=b_0\}.
\end{align*}
Define the discrete Neumann Green's function on $G_{j}$ to be
\begin{align*}
G_{j,N}(u,v)=\sum_{n=0}^{\infty}\mathbb{P}^u_{j}(W_{n,j}=v;n< \eta).
\end{align*}
\end{definition}

It is straightforward to see from the definition that
\begin{align*}
G_{j,N}(u,b_0)=0.
\end{align*}

\begin{lemma}\label{le219}Let the Neumann Green's function on $G_h$ be defined as in Definition $\ref{df218}$; write
\begin{align}
F_{j,N}^v(u):=\frac{G_{j,N}(u,v)}{\sum_{y\in V(G_j):y\sim v}|\Delta(v,y)|}.\label{dfuvn}
\end{align}
then for each fixed $u\in V(G_h)\setminus \{b_0\}$
\begin{align*}
[\Delta_{G_j} F_{j,N}^v](u)=\delta_v(u)
\end{align*}
\end{lemma}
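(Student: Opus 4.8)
The plan is to follow the proof of Lemma~\ref{le217} verbatim in structure, replacing the walk killed upon leaving $G_j^+$ by the walk $W_{n,j}$ killed upon hitting $b_0$. First I would collect the elementary facts about $\Delta_{G_j}$. By Lemma~\ref{le11} and the identification of $\Delta_{G_j}$ as the Neumann Laplacian, for $u\in V(G_j)$ the off-diagonal entries are $\Delta_{G_j}(u,y)=\Delta(u,y)=-\nu(uy)/\nu([uy]^+)<0$ for each neighbour $y$ of $u$ in $G_j$, the diagonal entry is $\Delta_{G_j}(u,u)=\sum_{y\in V(G_j):\,y\sim u}|\Delta(u,y)|$, every row of $\Delta_{G_j}$ sums to zero, and $\mathbb{P}^u_j(W_{1,j}=y)=|\Delta_{G_j}(u,y)|/\Delta_{G_j}(u,u)=-\Delta_{G_j}(u,y)/\Delta_{G_j}(u,u)$. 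Since $G_j$ is finite and connected and $b_0\in V(G_j)$, the absorbing vertex $b_0$ is reachable from every vertex, so $\mathbb{E}^u_j[\eta]<\infty$; in particular $G_{j,N}(u,v)\le \mathbb{E}^u_j[\eta]<\infty$ and all series below converge absolutely, so telescoping and interchanging sums are legitimate. I would also record $\sum_{y\in V(G_j):\,y\sim v}|\Delta(v,y)|=\Delta_{G_j}(v,v)$, so that $F_{j,N}^v(u)=G_{j,N}(u,v)/\Delta_{G_j}(v,v)$.

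Next, for $u\neq b_0$ I would expand
\[
[\Delta_{G_j}F_{j,N}^v](u)=\frac{1}{\Delta_{G_j}(v,v)}\Bigl[\Delta_{G_j}(u,u)\,G_{j,N}(u,v)+\sum_{y\sim u}\Delta_{G_j}(u,y)\,G_{j,N}(y,v)\Bigr],
\]
insert $G_{j,N}(\cdot,v)=\sum_{n\ge 0}\mathbb{P}^{\cdot}_j(W_{n,j}=v;n<\eta)$, and use the one-step Markov decomposition, valid for $u\neq b_0$ and every $n\ge 0$,
\[
\mathbb{P}^u_j(W_{n+1,j}=v;\,n+1<\eta)=\sum_{y\in V(G_j):\,y\sim u}\mathbb{P}^u_j(W_{1,j}=y)\,\mathbb{P}^y_j(W_{n,j}=v;\,n<\eta),
\]
in which the summand $y=b_0$ contributes $0$ on both sides. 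Combined with $\mathbb{P}^u_j(W_{1,j}=y)=-\Delta_{G_j}(u,y)/\Delta_{G_j}(u,u)$ this gives $\sum_{y\sim u}\Delta_{G_j}(u,y)\,\mathbb{P}^y_j(W_{n,j}=v;n<\eta)=-\Delta_{G_j}(u,u)\,\mathbb{P}^u_j(W_{n+1,j}=v;n+1<\eta)$, so the $n$-th bracketed term equals $\Delta_{G_j}(u,u)\bigl[\mathbb{P}^u_j(W_{n,j}=v;n<\eta)-\mathbb{P}^u_j(W_{n+1,j}=v;n+1<\eta)\bigr]$. Summing over $n$ and using $\mathbb{P}^u_j(W_{n,j}=v;n<\eta)\le\mathbb{P}^u_j(n<\eta)\to 0$, the series collapses to $\Delta_{G_j}(u,u)\,\mathbb{P}^u_j(W_{0,j}=v;0<\eta)=\Delta_{G_j}(u,u)\,\delta_v(u)$, the event $\{0<\eta\}$ being automatic since $u\neq b_0$. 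Dividing by $\Delta_{G_j}(v,v)$ yields $[\Delta_{G_j}F_{j,N}^v](u)=\tfrac{\Delta_{G_j}(u,u)}{\Delta_{G_j}(v,v)}\delta_v(u)=\delta_v(u)$.

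I do not expect a serious obstacle here; the one mildly delicate point is the bookkeeping around the stopping time $\eta$ when some neighbour of $u$ equals $b_0$ — one must check the one-step decomposition holds with the \emph{full} neighbour sum, which is exactly the two observations $\{W_{1,j}=b_0\}\subseteq\{\eta\le 1\}$ and $\mathbb{P}^{b_0}_j(W_{n,j}=v;n<\eta)=0$, so the $b_0$-terms vanish on each side. The degenerate case $v=b_0$ is consistent as well: then $F_{j,N}^{b_0}\equiv 0$ since $G_{j,N}(\cdot,b_0)\equiv 0$, while $\delta_{b_0}(u)=0$ for $u\neq b_0$. Everything else is the same telescoping identity already used in Lemma~\ref{le217}.
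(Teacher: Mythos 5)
Your proof is correct and follows the same route as the paper's: expand $\Delta_{G_j}F^v_{j,N}(u)$ term by term in the Green's function series, apply the one-step Markov decomposition of the killed walk, telescope, and divide by $\Delta_{G_j}(v,v)$. The paper's proof is the same computation, just written more compactly and with less explicit attention to the $b_0$-bookkeeping that you spell out.

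One incidental remark in your preamble is slightly off: the claim that every row of $\Delta_{G_j}$ sums to zero is not quite right for the matrix restricted to $V(G_j)\setminus\{b_0\}$. If $u$ is a neighbour of $b_0$, the column for $b_0$ is missing, so that row sums to $|\Delta(u,b_0)|>0$; it is the transition probabilities $\mathbb{P}^u_j(W_{1,j}=y)$ (which include $y=b_0$) that sum to one, not the entries of the restricted matrix. This does not affect your argument, since what you actually use is the identity $\mathbb{P}^u_j(W_{1,j}=y)=-\Delta_{G_j}(u,y)/\Delta_{G_j}(u,u)$ together with the observation that the $y=b_0$ terms vanish from both sides of the one-step decomposition — exactly what the paper's proof implicitly relies on by summing over $y\in V(G_j)\setminus\{b_0\}$.
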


\begin{proof}Note that
\begin{align*}
&[\Delta_{G_j} F_{j,N}^v](u)\\
&=\frac{1}{\sum_{y\in V(G_j):y\sim V}|\Delta(v,y)|}\left[\sum_{y\in V(G_j):y\sim u}|\Delta(u,y)|F_{j,N}^{v}(u)+\sum_{y\in V(G_h)\setminus \{b_0\}:y\sim u}\Delta(u,y)F_{j,N}^v(y)\right]\\
&=\frac{1}{\sum_{y\in V(G_h):y\sim v}|\Delta(v,y)|}\sum_{n=0}^{\infty}\left[\sum_{y\in V(G_h):y\sim u}|\Delta(u,y)|\mathbb{P}^u(W_{n,j}=v,\eta>n)\right.\\&+\left.\sum_{y\in V(G_h)\setminus\{b_0\}:y\sim u}\Delta(u,y)\mathbb{P}^y(W_{n,j}=v,\eta>n)\right]\\
&=\sum_{n=0}^{\infty}\frac{\sum_{y\in V(G_h)}|\Delta(u,y)|}{\sum_{y\in V(G_h)}|\Delta(v,y)|}[\mathbb{P}^u(W_{n,j}=v,\eta>n)-\mathbb{P}^u(W_{n+1,j}=v,\eta>n+1)]\\
&=\begin{cases}0&\mathrm{If}\ u\neq v\\
1&\mathrm{If}\ u=v.
\end{cases}
\end{align*}
\end{proof}

\subsection{Inverse matrices}
\begin{lemma}\label{l18}Suppose Assumption \ref{ap24} holds. Let $G_j$ be a finite, connected subgraph of $G$ consisting of faces of $G$.
\begin{enumerate}
\item 
\begin{align*}
\ol{D}_{j}^{-1}(b,w)=\frac{1}{\xi}(F^{b_1}_{j,d}(b)-F^{b_2}_{j,d}(b))\sqrt{\frac{\nu(b_1b_2)}{\nu([b_1b_2]^+)}};\qquad \forall b\in V(G_j^+).
\end{align*}
where 
\begin{itemize}
\item $b_1,b_2\in V^+$ such that $w$ is the white vertex of $G$ corresponding to the edge $(b_1,b_2)$; 
\item $\xi$ is the unit vector pointing from $b_2$ to $b_1$ in the Euclidean plane; 
\item $F^b_{j,d}(\cdot)$ is defined as in (\ref{dfuvd}).
\end{itemize}
\item \begin{align*}
\ol{D}_j^{-1}(b,w)=\frac{1}{\zeta}(F_{j,N}^{b_3}(b)-F_{j,N}^{b_4}(b))\sqrt{\frac{\nu(b_3b_4)}{\nu([b_3b_4]^+)}};\qquad \forall b\in V(G_j)\setminus \{b_0\}.
\end{align*}
where 
\begin{itemize}
\item $b_3,b_4\in V$ such that $w$ is the white vertex of $\ol{G}$ corresponding to the edge $(b_3,b_4)$; 
\item $\zeta$ is the unit vector pointing from $b_4$ to $b_3$ in the Euclidean plane;
\item $F^b_{j,N}(\cdot)$ is defined as in (\ref{dfuvn}).
\end{itemize}
\end{enumerate}
\end{lemma}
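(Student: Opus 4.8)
The plan is to factor the inverse through the Laplacian $\Delta_j:=\ol D_j^{*}\ol D_j$. Since $\mathcal{G}_j$ admits a perfect matching (Lemma \ref{le33}), $\ol D_j$ is a square matrix, and $\det\ol D_j\ne 0$ because $\ol D=S\ol\partial S^{*}$ with $S$ a positive diagonal matrix and $\det\ol\partial|_{W(\mathcal{G}_j),B(\mathcal{G}_j)}\ne 0$ by Corollary \ref{c18}(1). For a square invertible matrix one has $\ol D_j^{-1}=(\ol D_j^{*}\ol D_j)^{-1}\ol D_j^{*}=\Delta_j^{-1}\ol D_j^{*}$, so it suffices to compute $\Delta_j^{-1}$ and then multiply on the right by $\ol D_j^{*}$, entry by entry.

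First I would show that $\Delta_j$ is block-diagonal with respect to the partition $B(\mathcal{G}_j)=V(G_j^{+})\cup[V(G_j)\setminus\{b_0\}]$ into dual and primal vertices. A cross entry $\Delta_j(b,b')$ with $b\in V(G_j)\setminus\{b_0\}$ and $b'\in V(G_j^{+})$ is a sum over the white vertices of $\mathcal{G}_j$ joined in $\ol G$ to both $b$ and $b'$; these are exactly the two edges of the face $b'$ incident to $b$, both of which lie in $\mathcal{G}_j$, and the two summands cancel by the computation in the proof of Lemma \ref{le11}(4). Hence the dual block of $\Delta_j$ equals the submatrix $\Delta_{G_j^{+}}$ of (\ref{ddhd}) and the primal block equals the submatrix $\Delta_{G_j}$ of (\ref{ddhp}); both are invertible because $\Delta_j$ is, so $\Delta_j^{-1}$ is block-diagonal with blocks $\Delta_{G_j^{+}}^{-1}$ and $\Delta_{G_j}^{-1}$. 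By Lemmas \ref{le217} and \ref{le219}, the identities $\Delta_{G_j^{+}}F_{j,d}^{v}=\delta_{v}$ and $\Delta_{G_j}F_{j,N}^{v}=\delta_{v}$ say precisely that $\Delta_{G_j^{+}}^{-1}(u,v)=F_{j,d}^{v}(u)$ and $\Delta_{G_j}^{-1}(u,v)=F_{j,N}^{v}(u)$.

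It then remains to compute the entries of $\ol D_j^{*}$ geometrically and assemble. If $w$ is the white vertex sitting at the crossing of the primal edge $e=(b_3,b_4)$ and its dual edge $e^{+}=(b_1,b_2)$, then from $\ol D=S\ol\partial S^{*}$ and the definition of $\ol\partial$ one gets $\ol D_j(w,b_1)=\sqrt{\nu(e^{+})/\nu(e)}\,\xi$ and $\ol D_j(w,b_2)=-\sqrt{\nu(e^{+})/\nu(e)}\,\xi$, where $\xi$ is the unit vector from $b_2$ to $b_1$ — which is the direction from $w$ to $b_1$, since $w$ lies on the segment $b_1b_2$ — and similarly $\ol D_j(w,b_3)=\sqrt{\nu(e)/\nu(e^{+})}\,\zeta$ and $\ol D_j(w,b_4)=-\sqrt{\nu(e)/\nu(e^{+})}\,\zeta$. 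Taking complex conjugates (so, e.g., $\ol D_j^{*}(b_1,w)=\frac1\xi\sqrt{\nu(e^{+})/\nu(e)}$, using $|\xi|=1$) and inserting into $\ol D_j^{-1}(b,w)=\sum_{b'}\Delta_j^{-1}(b,b')\,\ol D_j^{*}(b',w)$: for $b\in V(G_j^{+})$ only the dual block survives and only $b'\in\{b_1,b_2\}$ contributes, giving $\frac1\xi\sqrt{\nu(e^{+})/\nu(e)}\,[F_{j,d}^{b_1}(b)-F_{j,d}^{b_2}(b)]$, which is (1) since $\nu(e^{+})=\nu(b_1b_2)$ and $\nu(e)=\nu([b_1b_2]^{+})$; the case $b\in V(G_j)\setminus\{b_0\}$ is identical with the primal block, $\zeta$, and $F_{j,N}$ in place of $F_{j,d}$. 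Boundary white vertices require no special treatment: if a dual (resp.\ primal) endpoint of $w$ fails to lie in $V(G_j^{+})$ (resp.\ equals $b_0$), the corresponding $F$ is zero by definition, so the displayed difference still holds. The point requiring most care is the block-diagonality of $\Delta_j$ together with the identification of its two blocks with (\ref{ddhd}) and (\ref{ddhp}): one must check that the white vertices entering each entry of $\ol D_j^{*}\ol D_j$ all lie in $\mathcal{G}_j$, which is exactly where the Temperley structure of $\mathcal{G}_j$ (every bounding edge of a face of $G_j$ is a white vertex of $\mathcal{G}_j$) is used; the remaining steps are routine computation.
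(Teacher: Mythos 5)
Your proof is correct and follows essentially the same route as the paper: both factor $\ol D_j^{-1}=\Delta_j^{-1}\ol D_j^{*}$ with $\Delta_j=\ol D_j^{*}\ol D_j$, identify the inverses of the two Laplacian blocks $\Delta_{G_j^{+}}$ and $\Delta_{G_j}$ with $F_{j,d}$ and $F_{j,N}$ via Lemmas~\ref{le217} and~\ref{le219}, compute $\ol D_j^{*}$ geometrically, and assemble. You are somewhat more explicit than the paper about two points it leaves tacit — that the primal-dual cross-entries of $\Delta_j$ cancel for the Temperley graph $\mathcal G_j$ (so $\Delta_j$ is genuinely block-diagonal, justifying the replacement of $\Delta_j$ by $\Delta_{G_j^{+}}$ or $\Delta_{G_j}$), and that at boundary white vertices the missing $F$-term vanishes — but the underlying argument is the same.
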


\begin{proof}Explicit computations show that
\begin{align*}
\ol{D}_j^*\ol{D}_j\ol{D}_j^{-1}(b,w)=\ol{D}_j^*(b,w)=\begin{cases}0&\mathrm{If}\ b\ \mathrm{and}\ w\ \mathrm{are\ not\ adjacent}\\ \frac{1}{\mathbf{i}\xi}\sqrt{\frac{\nu(b_3b_4)}{\nu(b_1b_2)}}&\mathrm{If}\ b\sim w,\ \mathrm{and}\ b\in V,\ \ \mathrm{and}\ b=b_3\\
-\frac{1}{\mathbf{i}\xi}\sqrt{\frac{\nu(b_3b_4)}{\nu(b_1b_2)}}&\mathrm{If}\ b\sim w,\ \mathrm{and}\ b\in V,\ \ \mathrm{and}\ b=b_4\\
\frac{1}{\xi}\sqrt{\frac{\nu(b_1b_2)}{\nu(b_3b_4)}}&\mathrm{If}\ b\sim w,\ \mathrm{and}\ b\in V^+,\  \mathrm{and}\ b=b_1,
\\ -\frac{1}{\xi}\sqrt{\frac{\nu(b_1b_2)}{\nu(b_3b_4)}}&\mathrm{If}\ b\sim w,\ \mathrm{and}\ b\in V^+,\  \mathrm{and}\ b=b_2
\end{cases}
\end{align*}
where the relative locations of $b_1,b_2,w$ are described in the lemma; and $(b_3,b_4)$ is the dual edge of $b_1b_2$; the unit vector pointing from $b_4$ to $b_3$ in the Euclidean plane is $\mathbf{i}\xi$, given that the unit vector pointing from $b_2$ to $b_1$ in the Euclidean plane is $\xi$. Then by (\ref{ddhd}), we obtain
\begin{align*}
\Delta_{G_j^+}\ol{D}_j^{-1}(b,w)=\ol{D}_j^*(b,w);\forall b\in V^+
\end{align*}
Let $b\in V^+$. Define a matrix
\begin{align*}
B(b,w):=\frac{1}{\xi}(F^{b_1}(b)-F^{b_2}(b))\sqrt{\frac{\nu(b_1b_2)}{\nu(b_3b_4)}};\qquad \forall b\in V^+.
\end{align*}

By Lemma \ref{l14}, we obtain
\begin{align*}
[\Delta_{G_j^+} B](b,w)
&=\sum_{b'\in V^+}\Delta_{G_j^+}(b,b')B(b',w)\\
&=\frac{1}{\xi}\sqrt{\frac{\nu(b_1b_2)}{\nu(b_3b_4)}}\left[\sum_{b'\in V^+}\Delta_{G_j^+}(b,b')F^{b_1}(b')-\sum_{b'\in V^+}\Delta_{G_j^+}(b,b')F^{b_2}(b')\right]\\
&=\frac{1}{\xi}\sqrt{\frac{\nu(b_1b_2)}{\nu(b_3b_4)}}\left[\delta_{b}(b_1)-\delta_b(b_2)\right];
\end{align*}
where the last identity follows from Lemma \ref{le217}. Then Part (1) of the Lemma follows. Parts (2) of the lemma follows from Lemma \ref{le219} similarly.
\end{proof}

\begin{theorem}\label{l422}Suppose that $G$ is an infinite, 3-connected, simple, proper plane graph with locally finite dual and admitting a double circle packing (see Proposition \ref{p27}) in the hyperbolic plane. Suppose Assumption \ref{ap24} holds. Assume $\{G_j\}_{j=1}^{\infty}$ is a sequence of finite, connected subgraphs of $G$ exhausting $G$, (i.e.$\cup_{j=1}^{\infty}G_j=\infty$). Let $b_{0,j}$ be the removed black vertex to obtain $\mathcal{G}_j$ from $\ol{G}_j$.
Assume 
\begin{align*}
\lim_{j\rightarrow\infty }b_{0,j}=b_0\in \partial \HH^2;
\end{align*}
where $b_0$ is an arbitrary point along $\partial\HH^2$ except for
\begin{itemize}
\item the limit points of a null family of singly infinite paths; or
\item the limit points of simple random walks with probability 0.
\end{itemize}

Let $\ol{D}_j$ be defined as in Definition \ref{df17}.
Then
\begin{enumerate}
\item
\begin{align*}
\lim_{j\rightarrow\infty}\ol{D}_{j}^{-1}(b,w)=\frac{1}{\xi}\left(\frac{G_{G^+}(b,b_1)}{|\Delta(b_1,b_1)|}-\frac{G_{G^+}(b,b_2)}{|\Delta(b_2,b_2)|}\right)\sqrt{\frac{\nu(b_1b_2)}{\nu([b_1b_2]^+)}};\qquad \forall b\in V(G_j^+).
\end{align*}
where $b_1,b_2,\xi$ are given as in Lemma \ref{l18}(1), and $G_{G^+}$ is the Green's function on $G^+$ as defined in Definition \ref{df12}.
\item  \begin{align}
\lim_{j\rightarrow\infty}\ol{D}_{j}^{-1}(b,w)=\frac{1}{\zeta}
\left(\frac{G_{G}(b,b_3)}{|\Delta(b_3,b_3)|}-\frac{G_{G}(b,b_4)}{|\Delta(b_4,b_4)|}+H_{b_3b_4}(b)\right)\sqrt{\frac{\nu(b_3b_4)}{\nu([b_3b_4]^+)}};\qquad \forall b\in V(G_j).\label{ss}
\end{align}
where $b_3,b_4,\zeta$ are given as in Lemma \ref{l18}(2), $G_G$ is the Green's function on $G$ as defined in Definition \ref{df12} and $H_{b_3b_4}$ is a Dirichlet harmonic function on $G$ satisfying
\begin{enumerate}
\item $H_{b_3b_4}(b_0)=0$;
\item \begin{align*}
\triangledown H_{b_3b_4}(e)=P_{\triangledown \mathbf{HD}}\chi_{b_3b_4}(e);
\end{align*}
i.e. the gradient of $H_{b_3b_4}$ is the orthogonal projection of $\chi_{b_3b_4}$ on the subspace $\triangledown\mathbf{HD}$.
\item Let $Z_n$ be the random walk on $G$ as defined in Definition \ref{df12} with $G^+$ replaced by $G$, then almost surely
\begin{align*}
\lim_{n\rightarrow\infty}\lim_{j\rightarrow\infty}\ol{D}_{j}^{-1}(Z_n,w)=\lim_{n\rightarrow\infty}\frac{1}{\zeta}
\left(\frac{G_{G}(Z_n,b_3)}{|\Delta(b_3,b_3)|}-\frac{G_{G}(Z_n,b_4)}{|\Delta(b_4,b_4)|}+H_{b_3b_4}(Z_n)\right)\sqrt{\frac{\nu(b_3b_4)}{\nu([b_3b_4]^+)}};
\end{align*}
\end{enumerate}
\end{enumerate}
\end{theorem}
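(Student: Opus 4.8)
The plan is to pass to the limit in the finite-volume identities of Lemma \ref{l18}, handling the two classes of black vertices separately. For $b\in V(G_j^+)$, part (1) of Lemma \ref{l18} expresses $\ol{D}_j^{-1}(b,w)$ in terms of the Dirichlet Green's functions $F^{b_1}_{j,d}$ and $F^{b_2}_{j,d}$. Since $\{G_j\}$ exhausts $G$, the interior dual graphs $\{G_j^+\}$ form an exhausting sequence of $G^+$; under Assumption \ref{ap16} (which holds here because $G^+$ is transient, being the dual of a graph admitting a hyperbolic double circle packing), Lemma \ref{l17} gives $\lim_{j\to\infty}G_{j,d}(u,v)=G_{G^+}(u,v)$ for each fixed pair $u,v$. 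Dividing by $|\Delta(v,v)|$ and recalling the definitions \eqref{dfuvd} and \eqref{dfuv}, this yields part (1) directly. The one subtlety is that the removed boundary vertex $b_{0,j}$ moves with $j$; but for $b\in V^+$ the formula in Lemma \ref{l18}(1) does not involve $b_{0,j}$ at all (it is the Dirichlet Green's function, with absorption on the outer boundary of $G_j$, not on $b_{0,j}$), so no extra argument is needed here.

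For $b\in V(G_j)\setminus\{b_0\}$, part (2) of Lemma \ref{l18} writes $\ol{D}_j^{-1}(b,w)$ in terms of the Neumann Green's functions $F^{b_3}_{j,N}$, $F^{b_4}_{j,N}$, where the ``Neumann'' random walk on $G_j$ is killed upon hitting $b_{0,j}$. Here the dependence on $b_{0,j}$ is genuine. The strategy is: first, decompose $F^{b_3}_{j,N}-F^{b_4}_{j,N}$ into a part converging to the difference of (full, transient) Green's functions $\frac{G_G(\cdot,b_3)}{|\Delta(b_3,b_3)|}-\frac{G_G(\cdot,b_4)}{|\Delta(b_4,b_4)|}$ on $G$, plus a harmonic correction term $H^{(j)}_{b_3b_4}$. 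Indeed, the anti-symmetric function $e\mapsto C(e)[\text{(this difference)}(\overline e)-\text{(this difference)}(\underline e)]$ equals $\chi_{b_3b_4}$ up to a harmonic part; more precisely $\triangledown\bigl(F^{b_3}_{j,N}-F^{b_4}_{j,N}\bigr)$ should be read as (a finite-graph approximation of) the orthogonal projection of $\chi_{b_3b_4}$ onto $\bigstar\oplus\mathcal{S}_{\{b_0\}}$ — the current appearing in the wired-at-$b_0$ spanning-tree measure. Using Propositions \ref{pp16}, \ref{pp17} together with the fact that $\mathcal{S}_{\{b_0\}}\subset\triangledown\mathbf{HD}(G)$ is the span of gradients of harmonic Dirichlet functions vanishing near $b_0$, we get $\triangledown\bigl(F^{b_3}_{j,N}-F^{b_4}_{j,N}\bigr)\to P_{\diamondsuit^\perp}\chi_{b_3b_4}=P_\bigstar\chi_{b_3b_4}+P_{\triangledown\mathbf{HD}}\chi_{b_3b_4}$ in the energy norm. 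The first summand is $\triangledown$ of the Green's-function difference (the wired current), and the second is $\triangledown H_{b_3b_4}$ by definition. Normalizing the additive constant by $H_{b_3b_4}(b_0)=0$ — which is consistent because $b_{0,j}\to b_0$ and the $F_{j,N}$'s vanish at $b_{0,j}$ — gives conclusions (a) and (b). The almost-sure statement (c) then follows from Lemma \ref{ll39}: energy-norm convergence of the anti-symmetric functions $\theta_j:=\triangledown\ol{D}_j^{-1}(\cdot,w)$ (suitably scaled) to $\theta:=\triangledown$ of the limiting expression, combined with the transience of $G$ and the exclusion of null families of paths, yields convergence of $\sum_i|\theta_j(e_i)-\theta(e_i)|$ along almost every path, hence convergence of $\ol{D}_j^{-1}(Z_n,w)$ along the random walk trajectory; the hypothesis that $b_0$ is not a limit point of a null family of paths (nor of the random walk with positive probability) is exactly what makes the exchange of $\lim_n$ and $\lim_j$ legitimate.

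The main obstacle is controlling the harmonic correction term: one must verify that the finite-graph Neumann-current differences really do converge to $P_{\diamondsuit^\perp}\chi_{b_3b_4}$ rather than to some other current, i.e.\ that moving the wired vertex $b_{0,j}$ to the boundary produces in the limit precisely the projection onto $\bigstar\oplus\mathcal{S}_{\{b_0\}}$ and that this is well-defined independent of the approximating sequence. This requires identifying $\mathcal{S}_{\{b_0\}}$ with the closure of the span of $\triangledown f$ over $f\in\mathbf{HD}(G)$ vanishing near $b_0$, invoking Theorem \ref{le39} for non-triviality (when applicable), and checking the energy estimates; the key analytic input is that $\chi_{b_3b_4}-P_\bigstar\chi_{b_3b_4}-P_\diamondsuit\chi_{b_3b_4}$ lies in $\triangledown\mathbf{HD}(G)$ and that the finite-$j$ approximations remain bounded in energy, which follows from the variational (minimal-energy) characterizations in Propositions \ref{pp16} and \ref{pp17} applied to the wired-at-$b_{0,j}$ networks.
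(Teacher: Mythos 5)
Your treatment of parts (1), (2a), and (2b) matches the paper's. Part (1) is exactly the paper's argument: Lemma~\ref{l18}(1) gives the finite-volume Dirichlet Green's function formula, Lemma~\ref{l17} gives pointwise convergence to the infinite-graph Green's function, and since the Dirichlet boundary lives on the outer boundary of $G_j$ rather than at $b_{0,j}$, no extra care is needed. For parts (2a)–(2b) you take the same decomposition as the paper: write the Neumann Green's function difference as the Dirichlet Green's function difference plus a harmonic correction, identify (via the paper's $J_e$/$J_{e,j,N}$ computation, which you paraphrase) the gradient of the Dirichlet part with the wired current $P_\bigstar\chi_{b_3b_4}$ and the gradient of the Neumann part with the free current $P_{\diamondsuit^\perp}\chi_{b_3b_4}$ via Propositions~\ref{pp16}–\ref{pp17}, and read off the harmonic correction as $P_{\triangledown\mathbf{HD}}\chi_{b_3b_4}$. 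One terminological slip: you first describe the limiting Neumann current as the projection onto ``$\bigstar\oplus\mathcal{S}_{\{b_0\}}$'', but the correct statement — which you immediately write two lines later — is that it converges to $P_{\diamondsuit^\perp}\chi_{b_3b_4}$, the \emph{free} current. Since $G_j$ exhausts $G$, Proposition~\ref{pp16} gives the free current regardless of where the Neumann normalization vertex $b_{0,j}$ is placed; $b_{0,j}$ only affects the additive constant, which is exactly condition (2a). The set $\mathcal{S}_\sA$ of the paper enters Theorem~\ref{le67}, not here, so invoking it obscures rather than clarifies.

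For (2c) you take a genuinely different route from the paper. You argue via Lemma~\ref{ll39}: energy-norm convergence of the gradients implies, for almost every path in the extremal-length sense, convergence of the telescoped sums, hence convergence of the function values along the path. The paper instead uses a martingale argument in the style of Ancona--Lyons--Peres \cite{ARP99}: it sets $R_n$ to be the increment along the random walk of the difference between the two sides of \eqref{ss}, shows $\mathbb{E}[R_r R_s]=0$ for $r<s$ by harmonicity and the tower property, bounds $\sum_k\mathbb{E}[R_k]^2$ by the Green's function at the starting vertex times the energy of the error (which vanishes because the gradients converge in $\|\cdot\|_R$), and concludes $\sum_k R_k\to 0$ a.s.\ by Chebyshev. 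Your route requires an additional bridge that you leave implicit: Lemma~\ref{ll39} gives convergence along \emph{almost every path with respect to extremal length}, whereas (2c) asserts almost-sure convergence with respect to the \emph{random-walk measure}; these are related (via the fact that on a transient network the random walk a.s.\ has finite $L^2$-length for any $L^2$ metric, which is essentially the \cite{ARP99} theorem), but that equivalence is precisely the nontrivial content, and stating ``the hypothesis on $b_0$ makes the exchange of limits legitimate'' does not supply it. The paper's martingale computation is self-contained and sidesteps this gap, whereas your version would need a further lemma tying extremal-length null families to random-walk null events before it is airtight. If you want to keep your route, make that lemma explicit; otherwise the direct second-moment argument is cleaner here.
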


\begin{proof}Part(1) of the lemma follows from Lemma \ref{l18}(1) and Lemma \ref{l17}.

Now we prove Part(2) of the lemma.
Let
\begin{align*}
F_{j,d,G}^{b_i}(b):=\frac{G_{j,d,G}(b,b_i)}{|\Delta(b_i,b_i)|};\qquad i\in\{3,4\};
\end{align*}
where $G_{j,d,G}$ is the Dirichlet Green's function on $G_j$ as defined in Definition \ref{df214} with $G_j^+$ replaced by $G_j$. Let
\begin{align*}
F_{j,N,G}^{b_i}(b):=F_{j,N}^{b_i}(b);\qquad i\in\{3,4\}.
\end{align*}
Define
\begin{align*}
H_{b_3b_4,j}(b):=F_{j,N,G}^{b_3}(b)-F_{j,N,G}^{b_4}(b)-F_{j,d,G}^{b_3}(b)-F_{j,d,G}^{b_4}(b);
\end{align*}
then it satisfies
\begin{align*}
\Delta H_{b_3,b_4,j}(z)=0;\qquad\forall z\in V(G_j).
\end{align*}
Moreover, let $e\in \vec{E}_j$ be a directed edge of $G_j$, then 
\begin{align*}
\triangledown H_{b_3b_4,j}(e)&=
\frac{\nu(e)}{\nu(e^+)}\left\{\left[F_{j,N,G}^{b_3}(\ol{e})-F_{j,N,G}^{b_4}(\ol{e})-F_{j,N,G}^{b_3}(\underline{e})+F_{j,N,G}^{b_4}(\underline{e})\right]\right.
\\&-\left.\left[F_{j,d,G}^{b_3}(\ol{e})-F_{j,d,G}^{b_4}(\ol{e})-F_{j,d,G}^{b_3}(\underline{e})+F_{j,d,G}^{b_4}(\underline{e})\right]\right\}.
\end{align*}

By Lemma \ref{l17}, we have
\begin{align*}
&\lim_{j\rightarrow\infty}\left[F_{j,d,G}^{b_3}(\ol{e})-F_{j,d,G}^{b_4}(\ol{e})-F_{j,d,G}^{b_3}(\underline{e})+F_{j,d,G}^{b_4}(\underline{e})\right]\\
&=\frac{G_{G}(\ol{e},b_3)}{|\Delta(b_3,b_3)|}-
\frac{G_{G}(\ol{e},b_4)}{|\Delta(b_4,b_4)|}-\frac{G_{G}(\underline{e},b_3)}{|\Delta(b_3,b_3)|}
+\frac{G_{G}(\underline{e},b_4)}{|\Delta(b_4,b_4)|};
\end{align*}
where $G_{G}(\cdot,\cdot)$ is the Green's function on the infinite transient graph $G$ as defined in Definition \ref{df12}.

Let $f$ be the directed edge with $\underline{f}=b_4$ and $\ol{f}=b_3$. Note that
\begin{align*}
&I_{e,\partial\HH^2}(f)=\frac{\nu(f)}{\nu(f^+)}\langle P_{\bigstar}\chi_e, \chi_f\rangle_{R};\qquad
&I_{e,\emptyset}(f)=\frac{\nu(f)}{\nu(f^+)}\langle P_{\diamondsuit^{\perp}}\chi_e, \chi_f\rangle_{R}.
\end{align*}
For $s\in \vec{E}$, define
\begin{align*}
J_e(s):&=\frac{\nu(s)}{\nu(s^+)}\left[\frac{G_{G}(\ol{e},\ol{s})}{|\Delta(\ol{s},\ol{s})|}-
\frac{G_{G}(\ol{e},\underline{s})}{\left|\Delta(\underline{s},\underline{s})\right|}-\frac{G_{G}(\underline{e},\ol{s})}{\left|\Delta(\ol{s},\ol{s})\right|}
+\frac{G_{G}(\underline{e},\underline{s})}{\left|\Delta(\underline{s},\underline{s})\right|}\right]\\
&=K_{\underline{e}}(s)-K_{\overline{e}}(s)
\end{align*}
where for each $v\in V$ and $s\in\vec{E}$, 
\begin{align*}
K_v(s):=\frac{\nu(s)}{\nu(s^+)}\left[-\frac{G_{G}(v,\ol{s})}{\left|\Delta(\ol{s},\ol{s})\right|}
+\frac{G_{G}(v,\underline{s})}{\left|\Delta(\underline{s},\underline{s})\right|}\right]
\end{align*}
is the expected number of times that a weighted random walk (as defined in \ref{drwd}) starting from $v$ uses $s$ minus the expected number of times that it uses $-s$. Then
\begin{align*}
\mathrm{div}K_{v}(w)=\frac{\delta_{v}(w)}{|\Delta(v,v)|}
\end{align*}
Then 
\begin{align*}
\mathrm{div} J_e(v)=\frac{1}{|\Delta(v,v)|}[\delta_{v}(\underline{e})-\delta_v(\overline{e})]=\mathrm{div} I_{e,\partial\HH^2}(v).
\end{align*}
Therefore $\mathrm{div} (J_e-I_{e,\partial\HH^2})=0$, which implies $(J_e-I_{e,\partial\HH^2})\perp \bigstar$. Since $I_{e,\partial\HH^2}\in \bigstar$, we deduce that $P_{\bigstar}J_e=I_{e,\partial \HH^2}$.

For any $v\in V$ and $s\in \vec{E}$, let $\theta_v(f)$ be the probability that the first step of the random walk starting at $v$ will use the edge $f$ minus the probability that it will use the edge $-f$. Then $\Delta(v,v)\theta_v=-\triangledown \mathbf{1}_v$. Hence $\theta_v\in \bigstar$. Note that
\begin{align*}
J_e=\sum_{v}[G_{G}(\underline{e},v)\theta_v-G_{G}(\overline{e},v)\theta_v];
\end{align*}
then $J_e\in \bigstar$; therefore $J_e=I_{e,\partial\HH^2}$.

For $s\in \vec{E}$, define
\begin{small}
\begin{align*}
&J_{e,j,N}(s):\\
&=\frac{\nu(s)}{\nu(s^+)}\left[\frac{G_{j,N,G}(\ol{e},\ol{s})}{\left|\sum_{x\in V(G_j):x\sim b_{0,j}}\Delta(\ol{s},x)\right|}-
\frac{G_{j,N,G}(\ol{e},\underline{s})}{\left|\sum_{x\in V(G_j),x\sim\underline{s}}\Delta(\underline{s},x)\right|}-\frac{G_{j,N,G}(\underline{e},\ol{s})}{\left|\sum_{x\in V(G_j),x\sim\ol{s}}\Delta(\ol{s},x)\right|}
+\frac{G_{j,N,G}(\underline{e},\underline{s})}{\left|\sum_{x\in V(G_j),x\sim\underline{s}}\Delta(\underline{s},x)\right|}\right]\\
&=K_{\underline{e},j,N}(s)-K_{\overline{e},j,N}(s).
\end{align*}
\end{small}
where for each $v\in V$ and $s\in\vec{E}$, 
\begin{align*}
K_{v,j,N}(s):=\frac{\nu(s)}{\nu(s^+)}\left[-\frac{G_{j,N,G}(v,\ol{s})}{\left|\sum_{x\in V(G_j),x\sim\ol{s}}\Delta(\ol{s},x)\right|}
+\frac{G_{j,N,G}(v,\underline{s})}{\left|\sum_{x\in V(G_j),x\sim\underline{s}}\Delta(\underline{s},x)\right|}\right]
\end{align*}
is the expected number of times that a weighted random walk on $G_j$ absorbed at $b_{0,j}$ (as defined in Definition \ref{df218}) starting from $v$ uses $s$ minus the expected number of times that it uses $-s$. Then
\begin{align*}
\mathrm{div}K_{v,j,N}(w)=
\begin{cases}\frac{\delta_{v}(w)}{|\sum_{x\in V(G_j):x\sim v}\Delta(v,x)|};&\mathrm{If}\ v\neq b_{0,j}\ \mathrm{and}\ w\neq b_{0,j};\\
0;&\mathrm{If}\ v= b_{0,j};\\
-\frac{1}{|\sum_{x\in V(G_j):x\sim b_{0,j}}\Delta(b_{0,j},x)|};&\mathrm{If}\ v\neq b_{0,j}\ \mathrm{and}\ w=b_{0,j}.
\end{cases}
\end{align*}
Assume neither endpoints of $e$ coincide with $b_{0,j}$. Then 
\begin{align*}
\mathrm{div} J_{e,j,N}(v)&=\frac{1}{\left|\sum_{x\in V(G_j):x\sim v}\Delta(v,x)\right|}[\delta_{v}(\underline{e})-\delta_v(\overline{e})]
=\mathrm{div} [P_{\bigstar_j}\chi_e](v).
\end{align*}
where $\bigstar_j:=\triangledown l^2(V_j)$.
Therefore $\mathrm{div} (J_{e,j,N}-P_{\bigstar_j}\chi_e)=0$, which implies $(J_{e,j,N}-P_{\bigstar_j}\chi_e)\perp \bigstar_j$. Since $P_{\bigstar_j}\chi_e\in \bigstar_j$, we deduce that $P_{\bigstar_j}J_{e,j,N}=P_{\bigstar_j}\chi_e$.

For any $v\in V_j$ and $s\in \vec{E}$, let $\theta_{j,N,v}(f)$ be the probability that the first step of the weighted random walk on $G_j$ absorbed at $b_{0,j}$ starting at $v$ will use the edge $f$ minus the probability that it will use the edge $-f$. Then $\left|\sum_{x\in V(G_j):x\sim v}\Delta(v,x)\right|\theta_{j,N,v}=-\triangledown \mathbf{1}_v$ if $v\neq b_{0,j}$ and $\theta_{j,N,b_{0,j}}=0$. Hence $\theta_{j,N,v}\in \bigstar_j$. Note that
\begin{align*}
J_{e,N,j}=\sum_{v\in V_j}[G_{j,N,G}(\underline{e},v)\theta_{j,N,v}-G_{j,N,G}(\overline{e},v)\theta_{j,N,v}];
\end{align*}
then $J_{e,N,j}\in \bigstar_j$; therefore 
\begin{align*}
J_{e,N,j}=P_{\bigstar_j}\chi_e.
\end{align*}
By Proposition \ref{pp16}, we obtain
\begin{align*}
\lim_{j\rightarrow\infty}P_{\bigstar_j}\chi_e=I_{e,\emptyset};
\end{align*}
and the convergence is in the norm $\|\cdot\|_{R}$. Then we obtain that
for any two adjacent vertices $b,b'\in V(G_j)$ with $e=(b,b')$
\begin{align*}
&\lim_{j\rightarrow\infty}\left[\ol{D}_{j}^{-1}(b,w)-\ol{D}_{j}^{-1}(b',w)\right]=\frac{1}{\zeta}\sqrt{\frac{\nu(b_3b_4)}{\nu([b_3b_4]^+)}}\frac{\nu([bb']^+)}{\nu(bb')}P_{\triangledown\mathbf{HD}}\chi_{b_3b_4}(e)\\
&+\frac{1}{\zeta}
\left(\frac{G_{G}(b,b_3)}{|\Delta(b_3,b_3)|}-\frac{G_{G}(b,b_4)}{|\Delta(b_4,b_4)|}-\frac{G_{G}(b',b_3)}{|\Delta(b_3,b_3)|}+\frac{G_{G}(b',b_4)}{|\Delta(b_4,b_4)|}\right)\sqrt{\frac{\nu(b_3b_4)}{\nu([b_3b_4]^+)}};
\end{align*}
where $b_3,b_4,\zeta$ are given as in Lemma \ref{l18}(2), $G_G$ is the Green's function on $G$ as defined in Definition \ref{df12}. 
Then (2a) follows. 

It remains to prove (2b). The proof is inspired by Theorem 1.1 in \cite{ARP99}. 
Let
\begin{align*}
&R_{n}:=\lim_{j\rightarrow\infty}\ol{D}_{j}^{-1}(Z_n,w)
-\frac{1}{\zeta}
\left(\frac{G_{G}(Z_n,b_3)}{|\Delta(b_3,b_3)|}-\frac{G_{G}(Z_n,b_4)}{|\Delta(b_4,b_4)|}+H_{b_3b_4}(Z_n)\right)\sqrt{\frac{\nu(b_3b_4)}{\nu([b_3b_4]^+)}}\\
&-\lim_{j\rightarrow\infty}\ol{D}_{j}^{-1}(Z_{n-1},w)
+\frac{1}{\zeta}
\left(\frac{G_{G}(Z_{n-1},b_3)}{|\Delta(b_3,b_3)|}-\frac{G_{G}(Z_{n-1},b_4)}{|\Delta(b_4,b_4)|}+H_{b_3b_4}(Z_{n-1})\right)\sqrt{\frac{\nu(b_3b_4)}{\nu([b_3b_4]^+)}}
\end{align*}

We shall prove that 
\begin{align}
&\lim_{n\rightarrow\infty}\sum_{k=1}^n R_{k}=0.\label{dfz}
\end{align}
Then the lemma follows.

Now we prove (\ref{dfz}). 
We have for any $\epsilon>0$
\begin{align*}
\mathbb{P}\left(\left|\sum_{k=1}^nR_{k}\right|>\epsilon\right)\leq \frac{\mathbb{E}\left|\sum_{k=1}^nR_{k}\right|^2}{\epsilon^2}
\end{align*}
Note that 
\begin{align*}
&\mathbb{E}\left|\sum_{k=1}^n R_{k}\right|^2=\sum_{k=1}^n\mathbb{E}[R_{k}]^2
+2\sum_{1\leq r<s\leq n}\mathbb{E}R_{r}R_{s}
\end{align*}
For any $1\leq r<s\leq n$, let $\mathcal{F}_{r}$ be the sigma algebra generated by $Z_0,\ldots,Z_r$
\begin{align*}
&\mathbb{E}R_{r}R_{s}
=\mathbb{E}\left[\mathbb{E}\left\{R_{r}R_{s}|\mathcal{F}_{r}\right\}\right]=\mathbb{E}R_{r}\mathbb{E}\left\{R_{s}|\mathcal{F}_{r}\right\}=0
\end{align*}
where the last identity follows from the fact that 
\begin{align*}
\mathbb{E}\left\{R_{s}|\mathcal{F}_{s-1}\right\}=0.
\end{align*}
and the harmonicity of $R_{n}$ with respect to the variable $Z_n$.
Hence we have 
\begin{align*}
&\mathbb{E}\left|\sum_{k=1}^n R_{k}\right|^2=\sum_{k=1}^n\mathbb{E}[R_{k}]^2
\end{align*}
Let $Z_0=x$ be an arbitrary vertex in $V$. Moreover
\begin{align*}
\sum_{k=1}^\infty\mathbb{E}[R_{k}]^2=\sum_{y\in V}G_{G}(x,y)\mathbb{E}[R_{1}|Z_0=y]^2
\end{align*}
Let $\tau_x$ be the first hitting time of the vertex $x$ by $Z_n$. Note that
\begin{align*}
\frac{G_{G}(x,y)}{|\Delta(y,y)|}=\frac{G_{G}(y,x)}{|\Delta(x,x)|}=\frac{\mathbb{P}(\tau_x<\infty|Z_0=y)G_{G}(x,x)}{|\Delta(x,x)|}
\leq \frac{G_{G}(x,x)}{|\Delta(x,x)|}
\end{align*}
Then we have 
\begin{align*}
\sum_{k=1}^\infty\mathbb{E}[R_{k}]^2\leq \frac{G_{G}(x,x)}{|\Delta(x,x)|}\sum_{y\in V}\mathbb{E}[R_{1}|Z_0=y]^2= 0,
\end{align*}
where the last identity follows from the fact that the left hand side of (\ref{ss}) converges to the right hand side of (\ref{ss}) in the $\|\cdot\|_{R}$ norm. Then we obtain for any $\epsilon>0$, $\mathbb{P}\left(\left|\sum_{k=1}^{\infty}R_{k}\right|>\epsilon\right)=0$, then (\ref{dfz}) follows.
\end{proof}

\subsection{Decay of the Dirichlet Green's Function}

\begin{lemma}(Proposition 6.6 in \cite{ly16})Let $G=(V,E)$ be an infinite connected, non-amenable graph with spectral radius $\rho<1$. For simple random walk on $G$,
\begin{align*}
p_n(x,y)\leq \sqrt{\frac{d(y)}{d(x)}}\rho^n
\end{align*}
where $d(y)$ (resp.~$d(x)$) is the degree of $y$ (resp.~$x$).
\end{lemma}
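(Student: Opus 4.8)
The plan is to run the standard spectral argument for reversible Markov chains; this is exactly the proof in \cite{ly16}, so in the paper I would in fact just cite Proposition 6.6 there and omit the details, but here is the shape of it. First I would observe that simple random walk on $G$ is reversible with respect to the degree measure $\pi(v):=d(v)$: for every edge $xy$ one has $\pi(x)p(x,y)=1=\pi(y)p(y,x)$. Consequently the transition operator $P$, given by $(Pf)(v)=\sum_{w}p(v,w)f(w)$, is self-adjoint on the weighted Hilbert space $\ell^2(V,\pi)$ with inner product $\langle f,g\rangle_\pi:=\sum_{v}\pi(v)f(v)\overline{g(v)}$.

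Next I would invoke the classical identity $\|P\|_{\ell^2(V,\pi)\to\ell^2(V,\pi)}=\rho(G)$ relating the operator norm of $P$ to the spectral radius of $G$ (recall from Definition \ref{df14}(7) that $\rho(G)<1$ in the non-amenable case). By submultiplicativity of the operator norm, $\|P^n\|\le\rho^n$. Writing $\mathbf{1}_x$ for the indicator function of the vertex $x$, one has $\|\mathbf{1}_x\|_\pi=\sqrt{d(x)}$ and $\langle P^n\mathbf{1}_y,\mathbf{1}_x\rangle_\pi=d(x)\,p_n(x,y)$, so the Cauchy--Schwarz inequality gives
\[
d(x)\,p_n(x,y)=\langle P^n\mathbf{1}_y,\mathbf{1}_x\rangle_\pi\le\|P^n\|\,\|\mathbf{1}_y\|_\pi\,\|\mathbf{1}_x\|_\pi\le\rho^n\sqrt{d(x)\,d(y)}.
\]
Dividing through by $d(x)$ yields $p_n(x,y)\le\sqrt{d(y)/d(x)}\,\rho^n$, as claimed.

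The only step that genuinely uses the non-amenability hypothesis — rather than being purely formal bookkeeping with the reversible inner product — is the identification of $\|P\|$ with $\rho(G)$; I would treat this as known, since it underlies the equivalence quoted in Definition \ref{df14}(7). Hence I do not anticipate any real obstacle: the result is classical, and the one-line Cauchy--Schwarz estimate above is the entire content once the operator-norm identification is granted.
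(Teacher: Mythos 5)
Your argument is correct and is exactly the proof behind Proposition 6.6 of \cite{ly16}, which the paper simply cites without reproving: reversibility of simple random walk with respect to the degree measure, self-adjointness of $P$ on $\ell^2(V,\pi)$, the identity $\|P\|=\rho$, and Cauchy--Schwarz applied to $\langle P^n\mathbf{1}_y,\mathbf{1}_x\rangle_\pi$. The only minor imprecision is the remark that non-amenability enters via the identification $\|P\|=\rho$; that identity holds for any connected locally finite graph, and non-amenability is used only to guarantee $\rho<1$ so that the resulting bound actually decays.
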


\begin{lemma}\label{ap31}Let $G=(V,E)$ be an infinite, connected, bounded degree, 3-connected, nonamenable, one-ended, properly-embedded planar graph with spectral radius $\rho(G)$. Assume the edge weights satisfy
\begin{align}
\nu(e)=\nu(e^+)=1;\ \forall e\in E.\label{aww}
\end{align}
Let $G_j$ be a finite subgraph of $G$ consisting of faces of $G$ bounded by a simple closed curve consisting of edges. 
Let $D$ be the maximal vertex degree of $G$.
\begin{align}
G_{j,d}(x,y)\leq G(x,y)\leq \sqrt{\frac{D}{3}}\frac{[\rho(G)]^{d_G(x,y)}}{1-\rho(G)}.,\ \forall x,y\in V(G)\label{ed2}
\end{align}
where $G_{j,d}(\cdot,\cdot)$ is the Dirichlet Green's function for $G_j$, and $G(\cdot,\cdot)$ is the discrete Green's function for the infinite graph $G$.
\end{lemma}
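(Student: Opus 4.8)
The plan is to prove the two inequalities separately: the left one $G_{j,d}(x,y)\le G(x,y)$ is essentially formal, while the right one will follow from the spectral-radius heat-kernel bound quoted just above, together with the elementary observation that $p_n(x,y)$ vanishes whenever $n$ is below the graph distance $d_G(x,y)$.

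For the left inequality I would simply compare series. Writing the random walk of Definition \ref{df12} with $G^+$ replaced by $G$, we have $G_{j,d}(x,y)=\sum_{n\ge 0}\mathbb{P}^x(X_n=y;\tau_j^x>n)$ and $G(x,y)=\sum_{n\ge 0}\mathbb{P}^x(X_n=y)$, so the bound holds term by term; passing from the full to the Dirichlet Green's function only discards the contribution of walks that have already left $G_j$. Nonamenability forces $G(x,y)<\infty$ (indeed the estimate below already gives $G(x,x)<\infty$), so both quantities are finite and the comparison is meaningful.

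For the right inequality I would first identify the walk. Under assumption \eqref{aww}, Lemma \ref{le11} gives $\Delta(y,v)=-1$ for $y\sim v$ in $G$ and $\Delta(y,y)=\deg_G(y)$, so the transition probabilities in \eqref{drwd} become $1/\deg_G(y)$; that is, $\{X_n\}$ is simple random walk on $G$ and $G(x,y)=\sum_{n\ge 0}p_n(x,y)$. Since $G$ is nonamenable, Definition \ref{df14}(7) gives $\rho:=\rho(G)<1$. Then I would estimate in three steps: (i) $p_n(x,y)=0$ for $n<d_G(x,y)$, since the walk needs at least that many steps to reach $y$ from $x$; (ii) the proposition quoted immediately before the lemma gives $p_n(x,y)\le\sqrt{d(y)/d(x)}\,\rho^n$ for all $n$; (iii) hence
\[
G(x,y)=\sum_{n=d_G(x,y)}^{\infty}p_n(x,y)\le \sqrt{\frac{d(y)}{d(x)}}\sum_{n=d_G(x,y)}^{\infty}\rho^n=\sqrt{\frac{d(y)}{d(x)}}\cdot\frac{\rho^{d_G(x,y)}}{1-\rho}.
\]
Finally I would bound $d(y)\le D$ by the definition of $D$, and $d(x)\ge 3$ because a $3$-connected graph has minimum degree at least $3$, giving $\sqrt{d(y)/d(x)}\le\sqrt{D/3}$ and the claimed inequality.

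I do not anticipate any real obstacle: all the analytic input — term-by-term domination, Lemma \ref{le11} to identify the walk, Definition \ref{df14}(7) for $\rho<1$, and the heat-kernel decay bound — is already in hand. The one point that genuinely works in our favor rather than being pure bookkeeping is step (i): starting the geometric sum at $n=d_G(x,y)$ instead of at $0$ is exactly what converts the crude bound $\tfrac{1}{1-\rho}$ into the distance-dependent factor $\tfrac{\rho^{d_G(x,y)}}{1-\rho}$, which is what is needed later for the decay estimates on double dimer contours.
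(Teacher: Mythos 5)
Your proof is correct and follows essentially the same route as the paper: term-by-term domination for $G_{j,d}\le G$, then the heat-kernel bound $p_n(x,y)\le\sqrt{d(y)/d(x)}\,\rho^n$ starting the sum at $n=d_G(x,y)$, and finally $\sqrt{d(y)/d(x)}\le\sqrt{D/3}$ via bounded degree and $3$-connectivity. You spell out the justifications (identification of the walk as simple random walk under \eqref{aww}, and minimum degree $\ge 3$ from $3$-connectivity) that the paper leaves implicit, but the argument is the same.
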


\begin{proof}Note that 
\begin{align*}
&G_{j,d}(u,v)\leq G(u,v)=\sum_{n=d_{G}(u,v)}^{\infty}p_n(u,v)
\leq \sum_{n=d_{G}(u,v)}^{\infty}\sqrt{\frac{d(v)}{d(u)}}[\rho(G)]^{n}\leq \sqrt{\frac{D}{3}}\frac{[\rho(G)]^{d_{G}(u,v)}}{1-\rho(G)}.
\end{align*}
Then (\ref{ed2}) follows.
\end{proof}

Here we obtain the exponential decay of $G(u,v)$ with respect to the distance of $u$ and $v$ from the nonamenability of the underlying graph. The decay rate of the Green's function $G(u,v)$ with respect to the distance of $u$ and $v$ is known to be related to the growth rate of the graph; see e.g. \cite{HSC93,LS97}.

Recall also the following general results proved in \cite{ly16}.

\begin{lemma}\label{le425}(Corollary 6.32 in \cite{ly16}) Let $p(\cdot,\cdot)$ be the transition probabilities of an irreducible Markov chain on a countable state space $V$. Assume the chain has an infinite stationary measure $\pi$. Let
$\pi_{\min}:=\inf_{x\in v}\pi(x)$.
For $x,y\in V$, let
\begin{align*}
Q(x,y):=\pi(x)p(x,y)
\end{align*}
For $S,A\subset V$, define
\begin{align*}
Q(S,A):=\sum_{s\in S,a\in A}Q(s,a);\qquad \mathrm{and}\ |\partial_E S|_Q:=Q(S,S^c)
\end{align*}
The edge expansion of a finite set $S\subset V$ is
\begin{align*}
\Phi_S:=\frac{|\partial_E S|_Q}{\pi (S)}.
\end{align*}
The expansion profile of the chain is defined for $u>0$ by 
\begin{align}
\Phi(u):=\inf\{\Phi_S:0<\pi(S)\leq u\},\label{dpu}
\end{align}
where we take the convention that the inf of an empty set is $\infty$. Suppose the Markov chain $(V,P)$ is reversible. Then
\begin{enumerate}
\item If $\Phi(u)\geq \varphi_0$ for some $\varphi_0>0$, then
\begin{align*}
\frac{p_n(x,y)}{\pi(y)}\leq \frac{4}{\pi_{\min}}\mathrm{exp}\left(-\frac{\varphi_0^2(n-1)}{16}\right)\qquad \forall x,y\in V,\ \mathrm{and}\ n\geq 1
\end{align*}
\item Let $d>0$. If $\Phi(u)\geq cu^{-\frac{1}{d}}$ for some $c>0$ and all $u>0$, then
\begin{align*}
\frac{p_n(x,y)}{\pi(y)}\leq C'n^{-\frac{d}{2}};\qquad \forall x,y\in V,\ \mathrm{and}\ n\geq 1
\end{align*}
where $C'=C'(d,c)$.
\item If $\Phi(u)\geq \frac{c}{\log (bu)}$ for some $b,c>0$ and all $u>0$, then
\begin{align*}
\frac{p_n(x,y)}{\pi(y)}\leq C_1\exp(-C_2n^{\frac{1}{3}});\qquad \forall x,y\in V,\ \mathrm{and}\ n\geq 1
\end{align*}
\end{enumerate}
\end{lemma}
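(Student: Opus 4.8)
The plan is to deduce all three estimates from one mechanism — the \emph{evolving set process} of Morris and Peres, which is how Corollary~6.32 of \cite{ly16} is established — specialized to the three growth rates of $\Phi$. I would first reduce to the diagonal: since the chain is reversible, $P$ is self-adjoint on $\ell^2(\pi)$, so $q_n(x):=p_{2n}(x,x)/\pi(x)=\sum_z p_n(x,z)^2/\pi(z)$ is non-increasing in $n$, and for every $x,y$ and every $n$ one has the Cauchy--Schwarz bound
\begin{align*}
\frac{p_n(x,y)}{\pi(y)}=\Big\langle \tfrac{p_{\lfloor n/2\rfloor}(x,\cdot)}{\pi},\,\tfrac{p_{\lceil n/2\rceil}(y,\cdot)}{\pi}\Big\rangle_{\pi}\ \le\ \sqrt{q_{\lfloor n/2\rfloor}(x)\,q_{\lceil n/2\rceil}(y)}\,,
\end{align*}
so it suffices to bound $q_n(x)$ uniformly in $x$.

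Next I would run the evolving set process $(S_n)_{n\ge 0}$ from $S_0=\{x\}$: given $S_n$, draw $U$ uniform on $[0,1]$ and set $S_{n+1}=\{y:\ Q(S_n,y)\ge U\,\pi(y)\}$. The Morris--Peres duality identity expresses $p_n(x,y)/\pi(y)$ as an expectation over this process of $\mathbf 1\{y\in S_n\}/\pi(S_n)$; in particular $q_n(x)$ is controlled by $\mathbb{E}^{\{x\}}[\pi(S_n)^{-1}]$. The engine is the \emph{growth gauge}: for an absolute $c_0>0$,
\begin{align*}
\mathbb{E}\!\left[\sqrt{\pi(S_{n+1})}\ \middle|\ S_n=S\right]\ \le\ \sqrt{\pi(S)}\,\bigl(1-c_0\,\Phi_S^2\bigr),
\end{align*}
a purely combinatorial estimate that converts the Cheeger constant of the current set into multiplicative decay of $\sqrt\pi$. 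Since $\Phi_S\ge\Phi(\pi(S))$ and $\pi(S_n)$ only grows, iterating the gauge turns the evolution of $\pi(S_n)$ into a recursion driven entirely by $\Phi$, and — after a Doob-type tilt of the process under which $1/\sqrt{\pi(S_n)}$ becomes a supermartingale with the right drift, plus Jensen together with $\pi(S_n)\ge\pi_{\min}$ — yields control of the form $q_n(x)\le \pi_{\min}^{-1}\exp(-c_1\sum_{k<n}\Phi_{S_k}^2)$.

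Then I would specialize. Writing $T(U):=\int_{\pi_{\min}}^{U}\frac{du}{u\,\Phi(u)^2}$ for the deterministic time the gauge needs to push $\pi(S_n)$ past level $U$, the previous step gives $p_n(x,y)/\pi(y)\lesssim \pi_{\min}^{-1}U^{-1}$ whenever $n\gtrsim T(U)$, and one takes $U=U(n)$ maximal with $T(U)\le c\,n$. If $\Phi\ge\varphi_0$, the gauge loses a fixed factor each step, giving $q_n(x)\le\pi_{\min}^{-1}e^{-c_1\varphi_0^2 n}$, and chasing the absolute constants through the diagonal reduction produces exactly $\tfrac{4}{\pi_{\min}}\exp(-\varphi_0^2(n-1)/16)$, which is (1). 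If $\Phi(u)\ge c\,u^{-1/d}$, then $T(U)\asymp U^{2/d}$, so $U(n)\asymp n^{d/2}$ and $p_n(x,y)/\pi(y)\lesssim n^{-d/2}$, which is (2). If $\Phi(u)\ge c/\log(bu)$, then $T(U)\asymp(\log bU)^3$, so $U(n)\asymp\exp(C n^{1/3})$ and $p_n(x,y)/\pi(y)\lesssim\exp(-C_2 n^{1/3})$, which is (3).

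The hard part is the growth-gauge inequality together with the passage from the random recursion for $\pi(S_n)$ to the clean deterministic comparison with $T(U)$: this is where the Doob transform of the evolving set process is needed, along with a convexity argument converting $\sqrt{\pi(S_n)}$-control into $1/\pi(S_n)$-control via $\pi(S_n)\ge\pi_{\min}$. Once that machinery is set up, the three regimes are just the three elementary evaluations of the integral $T(U)$; since in this paper the lemma is used only as a black box, it suffices to cite \cite{ly16} for the details.
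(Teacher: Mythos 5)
The paper does not supply a proof of this lemma at all: the parenthetical ``(Corollary 6.32 in \cite{ly16})'' is the entire justification, and no \verb|proof| environment follows. What you have written is a correct, well-organized sketch of the argument that actually establishes that corollary in Lyons--Peres, namely the Morris--Peres evolving-set method. Your reduction to the diagonal via reversibility and Cauchy--Schwarz is exactly right, the evolving-set duality $\EE^{\{x\}}[\mathbf{1}\{y\in S_n\}/\pi(S_n)] = p_n(x,y)/\pi(y)$ is the correct bridge, the growth-gauge inequality $\EE[\sqrt{\pi(S_{n+1})}\mid S_n=S]\le\sqrt{\pi(S)}(1-c_0\Phi_S^2)$ is the real engine, and the Doob $h$-transform by $\pi(S_n)$ is indeed what one needs to turn the random recursion into a comparison with the deterministic clock $T(U)=\int_{\pi_{\min}}^U \frac{du}{u\,\Phi(u)^2}$. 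Your three evaluations of $T(U)$ are also correct: $T(U)\asymp U^{2/d}$ when $\Phi(u)\gtrsim u^{-1/d}$ gives $p_n/\pi\lesssim n^{-d/2}$, and $T(U)\asymp(\log bU)^3$ when $\Phi(u)\gtrsim 1/\log(bu)$ gives $p_n/\pi\lesssim\exp(-C_2 n^{1/3})$. The only place I would be more careful than your sketch is in asserting that chasing constants produces \emph{exactly} $\tfrac{4}{\pi_{\min}}\exp(-\varphi_0^2(n-1)/16)$ in case (1); that requires tracking the absolute constants in the gauge inequality and the diagonal reduction precisely, which your outline elides, but the form is right. Since the paper treats the statement as a black box and so do you (you explicitly note this at the end), your proposal is consistent with the paper's use of the lemma while also giving useful detail on where it comes from.
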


\begin{lemma}\label{le426}Let $G=(V,E)$ be an infinite, connected, bounded degree, 3-connected, nonamenable, one-ended, properly-embedded planar graph. Consider the weighted random walk on defined by (\ref{drwd}) with $G^+$ replaced by $G$. Assume
\begin{align}
\Phi(u)\geq cu^{-\frac{1}{d}}\label{pud}
\end{align}
for some $d>2$. Then the discrete Green's function on the infinite graph $G$ satisfies
\begin{align*}
\frac{G(u,v)}{|\Delta(v,v)|}\leq C'' [d_{G}(u,v)]^{-\frac{d-2}{2}}
\end{align*}
where $C''=C''(d,c)$.
\end{lemma}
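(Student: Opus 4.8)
The plan is to obtain the bound as a direct consequence of the heat-kernel estimate of Lemma~\ref{le425}(2), once the correct reversing measure of the walk has been identified.

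First I would record the structure of the weighted random walk on $G$ defined by (\ref{drwd}) (with $G^+$ replaced by $G$). By Lemma~\ref{le11}(1) the off-diagonal entry $\Delta(y,v)$ for adjacent $y,v$ equals $-\nu(e)/\nu(e^+)$, where $e=\langle y,v\rangle$, and by Lemma~\ref{le11}(5) the diagonal entry is $\Delta(y,y)=\sum_{e\sim y}\nu(e)/\nu(e^+)$. Hence this walk is exactly the network random walk with conductances $c(e)=\nu(e)/\nu(e^+)$, so it is reversible with reversing measure $\pi(v):=|\Delta(v,v)|=\sum_{e\sim v}c(e)$; infiniteness of $\pi$ is implicit in the hypothesis that the expansion profile $\Phi$ in (\ref{dpu}) is defined for all $u>0$, and the chain is irreducible since $G$ is connected. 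Thus the hypotheses of Lemma~\ref{le425} hold, and $(\ref{pud})$ is precisely the hypothesis of part~(2) of that lemma with the same constants $c,d$.

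Next, writing $p_n(u,v):=\mathbb{P}^u(X_n=v)$ and using $\pi(v)=|\Delta(v,v)|$, I would observe that
\begin{align*}
\frac{G(u,v)}{|\Delta(v,v)|}=\sum_{n=0}^{\infty}\frac{p_n(u,v)}{\pi(v)},
\end{align*}
and that $p_n(u,v)=0$ whenever $n<d_G(u,v)$ because the walk moves along edges of $G$. Assume $u\ne v$ and set $N:=d_G(u,v)\ge 1$. Lemma~\ref{le425}(2) gives $p_n(u,v)/\pi(v)\le C'n^{-d/2}$ for all $n\ge 1$ with $C'=C'(d,c)$, so
\begin{align*}
\frac{G(u,v)}{|\Delta(v,v)|}\le C'\sum_{n=N}^{\infty}n^{-d/2}.
\end{align*}

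Finally I would estimate the tail. Since $d>2$, comparison with an integral gives $\sum_{n\ge N}n^{-d/2}\le N^{-d/2}+\int_N^{\infty}x^{-d/2}\,dx=N^{-d/2}+\tfrac{2}{d-2}N^{-(d-2)/2}\le C(d)\,N^{-(d-2)/2}$, using $N^{-d/2}\le N^{-(d-2)/2}$ for $N\ge 1$. Combining with the previous display yields $\frac{G(u,v)}{|\Delta(v,v)|}\le C''\,[d_G(u,v)]^{-(d-2)/2}$ with $C''=C''(d,c)$, while the case $u=v$ is trivial since then the right-hand side is infinite. I do not anticipate a serious obstacle: the only point that needs care is verifying that the reversing measure of the walk is exactly $|\Delta(v,v)|$, so that $G(u,v)/|\Delta(v,v)|$ equals $\sum_n p_n(u,v)/\pi(v)$ and Lemma~\ref{le425}(2) applies without change of normalization; the remaining summation is elementary.
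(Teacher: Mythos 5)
Your proof is correct and follows exactly the paper's route: identify the walk as reversible with stationary measure $\pi(v)=|\Delta(v,v)|$, apply Lemma~\ref{le425}(2) to bound $p_n(u,v)/\pi(v)$, use $p_n(u,v)=0$ for $n<d_G(u,v)$, and sum the tail. The only difference is that you spell out the elementary tail estimate $\sum_{n\ge N}n^{-d/2}\le C(d)N^{-(d-2)/2}$, which the paper leaves implicit with "Then the lemma follows."
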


\begin{proof}The random walk defined by (\ref{drwd}) on $d$ is reversible and has an infinite stationary measure $\pi(x)=|\Delta(x,x)|$.

By Lemma \ref{le425}(2), we have
\begin{align*}
\frac{G(u,v)}{|\Delta(v,v)|}=\sum_{n=d_{G}(u,v)}^{\infty}\frac{p_n(u,v)}{|\Delta(v,v)|}\leq C'\sum_{n=d_{G}(u,v)}^{\infty}n^{-\frac{d}{2}}
\end{align*}
Then the lemma follows.
\end{proof}

\section{General Boundary Conditions}\label{sect:gbc}

In this section, we discuss the infinite volume Gibbs measure of perfect matchings on $\ol{G}$ obtained by using sequences of finite graphs exhausting the infinite graph with boundary conditions other than the Temperley boundary conditions.  We then prove that using a sequence of finite graphs with two convex corners and no concave corners, such that the two boundaries portions divided by the two convex corners converge to two closed subsets $\sA_0$ and $\sA_1$ of $\partial\HH^2$, the limits of the entries of the inverse of the weighted adjacency matrix converge to the difference of the Dirichlet Green's function plus explicit harmonic Dirichlet functions; see Lemma \ref{le67}.

\begin{assumption}\label{ap51}Suppose Assumption \ref{ap24} holds.
Consider a sequence of finite, simply-connected subgraphs of $\ol{G}$ given by
\begin{align*}
\tilde{G}_1,\tilde{G}_2,\ldots,\tilde{G}_n,\ldots
\end{align*}
satisfying all the following conditions
\begin{enumerate}
\item Each $\tilde{G}_i$ is induced by a finite subset of vertices $\tilde{V}_i$ of $\ol{G}$; and
\item For any two vertices $u,v\in V(\tilde{G}_i)\cap V$, such that $(u,v)\in E$ is an edge of $G$, the white vertex of $\ol{G}$ corresponding to $(u,v)$ is also a vertex of $\tilde{G}_i$; and  
\item For any two vertices $u,v\in V(\tilde{G}_i)\cap V^+$, such that $(u,v)\in E^+$ is an edge of $G^+$, the white vertex of $\ol{G}$ corresponding to $(u,v)$ is also a vertex of $\tilde{G}_i$; and
\item For each $i\geq 1$, $\tilde{G}_i\subset \tilde{G}_{i+1}$; and
\item $\cup_{i=1}^{\infty}\tilde{G}_i=\ol{G}$; and
\item Each $\tilde{G}_i$ admits a perfect matching.
\end{enumerate}
\end{assumption}

\begin{definition}Let $\tilde{G}_j$ be a finite subgraph of $\ol{G}$. We call a white vertex $w$ of $\tilde{G}_j$ a concave corner if there exists a face of $\ol{G}$ consisting of vertices $w,b_1,w',b_2$ such that $w,b_1,b_2$ are vertices of $\tilde{G}_j$, but $w'$ is not a vertex of $\tilde{G}_j$.

We call a white vertex $w$ a convex corner if there is exactly one incident face of $w$ in $\tilde{G}_j$.
\end{definition}

\begin{lemma}\label{le64}Let $\{\tilde{G}_j\}$ be a finite subgraph of $\ol{G}$ satisfying Assumption \ref{ap51}(1)(2)(3)(6). 
Let $K_j$ be the number of concave white corners in $\tilde{G}_j$. Let $w$ be a white vertex of $\ol{G}$ and let $b\in V^+$. Let
\begin{align*}
\ol{D}_{\tilde{G}_j}:=\left.\ol{D}\right|_{W(\tilde{G}_j),B(\tilde{G}_j)}
\end{align*}
i.e.~$\ol{D}_{\tilde{G}_j}$ is the submatrix of $\ol{D}$ with rows restricted to white vertices of $\tilde{G}_j$, and columns restricted to black vertices of $\tilde{G}_j$.

If $K_j=0$, then
\begin{itemize}
\item If $b$ is a vertex of $G$ and $v\in V(\tilde{G}_j)\cap V$, then
\begin{align*}
\ol{D}^{-1}_{\tilde{G}_j}(b,w)
=\frac{1}{\zeta}\sqrt{\frac{\nu(b_3b_4)}{\nu([b_3b_4]^+)}}\left[\Delta_{{G}_{1,j}}^{-1}(b,b_3)-\Delta_{G_{1,j}}^{-1}(b,b_4)\right]
\end{align*}
where $b_3$, $b_4$, $\zeta$ are given as in Lemma \ref{l18}(2).
\item If $b$ is a vertex of $\tilde{G}_j^+$, then
\begin{align*}
\ol{D}_{\tilde{G}_j}^{-1}(b,w)=\frac{1}{\xi}\sqrt{\frac{\nu(b_1b_2)}{\nu([b_1b_2]^+)}}
\left[\Delta_{G_{0,j}}^{-1}(b,b_1)-\Delta_{G_{0,j}}^{-1}(b,b_2)\right]
\end{align*}
where $b_1$, $b_2$, $\xi$ are given as in Lemma \ref{l18}(1).
\end{itemize}
\end{lemma}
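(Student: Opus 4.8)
The plan is to mimic the proof of Lemma~\ref{l18}, replacing the Temperley graph $\mathcal G_j$ by the general subgraph $\tilde G_j$; the one genuinely new input is that the hypothesis $K_j=0$ forces the matrix $\ol D_{\tilde G_j}^{*}\ol D_{\tilde G_j}$ to be block diagonal along the splitting of black vertices $B(\tilde G_j)=[V(\tilde G_j)\cap V]\sqcup[V(\tilde G_j)\cap V^{+}]$. Since $\tilde G_j$ admits a perfect matching, $|W(\tilde G_j)|=|B(\tilde G_j)|$ and $\det\ol D_{\tilde G_j}\neq 0$ by Corollary~\ref{c18}, so $\ol D_{\tilde G_j}^{-1}$ exists and is unique; hence it suffices to verify that, writing $\Delta_{\tilde G_j}:=\ol D_{\tilde G_j}^{*}\ol D_{\tilde G_j}$, the matrix $\Delta_{\tilde G_j}$ applied to each of the two displayed right-hand sides reproduces $\ol D_{\tilde G_j}^{*}$.

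First I would establish the block structure. For $b\in V(\tilde G_j)\cap V$ and $b'\in V(\tilde G_j)\cap V^{+}$ one has $\Delta_{\tilde G_j}(b,b')=\sum_{w}\overline{\ol D_{\tilde G_j}(w,b)}\,\ol D_{\tilde G_j}(w,b')$, the sum running over white vertices $w$ of $\tilde G_j$ adjacent in $\ol G$ to both $b$ and $b'$. A white vertex is adjacent to both if and only if the primal edge through it is incident to $b$ and borders the face $b'$, which happens exactly when $b$ lies on the boundary of the face $b'$; in that case there are precisely two such white vertices $w_1,w_2$, and $b,w_1,b',w_2$ are the four vertices of a quadrilateral face of $\ol G$. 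If both $w_1,w_2\in\tilde G_j$, the two summands cancel by the computation in the proof of Lemma~\ref{le11}(4); if neither lies in $\tilde G_j$, the sum is empty; and the only remaining possibility --- exactly one of $w_1,w_2$ in $\tilde G_j$ --- is by definition the assertion that that white vertex is a concave corner of $\tilde G_j$, and is ruled out by $K_j=0$. Hence $\Delta_{\tilde G_j}(b,b')=0$ for every primal $b$ and dual $b'$, so $\Delta_{\tilde G_j}$ is block diagonal, and by Lemma~\ref{le11} its diagonal block on $V(\tilde G_j)\cap V^{+}$ is $\Delta_{G_{0,j}}$ while its diagonal block on $V(\tilde G_j)\cap V$ is $\Delta_{G_{1,j}}$. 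Since $\det\Delta_{\tilde G_j}=|\det\ol D_{\tilde G_j}|^{2}\neq 0$ and the determinant of a block-diagonal matrix factors, $\Delta_{G_{0,j}}$ and $\Delta_{G_{1,j}}$ are both invertible, so the formulas in the statement are well defined, with the convention that a term $\Delta_{G_{0,j}}^{-1}(b,b')$ (resp.\ $\Delta_{G_{1,j}}^{-1}(b,b')$) is read as $0$ when $b'\notin V(\tilde G_j)$, which occurs precisely at convex corners.

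It then remains to solve $\Delta_{\tilde G_j}\,\ol D_{\tilde G_j}^{-1}(\cdot,w)=\ol D_{\tilde G_j}^{*}(\cdot,w)$. Computing $\ol D_{\tilde G_j}^{*}(\cdot,w)$ exactly as in the proof of Lemma~\ref{l18}: if $w$ is the white vertex on the primal edge $e=(b_3,b_4)$ with dual edge $e^{+}=(b_1,b_2)$, then $\ol D_{\tilde G_j}^{*}(\cdot,w)$ is supported at $b_1,b_2$ on the dual part, with values $\pm\tfrac{1}{\xi}\sqrt{\nu(b_1b_2)/\nu(b_3b_4)}$, and at $b_3,b_4$ on the primal part, with values $\pm\tfrac{1}{\zeta}\sqrt{\nu(b_3b_4)/\nu(b_1b_2)}$ where $\zeta=\mathbf{i}\xi$. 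Because $\Delta_{\tilde G_j}$ is block diagonal the equation splits into $\Delta_{G_{0,j}}\bigl(\ol D_{\tilde G_j}^{-1}(\cdot,w)|_{V^{+}}\bigr)=\tfrac{1}{\xi}\sqrt{\nu(b_1b_2)/\nu([b_1b_2]^{+})}\,(\delta_{b_1}-\delta_{b_2})$ and $\Delta_{G_{1,j}}\bigl(\ol D_{\tilde G_j}^{-1}(\cdot,w)|_{V}\bigr)=\tfrac{1}{\zeta}\sqrt{\nu(b_3b_4)/\nu([b_3b_4]^{+})}\,(\delta_{b_3}-\delta_{b_4})$; applying $\Delta_{G_{0,j}}^{-1}$ and $\Delta_{G_{1,j}}^{-1}$ yields the two claimed identities. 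The hard part will be the block-diagonality step: translating, for each adjacent primal/dual pair, the ``exactly one common white neighbour present'' configuration into a concave corner, and checking that $K_j=0$ is exactly what is needed to run the cancellation of Lemma~\ref{le11}(4) on every off-diagonal entry. Everything else is the same bookkeeping as in Lemma~\ref{l18}, with Green's functions rewritten as inverse Laplacians via Lemmas~\ref{le217} and~\ref{le219}.
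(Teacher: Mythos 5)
Your argument is correct and follows essentially the same route as the paper's proof: identify the off-diagonal (primal $\times$ dual) blocks of $\Delta_{\tilde G_j}=\ol D_{\tilde G_j}^{*}\ol D_{\tilde G_j}$ with concave white corners via the cancellation computation of Lemma~\ref{le11}(4), deduce block diagonality from $K_j=0$, and then solve $\Delta_{\tilde G_j}\,\ol D_{\tilde G_j}^{-1}(\cdot,w)=\ol D_{\tilde G_j}^{*}(\cdot,w)$ block by block. The only cosmetic difference is that the paper first writes the full inverse as a Neumann-type factorization $\bigl(I+(\cdot)\bigr)^{-1}\mathrm{diag}(\Delta_{G_{1,j}}^{-1},\Delta_{G_{0,j}}^{-1})\,\ol D_{\tilde G_j}^{*}$ with the off-diagonal matrix $A$ having exactly $K_j$ nonzero entries, and then sets $A=0$; you skip straight to the $A=0$ case, which is all the lemma needs.
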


\begin{proof}For each pair of black vertices $b_1,b_2$ of $\tilde{G}_j$, the following cases might occur:
\begin{enumerate}
\item If $b_1,b_2$ are adjacent vertices in $G$ or if $b_1, b_2$ are adjacent vertices in $G^+$
\begin{align*}
\ol{D}_{\tilde{G}_j}^*\ol{D}_{\tilde{G}_j}(b_1,b_2)=\frac{\nu(b_1b_2)}{\nu((b_1b_2)^+)}
\end{align*}
\item If $d_{\ol{G}}(b_1,b_2)>2$, then
\begin{align*}
\ol{D}_{\tilde{G}_j}^*\ol{D}_{\tilde{G}_j}(b_1,b_2)=0.
\end{align*}
\item If $d_{\ol{G}}(b_1,b_2)=2$, $b_1$ is a vertex of $G$, $b_2$ is a vertex of $G^+$, in the quadrilateral face $b_1,w,b_2,w'$ of $\ol{G}$, both $w$ and $w'$ are vertices of $\tilde{G}_j$, then
\begin{align*}
\ol{D}_{\tilde{G}_j}^*\ol{D}_{\tilde{G}_j}(b_1,b_2)=\ol{D}_{\tilde{G}_j}^*\ol{D}_{\tilde{G}_j}(b_2,b_1)=0.
\end{align*}
\item If $d_{\ol{G}}(b_1,b_2)=2$, $b_1$ is a vertex of $G$, $b_2$ is a vertex of $G^+$, in the quadrilateral face $b_1,w,b_2,w'$ of $\ol{G}$, exactly one of $w$ and $w'$ is a vertex of $\tilde{G}_j$, then
\begin{align*}
\ol{D}_{\tilde{G}_j}^*\ol{D}_{\tilde{G}_j}(b_1,b_2)=\mathbf{i}\  \mathrm{and}\ \ol{D}_{\tilde{G}_j}^*\ol{D}_{\tilde{G}_j}(b_2,b_1)=-\mathbf{i}
\end{align*}
or 
\begin{align*}
\ol{D}_{\tilde{G}_j}^*\ol{D}_{\tilde{G}_j}(b_1,b_2)=-\mathbf{i}\ \mathrm{and}\ \ol{D}_{\tilde{G}_j}^*\ol{D}_{\tilde{G}_j}(b_2,b_1)=\mathbf{i}
\end{align*}
\item 
\begin{align*}
\ol{D}_{\tilde{G}_j}^*\ol{D}_{\tilde{G}_j}(b_1,b_1)=-\sum_{w\in V(\tilde{G}_j):w\sim b_i}
\frac{\nu(e_{wb})}{\nu(e_{wb}^+)};
\end{align*}
where $e_{wb}$ is the edge in $G$ or $G^+$ with $wb$ as a half edge.
\end{enumerate}
Then we obtain
\begin{align*}
\ol{D}_{\tilde{G}_j}^*\ol{D}_{\tilde{G}_j}=\left(\begin{array}{cc}\Delta_{G_{1,j}}&\mathbf{i}A\\ -\mathbf{i}A^t&\Delta_{G_{0,j}}\end{array}\right)
\end{align*}
where $A$ is a matrix with entries $+1$ or $-1$, rows index by vertices of $G_{j,1}$ and columns indexed by vertices of $G_{j,0}$; and $A^t$ is the transpose of $A$. Note that the number of non-zero entries in $A$ is exactly $K_j$. Since
\begin{align*}
\ol{D}_{\tilde{G}_j}^*\ol{D}_{\tilde{G}_j}\ol{D}_{\tilde{G}_j}^{-1}=\ol{D}_{\tilde{G}_j}^*;
\end{align*}
We obtain that
\begin{align*}
\ol{D}_{\tilde{G}_j}^{-1}&=\left(\begin{array}{cc}\Delta_{G_{1,j}}&\mathbf{i}A\\ -\mathbf{i}A^t&\Delta_{G_{0,j}}\end{array}\right)^{-1}\ol{D}_{G_{0,j}}^*\\
&=\left(I+\left(\begin{array}{cc}0&\mathbf{i}\Delta_{G_{1,j}}^{-1}A\\ \mathbf{i}\Delta_{G_{0,j}}^{-1}A&0\end{array}\right)\right)^{-1}\left(\begin{array}{cc}\Delta_{G_{1,j}}^{-1}&0\\ 0&\Delta_{G_{0,j}}^{-1}\end{array}\right)\ol{D}_{\tilde{G}_j}^*
\end{align*}
Note that
\begin{itemize}
\item If $b$ is a vertex of $G$ and $v\in V(\tilde{G}_j)\cap V$, then
\begin{align*}
\left(\begin{array}{cc}\Delta_{G_{1,j}}^{-1}&0\\ 0&\Delta_{G_{0,j}}^{-1}\end{array}\right)\ol{D}_{\tilde{G}_j}^*(b,w)
=\frac{1}{\zeta}\sqrt{\frac{\nu(b_3b_4)}{\nu([b_3b_4]^+)}}\left[\Delta_{{G}_{1,j}}^{-1}(b,b_3)-\Delta_{G_{1,j}}^{-1}(b,b_4)\right]
\end{align*}
where $b_3$, $b_4$, $\zeta$ are given as in Lemma \ref{l18}(2).
\item If $b$ is a vertex of $\tilde{G}_j^+$, then
\begin{align*}
\left(\begin{array}{cc}\Delta_{G_{1,j}}^{-1}&0\\ 0&\Delta_{G_{0,j}}^{-1}\end{array}\right)\ol{D}_{\tilde{G}_j}^*(b,w)=\frac{1}{\xi}\sqrt{\frac{\nu(b_1b_2)}{\nu([b_1b_2]^+)}}
\left[\Delta_{G_{0,j}}^{-1}(b,b_1)-\Delta_{G_{0,j}}^{-1}(b,b_2)\right]
\end{align*}
where $b_1$, $b_2$, $\xi$ are given as in Lemma \ref{l18}(1).
\end{itemize}
If $K_j=0$ we have $A=0$, then the lemma follows.
\end{proof}

\begin{definition}Let $\tilde{G}_j$ be a finite subgraph of $\ol{G}$. Assume $\tilde{G}_j$ consist of faces of $\ol{G}$. Let $w$ be a white vertex of $\tilde{G}_j$.  We call $w$ a convex corner of $\tilde{G}_j$ if exactly one incident face of $w$ in $\ol{G}$ is in $\tilde{G}_j$.
\end{definition}

\begin{figure}
\centering
\includegraphics[width=.35\textwidth]{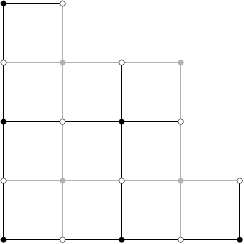}
\caption{A graph with two convex white corners and no concave white corners: the dual graph is represented by black lines; the primal graph is represented by gray lines.}
\label{fig:gpl}
\end{figure}

\begin{assumption}\label{ap66}Suppose Assumption \ref{ap311} holds with $\sA$ replaced by $\sA_0$. Assume each $G_{j,s}=(V_{j,s},E_{j,s})$ is a finite subgraph of $G$  such that $V_{j,s}$ consists all the vertices and edges in the bounded component of $\RR^2\setminus C_{j,s}$ (where $C_{j,s}$ is a simple closed curve consisting of edges of $G$), as well as all the vertices and edges along $C_{j,s}$; and $G_{j,s}$ is the induced subgraph of $V_{j,s}$. Assume there exist two vertices $v_{1,j,s}$ and $v_{2,j,s}$ along $C_{j,s}$, such that
\begin{align*}
C_{j,s}\setminus \{v_{1,j,s},v_{2,j,s}\}=C_{0,j,s}\cup C_{1,j,s}\ \mathrm{and}\ C_{0,j,s}\cap C_{1,j,s}=\emptyset.
\end{align*}
Assume 
\begin{align*}
\lim_{s\rightarrow\infty}v_{1,j,s}=z_1;\ \mathrm{and}\ \lim_{s\rightarrow\infty}v_{2,j,s}=z_2;\ \forall j\in \NN.
\end{align*}
and
\begin{align*}
\lim_{s\rightarrow\infty}C_{0,j,s}=\sA_0;
\end{align*}
where $\sA_0$ is a closed interval along $\partial\HH^2$, and the convergence is in the sense of Hausdorff distance of two closed set. Assume $\sA_1=\partial\HH^2\setminus \sA_0^{\circ}$; where $\sA_0^{\circ}$ is the interior of $\sA_0$.
Let $G_{j,s}^+=(V_{j,s}^+,E_{j,s}^+)$ be a finite subgraph of $G^+$ constructed as follows:
\begin{enumerate}
\item Let $\hat{G}_{j,s}^+$ be a subgraph of $G^+$ that contains $G_{j,s}$ as an interior dual graph;
\item Assume $\hat{G}_{j,s}^+$ is bounded by a simple closed curve $\hat{C}_{j,s}$ consisting of edges of $G^+$; i.e., the vertex set of $\hat{G}_{j,s}^+$ consists of all the vertices of $G^+$ in the bounded component of $\RR^2\setminus \hat{C}_{j,s}$; as well as vertices along $\hat{C}_{j,s}$, and $\hat{G}_{j,s}^+$ is the induced subgraph by its vertex set. Let $v^+_{1,j,s}$ be a vertex along $\hat{C}_{j,s}$ sharing a face with $v_{1,j,s}$ in $\ol{G}$; let $v^+_{2,j,s}$ be a vertex along $\hat{C}_{j,s}$ sharing a face with $v_{2,j,s}$ in $\ol{G}$;
\item Assume
\begin{align*}
\hat{C}_{j,s}\setminus \{v^+_{1,j,s},v^+_{2,j,s}\}=\hat{C}_{0,j,s}\cup \hat{C}_{1,j,s}\ \mathrm{and}\ \hat{C}_{0,j,s}\cap \hat{C}_{1,j,s}=\emptyset.
\end{align*}
Assume vertices in $\hat{C}_{1,j,s}\cap V^+$ share no faces in $\ol{G}$ with vertices in $C_{0,j,s}\cap V$; and vertices  in $\hat{C}_{0,j,s}\cap V^+$ share no faces in $\ol{G}$ with vertices in $C_{1,j,s}\cap V$.

Let $G_{j,s}^+$ be the subgraph of $\hat{G}_{j,s}^+$ obtained by removing all the vertices in $\hat{C}_{1,j,s}\cap V^+$ as well as their incident edges in $G^+$.
\end{enumerate}
Let $\tilde{G}_{j,s}$ be the bipartite graph obtained from the superposition of $G_{j,s}$ and $\hat{G}_{j,s}^+$, and then removing all the vertices along $\hat{C}_{j,s}$ between $v_{1,j,s}$ and $v_{2,j,s}$ surrounding $C_{1,j,s}$ as well as their incident edges in $\ol{G}$. See Figure \ref{fig:gpl}.
\end{assumption}

\begin{theorem}\label{le67}Suppose the Assumption \ref{ap66} holds. 

Then we have
\begin{enumerate}
\item If $b$ is a vertex of $G$ and $v\in V(\tilde{G}_{j,s})\cap V$, then
\begin{align}
\lim_{j\rightarrow\infty}\lim_{s\rightarrow\infty}\ol{D}^{-1}_{\tilde{G}_{j,s}}(b,w)
=\frac{1}{\zeta}\sqrt{\frac{\nu(b_3b_4)}{\nu([b_3b_4]^+)}}\left[\frac{G_{G}(b,b_3)}{\Delta(b_3,b_3)}-\frac{G_G(b,b_4)}{\Delta(b_4,b_4)}+H_{\sA_0,G,b_3b_4}(b)\right]
\label{piv}
\end{align}
where $b_3$, $b_4$, $\zeta$ are given as in Lemma \ref{l18}(2), $G_G$ is the Green's function on $G$ as defined in Definition \ref{df12} and $H_{\mathcal{A}_0,G,b_3b_4}$ is a harmonic Dirichlet function on $G$ satisfying
\begin{enumerate}[label=(\Alph*)]
\item For almost every singly infinite path $\gamma:=(\gamma(0),\gamma(1),\ldots)$ consisting of vertices in $G$ satisfying 
\begin{align*}
\lim_{n\rightarrow\infty}d_m(\gamma(n),z)=0  
\end{align*}
for some $z\in \mathcal{A}_0$,
we have
\begin{align*}
\lim_{n\rightarrow\infty}H_{\mathcal{A}_0,G,b_3b_4}(\gamma(n))=0;
\end{align*}
and
\item $\triangledown H_{\sA_0,G,b_3b_4}(e)=P_{\mathcal{S}_{\sA_0}(G)}\chi_{b_3b_4}(e)$;
i.e. the gradient of $H_{\sA_0,G,b_3b_4}$ is the orthogonal projection of $\chi_{b_3b_4}$ on the subspace $\mathcal{S}_{\sA_0}(G)$ of the gradient of harmonic Dirichlet functions on $G$ which are constant on $\mathcal{A}_0$.
\end{enumerate}
\item If $b$ is a vertex of $G^+$ and and $v\in V(\tilde{G}_{j,s})\cap V^+$, then
\begin{align*}
\lim_{j\rightarrow\infty}\lim_{s\rightarrow\infty}\ol{D}_{\tilde{G}_{j,s}}^{-1}(b,w)=\frac{1}{\xi}\sqrt{\frac{\nu(b_1b_2)}{\nu([b_1b_2]^+)}}
\left[\frac{G_{G^+}(b,b_1)}{\Delta(b_1,b_1)}-\frac{G_{G^+}(b,b_2)}{\Delta(b_2,b_2)}+H_{\sA_1,G^+,b_1b_2}(b)\right]
\end{align*}
where $b_1$, $b_2$, $\xi$ are given as in Lemma \ref{l18}(1), $G_{G^+}$ is the Green's function on $G^+$ as defined in Definition \ref{df12} and $H_{\mathcal{A}_1,G^+,b_3b_4}$ is a Dirichlet harmonic function on $G^+$ satisfying
\begin{itemize}
\item For almost every singly infinite path $\gamma:=(\gamma(0),\gamma(1),\ldots)$ consisting of vertices in $G^+$ satisfying 
\begin{align*}
\lim_{n\rightarrow\infty}d_m(\gamma(n),z)=0  
\end{align*}
for some $z\in \mathcal{A}_1$,
we have
\begin{align*}
\lim_{n\rightarrow\infty}H_{\mathcal{A}_1,G,b_3b_4}(\gamma(n))=0;
\end{align*}
and
\item $\triangledown H_{\sA_1,G^+,b_1b_2}(e)=P_{\mathcal{S}_{\sA_1}(G^+)}\chi_{b_1b_2}(e)$;
i.e. the gradient of $H_{\mathcal{A}_1,G^+,b_1b_2}$ is the orthogonal projection of $\chi_{b_1b_2}$ on the subspace $\mathcal{S}_{\sA_1}(G^+)$ of the gradient of harmonic Dirichlet functions on $G^+$ which are constants on $\sA_1$.
\end{itemize}
\item Suppose condition (a) in Lemma \ref{le39} holds with $\sA$ replaced by $\sA_0$; and condition (a) in Lemma \ref{le39} holds with $\sA$ replaced by $\sA_1$ and $G$ replaced by $G^+$. Then the infinite volume Gibbs measure obtained above is distinct from the infinite volume Gibbs measures obtained from the Temperley boundary conditions as obtained in Theorem \ref{l422}.
\end{enumerate}
\end{theorem}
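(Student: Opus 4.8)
The overall strategy is to reduce everything to the behaviour of the primal and dual discrete Laplacians and then feed the outcome into the Hilbert-space machinery of Section~\ref{sect:esf}. Since, by the construction in Assumption~\ref{ap66}, each finite graph $\tilde G_{j,s}$ has exactly two convex white corners and no concave white corner, we have $K_j=0$, so Lemma~\ref{le64} applies directly: with $\ol D_{\tilde G_{j,s}}^{*}\ol D_{\tilde G_{j,s}}=\mathrm{diag}(\Delta_{G_{1,j,s}},\Delta_{G_{0,j,s}})$, one has $\ol D^{-1}_{\tilde G_{j,s}}(b,w)=\zeta^{-1}\sqrt{\nu(b_3b_4)/\nu([b_3b_4]^+)}\,\bigl[\Delta_{G_{1,j,s}}^{-1}(b,b_3)-\Delta_{G_{1,j,s}}^{-1}(b,b_4)\bigr]$ for $b\in V$, and the analogous formula with $\Delta_{G_{0,j,s}}^{-1}$, $b_1,b_2,\xi$ for $b\in V^+$. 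The first substantive step is then to read off the mixed boundary conditions: by bookkeeping which incident white vertices of a boundary vertex survive the removals prescribed in Assumption~\ref{ap66} (and matching the shape of the claimed answer, exactly as the Temperley case produces the Neumann primal Laplacian and the Dirichlet dual Laplacian after Definition~\ref{df17}), $\Delta_{G_{1,j,s}}$ is the discrete Laplacian on the primal patch $G_{j,s}$ which is absorbing (Dirichlet) along the arc $C_{0,j,s}$ converging to $\sA_0$ and reflecting (Neumann) along the arc $C_{1,j,s}$ converging to $\sA_1$, while $\Delta_{G_{0,j,s}}$ is the Laplacian on the dual patch which is Dirichlet along the arc converging to $\sA_1$ and Neumann along the arc converging to $\sA_0$.

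For part~(1), following the proof of Theorem~\ref{l422} I would split the difference $\Delta_{G_{1,j,s}}^{-1}(\cdot,b_3)-\Delta_{G_{1,j,s}}^{-1}(\cdot,b_4)$ as a (pure, fully wired) Dirichlet Green's function difference on $G_{j,s}$ plus a $\Delta$-harmonic correction $H_{b_3b_4,j,s}$ carrying the Neumann part along the $\sA_1$-arc. By Lemma~\ref{l17} --- applicable since $G$ is transient, so Assumption~\ref{ap16} holds --- the Dirichlet difference converges, as $s\to\infty$ and then $j\to\infty$, to $G_G(b,b_3)/|\Delta(b_3,b_3)|-G_G(b,b_4)/|\Delta(b_4,b_4)|$, and (up to the scalar $\zeta^{-1}\sqrt{\nu(b_3b_4)/\nu([b_3b_4]^+)}$) its $\triangledown$-image converges to the wired current $P_{\bigstar}\chi_{b_3b_4}$. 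The flow computation of Theorem~\ref{l422}, now run with the weighted random walk on $G_{j,s}$ \emph{absorbed} along $C_{0,j,s}$ instead of free, shows (again up to that scalar) that $\triangledown$ of $\Delta_{G_{1,j,s}}^{-1}(\cdot,b_3)-\Delta_{G_{1,j,s}}^{-1}(\cdot,b_4)$ equals $P_{\bigstar_{j,s}^{(0)}}\chi_{b_3b_4}$, where $\bigstar_{j,s}^{(0)}$ is the star space of $G_{j,s}$ after the arc $C_{0,j,s}$ has been contracted to a single vertex; and the double limit ($s\to\infty$ wiring the growing discrete approximation of $\sA_0$, then $j\to\infty$ exhausting $G$), combined with Lemma~\ref{lem16} and Propositions~\ref{pp16} and~\ref{pp17}, gives $P_{\bigstar_{j,s}^{(0)}}\chi_{b_3b_4}\to P_{\bigstar\oplus\mathcal S_{\sA_0}(G)}\chi_{b_3b_4}$ in the $\|\cdot\|_R$ norm. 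Subtracting the wired current $P_{\bigstar}\chi_{b_3b_4}$ leaves $\triangledown H_{\sA_0,G,b_3b_4}=P_{\mathcal S_{\sA_0}(G)}\chi_{b_3b_4}$, which is property~(B). Property~(A) then follows: $\ol D^{-1}_{\tilde G_{j,s}}(b,w)\to 0$ as $b$ approaches the Dirichlet arc $C_{0,j,s}$, and $G_G(\cdot,b_i)\to 0$ along every path to $\partial\HH^2$ by transience, so the harmonic part $H_{\sA_0,G,b_3b_4}$ of the limit vanishes along almost every path to $\sA_0$ (equivalently, $\triangledown H_{\sA_0,G,b_3b_4}\in\mathcal S_{\sA_0}(G)=\triangledown\mathcal R_{\sA_0}$ and $\mathcal R_{\sA_0}$ contains the constant functions --- approximable by functions supported away from $\sA_0$ by the energy estimate in the proof of Theorem~\ref{le39} --- so $H_{\sA_0,G,b_3b_4}\in\mathcal R_{\sA_0}$ and the corollary to Lemma~\ref{ll39} applies). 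Part~(2) is proved identically after exchanging $(G,\sA_0)$ with $(G^+,\sA_1)$, the primal/dual asymmetry being exactly that of the construction in Assumption~\ref{ap66}, which removes the outer dual layer.

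For part~(3) I would compare the two Gibbs measures through their images on the dual graph. By the planar Temperley correspondence (Lemma~\ref{le33} together with planar duality of spanning trees, passed to the limit) the dual-spanning-forest marginal of the Temperley measure of Theorem~\ref{l422} is the wired forest $WSF(G^+)$, consistent with the absence of any harmonic correction in Theorem~\ref{l422}(1); whereas the explicit formula of Theorem~\ref{le67}(2), together with Lemmas~\ref{l18} and~\ref{lem16}, identifies the dual-spanning-forest marginal of the measure of Theorem~\ref{le67} with $\mu_{\sA_1}(G^+)$, since its harmonic correction has gradient $P_{\mathcal S_{\sA_1}(G^+)}\chi_{b_1b_2}$. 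This dual forest is a measurable function of the dimer configuration (each dual vertex is matched to a unique incident white vertex, hence selects a unique outgoing dual edge), so if the two dimer measures coincided their dual-forest marginals would coincide. But the hypothesis that $(\sA_1,G^+)$ satisfies condition~(a) in Lemma~\ref{le39} is precisely the hypothesis of Theorem~\ref{t314}(6) applied to $G^+$ and $\sA_1$, so $\mu_{\sA_1}(G^+)\ne WSF(G^+)$. Hence the two dimer measures are distinct; the hypothesis on $(\sA_0,G)$ yields the same conclusion symmetrically on the primal side.

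I expect the main difficulty to lie in parts~(1)--(2), and to be twofold. First, one must correctly extract the mixed Dirichlet--Neumann data of $\Delta_{G_{1,j,s}}$ and $\Delta_{G_{0,j,s}}$ from the somewhat elaborate construction of Assumption~\ref{ap66} --- which boundary arc becomes absorbing for which block, the source of the $\sA_0$ versus $\sA_1$ bookkeeping. Second, and more serious, one must justify the convergence $P_{\bigstar_{j,s}^{(0)}}\chi_e\to P_{\bigstar\oplus\mathcal S_{\sA_0}(G)}\chi_e$ of the ``wired on $\sA_0$, free elsewhere'' currents, which requires interleaving the double-limit bookkeeping ($s\to\infty$ then $j\to\infty$) with the Hilbert-space projection estimates of Section~\ref{sect:esf} (Lemma~\ref{ll39}, Lemma~\ref{lem16}, Propositions~\ref{pp16} and~\ref{pp17}) and with the extremal-length / almost-every-path arguments controlling the boundary values of the limiting harmonic Dirichlet functions. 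Once parts~(1)--(2) and Theorem~\ref{le39} are in hand, part~(3) is essentially immediate.
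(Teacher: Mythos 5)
Your proposal follows the paper's strategy closely and expands it where the paper is terse. The paper's proof of parts~(1)--(2) is essentially a two-sentence pointer (``follow the argument of Theorem~\ref{l422}, then invoke Lemma~\ref{ll39} and property~(A)''), and your account of how to execute this --- first apply Lemma~\ref{le64} with $K_j=0$ to reduce to the primal and dual block Laplacians, identify the mixed Dirichlet--Neumann data on each block from the construction in Assumption~\ref{ap66}, split each $\Delta_{G_{i,j,s}}^{-1}$ difference into a Dirichlet Green's function piece handled by Lemma~\ref{l17} plus a harmonic correction whose gradient is the current $P_{\bigstar_{j,s}^\bullet}\chi_e$, and then push the projections through the double limit using Lemma~\ref{lem16} together with Propositions~\ref{pp16} and~\ref{pp17} to land on $P_{\bigstar\oplus\mathcal{S}_{\sA_0}(G)}\chi_e$ --- is exactly what ``following the arguments of Theorem~\ref{l422}'' means here, so the core of your argument matches the paper's.

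For part~(3) you take a genuinely different (though equivalent) route. The paper compares the two measures directly through the explicit Green's function formulas: Theorem~\ref{l422}(1) has no harmonic correction on the dual side while Theorem~\ref{le67}(2) adds $H_{\sA_1,G^+,b_1b_2}$, and Theorem~\ref{le39} guarantees that $\mathcal{S}_{\sA_1}(G^+)$ is nontrivial, so some entry of the inverse Kasteleyn matrix (hence some cylinder probability via Corollary~\ref{c18}) differs. You instead pass to the dual-spanning-forest marginal via the Temperley correspondence (Lemma~\ref{le33}) and invoke Theorem~\ref{t314}(6) to conclude $\mu_{\sA_1}(G^+)\ne WSF(G^+)$; this is sound, relies on the same nontriviality input, but requires the extra (correct but unstated in the paper's proof) observation that the dual forest is a measurable function of the dimer configuration. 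The paper's route is marginally more self-contained; yours makes the probabilistic meaning of the distinction more transparent.

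One small imprecision in your treatment of property~(A): you write that ``$\mathcal{R}_{\sA_0}$ contains the constant functions,'' which is not what is needed and is not obviously true from the definition \eqref{dra}. The correct anchoring argument is the one the paper implicitly uses: the finite-volume quantities $H_{b_3b_4,j,s}(b)$ vanish on $C_{0,j,s}$ (Dirichlet arc), so after taking limits the gradient data $\triangledown H_{\sA_0,G,b_3b_4}\in\mathcal{S}_{\sA_0}(G)$ determine $H$ only up to an additive constant, and that constant is fixed to make $H$ vanish along paths to $\sA_0$ via Lemma~\ref{ll39} (equivalently via the unnumbered corollary following it, applied to the representative $g\in\mathcal{R}_{\sA_0}$ with $\triangledown g=\triangledown H$). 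Your conclusion is right; the justification via ``constants lie in $\mathcal{R}_{\sA_0}$'' is not the intended one.
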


\begin{proof}Note that under Assumption \ref{ap66}, the sequence of graphs $\{G_{j,s}\}$ satisfy Assumption \ref{ap311} with $\sA$ replaced by $\sA_0$, while the sequence of graphs $\{G_{j,s}^+\}$ satisfy Assumption \ref{ap312} with $\sA$ replaced by $\sA_1$. We only prove part (1) here; part (2) can be proved similarly.


Following similar arguments as in the proof of Lemma \ref{l422}, one obtains for each fixed $w$
\begin{align*}
\zeta \sqrt{\frac{\nu([b_3b_4]^+)}{\nu(b_3b_4)}}\ol{D}^{-1}_{\tilde{G}_{j,s}}(\cdot,w)-F_{j,d,G}^{b_3}(\cdot)+F_{j,d,G}^{b_4}(\cdot)\rightarrow H_{\sA_0,G,b_3,b_4}(\cdot)
\end{align*}
with respect to $\sqrt{\mathcal{E}(\triangledown)}$. Then the convergence of the left hand side to the right hand side for each $b\in V$ follows from Lemma \ref{ll39} and condition (A) of $H_{\sA_0,G,b_3,b_4}(\cdot)$.

Part (3) of the lemma follows from the explicit expressions of the infinite volume Gibbs measures in Theorem \ref{l422}, part (1)(2) and Theorem \ref{le39}.
\end{proof}

\section{Extremity}\label{sect:ex}

In this section, we prove that the infinite-volume  measure for dimer configurations on $\ol{G}$ obtained as in Theorem \ref{le67} is extreme; see Proposition \ref{lle83}. The idea is as follows. We first show that the extremity of the infinite-volume measure is equivalent to the tail-triviality; then we show that the infinite-volume measure is tail-trivial. The proof of the tail-triviality of the infinite volume measure is inspired by the proof of Theorem 8.4 in \cite{BLPS01}. One of the major differences is instead of considering uniform spanning trees, we are considering perfect matchings. The corresponding anti-symmetric function for the perfect matchings is a ``discrete integral" version of the anti-symmetric function for the spanning trees. Also it is straightforward to see that the anti-symmetric function for spanning trees has finite energy, the fact that the anti-symmetric function for perfect matchings has finite energy is not so obvious; but we still manage to prove it by using the transience property of the graph. Using similar technique, we prove that the infinite-volume  measure for dimer configurations on $\ol{G}$ obtained as in Theorem \ref{l422} is extreme under natural technical assumptions; see Propositions \ref{lle614} and \ref{p620}.

\subsection{Extremity and Tail-triviality}

We first review a few definitions and lemmas.

\begin{definition}\label{df71}Suppose Assumption \ref{ap24} holds. Consider the embedding of $\ol{G}$ into the hyperbolic plane such that $(G,G^+)$ are nerves of the double circle packings. Define a unit flow from black vertices of $\ol{G}$ to white vertices of $\ol{G}$ as follows. Let $e=(w,b)$ be an edge of $\ol{G}$ with black vertex $b$ and white vertex $w$. The edge $e$ is incident to two quadrilateral faces $f_1$ and $f_2$ in $\ol{G}$. For $i\in\{1,2\}$, let $d_i$ be the diagonal of $f_i$ with $b$ as one endpoint. Let $\theta_e$ be the angle formed by $d_1$ and $d_2$ containing the edge $e$. Defined the flow from $b$ to $w$ to be $\frac{\theta_e}{2\pi}$. Then it is straightforward to check the following
\begin{align}
\sum_{e:e\sim w}\frac{\theta_e}{2\pi}=1;\qquad \sum_{e:e\sim b}\frac{\theta_e}{2\pi}=1.\label{uf}
\end{align}
Let $M$ be a perfect matching on $\ol{G}$. Define a preliminary height function $\ol{h}_M$ associate to $M$ on faces of $\ol{G}$ as follows. Let $f_1$ and $f_2$ be two faces of $G$ sharing an edge $e=(w,b)$.
\begin{itemize}
\item If $e\in M$, and
\begin{itemize}
\item moving from $f_1$ to $f_2$ along the dual edge $e^+$, the black vertex $b$ is on the left, then $\ol{h}_{M}(f_1)-\ol{h}_{M}(f_2)=1-\frac{\theta_e}{2\pi}$;  
\item moving from $f_1$ to $f_2$ along the dual edge $e^+$, the black vertex $b$ is on the right, then $\ol{h}_{M}(f_1)-\ol{h}_{M}(f_2)=-1+\frac{\theta_e}{2\pi}$; 
\end{itemize}
\item If $e\notin M$, and
\item moving from $f_1$ to $f_2$ along the dual edge $e^+$, the black vertex $b$ is on the left, then $\ol{h}_{M}(f_1)-\ol{h}_{M}(f_2)=-\frac{\theta_e}{2\pi}$;  
\item moving from $f_1$ to $f_2$ along the dual edge $e^+$, the black vertex $b$ is on the right, then $\ol{h}_{M}(f_1)-\ol{h}_{M}(f_2)=\frac{\theta_e}{2\pi}$;
\end{itemize}
The condition (\ref{uf}) guarantees that the preliminary height change around any vertex is 0. Therefore $\ol{h}_M$ is well-defined up to an additive constant. One can make $\ol{h}_M$ uniquely defined by fixing the height at one specific face to be 0.

Let $M_0$ be a fixed perfect matching on $\ol{G}$. Define the height function $h_M$ associated to $M$ as a function from faces of $\ol{G}$ to the set of integers $\ZZ$ by
\begin{align*}
h_M(f)=\ol{h}_{M}(f)-\ol{h}_{M_0}(f).
\end{align*}
\end{definition}

\begin{definition}\label{df72}Suppose Assumption \ref{ap24} holds. Let $M_0$ be the fixed perfect matching on $\ol{G}$ as in Definition \ref{df71}. 
\begin{enumerate}[label=(\alph*)]
\item For each edge $e$ of $\ol{G}$, let $f_1,f_2$ be the two faces of $\ol{G}$ incident to $e$, such that moving along $e^+$ from $f_1$ to $f_2$, the black vertex of $e$ is no the left.
\end{enumerate}
Define a potential $V_e: \ZZ\rightarrow [0,\infty)$ as follows:
\begin{align*}
V_e(\eta)=\begin{cases}0&\mathrm{if}\ \eta=0\\0&\mathrm{if}\ h_{M_0}(f_1)>h_{M_0}(f_2)\ \mathrm{and}\ \eta=-1\\0&\mathrm{if}\ h_{M_0}(f_1)<h_{M_0}(f_2)\ \mathrm{and}\ \eta=1\\ \infty&\mathrm{otherwise}.
\end{cases}
\end{align*}

Let $F(\ol{G})$ be the set of all the faces of $\ol{G}$. Let $\Omega$ be the set of functions from $F(\ol{G})$ to $\ZZ$; and let $\mathcal{F}$ be the Borel $\sigma$-algebra of the product topology on $\Omega$. Let $\Lambda$ be an arbitrary finite subset of $F(\ol{G})$.
A gradient Gibbs measure $\mu$ on $\Omega$ is a probability measure on $\Omega$ satisfying the following conditions. For each $\phi\in \Omega$, let $\phi|_{\Lambda}$ (resp.\ $\phi|_{F(\ol{G})\setminus\Lambda}$) be the restriction of $\phi$ onto $\Lambda$ (resp.\ $F(\ol{G})\setminus \Lambda$).
\begin{align*}
\mu(\phi_{\Lambda}|\phi_{F(\ol{G})\setminus{\Lambda}})\propto \exp (-H_{\Lambda}(\phi))
\end{align*}
where 
\begin{align*}
H_{\Lambda}(\phi):&=\sum_{e^+=(f_1,f_2):f_1,f_2\in \Lambda}V_e(\phi(f_1)-\phi(f_2))+\sum_{e^+=(f_1,f_2):f_1\in \Lambda,f_2\notin \Lambda}V_e(\phi(f_1)-\phi(f_2))\\
&+\sum_{e^+=(f_1,f_2):f_1\notin \Lambda,f_2\in \Lambda}V_e(\phi(f_1)-\phi(f_2))
\end{align*}
and $f_1,f_2$ satisfies (a).
\end{definition}

Then it is straightforward to verify the following lemma
\begin{lemma}Given the 1-1 correspondence between perfect matchings on $\ol{G}$ and its height function, any gradient Gibbs measure satisfying  on Definition \ref{df72} gives a Gibbs measure on perfect matchings of $\ol{G}$ with edge weights given by
\begin{align}
\nu(e)=\nu(e^+)=1;\label{wtw}
\end{align}
for any edge $e$ of $G$.
\end{lemma}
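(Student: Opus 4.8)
The plan is to transport the DLR equations of the gradient Gibbs measure through the bijection $M\mapsto h_M$ of Definition \ref{df71}, observing that with unit weights both specifications reduce to a uniform law over locally admissible completions and therefore coincide. First I would pin down the image of the bijection. Using the height increments recorded in Definition \ref{df71}, for an edge $e$ of $\ol G$ with incident faces $f_1,f_2$ ordered as in Definition \ref{df72}(a) one computes that $h_M(f_1)-h_M(f_2)$ equals $0$ when $M$ and $M_0$ agree at $e$, equals $-1$ when $e\in M_0\setminus M$, and equals $+1$ when $e\in M\setminus M_0$; since the preliminary height increment of $M_0$ across $e$ is $1-\frac{\theta_e}{2\pi}>0$ when $e\in M_0$ and $-\frac{\theta_e}{2\pi}<0$ when $e\notin M_0$, these are precisely the values of $\eta$ on which the potential $V_e$ of Definition \ref{df72} vanishes. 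Hence the finite-energy set $\Omega_0:=\{\phi\in\Omega:V_e(\phi(f_1)-\phi(f_2))=0\text{ for every edge }e\text{ of }\ol G\}$ coincides with $\{h_M+c: M\text{ a perfect matching of }\ol G,\ c\in\ZZ\}$: the forward inclusion is the computation just made, and the reverse inclusion — that a $\phi\in\Omega_0$ comes from an honest perfect matching, i.e.\ the induced edge set meets each vertex of $\ol G$ exactly once — is the easily checked converse, obtained by reading the balance relation (\ref{uf}) at each vertex of $\ol G$. Since $\exp(-H_\Lambda(\phi))$ vanishes as soon as some $V_e$ with $e$ incident to $\Lambda$ is infinite, a union bound over the countably many finite $\Lambda$ shows that any gradient Gibbs measure $\mu$ as in Definition \ref{df72} is supported on $\Omega_0$, and pushing $\mu$ forward under $\phi\mapsto M_\phi$ (the unique perfect matching with $h_{M_\phi}$ equal to $\phi$ up to an additive constant) produces a probability measure $\widehat\mu$ on perfect matchings of $\ol G$.

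Next I would verify the DLR equations for $\widehat\mu$. Fix a finite face set $\Lambda\subset F(\ol G)$ and let $B_\Lambda$ be the finite edge set of $\ol G$ consisting of the edges with at least one incident face in $\Lambda$. Because the gradient Gibbs property of $\mu$ involves only increments of $\phi$, conditioning on $\phi$ outside $\Lambda$ is the same as conditioning on the increments of $\phi$ across all edges outside $B_\Lambda$, i.e.\ on the matching status of every edge outside $B_\Lambda$; thus the conditioning $\sigma$-algebras correspond under $\phi\mapsto M_\phi$. For a $\phi$ whose exterior already lies in $\Omega_0$, the weight $\exp(-H_\Lambda(\phi))$ equals $1$ if every edge incident to $\Lambda$ is locally admissible and $0$ otherwise, so the $\mu$-conditional law of $\phi|_\Lambda$ is uniform over the completions that land in $\Omega_0$. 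With $\nu(e)=\nu(e^+)=1$ the dimer weight $\prod_{e\in M'\cap B_\Lambda}\nu(e)$ of every such completion $M'$ is also $1$, so the $\widehat\mu$-conditional law of $M\cap B_\Lambda$ given $M\setminus B_\Lambda$ is proportional to $\prod_{e\in M'\cap B_\Lambda}\nu(e)$ over perfect matchings $M'$ agreeing with $M$ off $B_\Lambda$, which is exactly the dimer specification. Since every finite edge subset of $\ol G$ is contained in some $B_\Lambda$, $\widehat\mu$ is a Gibbs measure for perfect matchings on $\ol G$ with the stated weights.

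The one genuine obstacle is the bookkeeping in the second step: confirming that a finite face window $\Lambda$ for the gradient model and the finite edge window $B_\Lambda$ for the dimer model carry matching conditioning $\sigma$-algebras — i.e.\ that fixing exterior heights is the same as fixing the exterior matching, the ambient additive constant being immaterial for both $\mu$ and $\widehat\mu$ — and that $\mu$-a.s.\ every edge outside $B_\Lambda$ is admissible, so that $\exp(-H_\Lambda)$ is genuinely the indicator of local admissibility near $\Lambda$. Once that is settled, the hypothesis $\nu\equiv 1$ makes both conditional laws literally the uniform law on the same finite set of local completions, and the lemma follows.
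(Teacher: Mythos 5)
The paper does not actually give a proof of this lemma: it is preceded by the sentence ``Then it is straightforward to verify the following lemma'' and no argument is supplied.  Your proof is the right filling-in, and the structure you use — (i) identify the image of the height bijection as the zero-energy set $\Omega_0$, (ii) check the DLR equations match on a cofinal family of windows, (iii) observe that with $\nu\equiv 1$ both specifications are uniform on the same finite set of local completions — is exactly the argument the author is implicitly invoking.

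Two small remarks.  First, you correctly read through a typo in Definition~\ref{df72}: as printed, the conditions $h_{M_0}(f_1)>h_{M_0}(f_2)$ and $h_{M_0}(f_1)<h_{M_0}(f_2)$ are vacuous because $h_{M_0}=\ol{h}_{M_0}-\ol{h}_{M_0}\equiv 0$ by Definition~\ref{df71}; the potential is meaningful only if these are read as $\ol{h}_{M_0}(f_1)>\ol{h}_{M_0}(f_2)$ and $\ol{h}_{M_0}(f_1)<\ol{h}_{M_0}(f_2)$, which, since the preliminary increment across $e$ is $1-\theta_e/(2\pi)>0$ or $-\theta_e/(2\pi)<0$ according to whether $e\in M_0$, is precisely the admissibility you use.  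Second, in the reverse inclusion the operative fact is that $\sum_{e\sim v}\eta_e=0$ at each vertex $v$ (closedness of $d\phi$) together with the perfect-matching property of $M_0$; it is $\eqref{uf}$ that underlies the well-definedness of $\ol h_{M_0}$ and hence of $\phi\mapsto M_\phi$, so your citation is defensible but a sentence unpacking the vertex-by-vertex balance (one $M_0$-edge at $v$, increments $\pm 1$ cancel in pairs, so exactly one $M$-edge remains) would make the ``easily checked converse'' explicit.  Neither point affects correctness.
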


\begin{lemma}\label{l74}Let $G=(V,E)$ be an infinite, connected graph. Let $\mathcal{Q}$ be the collection of all gradient Gibbs measures on $\Omega$ as defined by Definition \ref{df72}. A Gibbs measure is called  extreme in $\mathcal{Q}$ if it is an extreme element in the convex set $\mathcal{Q}$. Let $\mathrm{ex}\mathcal{Q}$ be the set of all the extremal Gibbs measures in $\mathcal{Q}$. For each $A\in \mathcal{F}$, let $e_{A}:\mu\mapsto \mu(A)$ be the evaluation map.
Denote by $e(\mathrm{ex}\mathcal{Q})$ the smallest $\sigma$-algebra on $\mathrm{ex}(\mathcal{Q})$ with respect to which each $e_A$ is measurable. Then 
\begin{enumerate}
\item For each $\nu\in\sQ$, $\nu\in ex\sQ$ if and only if $\nu$ is tail-trivial;
\item For each $\mu\in \mathcal{Q}$, there exists a unique weight $w_{\mu}$, which is a probability measure on $(\mathrm{ex}\mathcal{Q},e(\mathrm{ex}\mathcal{Q}))$, such that for each $A\in \mathcal{F}$, 
\begin{align}
\mu(A)=\int_{\mathrm{ex}\mathcal{Q}}\nu(A)w_{\mu}(d\nu)\label{edc}
\end{align}
The mapping $\mu\mapsto w_{\mu}$ is a bijection between $\mathcal{Q}$ and $\mathcal{P}(\mathrm{ex}\mathcal{Q},e(\mathrm{ex}\mathcal{Q}))$ (the set of all the probability measures on $(\mathrm{ex}\mathcal{Q},e(\mathrm{ex}\mathcal{Q}))$).
\end{enumerate}
\end{lemma}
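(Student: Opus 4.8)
The plan is to place the statement inside the general extreme-decomposition theory for Gibbs measures of a measurable specification (Chapter 7 of \cite{geo88}), applied to the specification $\gamma=(\gamma_\Lambda)$ determined by the potentials $V_e$ of Definition \ref{df72}: for a finite $\Lambda\subset F(\ol G)$, $\gamma_\Lambda(\cdot\mid\phi)$ is the probability kernel on $\Omega$ that keeps the coordinates on $F(\ol G)\setminus\Lambda$ equal to those of $\phi$ and resamples the coordinates on $\Lambda$ with weight proportional to $\exp(-H_\Lambda(\phi))$, with the usual convention on the (for every $\mu\in\mathcal Q$ null) set of boundary conditions admitting no admissible extension. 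The first, routine, step is to check that $\gamma$ is a proper, consistent, measurable specification and that $\mathcal Q$ is exactly its set of Gibbs measures, i.e.\ the probability measures $\mu$ on $(\Omega,\mathcal F)$ with $\mu\gamma_\Lambda=\mu$ for every finite $\Lambda$; in particular $\mathcal Q$ is convex and, for any probability measure $w$ on a measurable family of elements of $\mathcal Q$, the mixture $A\mapsto\int\nu(A)\,w(d\nu)$ again lies in $\mathcal Q$. Throughout I use the tail $\sigma$-algebra $\mathcal T=\bigcap_{K}\mathcal F(F(\ol G)\setminus K)$ of Definition \ref{df33}; since a $T\in\mathcal T$ depends on none of the finitely many coordinates in a fixed $\Lambda$, properness gives $\gamma_\Lambda(T\mid\phi)=\mathbf{1}_T(\phi)$.

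The analytic core is a backward-martingale identity. Fix a finite exhaustion $\Lambda_1\subset\Lambda_2\subset\cdots$ of $F(\ol G)$ and set $\mathcal T_n=\mathcal F(F(\ol G)\setminus\Lambda_n)$, so $\mathcal T_n\downarrow\mathcal T$. For $\mu\in\mathcal Q$ and $A\in\mathcal F$ the DLR equation together with $\mathcal T_n$-measurability of $\gamma_{\Lambda_n}(A\mid\cdot)$ shows that $\gamma_{\Lambda_n}(A\mid\cdot)$ is a version of $\mathbb E_\mu[\mathbf{1}_A\mid\mathcal T_n]$, and the decreasing martingale convergence theorem yields $\gamma_{\Lambda_n}(A\mid\cdot)\to\mathbb E_\mu[\mathbf{1}_A\mid\mathcal T]$ both $\mu$-a.s.\ and in $L^1(\mu)$. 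Statement (1) follows. If $\mu$ is tail-trivial and $\mu=t\mu_1+(1-t)\mu_2$ with $t\in(0,1)$, $\mu_i\in\mathcal Q$, then $\mu_i\ll\mu$, the functions $\gamma_{\Lambda_n}(A\mid\cdot)$ are common to $\mu,\mu_1,\mu_2$, and passing to the limit gives $\mathbb E_{\mu_1}[\mathbf{1}_A\mid\mathcal T]=\mathbb E_\mu[\mathbf{1}_A\mid\mathcal T]=\mu(A)$ $\mu_1$-a.s.; taking $\mu_1$-expectations, $\mu_1(A)=\mu(A)$ for all $A$, so $\mu_1=\mu=\mu_2$ and $\mu$ is extreme. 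Conversely, if $\mu$ is not tail-trivial, choose $T\in\mathcal T$ with $0<\mu(T)<1$; because $\gamma_\Lambda(T\mid\cdot)=\mathbf{1}_T$, one checks directly that $\mu(\cdot\mid T)$ and $\mu(\cdot\mid T^{c})$ satisfy every DLR equation, hence lie in $\mathcal Q$, and $\mu=\mu(T)\,\mu(\cdot\mid T)+\mu(T^{c})\,\mu(\cdot\mid T^{c})$ exhibits $\mu$ as a nontrivial convex combination of distinct elements of $\mathcal Q$, so $\mu\notin\mathrm{ex}\mathcal Q$.

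For (2) I would build the decomposition from a regular conditional probability. Since $\Omega=\ZZ^{F(\ol G)}$ is standard Borel, every $\mu\in\mathcal Q$ admits a regular conditional distribution $\pi_\mu(\phi,\cdot)$ given $\mathcal T$. Using that each $\gamma_{\Lambda_n}(\cdot\mid\phi)$ is $\mathcal T_n$-measurable and a version of $\mathbb E_\mu[\,\cdot\mid\mathcal T_n]$, one shows that for $\mu$-a.e.\ $\phi$ the measure $\pi_\mu(\phi,\cdot)$ satisfies all DLR equations and is tail-trivial, hence $\pi_\mu(\phi,\cdot)\in\mathrm{ex}\mathcal Q$ by part (1), and $\phi\mapsto\pi_\mu(\phi,\cdot)$ is measurable into $(\mathrm{ex}\mathcal Q,e(\mathrm{ex}\mathcal Q))$. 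Then $\mu=\int_\Omega\pi_\mu(\phi,\cdot)\,\mu(d\phi)$, and the push-forward $w_\mu$ of $\mu$ under $\phi\mapsto\pi_\mu(\phi,\cdot)$ satisfies $\mu(A)=\int_{\mathrm{ex}\mathcal Q}\nu(A)\,w_\mu(d\nu)$ for all $A\in\mathcal F$, which is \eqref{edc}. Injectivity of $\mu\mapsto w_\mu$ is immediate from \eqref{edc} applied to all $A\in\mathcal F$, and surjectivity is the mixing remark of the first paragraph (one checks the mixture is Gibbs and has weight $w$). Uniqueness of the representing weight rests on the mutual singularity of distinct extreme measures: if $\nu_1\neq\nu_2$ in $\mathrm{ex}\mathcal Q$, pick $A$ with $\nu_1(A)\neq\nu_2(A)$; then $\{\lim_n\gamma_{\Lambda_n}(A\mid\cdot)=\nu_1(A)\}\in\mathcal T$ has $\nu_1$-measure $1$ and $\nu_2$-measure $0$, so any representing weight $w$ for $\mu$ is forced to agree with the law of the $\mathcal T$-conditional kernel of $\mu$, i.e.\ $w=w_\mu$.

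The step I expect to be the main obstacle is the measure-theoretic machinery behind (2): the existence of the regular conditional probability, the verification that $\pi_\mu(\phi,\cdot)$ is $\mu$-a.s.\ a tail-trivial element of $\mathcal Q$, the measurable-selection statement that $\phi\mapsto\pi_\mu(\phi,\cdot)$ lands measurably in $\mathrm{ex}\mathcal Q$, and the mutual-singularity fact underlying uniqueness. All of these hold in our situation because $\Omega$ is standard Borel, the specification from Definition \ref{df72} is proper and measurable in the boundary condition, and part (1) is available; since this is precisely the setting of the general Gibbsian extreme-decomposition theorem, I would cite Theorems~7.7 and~7.26 of \cite{geo88} (see also Theorem 8.4 of \cite{BLPS01} for the analogous reasoning in the spanning-forest setting) for the abstract statements and only spell out the verification that Definition \ref{df72} yields an admissible specification, writing out in full only the comparatively soft part (1) and the construction of $w_\mu$ from $\pi_\mu$ as sketched above.
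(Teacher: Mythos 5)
Your proof is correct and takes essentially the same route as the paper: the paper itself disposes of the lemma by citing Theorem~7.7(1) and Theorem~7.26 of \cite{geo88} (together with Lemma~3.2.4 of \cite{SS03}), exactly the general Gibbsian extremal-decomposition theory you invoke and whose proof you sketch. The only difference is that you unpack the backward-martingale argument and the regular-conditional-probability construction behind those citations, whereas the paper delegates all of that to the references.
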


\begin{proof}Part (1) of the lemma follows from Theorem 7.7(1) in \cite{geo88}. Part (2) of the lemma follows from Lemma 3.2.4 of \cite{SS03}; see also Theorem 7.26 of \cite{geo88}.
\end{proof}

\subsection{General boundary conditions.}

\begin{proposition}\label{lle83}Let $G=(V,E)$ be a 3-connected, transient, simple proper planar graph with bounded vertex degree and locally finite dual. Suppose Assumptions \ref{ap24},  \ref{ap66} hold. Let $\mu$ be the probability measure for dimer coverings on $\ol{G}$ obtained as in Lemma \ref{le67}. Assume the edge weights satisfy
(\ref{wtw}).
Then the measure $\mu$ is tail-trivial. 
\end{proposition}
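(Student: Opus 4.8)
The plan is to deduce the proposition from two ingredients: the identification of tail-triviality with extremality (by Lemma~\ref{l74}(1) these are equivalent for $\mu\in\mathcal Q$, which is why tail-triviality is the relevant property), and a Borel--Cantelli-type decorrelation estimate on the finite graphs $\tilde G_{j,s}$ modelled on the proof of Theorem~8.4 of \cite{BLPS01} (our Lemma~\ref{l112}). First I would observe, via the criterion (\ref{dctt}), that it is enough to show: for every cylinder event $A_1$ depending only on dimer edges in a finite set $K\subset E(\ol G)$ and every $\epsilon>0$, there is a finite $K'\supseteq K$ with $|\mu(A_1\cap A_2)-\mu(A_1)\mu(A_2)|<\epsilon$ for all $A_2\in\mathcal F(E(\ol G)\setminus K')$. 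Approximating a general $A_1$ by such a cylinder, and a general $A_2$ by a cylinder on a finite far set $F$ (using that $\mu_{j,s}\to\mu$ weakly by Theorem~\ref{le67} and that cylinder functions are continuous), reduces the problem to a uniform-in-$(j,s)$ estimate for the finite-volume dimer measures $\mu_{j,s}$.

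On $\tilde G_{j,s}$ I would invoke the Temperley-type correspondence underlying Lemma~\ref{le33} and the construction in Assumption~\ref{ap66}: a dimer cover of $\tilde G_{j,s}$ corresponds to a spanning-tree-type configuration on the primal piece wired along the boundary arc $C_{0,j,s}$ converging to $\sA_0$, together with the complementary forest on the dual piece wired along the arc converging to $\sA_1$. Under this correspondence a primal-side dimer edge $e=(w,b)$ is present exactly when the primal edge $f_w$ of $w$ is the outgoing edge at $b$, i.e.\ the branch from $b$ to the wired arc traverses it; hence $\mu_{j,s}(e\in M)=\Theta_{e,j,s}(f_w)$, where $\Theta_{e,j,s}$ is the unit current flow from $b$ to $C_{0,j,s}$ (and dually for dual-side edges, with $\sA_1$). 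This $\Theta_{e,j,s}$ is the ``anti-symmetric function for perfect matchings'': it is the discrete integral, over the weighted random walk on the primal piece absorbed on the wired arc, of the elementary first-step flows $\theta_v$ appearing in the proof of Theorem~\ref{l422}, whereas the spanning-tree current $I_{f,j,s}=P_{\bigstar_{j,s}}\chi_f$ is simply the projection of the unit flow along a single edge. Carrying the proof of Theorem~8.4 of \cite{BLPS01} through with $\Theta_{e,j,s}$ in place of the edge currents yields, for a near cylinder event $A_1\in\mathcal F(K)$ and a far cylinder event $A_2\in\mathcal F(F)$,
\begin{align*}
\bigl|\mu_{j,s}(A_1\cap A_2)-\mu_{j,s}(A_1)\mu_{j,s}(A_2)\bigr|\le\Bigl(2^{2|K|}|K|\sum_{e\in K}\bigl\|P_{\langle\chi_{F}\rangle}\Theta_{e,j,s}\bigr\|_R^2\Bigr)^{1/2},
\end{align*}
with $\langle\chi_F\rangle$ the span of the unit flows along the primal (resp.\ dual) edges underlying the white vertices in $F$.

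The hard part will be the uniform energy control of the $\Theta_{e,j,s}$. Unlike the spanning-tree currents, which lie in $l^2_-(\vec E)$ automatically because $\chi_f\in l^2_-(\vec E)$, each $\Theta_{e,j,s}$ is a current flow from a vertex \emph{to the boundary}, so its limiting finite energy is precisely a transience statement. I would argue: by Proposition~\ref{pp17}, applied to the primal and dual pieces as in the proof of Theorem~\ref{l422}, $\Theta_{e,j,s}\to\Theta_e$ in the norm $\|\cdot\|_R$, where $\Theta_e$ is the unit current flow from $b$ to $\infty$ corrected by the harmonic Dirichlet term produced in Theorem~\ref{le67}; by Lemma~\ref{ls83} the transience of $G$ and $G^+$ is exactly what guarantees $\mathcal E(\Theta_e)<\infty$, and convergence in energy then gives $\sup_{j,s}\mathcal E(\Theta_{e,j,s})<\infty$. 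Since $\Theta_e\in l^2_-(\vec E)$ and, under the weights (\ref{wtw}), one has $\langle\chi_f,\theta\rangle_R=\theta(f)\to 0$ as the edge $f$ escapes to infinity for every $\theta\in l^2_-(\vec E)$, the projections $P_{\langle\chi_F\rangle}\Theta_e$ tend to $0$ as $F$ recedes; transferring this through the uniform energy convergence gives $\sup_{j,s}\bigl\|P_{\langle\chi_F\rangle}\Theta_{e,j,s}\bigr\|_R\to 0$ as $d_{\ol G}(K,F)\to\infty$, for each fixed finite $K$.

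Finally I would assemble the estimate. Given $A_1$ and $\epsilon$, fix a cylinder $A_1'$ on a finite $K$ with $\mu(A_1\triangle A_1')<\epsilon/4$; by the preceding paragraph choose a finite $K'\supseteq K$ so that the right-hand side of the displayed inequality is $<\epsilon/4$ for all $j,s$ whenever the white vertices of $F$ lie outside $K'$. For an arbitrary $A_2\in\mathcal F(E(\ol G)\setminus K')$ pick a cylinder $A_2'$ on a finite $F\subset E(\ol G)\setminus K'$ with $\mu(A_2\triangle A_2')<\epsilon/8$ and, for $j,s$ large, $\mu_{j,s}(A_2\triangle A_2')<\epsilon/8$ (possible by Theorem~\ref{le67}). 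Passing to the limit $s\to\infty$ and then $j\to\infty$ in the finite-graph inequality, using weak convergence of $\mu_{j,s}$ on cylinders, yields $|\mu(A_1'\cap A_2')-\mu(A_1')\mu(A_2')|<\epsilon/4$; combining with the four approximation errors gives $|\mu(A_1\cap A_2)-\mu(A_1)\mu(A_2)|<\epsilon$, which is (\ref{dctt}). Hence $\mu$ is tail-trivial, and by Lemma~\ref{l74}(1) extremal in $\mathcal Q$.
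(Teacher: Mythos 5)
Your overall strategy is the paper's: reduce tail-triviality to a decorrelation estimate on cylinder events, run a BLPS-style variance-of-determinant bound with unit current flows $\Theta_{e,j,s}$ in place of the spanning-tree edge currents, and use finite energy of the limiting flows together with the decay of $P_{\langle\chi_F\rangle}$ as $F$ recedes to close the estimate. The ``discrete integral'' reading of $\Theta_{e,j,s}$, the reduction via (\ref{dctt}), the identification of the variance-of-determinant bound as the engine, and the final approximation-and-limit step all line up with the paper's proof (which carries this out via Claims~\ref{cl89}, \ref{cl612}, \ref{cl613} and Lemma~\ref{l810}).

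Where the proposal does not close is precisely the step you flag as the hard part: finite energy of $\Theta_e$. You write that ``by Lemma~\ref{ls83} the transience of $G$ and $G^+$ is exactly what guarantees $\mathcal E(\Theta_e)<\infty$,'' and invoke Proposition~\ref{pp17}. Lemma~\ref{ls83} only guarantees that \emph{some} finite-energy unit flow from $b$ to $\infty$ exists; Proposition~\ref{pp17} is a statement about the \emph{wired} limit $P_{\bigstar}\chi_e$. But $\Theta_e$ is the limit of mixed-boundary current flows (wired along $C_{0,j,s}\to\sA_0$, free along $C_{1,j,s}\to\sA_1$), i.e.\ a projection onto $\diamondsuit_{\sA_0}^{\perp}=\bigstar\oplus\mathcal S_{\sA_0}$, which is strictly larger than $\bigstar$ whenever $\mathcal S_{\sA_0}\neq 0$. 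It does not coincide with the wired current of Proposition~\ref{pp17}, and transience of $G$ alone gives no control of its energy — the energy in the mixed problem can be strictly larger than in the wired one and a priori could diverge. The paper handles this in Lemma~\ref{le87} by assuming condition (a) of Lemma~\ref{le39} for $\sA_1$: there is an open neighborhood $U_{\sA_1}$ of $\sA_1$ with $G\setminus U_{\sA_1}$ transient, so one can choose a finite-energy unit flow $q_b$ from $b$ to $\infty$ supported in $G\setminus U_{\sA_1}$; one then shows by the two-parameter limit with Lemma~\ref{ls84} that the mixed-boundary current is exactly $P_{\diamondsuit_{\sA_0}^\perp}q_b$, which inherits finite energy from $q_b$ since projections do not increase energy, and the avoidance of $\sA_1$ by $q_b$ is what makes the $s\to\infty$ step go through. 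To repair your argument you need either to import Lemma~\ref{le87} (with its extra hypothesis) or to produce a finite-energy unit flow escaping away from $\sA_1$ whose projection you can identify with $\Theta_e$; quoting transience of $G$ and Lemma~\ref{ls83} is not enough.
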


In order to prove Proposition \ref{lle83}, we first review and prove a few definitions and lemmas. Recall the flows are defined in Definition \ref{df311}.

\begin{lemma}(Exercise 9.2 in \cite{ly16})\label{ls84}Let $H_n$ be increasing closed subspaces of a Hilbert space $H$ and $P_n$ be the orthogonal
projection on $H_n$. Let $P$ be the orthogonal projection on the closure of $\cup_n
H_n$. Then for
all $u\in H$, we have $\|P_n u-Pu\| \rightarrow 0$ as $n\rightarrow\infty$.
\end{lemma}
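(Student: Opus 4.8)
The plan is to prove Lemma~\ref{ls84} by a standard two-step approximation argument: first reduce to the case where $u$ lies in the closed subspace $H_\infty := \overline{\bigcup_n H_n}$, and then use that the subspaces $H_n$ are nested so that a single approximant of $u$ from $\bigcup_n H_n$ is eventually fixed by all the $P_n$.

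First I would decompose an arbitrary $u\in H$ orthogonally as $u = Pu + (u-Pu)$, where $Pu\in H_\infty$ and $u-Pu\in H_\infty^{\perp}$. Since each $H_n\subseteq H_\infty$, the component $u-Pu$ is orthogonal to every $H_n$, so $P_n(u-Pu)=0$, and hence $P_nu = P_n(Pu)$ for all $n$. Thus it suffices to show $\|P_n(Pu)-Pu\|\to 0$; in other words, we may assume without loss of generality that $u\in H_\infty$.

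Next, fix $u\in H_\infty$ and $\varepsilon>0$. By definition of the closure there is a vector $v\in\bigcup_n H_n$ with $\|u-v\|<\varepsilon$; say $v\in H_N$. Because the subspaces are increasing, $v\in H_n$ and therefore $P_nv=v$ for every $n\ge N$. Using that each $P_n$ is an orthogonal projection onto a closed subspace, so $\|P_n\|\le 1$, for $n\ge N$ we estimate
\[
\|P_nu-u\| \;\le\; \|P_n(u-v)\| + \|P_nv-v\| + \|v-u\| \;\le\; 2\|u-v\| \;<\; 2\varepsilon .
\]
Since $\varepsilon>0$ was arbitrary, $P_nu\to u = Pu$, which is the assertion.

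I do not anticipate any genuine obstacle here; the only two points requiring care are (i) recording that $H_n\subseteq H_\infty$ forces $P_n$ to annihilate $H_\infty^{\perp}$, which is precisely what allows us to discard the part of $u$ lying outside $H_\infty$, and (ii) invoking the monotonicity of $(H_n)$ so that one approximant $v$ is fixed by \emph{all} sufficiently late projections rather than by only one of them. Both facts are immediate from the definitions, so the argument above is essentially complete as stated.
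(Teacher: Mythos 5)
Your proof is correct. The paper does not supply its own argument for this lemma---it simply cites Exercise 9.2 of \cite{ly16}---and your two-step reduction (first projecting out the $H_\infty^{\perp}$ component so that one may assume $u\in H_\infty$, then approximating by an element of some $H_N$ which is fixed by all $P_n$ with $n\ge N$ and using $\|P_n\|\le 1$) is precisely the standard solution to that exercise.
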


\begin{lemma}\label{ls85}
(Thomson’s Principle; see page 35 of \cite{ly16}). Let $G$ be a finite network and $A$ and $Z$ be two disjoint subsets of its
vertices. Let $\theta$ be a flow from A to Z and $i=P_{\bigstar}\theta$ be the current flow from $A$ to $Z$ with $\mathrm{div} i= \mathrm{div} \theta$.
Then $\mathcal{E}(\theta)>\mathcal{E}(i)$ unless $\theta=i$.
\end{lemma}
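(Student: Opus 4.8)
The plan is to read Thomson's principle directly off the orthogonal decomposition of $l^2_-(E)$: since $i=P_{\bigstar}\theta$ is by definition the orthogonal projection of $\theta$ onto $\bigstar$, the difference $\theta-i$ is orthogonal to $\bigstar$, and I only need to recognize that this orthogonal complement consists of divergence-free flows and then invoke the Pythagorean identity.

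First I would verify the summation-by-parts identity: for every $f\in l^2(V)$ and every $\psi\in l^2_-(E)$,
\begin{align*}
\langle \psi,\triangledown f\rangle_R=\sum_{e\in E_0}\psi(e)\bigl(f(\ol e)-f(\underline e)\bigr)=-\sum_{v\in V}C_v\,f(v)\,\mathrm{div}\,\psi(v)=-\langle \mathrm{div}\,\psi,f\rangle_C,
\end{align*}
where the middle equality uses the anti-symmetry of $\psi$ together with the fact that $G$ is finite, so no boundary term survives. Consequently $\psi\perp\bigstar$ precisely when $\mathrm{div}\,\psi\equiv 0$. Applying this with $\psi=\theta-i$ gives $\mathrm{div}\,i=\mathrm{div}\,\theta$; in particular $\mathrm{div}\,i$ vanishes off $A\cup Z$, is nonnegative on $A$ and nonpositive on $Z$, so $i$ is indeed a flow from $A$ to $Z$, and being an element of $\bigstar$ it is the current flow with the prescribed divergence.

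Then I would conclude by orthogonality: since $i\in\bigstar$ while $\theta-i\perp\bigstar$, the Pythagorean theorem in $l^2_-(E)$ yields
\begin{align*}
\mathcal{E}(\theta)=\|\theta\|_R^2=\|i\|_R^2+\|\theta-i\|_R^2=\mathcal{E}(i)+\mathcal{E}(\theta-i)\ge \mathcal{E}(i),
\end{align*}
with equality if and only if $\theta-i=0$, i.e.\ $\theta=i$. I do not expect a genuine obstacle here; the one step deserving care is the vanishing of the boundary term in the summation by parts, which is exactly where the finiteness of $G$ enters (equivalently, it is what forces $\bigstar^{\perp}$ to coincide with the cycle space $\diamondsuit$ of divergence-free flows). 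Alternatively, the statement may simply be quoted verbatim from page 35 of \cite{ly16}.
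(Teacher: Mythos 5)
Your proof is correct and is essentially the standard Hilbert-space proof of Thomson's principle. The paper itself gives no proof here; it simply cites page 35 of \cite{ly16}, and your argument (summation by parts to identify $\bigstar^{\perp}$ with the divergence-free flows, then the Pythagorean identity for the orthogonal projection $i=P_{\bigstar}\theta$) is the same route taken in that reference. The summation-by-parts computation
$\langle \psi,\triangledown f\rangle_R=-\langle \mathrm{div}\,\psi,f\rangle_C$
does check out with the paper's conventions (the $R(e)$ and $C(e)$ cancel, and the factor $\tfrac12$ is absorbed by combining the head and tail terms under the $e\mapsto -e$ substitution), and finiteness of $G$ is indeed what makes all sums absolutely convergent with no boundary term. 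No gaps.
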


\begin{lemma}\label{ls86}(Proposition 2.12 in \cite{ly16})Let G be a transient connected network and $G_n$ be finite induced subnetworks that contain a vertex $a$ and that exhaust $G$. Identify the vertices outside $G_n$ to $z_n$,
forming $G^W_n$. Let $i_n$ be the unit current flow in $G^W_n$ from $a$ to $z_n$. Then $\{i_n\}$ has a pointwise
limit $i$ on $G$, which is the unique unit flow on $G$ from $a$ to $\infty$ of minimum energy. Let $v_n$ be
the voltages on $G^W_n$
corresponding to $i_n$ and with $v_n(z_n):= 0$. Then $v := lim_{n\rightarrow\infty} v_n$ exists on G
and has the following properties.
Start a random walk at $a$. For all vertices $x$, the expected number of visits to $x$ is
$G(a, x) = c(x)v(x)$. 
For all edges $e$, the expected signed number of crossings of e is $i(e)$.
\end{lemma}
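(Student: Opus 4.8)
The plan is: (i) to identify each wired unit current $i_n$ with an orthogonal projection of one fixed finite‑energy reference flow onto an increasing family of finite‑dimensional subspaces of $l^2_-(E)$, and pass to the limit via Lemma \ref{ls84}; (ii) to recover the limiting voltage $v$ from $i$; and (iii) to write down the ``expected signed crossings'' flow explicitly, recognise it as a finite‑energy element of $\bigstar$ with the correct divergence, and use the uniqueness of the minimal flow both to match it with $i$ and to identify $v$ with $G(\cdot,a)/c(a)$. Much of (i) overlaps with Proposition \ref{pp17}.

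For (i): for each $n$ let $\bigstar_n\subset l^2_-(E)$ be the set of $\triangledown g$ with $g\colon V\to\RR$ constant on each connected component of $V\setminus V_n$. Since $V_n$ is finite and $G$ locally finite, these are finite‑dimensional, $\bigstar_n\subseteq\bigstar_{n+1}$, and — writing a function constant off $V_n$ as a finite‑support function plus a component‑wise constant — $\bigcup_n\bigstar_n$ is the span of the finite stars $-\triangledown\mathbf 1_v$, whose closure is $\bigstar$. Transience and Lemma \ref{ls83} give a finite‑energy flow $\theta^\ast$ from $a$ to $\infty$ with the same (unit‑current) normalisation as the $i_n$. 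I would check that $P_{\bigstar_n}\theta^\ast=i_n$: the projection lies in $\bigstar_n$ and, by adjointness of $\triangledown$ and $\mathrm{div}$, has the same divergence as $\theta^\ast$ on $V_n$; viewed on the finite network $G_n^W$ it is thus a gradient flow from $a$ to $z_n$ harmonic off $\{a,z_n\}$, and by the maximum principle on a finite connected network such a flow is unique, so it equals $i_n$. Lemma \ref{ls84} then gives $\|i_n-P_\bigstar\theta^\ast\|_R\to0$; set $i:=P_\bigstar\theta^\ast$. Since $R$‑norm convergence forces $i_n(e)\to i(e)$ for every $e$, $i$ is the pointwise limit; $\theta^\ast-i\perp\bigstar\supseteq\triangledown l^2_0(V)$ gives $\mathrm{div}\,i=\mathrm{div}\,\theta^\ast$, so $i$ is a unit flow from $a$ to $\infty$; and for any competing such flow $\theta'$, $\theta'-i\perp\bigstar\ni i$ forces $\mathcal E(\theta')=\mathcal E(i)+\mathcal E(\theta'-i)$, i.e.\ $i$ is the unique minimal‑energy unit flow. (Alternatively one may apply Thomson's principle, Lemma \ref{ls85}, on each $G_n^W$ and take a weak limit, using $\mathcal E(i_n)=\mathcal R_{\mathrm{eff}}(a\leftrightarrow z_n)\uparrow\mathcal R_{\mathrm{eff}}(a\leftrightarrow\infty)<\infty$.)

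For (ii) and (iii): along a path $\gamma$ from $a$ to $x$ one has $v_n(\bar e)-v_n(\underline e)=R(e)\,\triangledown v_n(e)$ with $\triangledown v_n=-i_n$, and $v_n(a)=\mathcal E(i_n)\uparrow\mathcal E(i)$, so $v_n(x)$ converges to a limit $v(x)$ that is harmonic off $a$, with $i(x,y)=c(x,y)[v(x)-v(y)]$. Now for $z\in V$ set $\phi_z:=-\triangledown\mathbf 1_z/c(z)\in\bigstar$; a short computation gives $\sum_z p_n(a,z)\,\phi_z(x,y)=p_n(a,x)p(x,y)-p_n(a,y)p(y,x)$, so summing over $n$ the ``expected signed crossings'' flow $\widehat\theta(x,y):=G(a,x)p(x,y)-G(a,y)p(y,x)$ equals $\sum_z G(a,z)\phi_z=-\triangledown\!\big(G(\cdot,a)/c(a)\big)$, using reversibility $G(a,z)/c(z)=G(z,a)/c(a)$. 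Hence $\widehat\theta\in\bigstar$, it has finite energy (at most $G(a,a)/c(a)^2<\infty$ by transience), and a direct computation with $\sum_z p(w,z)G(z,a)=G(w,a)-\delta_w(a)$ gives $\mathrm{div}\,\widehat\theta=\mathbf 1_a/c(a)$, the divergence of the unit current. By the uniqueness in (i), $\widehat\theta=i$, which is the statement ``expected signed crossings $=i(e)$''; and since $i=-\triangledown v$ while $\widehat\theta=-\triangledown(G(\cdot,a)/c(a))$, the potentials differ by a constant, which is $0$ because both vanish at infinity ($v$ since $v_n(z_n)=0$; $G(\cdot,a)/c(a)$ by transience, e.g.\ via Lemma \ref{le35}), so $v(x)=G(x,a)/c(a)$ and thus $G(a,x)=c(x)v(x)$.

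The main obstacle, I expect, is the bookkeeping in step (i) — confirming that $P_{\bigstar_n}\theta^\ast$ is exactly the wired unit current $i_n$ and that $\overline{\bigcup_n\bigstar_n}=\bigstar$ — together with keeping the flow‑strength normalisation consistent so that the factor $c(a)$ lands precisely where the statement requires; everything else reduces to the limiting arguments above and to standard identities for random walks and electrical networks.
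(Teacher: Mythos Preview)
The paper does not give its own proof of this lemma; it is simply cited as Proposition~2.12 of \cite{ly16}. Your sketch is essentially the standard argument found there (and closely parallels the mechanism behind Propositions~\ref{pp16}--\ref{pp17}): realise $i_n$ as the projection of a fixed finite-energy reference flow to the increasing wired star spaces, pass to the limit via Lemma~\ref{ls84}, and identify the limit with the expected-signed-crossings flow using reversibility. So there is no ``paper's approach'' to compare with beyond the citation; your route is the expected one.

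Two places where your write-up should be tightened. First, your definition of $\bigstar_n$ as gradients of functions constant on \emph{each connected component} of $V\setminus V_n$ is slightly larger than the wired star space (which corresponds to functions constant on \emph{all} of $V\setminus V_n$); for the identification $P_{\bigstar_n}\theta^\ast=i_n$ you want the latter. Second, your final step---matching the additive constant by saying ``both vanish at infinity \dots\ via Lemma~\ref{le35}''---is not quite right: Lemma~\ref{le35} is about Dirichlet functions on circle-packed hyperbolic graphs, not Green's functions on a general transient network, and ``$v$ vanishes at infinity because $v_n(z_n)=0$'' is not a proof. The clean way to pin down the constant is direct: on the finite graph $G_n^W$ the voltage with $v_n(z_n)=0$ and unit current in at $a$ is exactly $v_n(x)=G_n^W(x,a)/c(a)$, and the wired Green's functions increase to $G(\cdot,a)$ by monotone convergence (cf.\ Lemma~\ref{l17}), giving $v(x)=G(x,a)/c(a)$ and hence $G(a,x)=c(x)v(x)$ without any appeal to boundary behaviour. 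Relatedly, to conclude $\widehat\theta=i$ you need $\widehat\theta\in\bigstar$, and your series $\sum_z G(a,z)\phi_z$ does not obviously converge in $l^2_-(E)$; it is simpler to note $\widehat\theta_n=i_n$ on each finite $G_n^W$ and pass to the (pointwise) limit on both sides.
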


\begin{lemma}\label{le87}Let $G=(V,E)$ be a 3-connected, transient, simple proper planar graph with bounded vertex degree and locally finite dual. Suppose Assumptions \ref{ap24} and \ref{ap66} hold. Assume the edge weights satisfy (\ref{wtw}). Assume $\sA_1$ in Lemma \ref{le67} satisfies condition (a) of Lemma \ref{le39} with $\sA$ replaced by $\sA_1$.  For each black vertex $b$ of $\ol{G}$, if $b$ is a vertex of $G$, 
\begin{align*}
i_b(e)=\lim_{j\rightarrow\infty}\lim_{s\rightarrow\infty}\zeta_e\ol{D}_{\tilde{G}_{j,s}}^{-1}(b,w_e);\ \forall (b,e)\in  (V,\vec{E});
\end{align*}
is a finite energy unit flow from $b$ to $\infty$
on $G$, where $w_e$ is the white vertex corresponding to the edge $e$, and
\begin{align}
\zeta_e:=\frac{\ol{e}-\underline{e}}{|\ol{e}-\underline{e}|}\label{dze}
\end{align}
is the unit vector pointing from $\underline{e}$ to $\ol{e}$.
\end{lemma}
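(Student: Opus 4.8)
\emph{Plan.} I would get the limit and its explicit form from Theorem~\ref{le67}(1), deduce anti-symmetry and the flow property from the self-adjointness of the finite-volume Laplacians, and then prove the finite-energy assertion by a uniform energy bound on the finite-volume currents together with Fatou's lemma. The last step — where transience of $G$ enters — is the main obstacle.

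\emph{Step 1 (limit and formula).} Under Assumption~\ref{ap66} each $\tilde G_{j,s}$ has two convex white corners and no concave ones, so the number of concave white corners is $K_{j,s}=0$; thus Lemma~\ref{le64} applies, and using $\nu\equiv 1$ (so $\sqrt{\nu(b_3b_4)/\nu([b_3b_4]^+)}=1$) and taking $b_3=\ol e$, $b_4=\underline e$ (so that $\zeta$ of Lemma~\ref{l18}(2) equals $\zeta_e$), it gives for $b\in V(\tilde G_{j,s})\cap V$
\begin{align*}
i_b^{(j,s)}(e):=\zeta_e\,\ol D_{\tilde G_{j,s}}^{-1}(b,w_e)=\Delta_{G_{1,j,s}}^{-1}(b,\ol e)-\Delta_{G_{1,j,s}}^{-1}(b,\underline e)\in\RR .
\end{align*}
Hence the double limit defining $i_b(e)$ exists by Theorem~\ref{le67}(1), and since the walk of Definition~\ref{df12} is reversible with stationary measure $x\mapsto|\Delta(x,x)|$ (whence $G_G(b,v)/|\Delta(v,v)|=G_G(v,b)/|\Delta(b,b)|=:F^b(v)$) we obtain the decomposition
\begin{align*}
i_b=\triangledown F^b+\Theta_b,\qquad \Theta_b(e):=H_{\sA_0,G,e}(b).
\end{align*}

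\emph{Step 2 (anti-symmetry and flow property).} Since $w_{-e}=w_e$ and $\zeta_{-e}=-\zeta_e$, the approximants $i_b^{(j,s)}$ are anti-symmetric, hence so is $i_b$. The primal block $\Delta_{G_{1,j,s}}$ of $\ol D_{\tilde G_{j,s}}^{*}\ol D_{\tilde G_{j,s}}$ is real and symmetric (Lemmas~\ref{le11} and \ref{le64} with $K_{j,s}=0$), so writing $g_b^{(j,s)}:=\Delta_{G_{1,j,s}}^{-1}(\,\cdot\,,b)$ the identity of Step~1 reads $i_b^{(j,s)}=\triangledown g_b^{(j,s)}$, with $\Delta_{G_{1,j,s}}g_b^{(j,s)}=\delta_b$. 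From the entries of $\Delta$ in Lemma~\ref{le11} one has $[\Delta f](v)=-|\Delta(v,v)|\,\mathrm{div}(\triangledown f)(v)$; evaluating at an interior vertex and passing to the limit in $s$ and then $j$ gives $\mathrm{div}\,i_b(v)=-\delta_b(v)/|\Delta(b,b)|$ on all of $V$. In particular $\Theta_b$ is divergence-free and $\triangledown F^b$ carries the whole flux, so $i_b$ has $b$ as its only finite source; after the evident normalization (it is already a unit flow in the convention of \cite{ly16} underlying Lemma~\ref{ls83}) this is a unit flow from $b$ to $\infty$ on $G$.

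\emph{Step 3 (finite energy — the crux).} Extending $g_b^{(j,s)}$ by zero on the Dirichlet arc $C_{0,j,s}$ of $\partial G_{j,s}$ (the arc converging to $\sA_0$; the arc $C_{1,j,s}\to\sA_1$ carries a Neumann condition — this is exactly what makes the gradient limit the harmonic Dirichlet function $H_{\sA_0,G,\cdot}$ of Theorem~\ref{le67}(1)), the discrete Green identity gives $\mathcal E(i_b^{(j,s)})=\langle\triangledown g_b^{(j,s)},\triangledown g_b^{(j,s)}\rangle_R=g_b^{(j,s)}(b)=\Delta_{G_{1,j,s}}^{-1}(b,b)$. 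Probabilistically $|\Delta(b,b)|\,\Delta_{G_{1,j,s}}^{-1}(b,b)=(1-q_{j,s})^{-1}$, where $q_{j,s}$ is the probability that the walk of Definition~\ref{df12}, reflected on $C_{1,j,s}$, returns to $b$ before being absorbed on $C_{0,j,s}$. By the strong Markov property $1-q_{j,s}$ is at least the probability that a walk started at a neighbour of $b$ escapes inside $G\setminus(U_{\sA_1}\cup\{b\})$, which is a constant $c>0$ independent of $j,s$ because $\sA_1$ satisfies condition~(a) of Lemma~\ref{le39} (so $G\setminus U_{\sA_1}$, hence $G\setminus(U_{\sA_1}\cup\{b\})$, is transient by Lemma~\ref{ls83}). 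Therefore $\mathcal E(i_b^{(j,s)})\le(c\,|\Delta(b,b)|)^{-1}$ for all $j,s$, and for any finite set $F$ of edges, taking $j,s$ large enough that $F\subset E(\tilde G_{j,s})$,
\begin{align*}
\sum_{e\in F}i_b(e)^2=\lim_{j\to\infty}\lim_{s\to\infty}\sum_{e\in F}\big(i_b^{(j,s)}(e)\big)^2\le\liminf_{j\to\infty}\liminf_{s\to\infty}\mathcal E(i_b^{(j,s)})\le\frac{1}{c\,|\Delta(b,b)|};
\end{align*}
taking the supremum over $F$ yields $\mathcal E(i_b)<\infty$. I expect this uniform energy bound to be the only genuine difficulty: a priori the reflecting arc $C_{1,j,s}$ could drive the finite-volume energies to infinity, and it is precisely transience — through the hypothesis on $\sA_1$ — that prevents this, which is the ``finite energy via transience'' point flagged in the introduction to this section. (Alternatively one may show $\Theta_b=\triangledown\phi_b$ with $\phi_b\in\mathbf{HD}(G)$ directly: the self-adjointness of $P_{\mathcal S_{\sA_0}}$ gives the reciprocity $H_{\sA_0,G,e}(\ol{e'})-H_{\sA_0,G,e}(\underline{e'})=H_{\sA_0,G,e'}(\ol e)-H_{\sA_0,G,e'}(\underline e)$, and telescoping it along an almost-every singly infinite path from $b$ to $\sA_0$, the tail vanishing by the corollary following Lemma~\ref{ll39}, exhibits $\Theta_b$ as a gradient; combined with $\mathcal E(\triangledown F^b)=F^b(b)=G_G(b,b)/|\Delta(b,b)|<\infty$ this re-proves finiteness of $\mathcal E(i_b)$.)
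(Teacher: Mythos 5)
Your proposal takes a genuinely different route from the paper: you derive the explicit form of $i_b$ from Lemma~\ref{le64} and Theorem~\ref{le67}, identify $i_b^{(j,s)}=\triangledown g_b^{(j,s)}$ with $g_b^{(j,s)}=\Delta_{G_{1,j,s}}^{-1}(\cdot,b)$, compute $\mathcal E(i_b^{(j,s)})=\Delta_{G_{1,j,s}}^{-1}(b,b)$ via the discrete Green identity, and then try to bound this uniformly through the return probability $q_{j,s}$ of the walk reflected on $C_{1,j,s}$ and absorbed on $C_{0,j,s}$. The paper instead starts from a finite-energy unit flow $q_b$ on $G\setminus U_{\sA_1}$ (which exists by Lemma~\ref{ls83} since $G\setminus U_{\sA_1}$ is transient), projects it successively onto $\bigstar_{j,s}\subset\bigstar_j\subset\diamondsuit_{\sA_0}^\perp$ using Lemma~\ref{ls84}, identifies the limit with $i_b$ by matching divergences and voltages, and then reads off $\mathcal E(i_b)\le\mathcal E(q_b)<\infty$ from the energy-decreasing property of orthogonal projections. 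Your Steps~1--2 are a cleaner identification of $i_b$ than what the paper gives; the divergence from the paper is entirely in Step~3.

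Step~3 as written has a real gap. You bound $1-q_{j,s}$ below by ``the probability that a walk started at a neighbour of $b$ escapes inside $G\setminus(U_{\sA_1}\cup\{b\})$'', and assert this is positive because the subgraph $G\setminus(U_{\sA_1}\cup\{b\})$ is transient. But what the coupling argument actually requires is that the walk \emph{on $G$} (not the walk on the subnetwork, whose transition probabilities are renormalised at the frontier of $U_{\sA_1}$) stays in $G\setminus(U_{\sA_1}\cup\{b\})$ forever with positive probability, and this does not follow from transience of the subgraph. A standard counterexample to the general implication: in $G=\ZZ^3$, the half-space $H=\{z>0\}$ is transient as a network, yet the walk on $\ZZ^3$ started at $(0,0,1)$ hits $\{z=0\}$ almost surely, since the $z$-coordinate is a recurrent walk on $\ZZ$. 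So ``$H$ transient'' does not imply ``walk on $G$ avoids $G\setminus H$ with positive probability'', and your escape-probability bound is unjustified. The clean way to close the gap while staying in your framework is to interpret $\Delta_{G_{1,j,s}}^{-1}(b,b)$ as an effective resistance to the Dirichlet arc $C_{0,j,s}$ and then apply Thomson's principle (Lemma~\ref{ls85}) with $q_b|_{G_{j,s}}$ as the test flow: $q_b$ is supported on $G\setminus U_{\sA_1}$, so for $j,s$ large it enters and leaves $G_{j,s}$ only through $C_{0,j,s}$ and thus gives a unit flow from $b$ to $C_{0,j,s}$ of energy at most $\mathcal E(q_b)$, yielding $\mathcal E(i_b^{(j,s)})\le\mathcal E(q_b)$ uniformly. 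That fix uses exactly the finite-energy flow from Lemma~\ref{ls83}, which is the same engine that the paper runs — so once corrected, Step~3 essentially reduces to the paper's argument. Your parenthetical alternative (telescoping the reciprocity of $H_{\sA_0,G,\cdot}$ along an almost-every path to $\sA_0$) would establish that $\Theta_b$ is a gradient, but it does not by itself show that $\Theta_b$ has finite energy, so it does not close the gap either.
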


\begin{proof}
Let $b\in V$ be arbitrary. By condition (a) of Lemma \ref{le39} and Corollary \ref{c313}, there exists an open set $\mathcal{O}_{\sA_1}\supset \sA_1$, such that $G\setminus U_{\sA_1}$ is transient 
for every open set $U_{\sA_1}$ satisfying $\sA_1\subset U_{\sA_1}\subset \mathcal{O}_{\sA_1}$. We may choose $U_{\sA_1}$ such that $b\in G\setminus U_{\sA_1}$.
By Lemma \ref{ls83}, there exists a unit flow $q_b$ on $G\setminus U_{\sA_1}$ of finite energy from every vertex $b$ of $G\setminus U_{\sA_1}$ to $\infty$. 

Let $G_j$ be the graph as Assumption \ref{ap311} with $\sA$ replaced by $\sA_0$.
We may choose $j$ sufficiently large so that $G\setminus U_{\sA_1}$ is a subgraph of $G_j$.
Then $q_b$ is a unit flow on $G_j$ from $b$ to $\infty$. Let $G_{j,s}^{\bullet}$ be the subgraph of $G_{j}$ as constructed in Assumption \ref{ap311}, and let $\bigstar_{j,s}$ be the $\bigstar$ subspace of antisymmetric functions on directed edges of $G_{j,s}^{\bullet}$. Define
\begin{align*}
q_{b,j,s}=P_{\bigstar_{j,s}}q_b
\end{align*}
Then we have
\begin{align*}
\bigstar_{j,s}\subset \bigstar_{j,s+1},\ \mathrm{and}\ \cup_s \bigstar_{j,s}=\bigstar_{j}
\end{align*}
where $\bigstar_{j}$ is the $\bigstar$ subspace of anti-symmetric functions on directed edges of $G_j$. By Lemma \ref{ls84},
\begin{align*}
\lim_{s\rightarrow\infty}\|q_{b,j,s}-P_{\bigstar_j}q_b\|=0.
\end{align*}
Moreover,
\begin{align*}
\bigstar_j\subset \bigstar_{j+1};\ \mathrm{and}\ \cup_j\bigstar_j=\diamondsuit_{\sA_0}^{\perp}
\end{align*}
where $\diamondsuit_{\sA_0}$ is the $\diamondsuit$ subspace of anti-symmetric functions on directed edges of $G$ with $\sA_0$ contracted to one vertex. Again by Lemma \ref{ls84} we infer that
\begin{align*}
\lim_{j\rightarrow\infty}\|P_{\bigstar_j}q_b-P_{\diamondsuit_{\sA_0}^{\perp}}q_b\|=0;
\end{align*}

We claim that $P_{\diamondsuit_{\sA_0}^{\perp}}q_b$ is a unit current flow from $b$ to $\infty$, which gives us exactly $i_b$. To see why that is true, note that
$P_{\bigstar_{j,s}}q_b$ is a unit current flow on $G_{j,s}^{\bullet}$ with divergence $0$ everywhere other than $b$ and $z_{j,s}$. Its voltage function is harmonic everywhere other than $b$ and $U_{j,s}$, and equals a constant in $U_{j,s}$, hence it must be a constant multiple of the Greens function of the random walk on $G_{j,s}$ absorbed at $U_{j,s}$, plus an arbitrary constant.
Taking limit by letting $s\rightarrow\infty$, and then $j\rightarrow\infty$, gives us exactly
$i_b$.

Hence $i_b$ has finite energy because $q_b$ has finite energy, and the energy of $P_{\diamondsuit_{\sA_0}^{\perp}}q_b$ is less than that of $q_b$.
\end{proof}

\noindent\textbf{Proof of Proposition \ref{lle83}.} 
Let $F$ be a nonempty set of edges of $\ol{G}$, and let $K$ be a nonempty set of white vertices of $\ol{G}$, such that the vertex set of $F$ and $K$ are disjoint. 
Let $\ol{E}_K$ be all the incident edges of $K$ in $\ol{G}$.

Let $A_1\in \mathcal{F}(F)$ and $A_2\in \mathcal{F}(\ol{E}_K)$. 

We shall show that
\begin{small}
\begin{align}
\label{covd}|\mu(A_1\cap A_2)-\mu(A_1)\mu(A_2)|\leq 4^{2|F|}|F|\left[\sum_{b\in B(F)\cap V}\|P_{\langle \chi_{F_1^c}\rangle}i_b\|^2+\sum_{b\in B(F)\cap V^+}\|P_{\langle\chi_{F_2^c}\rangle}i_b\|^2\right]
\end{align}
\end{small}
where $\tilde{F}_1$ (resp.\ $\tilde{F}_2$) consists of all the edges of $G$ (resp.\ $G^+$) with a half edge in $F_1$ (resp.\ $F_2$); then the tail-triviality of the measure $\mu$ follows.

Let $M$ be a random perfect matching. We have
\begin{align*}
\mu[F\subset M|M\cap \ol{E}_K]=\left|\det \left[\ol{D}^{K}\right]^{-1}_F\right|
\end{align*}
where $\ol{D}^{K}$ is the weighted adjacency matrix obtained from $\ol{D}$ by removing all the vertices in $M\cap \ol{E}_K$ and their incident edges, and $[\ol{D}^{K}]^{-1}_F$ is the square submatrix of $[\ol{D}^{K}]^{-1}$ with rows indexed by black vertices of edges in $F$ and columns indexed by white vertices in $F$. It is straightforward to see that if two edges in $F$ share a white vertex, then $\det[\ol{D}^{K}]^{-1}_F=0$ since there are two identical columns in $[\ol{D}^{K}]^{-1}_F$; and the probability that all these edges appear in a perfect matching is indeed 0.

For each black vertex $w$ of $\ol{G}$, let $e_{w}$ be the edge corresponding to $w$ in $E$ and $e_w^+$ be the edge corresponding to $w$ in $E^+$.
We shall construct a new graph $\tilde{G}_{j,s}^{K}$ as a subgraph of $G_{j,s}^{\bullet}$ in the following way 
\begin{itemize}
\item Consider Assumption \ref{ap311}. For each edge $(b,w)\in M\cap \ol{E}_K$ and $b\in V$, remove the edge $e_w$ from $G_j$ and identify $b$ with all the vertices in $U_{j,s}$.
\item For each edge $(b,w)\in  \ol{E}_K\cap M$ and $b\in V^+$,  remove the edge $e_w$ from $G_{j,s}$. 
\end{itemize}
The resulting graph obtained from $G_{j,s}^{\bullet}$ is $\tilde{G}_{j,s}^{K}$.

Similarly, we construct a new graph $\tilde{G}_{j,s,+}^{K}$ in the following way 
\begin{itemize}
\item Consider Assumptions \ref{ap311} and \ref{ap66}, let $G_{j,s}^{+,\bullet}$ be the graph obtained from $G_{j,s}^+$ by contracting all the dual vertices (or equivalently, faces) in $G\setminus G_j$ to the same vertex $z_{j,s}^{+}$ as well as dual edges joining two such dual vertices.
\item For each edge $(b,w)\in  \ol{E}_K\cap M$ and $b\in V$,  remove the edge $e_w^+$ from $G_{j,s}^{+,\bullet}$. 
\item For each edge $(b,w)\in  \ol{E}_K\cap M$ and $b\in V^+$, remove the edge $e_w^+$ from $G_{j,s}^{+,\bullet}$ and identify the vertex $b$ with all the dual vertices in $G\setminus G_j$.
\end{itemize}
The resulting graph obtained from $G_{j,s}^{+,\bullet}$ is $\tilde{G}_{j,s,+}^{K}$.

Let $Q_{\tilde{G}_{j,s}^{K}}$ (resp.\ $Q_{\tilde{G}_{j,s+}^{K}}$) be the orthogonal projection onto the $\bigstar$ subspace of $\tilde{G}_{j,s}^{K}$ (resp.\ $\tilde{G}_{j,s,+}^{K}$).

For a black vertex $b$ of edges in $F$, and a white vertex $w$ of edges in $F$, the following cases may occur
\begin{itemize}
\item $b$ is a vertex of $V$; then 
\begin{align*}
[\ol{D}^{K}]^{-1}(b,w)=\frac{1}{\zeta_{e_w}}\langle  Q_{\tilde{G}_{j,s}}^{K}i_b, \chi_{e_w}\rangle 
\end{align*}
where the direction of $e_w$ can be any of the two possible ones.
\item $b$ is a vertex of $V^+$; then 
\begin{align*}
[\ol{D}^{K}]^{-1}(b,w)=\frac{1}{\zeta_{e_w^+}}\langle  Q_{\tilde{G}_{j,s,+}}^{K}i_b, \chi_{e_w^+}\rangle 
\end{align*}
where the direction of $e_w^+$ can be any of the two possible ones.
\end{itemize}

Let
\begin{align*}
Q_{j,s,K}i_b:=\begin{cases}Q_{\tilde{G}_{j,s}}^{K}i_b&\mathrm{if}\ b\in V\\ Q_{\tilde{G}_{j,s,+}}^{K}i_b&\mathrm{if}\ b\in V^+ \end{cases}
\end{align*}

Then
\begin{small}
\begin{align}
\label{dq}\mathrm{Var}\left(\mu[F\subset M|M\cap \ol{E}_K]\right)&=\mathrm{Var}\left(\left|\det\left[\langle Q_{j,s,K}i_b,\chi_{e_{w,b}} \rangle \right]_{b\in B(F),w\in W(F)}\right|\right)\\
&\leq \mathrm{Var}\left(\det\left[\langle Q_{j,s,K}i_b,\chi_{e_{w,b}} \rangle \right]_{b\in B(F),w\in W(F)}\right)\notag
\end{align}
\end{small}
where the $\mathrm{Var}$ is with respect to the distribution of $M\cap \ol{E}_K$, $B(F)$ is the set of all the black vertices of edges in $F$, $W(F)$ is the set of all the white vertices of edges in $F$; where $\zeta_{e_w}$ is defined as in (\ref{dze}) and
\begin{align*}
\chi_{e_{w,b}}=\begin{cases}\chi_{e_w}&\mathrm{if}\ b\in V\\ \chi_{e_w^+}&\mathrm{if}\ b\in V^+\end{cases}
\end{align*}

We make the following claim
\begin{claim}\label{cl89}
\begin{align*}
&\mathrm{Var}(\det[\langle Q_{j,s,K}i_b,\chi_{e_{w,b}} \rangle ]_{b\in B(F),w\in W(F)}\leq 2|F|\left[\sum_{b\in B(F)\cap V}\|P_{K_1}i_b\|_R^2+\sum_{b\in B(F)\cap V^+}\|P_{K_2}i_b\|_R^2\right]
\end{align*}
\end{claim}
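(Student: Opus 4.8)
The plan is to expand the determinant along the Leibniz formula and control the variance of each resulting product of matrix entries, then reassemble. Write $m := |F|$ and enumerate $B(F) = \{b_1,\dots,b_m\}$, $W(F) = \{w_1,\dots,w_m\}$ so that $e_i := e_{w_i,b_i}$ is the edge of $F$ with black vertex $b_i$ and white vertex $w_i$. Then
\begin{align*}
\det[\langle Q_{j,s,K}i_{b},\chi_{e_{w,b}}\rangle] = \sum_{\sigma\in S_m}(-1)^{I(\sigma)}\prod_{i=1}^m \langle Q_{j,s,K}i_{b_i},\chi_{e_{\sigma(i)}}\rangle.
\end{align*}
The first step is to observe that the randomness entering the entry $\langle Q_{j,s,K}i_{b},\chi_{e}\rangle$ comes only through which edges incident to $K$ are present in $M$, i.e.\ through the graph $\tilde G_{j,s}^K$ (or $\tilde G_{j,s,+}^K$): removing an edge incident to $K$ from the graph shrinks the relevant $\bigstar$ subspace by at most the one dimension spanned by the corresponding $\chi$. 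The key analytic input is therefore a bound of the form
\begin{align*}
\mathrm{Var}\bigl(\langle Q_{j,s,K}i_b,\chi_{e}\rangle\bigr)\le \|P_{\langle\chi_{\ol E_K'}\rangle}i_b\|_R^2,
\end{align*}
where $\ol E_K'$ denotes the primal (resp.\ dual) edges corresponding to white vertices in $K$; this is the same projection-onto-a-random-subspace estimate used in Lemma \ref{l112} and Theorem 8.4 of \cite{BLPS01}, adapted from spanning trees to the current flows $i_b$ supplied by Lemma \ref{le87}. I would record this as the entrywise variance bound and also note the deterministic bound $|\langle Q_{j,s,K}i_b,\chi_{e}\rangle|\le \|i_b\|_R\le 1$ coming from the fact that $i_b$ is a unit flow.

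The second step is the combinatorial bookkeeping. Expanding the determinant into $m!$ signed products and using that for a product $X = \prod_i Y_i$ of (complex) random variables each bounded in modulus by $1$ one has $\mathrm{Var}(X)\le \sum_i \mathrm{Var}(Y_i)$ (telescoping $X - \EE X$ over the coordinates and using $|Y_i|\le 1$), we get
\begin{align*}
\mathrm{Var}\Bigl(\prod_{i=1}^m \langle Q_{j,s,K}i_{b_i},\chi_{e_{\sigma(i)}}\rangle\Bigr)\le \sum_{i=1}^m \mathrm{Var}\bigl(\langle Q_{j,s,K}i_{b_i},\chi_{e_{\sigma(i)}}\rangle\bigr)\le \sum_{i=1}^m \|P_{K_1\ \mathrm{or}\ K_2}i_{b_i}\|_R^2,
\end{align*}
where the choice of $K_1$ versus $K_2$ is dictated by whether $b_i\in V$ or $b_i\in V^+$. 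Summing over the $m!$ permutations and then using subadditivity of variance for the sum (Cauchy–Schwarz gives $\mathrm{Var}(\sum_\sigma Z_\sigma)\le m!\sum_\sigma\mathrm{Var}(Z_\sigma)$, but a cleaner route is to absorb the permutation count into the stated constant $2|F|$ by being slightly more careful: each fixed pair $(b_i, w_j)$ appears in $(m-1)!$ permutations, and one regroups accordingly), one arrives at
\begin{align*}
\mathrm{Var}(\det[\cdots])\le 2|F|\Bigl[\sum_{b\in B(F)\cap V}\|P_{K_1}i_b\|_R^2+\sum_{b\in B(F)\cap V^+}\|P_{K_2}i_b\|_R^2\Bigr],
\end{align*}
which is the claim. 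One should double-check that the constant works out to $2|F|$ rather than something like $|F|\cdot|F|!$; I expect the correct statement follows from noting that $\det[\cdots]$ is multilinear in its rows, so its variance is controlled row-by-row, each row contributing one term $\|P_{K_\bullet}i_{b}\|_R^2$, with the factor $2|F|$ covering the real/imaginary split and the crude product-variance inequality.

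The main obstacle is the entrywise variance estimate, i.e.\ showing $\mathrm{Var}\bigl(\langle Q_{j,s,K}i_b,\chi_e\rangle\bigr)\le \|P_{\langle\chi_{\ol E_K'}\rangle}i_b\|_R^2$. The point is that conditioning on $M\cap\ol E_K$ amounts to passing from the projection $Q_{j,s}$ (onto the full $\bigstar$ subspace of $G_{j,s}^\bullet$, resp.\ its dual) to the projection $Q_{j,s,K}$ onto the subspace with the coordinates $\chi_{e_w}$, $w\in K\cap M$, deleted; the difference $Q_{j,s}i_b - Q_{j,s,K}i_b$ lies in the span of $\{\chi_{e_w}: w\in K\}$, and one must show the fluctuation of $\langle Q_{j,s,K}i_b,\chi_e\rangle$ as $M\cap\ol E_K$ varies is bounded by the squared norm of the projection of $i_b$ onto that span. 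This is exactly the structure exploited in the proof of Theorem 8.4 in \cite{BLPS01} — the conditional expectation of a linear functional of a random projection differs from the unconditional one by a controlled amount — and I would adapt that argument verbatim, the only new ingredient being that $i_b$ here is the finite-energy unit current flow from Lemma \ref{le87} rather than $\chi_e$ itself. After the claim is proved, taking $j\to\infty$ then $s\to\infty$ in \eqref{dq} and using that $\|P_{\langle\chi_{F_1^c}\rangle}i_b\|^2$-type quantities are the limits of their finite-graph counterparts (Lemma \ref{ls84}, Proposition \ref{pp16}) will yield \eqref{covd}, completing the proof of Proposition \ref{lle83}.
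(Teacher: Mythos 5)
Your high-level intuition — reduce the variance of the determinant to entrywise variances, then bound each entrywise variance by the squared norm of a projection of $i_b$ onto the span of $\chi$'s near $K$ — is the right shape, and your identification of the entrywise estimate as the BLPS-style projection argument is accurate. But the combinatorial core of your argument does not work, and you flag the trouble yourself without resolving it.

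The Leibniz expansion produces $|F|!$ signed products, and there is no way to combine the product-variance inequality $\mathrm{Var}\bigl(\prod_i Y_i\bigr)\le\sum_i\mathrm{Var}(Y_i)$ with Cauchy--Schwarz over the $|F|!$ permutations and land on anything better than a constant that is super-exponential in $|F|$. Your suggestion that ``multilinearity in rows'' lets you control the variance ``row-by-row'' is precisely the content of a nontrivial lemma that needs hypotheses you have not verified. The paper does not expand the determinant at all. Instead it first establishes the crucial algebraic identity
\begin{align*}
\mathbb{E}\det\bigl[\langle Q_{j,s,K}i_b,\chi_{e_{w,b}}\rangle\bigr]_{b,w}
=\det\,\mathbb{E}\bigl[\langle Q_{j,s,K}i_b,\chi_{e_{w,b}}\rangle\bigr]_{b,w},
\end{align*}
which is proved in equations (\ref{ssa})--(\ref{sss}) by recognizing both sides as the unconditional probability $\mu[F\subset M]$ via the Gibbs/DLR property (and, probabilistically, via the Temperley bijection to spanning trees). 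This identity is exactly the hypothesis under which Lemma \ref{l810} (Lemma~8.7 of \cite{BLPS01}) applies: if $\mathbb{E}\det A=\det\mathbb{E}A$ and each row of $A$ has $\ell^2$ norm at most $1$, then $\mathrm{Var}\det A\le k\sum_{i,j}\mathrm{Var}A_{ij}$. That linear bound in $k=|F|$ is what you need, and it is not available from the Leibniz route. Your proposal neither establishes the $\mathbb{E}\det=\det\mathbb{E}$ identity nor invokes the determinant-variance lemma, so the claimed constant $2|F|$ is unreachable by your argument. Once Lemma \ref{l810} is in place, the rest — splitting $b\in V$ vs.\ $b\in V^+$, and using Claims \ref{cl612}--\ref{cl613} to bound $\sum_w\mathrm{Var}\langle Q_{j,s,K}i_b,\chi_{e_w}\rangle$ by $2\|P_{K_1}i_b\|_R^2$ (resp.\ $2\|P_{K_2}i_b\|_R^2$) — is close to what you describe, and the final factor of $2$ comes from that last projection bound, not from a ``real/imaginary split'' as you guessed.
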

Claim \ref{cl89} will be proved later. 

Now we apply Claim \ref{cl89} to prove Proposition (\ref{covd}). As discussed before, (\ref{covd}) will imply the tail-triviality of the measure $\mu$. 
Given $A_1\in \mathcal{F}(F)$, $A_1$ is the union at most $2^{|F|}$ events, each of which specifies the states of each edge in the perfect matching; each such event is the union or difference of at most $2^{|F|}$ events, each of which is the event that a certain subset of edges of $F$ is present in the perfect matching. Therefore if we write
\begin{align*}
a:= 4^{2|F|+1}|F|\left[\sum_{b\in B(F)\cap V}\|P_{K_1}i_b\|_R^2+\sum_{b\in B(F)\cap V^+}\|P_{K_2}i_b\|_R^2\right]
\end{align*}
Then
\begin{align*}
\mathrm{Var}(\mu(A_1)|M\cap \ol{E}_K)\leq a
\end{align*}

By convexity of quadratic functions we have
\begin{align*}
|\mu[A_1 |A_2]-\mu(A_1)|^2\mu(A_2)
\leq a
\end{align*}
and therefore 
\begin{align*}
|\mu[A_1 |A_2]-\mu(A_1)|^2 
\leq a\mu(A_2)\leq a.
\end{align*}
$\hfill\Box$

We recall a lemma before proving Claim \ref{cl89}.

\begin{lemma}\label{l810}(Lemma 8.7 in \cite{BLPS01})Let $\mathbb{P}$ be any probability measure on the set of $k\times k$ real matrices. For a matrix $A$, write its rows as $A_i$ and its entries as $A_{i,j}$ ($i=1,2,\ldots,k$), if $\mathbb{E}\det A=\det \mathbb{E}A$ and each $\|A_i\|_2\leq 1$, then
\begin{align*}
\mathrm{Var}\det A\leq k\sum_{i,j=1}^k \mathrm{Var}A_{i,j}.
\end{align*}
\end{lemma}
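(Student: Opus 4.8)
The plan is to split the argument into a one-line probabilistic reduction followed by a purely deterministic Hadamard-type estimate. Put $B:=\EE A$, so that $B_{i,j}=\EE A_{i,j}$ and, by the hypothesis, $\EE\det A=\det B$. Then $\mathrm{Var}\det A=\EE\big[(\det A-\EE\det A)^2\big]=\EE\big[(\det A-\det B)^2\big]$, and it remains to bound $(\det A-\det B)^2$ pointwise in $A$ by $k\sum_{i=1}^k\|A_i-B_i\|_2^2$, where $A_i,B_i\in\RR^k$ denote the $i$-th rows, and then integrate.

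For the pointwise bound I would run the standard hybrid (telescoping) decomposition over rows. For $0\le i\le k$ let $C^{(i)}$ be the matrix whose rows are $A_1,\dots,A_i,B_{i+1},\dots,B_k$, so that $C^{(0)}=B$ and $C^{(k)}=A$. Since $C^{(i)}$ and $C^{(i-1)}$ differ only in row $i$, multilinearity of the determinant in its rows gives $\det A-\det B=\sum_{i=1}^k\det P_i$, where $P_i$ is obtained from $C^{(i)}$ by replacing its $i$-th row with $A_i-B_i$. Every row of $P_i$ other than the $i$-th is one of the $A_l$ or $B_l$; the hypothesis gives $\|A_l\|_2\le 1$, and $\|B_l\|_2=\|\EE A_l\|_2\le\EE\|A_l\|_2\le 1$ by convexity of the Euclidean norm. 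Hadamard's inequality therefore yields $|\det P_i|\le\|A_i-B_i\|_2$, hence $|\det A-\det B|\le\sum_{i=1}^k\|A_i-B_i\|_2$, and the Cauchy--Schwarz inequality $\big(\sum_{i=1}^k a_i\big)^2\le k\sum_{i=1}^k a_i^2$ upgrades this to the desired pointwise estimate.

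Taking expectations then gives
\[
\mathrm{Var}\det A\le k\sum_{i=1}^k\EE\|A_i-B_i\|_2^2=k\sum_{i,j=1}^k\EE\big[(A_{i,j}-\EE A_{i,j})^2\big]=k\sum_{i,j=1}^k\mathrm{Var}(A_{i,j}),
\]
as claimed. I do not expect a substantial obstacle: the content is elementary linear algebra together with Jensen's inequality. The only points that need a little care are that the telescoping changes exactly one row at a time (so that each increment $\det C^{(i)}-\det C^{(i-1)}$ is a single determinant to which Hadamard's bound with the factor $\|A_i-B_i\|_2$ applies), and the a priori bound $\|\EE A_i\|_2\le 1$ on the rows of $B$. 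It is worth noting that no independence of the rows of $A$ is used: the hypothesis $\EE\det A=\det B$ — which would follow from independence of the rows but is strictly weaker — enters only to identify $\EE\det A$ with $\det B$, and even that step could be bypassed using the elementary inequality $\mathrm{Var}(X)\le\EE[(X-c)^2]$, valid for every constant $c$.
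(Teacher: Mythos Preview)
Your argument is correct. The paper does not supply its own proof of this lemma; it is quoted verbatim as Lemma~8.7 of \cite{BLPS01}, and your telescoping-plus-Hadamard argument is essentially the standard proof given there.
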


\noindent\textbf{Proof of Claim \ref{cl89}.} 
Now we fix the direction of each $e_{w,b}$ to be from $w$ to $b$.
Note that
\begin{small}
\begin{align}
\label{ssa}&\mathbb{E}\det[\langle Q_{j,s,K}i_b,\chi_{e_{w,b}} \rangle]_{b\in B(F),w\in W(F)}
=\mathbb{E}\frac{(-1)^{\sum_{b\in B(F)}\sum_{w\in W(F)}|b|+|w|}\det\ol{D}^K_{\setminus\{B(F),W(F)\}}}{\det\ol{D}^K}\\
&=\mathbb{E}\mu[F\subset M|M\cap \ol{E}_K]=\mu[F\subset M]=\frac{(-1)^{\sum_{b\in B(F)}\sum_{w\in W(F)}|b|+|w|}\det\ol{D}_{\tilde{G}_{j,s}\setminus \{B(F),W(F)\}}}{\det\ol{D}_{\tilde{G}_{j,s}}}
\notag
\end{align}
\end{small}
where 
\begin{itemize}
\item $\ol{D}^K_{\setminus\{B(F),W(F)\}}$ is the submatrix of $\ol{D}^K$ by removing rows and columns indexed by $B(F)$ and $W(F)$, respectively.
\end{itemize}

Now we show that
\begin{align}
\frac{(-1)^{\sum_{b\in B(F)}\sum_{w\in W(F)}|b|+|w|}\det\ol{D}_{\tilde{G}_{j,s}\setminus \{B(F),W(F)\}}}{\det\ol{D}_{\tilde{G}_{j,s}}} =\det\mathbb{E}[\langle Q_{j,s,K}i_b,\chi_{e_{w,b}} \rangle]_{b\in B(F),w\in W(F)}\label{sss}
\end{align}
Since
\begin{align*}
&\mathbb{E}[\langle Q_{j,s,K}i_b,\chi_{e_{w,b}} \rangle]=\sum_{W':W'=W(M\cap\ol{E}_K )}\frac{(-1)^{|w|+|b|+\sum_{w'\in W'}\sum_{b'\in K}|w'|+|b'|}\det{\ol{D}}_{\tilde{G}_{j,s}\setminus[\{b,w\}\cup W'\cup K]}}{\det \ol{D}_{\tilde{G}_{j,s}}}\\
&=\frac{(-1)^{|w|}\det{\ol{D}}_{\tilde{G}_{j,s}\setminus[\{b,w\}\cup W'\cup K]}}{\det \ol{D}_{\tilde{G}_{j,s}}}
\end{align*}
\begin{itemize}
\item $W(M\cap \ol{E}_K)$ is the set of white vertices of edges in $M\cap \ol{E}_K$ (recall that the set of black vertices of edges in $M\cap \ol{E}_K$ is $K$).
\end{itemize}
We have (\ref{sss}) follows.

It follows from (\ref{ssa}), (\ref{sss}) that
\begin{align*}
\mathbb{E}\det[\langle Q_{j,s,K}i_b,\chi_{e_{w,b}} \rangle]_{b\in B(F),w\in W(F)}=\det\mathbb{E}[\langle Q_{j,s,K}i_b,\chi_{e_{w,b}} \rangle]_{b\in B(F),w\in W(F)}
\end{align*}

One may also interpret (\ref{ssa}) as follows. By Temperley's bijection, perfect matching configurations are in 1-1 correspondence with directed spanning trees (recall that the dual tree configuration is uniquely determined by the spanning tree configuration on the primal graph). The probabilistic interpretation of $\langle Q_{j,s,K}i_b,\chi_{e_w,b} \rangle$ is, conditional on the directed tree configurations on edges and dual edges corresponding to white vertices in $F$, the probability that the branch of the tree staring at $b$ passes through $e_w$ minus the probability that it passes through $-e_w$. Then the last identity of (\ref{ssa}) follows.

Then by Lemma \ref{l810}, we have
\begin{small}
\begin{align}
&\mathrm{Var}(\det[\langle Q_{j,s,K}i_b,\chi_{e_{w,b}} \rangle ]_{b\in B(F),w\in W(F)}\leq |F|\sum_{b\in B(F),w\in W(F)}\mathrm{Var}
\langle Q_{j,s,K}i_b,\chi_{e_{w,b}}\rangle\label{lss}\\
&=|F|\left[\sum_{b\in B(F)\cap V,w\in W(F)}\mathrm{Var}
\langle Q_{j,s,K}i_b,\chi_{e_w}\rangle+\sum_{b\in B(F)\cap V^+,w\in W(F)}\mathrm{Var}
\langle Q_{j,s,K}i_b,\chi_{e_w^+}\rangle\right]\notag
\end{align}
\end{small}

Recall that the graph $G_{j,s}^{\bullet}$ in Assumption \ref{ap311} has a unique identified boundary vertex $z_{j,s}$ with Dirichlet boundary condition; let $\bigstar_{G_{j,s}^{\bullet}}$ be the $\bigstar$ space of anti-symmetric functions on directed edges of $\bigstar_{G_{j,s}^{\bullet}}$.
Let 
\begin{align*}
K_1:=\{e_w:w\in K\};\qquad K_2:=\{e_w^+:w\in K\}.
\end{align*}

For each possible $S=\ol{E}_K\cap M$, let $L_S$ consist of all the shortest paths in $G_{j,s}^{\bullet}$ joining $z_{j,s}$ and a vertex in $S\cap V$ without using edges in $K_1$. 

Then
\begin{align}
\mathbb{E}Q_{\tilde{G}_{j,s}^K}i_b=\sum_{S\subset \ol{E}_K}\mathbb{P}[M\cap \ol{E}_K=S]P_{[\bigstar_{G_{j,s}^{\bullet}}+\langle \chi_{K_1}\rangle]\cap\langle \chi_{K_1}\rangle^{\perp}\cap\langle L_S\rangle^{\perp}}i_b\label{eqe}
\end{align}
where $\langle \chi_{K_1}\rangle$ is the subspace of antisymmetric functions generated by $\{\chi_{e}\}_{e\in K_1}$.

We shall need the following claims

\begin{claim}\label{cl612}\begin{enumerate}
\item 
\begin{align*}
\mathbb{E}Q_{\tilde{G}_{j,s}^K}=P_{K_1}^{\perp}P_{\bigstar_{G_{j,s}^{\bullet}}}P_{K_1}^{\perp}; \forall b\in B(F)\cap V,\  b\notin K.
\end{align*}
\item 
\begin{align*}
\mathbb{E}Q_{\tilde{G}_{j,s}^K}=P_{K_2}^{\perp}P_{\bigstar_{G_{j,s}^{+,\bullet}}}P_{K_2}^{\perp}; \forall b\in B(F)\cap V^+,\  b\notin K.
\end{align*}
\end{enumerate}
\end{claim}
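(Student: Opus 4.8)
The plan is to start from the decomposition (\ref{eqe}), which writes $\mathbb{E}Q_{\tilde{G}_{j,s}^K}i_b$ as a convex combination, over the finitely many realizations $S$ of $M\cap\ol{E}_K$, of orthogonal projections $P_{W_S}i_b$ with $W_S:=[\bigstar_{G_{j,s}^{\bullet}}+\langle\chi_{K_1}\rangle]\cap\langle\chi_{K_1}\rangle^{\perp}\cap\langle L_S\rangle^{\perp}$. I will then evaluate $\sum_S\mathbb{P}[M\cap\ol{E}_K=S]\,P_{W_S}i_b$ and show it equals $P_{K_1}^{\perp}P_{\bigstar_{G_{j,s}^{\bullet}}}P_{K_1}^{\perp}i_b$, where I abbreviate $P_{K_1}:=P_{\langle\chi_{K_1}\rangle}$; the stated operator identity follows by running the same computation on a general input in place of $i_b$. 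I will write out only part (1); part (2) is identical after replacing $G$, the primal spanning tree $T_{b_0}$, and $K_1$ by $G^+$, the dual co-tree $R_{b_0}^{+}$, and $K_2$.

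The argument will be an induction on $|K|$, revealing the white vertices of $K$ one at a time, with Lemma \ref{le19} (the contraction--deletion description of the conditional spanning tree) and the Temperley bijection of Lemma \ref{le33} as the combinatorial engine. Fix $K=\{w_1,\dots,w_m\}$ and let $f_t:=e_{w_t}\in E$ be the primal edge carried by $w_t$, so $K_1=\{f_1,\dots,f_m\}$. Under Temperley's bijection, specifying to which black vertex $w_t$ is matched in $M$ is the same as specifying either that $f_t\notin T_{b_0}$ (if $w_t$ is matched to a vertex of $V^+$), or that $f_t\in T_{b_0}$ with a prescribed orientation (if $w_t$ is matched to one of its endpoints in $V$). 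In the first case the construction of $\tilde{G}_{j,s}^K$ deletes $f_t$; in the second it deletes $f_t$ and identifies the tail of $f_t$ with the contracted boundary $z_{j,s}$, while the paths in $L_S$ record the deterministic branch segments thereby created. On the event $\{M\cap\ol{E}_K=S\}$, Lemma \ref{le87} applied to the modified finite network identifies $Q_{\tilde{G}_{j,s}^K}i_b$ with the projection of $i_b$ onto the star space of that modified graph, which is exactly the term appearing in (\ref{eqe}).

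The bookkeeping will rely on two elementary subspace identities: for an edge $e$ of a finite network that is not a cut edge, the star space after deleting $e$ equals $P_{\chi_e}^{\perp}\bigstar$ (inside $\langle\chi_e\rangle^{\perp}$), while the star space after contracting $e$ equals $\bigstar\cap\langle\chi_e\rangle^{\perp}$; cut edges are handled by passing to the component of the source. Revealing $w_1$ and averaging the three resulting projections against the conditional matching probabilities, which by the transfer current theorem are the normalized diagonal entries of $P_{\bigstar_{G_{j,s}^{\bullet}}}$, should collapse the $\chi_{f_1}$ direction together with the unique $L_S$-path meeting $f_1$ into the sandwich $P_{\langle\chi_{f_1}\rangle}^{\perp}P_{\bigstar_{G_{j,s}^{\bullet}}}P_{\langle\chi_{f_1}\rangle}^{\perp}i_b$, a ``tower property'' for conditional currents. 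Iterating for $w_2,\dots,w_m$, and using that the $P_{\chi_{f_t}}^{\perp}$ commute and that at each stage the relevant $L_S$-paths can be chosen to avoid the edges already revealed, yields $P_{K_1}^{\perp}P_{\bigstar_{G_{j,s}^{\bullet}}}P_{K_1}^{\perp}i_b$; the hypothesis $b\notin K$ guarantees that $i_b$ and all intermediate projections are defined and that the source is never a revealed vertex.

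I expect the base case of the induction to be the main obstacle: matching, at a single white vertex, the three-term conditional average against the subspace operations, and in particular checking that the deterministic branch segment encoded by $L_S$ together with the deleted direction $\chi_{f_1}$ recombine after averaging into precisely the two outer factors $P_{\langle\chi_{f_1}\rangle}^{\perp}$, leaving the middle factor $P_{\bigstar_{G_{j,s}^{\bullet}}}$ unchanged. Once this single-vertex identity is in place, the inductive step should follow routinely from the commutation of the $P_{\chi_{f_t}}^{\perp}$ and the disjointness of $K$ from the support of $F$ and from $\{b\}$.
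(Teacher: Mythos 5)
Your high-level intuition is the right one — the claim really is a tower property for conditional transfer currents — but the route you sketch is not the paper's, it is substantially harder, and it has genuine gaps that you flag yourself but do not close.

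The paper does not induct on $|K|$ at all. It passes to inner products: writing $e=(u,v)$ and $\zeta_T^e:=\sum_{e_i\in L_{u,v}(T)}\chi_{e_i}$ for the $T$-path from $u$ to $v$ in a random spanning tree $T$ of $G_{j,s}^{\bullet}$, it uses the classical identity $P_{\bigstar_{G_{j,s}^{\bullet}}}\chi_e=\mathbb{E}\,\zeta_T^e$ together with its conditional analogue $P_{W_S}\chi_e=P_{K_1}^{\perp}\,\mathbb{E}[\zeta_T^e\mid M\cap\ol{E}_K=S]$, where $M$ is the Temperley matching associated to $T$ and $W_S$ is the subspace appearing in (\ref{eqe}). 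The whole of $K$ is then handled in one stroke by the tower property of conditional expectation, after observing that $\langle P_{\bigstar}\chi_e,\chi_h\rangle=\langle P_{\bigstar}\chi_e,P_{K_1}^{\perp}\chi_h\rangle$ whenever $e,h\notin K_1$, and that the $e\in K_1$ or $h\in K_1$ cases make both sides vanish. The probabilistic representation is what makes the conditioning tractable; once you have it, the operator sandwich $P_{K_1}^{\perp}P_{\bigstar}P_{K_1}^{\perp}$ falls out immediately on test vectors $\chi_e,\chi_h$, with no need to track subspaces step by step.

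Two concrete problems with your plan. First, you correctly record the deletion identity $\bigstar_{G\setminus e}=P_{\chi_e}^{\perp}\bigstar$ and the contraction identity $\bigstar_{G/e}=\bigstar\cap\langle\chi_e\rangle^{\perp}$, but the operation used to build $\tilde{G}_{j,s}^{K}$ in case (a) is ``delete $e_w$, then identify the remaining black endpoint $b$ with $z_{j,s}$.'' Since $b$ and $z_{j,s}$ need not be adjacent, this is \emph{not} contraction of an existing edge, and your cited identity does not apply as stated; you would at minimum have to add an auxiliary edge $(b,z_{j,s})$, contract it, and argue that this changes nothing — none of which you do. Second, the step you call ``the main obstacle'' — that after averaging, the $L_S$-direction and the deleted $\chi_{f_1}$-direction recombine into exactly the two outer factors $P_{\langle\chi_{f_1}\rangle}^{\perp}$ — is the entire content of the claim at $|K|=1$, and you leave it as a conjecture. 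It is not a routine Gram–Schmidt computation: the convex combination $\sum_S p_S\,P_{W_S}$ is a priori just a positive contraction, not a sandwich of fixed projections, and the probabilistic interpretation is what certifies that the average does have that form. Finally, your invocation of Lemma \ref{le87} is a misattribution: that lemma proves finite energy of $i_b$ on the infinite graph; it says nothing about identifying $Q_{\tilde{G}_{j,s}^K}i_b$ with a star-space projection on a finite modified network.

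If you want to salvage the induction, the cleanest way is to first establish the one-vertex identity $P_{W_S}\chi_e=P_{K_1}^{\perp}\mathbb{E}[\zeta_T^e\mid M\cap\ol{E}_K=S]$ (for $|K|=1$) directly from the Temperley bijection and Lemma \ref{le19}, and then notice that the tower property already gives the general case without induction — which is exactly what the paper does.
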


\begin{claim}\label{cl613}For any $\xi\in l_{-}(E(G_{j,s}^{\bullet}))$, we have
\begin{align*}
\mathrm{Var}(Q_{j,s,K}\xi)
:=\mathbb{E}\|Q_{j,s,K}\xi-\mathbb{E}Q_{j,s,K}\xi\|_R^2=\|P_{K_1}P_{\bigstar_{G_{j,s}^{\bullet}}}P_{K_1}^{\perp}\xi\|_R^2.
\end{align*}
For any $\eta\in l_{-}(E(G_{j,s}^{+,\bullet}))$, we have
\begin{align*}
\mathrm{Var}(Q_{j,s,K}\eta)
:=\mathbb{E}\|Q_{j,s,K}\eta-\mathbb{E}Q_{j,s,K}\eta\|_R^2=\|P_{K_2}P_{\bigstar_{G_{j,s}^{+,\bullet}}}P_{K_2}^{\perp}\eta\|_R^2.
\end{align*}
\end{claim}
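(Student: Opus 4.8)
The plan is to derive Claim \ref{cl613} from Claim \ref{cl612} by a short computation with orthogonal projections. The structural fact I would start from is the one recorded in the construction leading to \eqref{eqe}: conditionally on the realization $M\cap\ol{E}_K=S$, the modified graph $\tilde{G}_{j,s}^{K}$ (resp.\ $\tilde{G}_{j,s,+}^{K}$) is deterministic, so on this event the random operator $Q_{j,s,K}$ coincides with the orthogonal projection $P_{\mathcal{H}_S}$ of the finite-dimensional space $l_-(E(G_{j,s}^{\bullet}))$ (resp.\ $l_-(E(G_{j,s}^{+,\bullet}))$) onto $\mathcal{H}_S=[\bigstar_{G_{j,s}^{\bullet}}+\langle\chi_{K_1}\rangle]\cap\langle\chi_{K_1}\rangle^{\perp}\cap\langle L_S\rangle^{\perp}$ (resp.\ the analogous subspace built from $K_2$). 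In particular every realization of $Q_{j,s,K}$ is a self-adjoint idempotent, and since $\ol{E}_K$ is finite the law of $M\cap\ol{E}_K$ has finite support, so all expectations below are finite sums.

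For the computation I would first note that any $l_-$-valued random variable $Z$ and any deterministic $\xi$ satisfy $\mathbb{E}\|Z-\mathbb{E}Z\|_R^2=\mathbb{E}\|Z\|_R^2-\|\mathbb{E}Z\|_R^2$, by expanding the square and using $\mathbb{E}\langle Z,\mathbb{E}Z\rangle_R=\|\mathbb{E}Z\|_R^2$. Take $Z=Q_{j,s,K}\xi$ with $\xi\in l_-(E(G_{j,s}^{\bullet}))$ (so $Q_{j,s,K}=Q_{\tilde{G}_{j,s}^{K}}$). Since each realization of $Q_{j,s,K}$ is a self-adjoint projection, $\|Q_{j,s,K}\xi\|_R^2=\langle Q_{j,s,K}\xi,\xi\rangle_R$ pointwise, and therefore, using $\mathbb{E}[Q_{j,s,K}\xi]=(\mathbb{E}Q_{j,s,K})\xi$,
\begin{align*}
\mathbb{E}\|Q_{j,s,K}\xi\|_R^2=\langle(\mathbb{E}Q_{j,s,K})\xi,\xi\rangle_R.
\end{align*}
Now I would substitute the operator identity of Claim \ref{cl612}(1), $\mathbb{E}Q_{j,s,K}=P_{K_1}^{\perp}P_{\bigstar_{G_{j,s}^{\bullet}}}P_{K_1}^{\perp}$, and set $v:=P_{\bigstar_{G_{j,s}^{\bullet}}}P_{K_1}^{\perp}\xi$. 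Using self-adjointness of $P_{K_1}^{\perp}$ and that $P_{\bigstar_{G_{j,s}^{\bullet}}}$ is a self-adjoint idempotent, one gets
\begin{align*}
\mathbb{E}\|Q_{j,s,K}\xi\|_R^2 &=\langle P_{K_1}^{\perp}v,\xi\rangle_R=\langle v,P_{K_1}^{\perp}\xi\rangle_R=\|v\|_R^2,\\
\|\mathbb{E}[Q_{j,s,K}\xi]\|_R^2 &=\|P_{K_1}^{\perp}v\|_R^2,
\end{align*}
so $\mathrm{Var}(Q_{j,s,K}\xi)=\|v\|_R^2-\|P_{K_1}^{\perp}v\|_R^2$. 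Finally, $v=P_{K_1}v+P_{K_1}^{\perp}v$ is an orthogonal decomposition, so by the Pythagorean theorem $\mathrm{Var}(Q_{j,s,K}\xi)=\|P_{K_1}v\|_R^2=\|P_{K_1}P_{\bigstar_{G_{j,s}^{\bullet}}}P_{K_1}^{\perp}\xi\|_R^2$, which is the first assertion; the second, for $\eta\in l_-(E(G_{j,s}^{+,\bullet}))$, is obtained verbatim with $G_{j,s}^{\bullet}$, $K_1$ and Claim \ref{cl612}(1) replaced by $G_{j,s}^{+,\bullet}$, $K_2$ and Claim \ref{cl612}(2).

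Modulo Claim \ref{cl612}, the argument above is purely formal, so the main obstacle is the input rather than the manipulation: one must know both that conditionally on $M\cap\ol{E}_K=S$ the operator $Q_{j,s,K}$ genuinely is the orthogonal projection onto the explicit subspace $\mathcal{H}_S$ (which is what makes $\|Q_{j,s,K}\xi\|_R^2=\langle Q_{j,s,K}\xi,\xi\rangle_R$ legitimate), and that averaging these projections over $S$ collapses to the fixed operator $P_{K_1}^{\perp}P_{\bigstar_{G_{j,s}^{\bullet}}}P_{K_1}^{\perp}$. Both facts rest on Temperley's bijection and on the description of the law of the spanning tree conditioned on $\ol{E}_K$ via deletion of the unused edges of $K_1$ together with contraction of the paths of $L_S$ attached to the used ones; once they are granted, Claim \ref{cl613} holds as stated and is exactly the estimate fed into \eqref{lss} to finish the proof of Claim \ref{cl89}.
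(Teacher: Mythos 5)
Your argument is correct and reproduces precisely the projection-variance computation of Lemma 8.6 in \cite{BLPS01}, to which the paper defers without details: you use that each conditional realization of $Q_{j,s,K}$ is a self-adjoint idempotent (so $\|Q_{j,s,K}\xi\|_R^2=\langle Q_{j,s,K}\xi,\xi\rangle_R$ pointwise), substitute the operator identity $\mathbb{E}Q_{j,s,K}=P_{K_1}^{\perp}P_{\bigstar_{G_{j,s}^{\bullet}}}P_{K_1}^{\perp}$ from Claim~\ref{cl612}, and finish with the Pythagorean decomposition $v=P_{K_1}v+P_{K_1}^{\perp}v$. This is the same route as the cited source, and your identification of the two needed inputs (that $Q_{j,s,K}$ is a.s.\ an orthogonal projection onto the explicit subspace $\mathcal{H}_S$, and the averaging identity from Claim~\ref{cl612}) is exactly where the work lies.
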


Claims \ref{cl612} and \ref{cl613} will be proved later.

We infer that
\begin{small}
\begin{align}
&\sum_{w\in W(F)}\mathrm{Var}\langle Q_{j,s,K}i_b,\chi_{e_w}\rangle
\leq\mathbb{E}\|Q_{j,s,K}i_b-\mathbb{E}Q_{j,s,K}i_b\|^2_R\leq 
\|P_{K_1}P_{\bigstar_{G_{j,s}^{\bullet}}}P_{K_1}^{\perp}i_b\|_R^2\label{qq1}\\
&\notag=\|P_{K_1}P_{\bigstar_{G_{j,s}^{\bullet}}}(1-P_{K_1})i_b\|_R^2
\leq 2\|P_{K_1}i_b\|^2
\end{align}
\end{small}

Similarly we can show that if $b\in V^+$
\begin{align}\label{qq2}
\sum_{w\in W(F)}\mathrm{Var}\langle Q_{j,s,K}i_b,\chi_{e_w}\rangle
\leq 2
\|P_{K_2}i_b\|_R^2.
\end{align}
Then the Claim \ref{cl89} follows from (\ref{qq1}), (\ref{qq2}) and (\ref{lss}).
$\hfill\Box$

\bigskip

\noindent\textbf{Proof of Claim \ref{cl612}.}\ 
We only prove part (1) of Claim \ref{cl612}, part (2) can be proved using the same arguments.

Let $T$ be a random spanning tree of $G_{j,s}^{\bullet}$ and let $e=(u,v)\in \vec{E}(G_{j,s}^{\bullet})$. Let $L_{u,v}$ be the oriented path in $T$ starting from $u$ and ending in $v$.
Let
\begin{align*}
\zeta_{T}^e=\sum_{e_i\in L_{u,v}}\chi_{e_i}
\end{align*}
Then for any $f\in \vec{E}(G_{j,s}^{\bullet})$
\begin{align}\label{iez}
&P_{\bigstar_{G_{j,s}^{\bullet}}}\chi_e(f)=
\mathbb{E}\zeta_T^{e}(f)
=\left[-\frac{G_{G_{j,s}^{
\bullet
},N}(u,\ol{f})}{\left|\sum_{x\in V(G_{j,s}^{\bullet}),x\sim\ol{f}}\Delta(\ol{f},x)\right|}
+\frac{G_{G_{j,s}^{\bullet},N}(u,\underline{f})}{\left|\sum_{x\in V(G_{j,s}^{\bullet}),x\sim\underline{f}}\Delta(\underline{f},x)\right|}\right.\\&\left.+\frac{G_{G_{j,s}^{
\bullet
},N}(v,\ol{f})}{\left|\sum_{x\in V(G_{j,s}^{\bullet}),x\sim\ol{f}}\Delta(\ol{f},x)\right|}
-\frac{G_{G_{j,s}^{\bullet},N}(v,\underline{f})}{\left|\sum_{x\in V(G_{j,s}^{\bullet}),x\sim\underline{f}}\Delta(\underline{f},x)\right|}\right]\notag
\end{align}
where $G_{G_{j,s}^{
\bullet
},N}$ is the Green's function for the random walk on $G_{j,s}^{
\bullet
}$ absorbed at $z_{j,s}$.

Let $M$ be the random perfect matching obtained from $T$ via the Temperley bijection.
More precisely, let $T^+$ be the dual spanning tree on $G_{j,s}^{+,\bullet}$ such that a dual edge $e^+\in T^+$ if and only if its corresponding primal edge $e\notin T$. For each $e\in T$ (resp.\ $e^+\in T^+$), let $L_e$ ($L_{e^+}$)  be the unique branch of $T$ (resp.~$T^+$) starting from $e$ (resp.~$T^+$) and ending in  $z_{j,s}$ (resp.~$z_{j,s}^+$), orient $e=(u,v)$ (resp.~$e^+=(x,y)$) from $u$ (resp.~$x$) to $v$ (resp.~$y$) such that $u$ (resp.~$x$) is the first vertex along $L_{e}$ ($L_{e^+}$).
Then an edge $s=(b,w)$ of $\ol{G}$ is present in the perfect matching $M$ if and only if one of the following two conditions holds
\begin{itemize}
\item $s$ is a half edge of a directed edge $e\in T$ such that $e$ starts at $b$; or 
\item $s$ is a half edge of a directed edge $e^+\in T^+$ such that $e^+$ starts at $b$.
\end{itemize}

Similarly to (\ref{iez}) we have if $b\in V$ and $e\notin K_1$,
\begin{align*}
P_{[\bigstar_{G_{j,s}^{\bullet}}+\langle \chi_{K_1}\rangle]\cap\langle \chi_{K_1}\rangle^{\perp}\cap\langle L_S\rangle^{\perp}}\chi_e
=P_{K_1}^{\perp}\mathbb{E}[\zeta_T^e|M\cap \ol{E}_K=S]
\end{align*}

To prove Claim \ref{cl612}(1), it suffices to show that for any $e,h\in \vec{E}(G_{j,s}^{\bullet})$,
\begin{align}
\langle\mathbb{E}Q_{\tilde{G}_{j,s}^K}\chi_e,\chi_h\rangle=\langle P_{K_1}^{\perp}P_{\bigstar_{G_{j,s}^{\bullet}}}P_{K_1}^{\perp}\chi_e, \chi_h\rangle.\label{p6131}
\end{align}
If $h\in K_1$ or $e\in K_1$, by (\ref{eqe}) we see both the left hand side and the right hand side of (\ref{p6131}) are 0. Assume $h\notin K_1$ and $e\notin K_1$; then to prove (\ref{p6131}) it suffices to show that
\begin{align}
\langle\mathbb{E}Q_{\tilde{G}_{j,s}^K}\chi_e,\chi_h\rangle=\langle P_{\bigstar_{G_{j,s}^{\bullet}}}\chi_e, \chi_h\rangle\label{pp6131}
\end{align}
Note that
\begin{align*}
\langle P_{\bigstar_{G_{j,s}^{\bullet}}}\chi_e, \chi_h\rangle&=\langle \mathbb{E}\zeta_{T}^e,\chi_h \rangle
=\langle \mathbb{E}\zeta_{T}^e,P_{K_1}^{\perp}\chi_h \rangle
=\langle P_{K_1}^{\perp}\mathbb{E}\zeta_{T}^e,\chi_h \rangle\\
&=\sum_{S\subset \ol{E}_K}\mathbb{P}(M\cap \ol{E}_K=S)\langle P_{K_1}^{\perp}\mathbb{E}[\zeta_T^e|M\cap \ol{E}_K=S],\chi_h \rangle\\
&=\sum_{S\subset \ol{E}_K}\mathbb{P}(M\cap \ol{E}_K=S)\langle P_{[\bigstar_{G_{j,s}^{\bullet}}+\langle \chi_{K_1}\rangle]\cap\langle \chi_{K_1}\rangle^{\perp}\cap\langle L_S\rangle^{\perp}}\chi_e,\chi_h \rangle\\
&=\langle\mathbb{E}Q_{\tilde{G}_{j,s}^K}\chi_e,\chi_h\rangle.
\end{align*}
Then Claim \ref{cl612}(1) follows.
$\hfill\Box$

\bigskip
\noindent\textbf{Proof of Claim \ref{cl613}.} Same arguments as the proof of Lemma 8.6 in \cite{BLPS01}.
$\hfill\Box$

\begin{remark}\label{rk7}Note that although the infinite-volume Gibbs measure obtained in Theorem \ref{le67} is extremal, it is in general not automorphism-invariant with respect to the whole automorphism group of $\ol{G}$. For example let $G$ be the degree-7 transitive triangulation of the hyperbolic plane. Let $b_1$ be a vertex of $G$ and let $b_2$ be a vertex of $G^+$. Let $\{e_i\}_{i\in\{1,2,\ldots,7\}}$ be all the edges incident to $b_1$ and let 
$\{f_j\}_{j\in\{1,2,3\}}$ be all the edges incident to $b_2$. If $\eta$ is an automorphism-invariant measure for perfect matchings on $\ol{G}$ with respect to the whole automorphism group, then
\begin{align*}
&\eta(e_i\in M)=\frac{1}{7};\qquad \forall i\in\{1,2,\ldots,7\}
\\
&\eta(f_j\in M)=\frac{1}{3};\qquad \forall j\in\{1,2,3\}
\end{align*}
then at each white vertex, the sum of probabilities of the 4 incident edges present in a perfect matching is $\frac{1}{3}+\frac{1}{3}+\frac{1}{7}+\frac{1}{7}<1$, which is not possible. However, from Theorem \ref{le67}, one sees that this infinite-volume Gibbs measure is invariant with respect to automorphisms of $G$ that preserving $\sA_0$. But the automorphisms of $G$ that preserving $\sA_0$ do not have a finite orbit.
\end{remark}

\subsection{Temperley boundary conditions}

\begin{assumption}\label{ap615}Suppose the edge weights satisfy (\ref{wtw}). Let $G_j$ be a finite subgraph of $G$ as in section \ref{sect42}. Let $b_{0,j}$ be the removed vertex to obtain $\mathcal{G}_j$ from $\ol{G}_j$. For $b\in V(G_j)$, add an extra edge $e_{bb_{0,j}}$ with end points $b$ and $b_{0,j}$ in $G_j$ and let $G_{j,e_{bb_{0,j}}}$ be the new graph $G_j\cup \{e_{bb_{0,j}}\}$. Define a flow on the new graph by 
\begin{align*}
\theta_j:=P_{\bigstar_{G_{j,e_{bb_{0,j}}}}}\chi_{(b,b_{0,j})}
\end{align*}
where $(b,b_{0,j})$ is the directed edge from $b$ to $b_{0,j}$, and $\bigstar_{G_{j,e_{bb_{0,j}}}}$ is the $\bigstar$ subspace of the graph $G_{j,e_{bb_{0,j}}}$. Assume $\lim_{j\rightarrow\infty}b_{0,j}=b_0$.
Assume as $j\rightarrow\infty$, $\theta_j$ converges edgewise to a flow $\theta$ such that
\begin{align*}
\langle \theta, \chi(b,b_{0}) \rangle\neq 1 
\end{align*}
\end{assumption}

\begin{lemma}Suppose Assumption \ref{ap615} holds. Then 
\begin{align}
\frac{P_{(b,b_{0})}^{\perp}\theta}{1-\langle \theta, \chi(b,b_{0}) \rangle} \label{ucf}
\end{align}
is a finite energy unit current flow from $b$ to $b_0$ in $G$.
\end{lemma}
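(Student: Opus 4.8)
The plan is to take the flow $\theta_j = P_{\bigstar_{G_{j,e_{bb_{0,j}}}}}\chi_{(b,b_{0,j})}$ on the augmented finite graph $G_{j,e_{bb_{0,j}}}$, renormalize it by deleting its component along the extra edge $e_{bb_{0,j}}$, and identify the limit. First I would observe that $\theta_j$, being the orthogonal projection of $\chi_{(b,b_{0,j})}$ onto $\bigstar_{G_{j,e_{bb_{0,j}}}}$, has $\mathrm{div}\,\theta_j = \mathrm{div}\,\chi_{(b,b_{0,j})}$, i.e.\ it is a unit flow from $b$ to $b_{0,j}$ in the augmented graph that also happens to be a current flow (it lies in $\bigstar$, so it satisfies the cycle law for the weighted resistances). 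Writing $\alpha_j := \langle \theta_j, \chi(b,b_{0,j})\rangle$ for the amount of flow carried by the extra edge, the complementary flow $P_{(b,b_{0,j})}^{\perp}\theta_j = \theta_j - \alpha_j \chi(b,b_{0,j})$ is supported on the original edges of $G_j$ and carries $1-\alpha_j$ units from $b$ to $b_{0,j}$; dividing by $1-\alpha_j$ (which is positive since the extra edge cannot carry all the current unless $b = b_{0,j}$) gives a unit flow from $b$ to $b_{0,j}$ supported on $G_j$. The point of the projection step is precisely that restricting $\theta_j$ to the original edges and renormalizing produces the unit \emph{current} flow in $G_j$ with $b_{0,j}$ wired: this is the standard fact that deleting an edge and rescaling a current flow gives the current flow of the smaller network (it still obeys the cycle law on every cycle of $G_j$, and has the right divergence).

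Next I would pass to the limit. By Assumption \ref{ap615}, $\theta_j \to \theta$ edgewise and $\alpha_j = \langle\theta_j,\chi(b,b_{0,j})\rangle \to \langle\theta,\chi(b,b_0)\rangle \ne 1$, so the renormalized flows converge edgewise to $\frac{P_{(b,b_0)}^{\perp}\theta}{1-\langle\theta,\chi(b,b_0)\rangle}$. This limit flow is a unit flow from $b$ to $\infty$ in $G$: its divergence at $b$ is $1$, at every other vertex it is $0$ (the divergence passes to the limit since each vertex has finitely many incident edges), and it is supported on the edges of $G$ rather than on the contracted vertex $b_{0,j}$, which in the limit escapes to the boundary point $b_0$. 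To see it is a \emph{current} flow (i.e.\ lies in $\bigstar$ for $G$, equivalently satisfies the cycle law), note that each finite renormalized flow obeys the cycle law around every finite cycle of $G_j$, and edgewise convergence preserves the cycle law on any fixed finite cycle; since $G$ is locally finite and cycles are finite, the limit obeys the cycle law around every cycle of $G$, hence is a current flow.

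Finally, for the finite-energy claim: the honest route is to invoke Thomson's principle (Lemma \ref{ls85}) at the finite level — among all flows from $b$ to $b_{0,j}$ on $G_j$, the current flow minimizes energy — together with an a priori finite-energy competitor. Since $G$ is transient and $3$-connected with bounded degree circle-packed in $\HH^2$, by Lemma \ref{ls83} there is a finite-energy unit flow $q_b$ from $b$ to $\infty$ in $G$; projecting $q_b$ onto the appropriate $\bigstar$ subspace of $G_j$ (with $b_{0,j}$ wired) only decreases energy, and the resulting flows converge to the current flow $\frac{P_{(b,b_0)}^{\perp}\theta}{1-\langle\theta,\chi(b,b_0)\rangle}$ by uniqueness of minimal-energy unit flow (cf.\ Proposition \ref{pp17} and Lemma \ref{ls84}); by Fatou / lower semicontinuity of energy under edgewise limits, the limit flow has energy at most $\mathcal{E}(q_b) < \infty$. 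I expect the main obstacle to be pinning down that the renormalized limit really coincides with the wired-current-flow limit rather than merely being \emph{some} flow with the right divergence — i.e.\ matching the two descriptions (``renormalize the augmented-graph projection'' versus ``project onto the wired $\bigstar$ space'') carefully enough to transfer the energy bound and the current-flow property across the limit; this is a bookkeeping issue about how the extra edge $e_{bb_{0,j}}$ interacts with the $\bigstar$ decomposition, handled by the observation that projecting out one edge of a current flow and rescaling yields the current flow of the edge-deleted network.
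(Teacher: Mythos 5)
Your description of the flow structure — that $\theta_j$ is the unit current flow from $b$ to $b_{0,j}$ in the augmented graph, that deleting the extra-edge component and rescaling gives the unit current flow from $b$ to $b_{0,j}$ in $G_j$, that divergence and the cycle law pass to the edgewise limit — matches the paper. But your finite-energy argument takes a genuinely different route, and that route has a gap.

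The gap is in the competitor. You invoke Lemma~\ref{ls83} to get a finite-energy unit flow $q_b$ from $b$ to $\infty$ in $G$, then say ``projecting $q_b$ onto the appropriate $\bigstar$ subspace of $G_j$ (with $b_{0,j}$ wired) only decreases energy, and the resulting flows converge to $\frac{P_{(b,b_0)}^\perp\theta}{1-\langle\theta,\chi(b,b_0)\rangle}$ by uniqueness of minimal-energy unit flow (cf.\ Proposition~\ref{pp17} and Lemma~\ref{ls84}).'' This conflates two different objects. The current flow $\theta_j/(1-\alpha_j)$ restricted to $G_j$ is the unit current flow from $b$ to $b_{0,j}$ in the \emph{free} finite graph $G_j$, where $b_{0,j}$ is one specific boundary vertex. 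Proposition~\ref{pp17} and the minimal-energy characterization concern the \emph{wired} current flow from $b$ to the single contracted exterior vertex, which is a different network and produces a different flow with a different limit. To make Thomson's principle give a bound for the actual flow at hand you would need a unit flow from $b$ to $b_{0,j}$ in $G_j$ with uniformly (in $j$) bounded energy, and restricting $q_b$ to $G_j$ does not produce such a thing: the restriction has nonzero divergence distributed along all of $\partial G_j$, and rerouting that boundary flow into the single vertex $b_{0,j}$ can blow up the energy. So ``by uniqueness of minimal-energy unit flow'' is not available, and your last paragraph does not close.

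The paper's finite-energy argument is much shorter and does not use transience or Thomson's principle at all: since $\theta_j = P_{\bigstar_{G_{j,e_{bb_{0,j}}}}}\chi_{(b,b_{0,j})}$ is an orthogonal projection, $\|\theta_j\|_R^2 \le \|\chi_{(b,b_{0,j})}\|_R^2 = R(e_{bb_{0,j}})$ ($=1$ with unit resistance on the added edge), a bound uniform in $j$. Edgewise convergence plus lower semicontinuity of energy then gives $\|\theta\|_R^2 \le 1$, hence $\|P_{(b,b_0)}^\perp\theta\|_R^2 \le 1$, and dividing by $1-\langle\theta,\chi(b,b_0)\rangle$ (nonzero by Assumption~\ref{ap615}) yields $\mathcal{E} \le [1-\langle\theta,\chi(b,b_0)\rangle]^{-2}$. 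Notice this is precisely the energy bound stated in the lemma, and it is a direct consequence of the fact that the flow you are analyzing was itself \emph{defined} as an orthogonal projection — the contraction property of projections is the whole story, and no external competitor is needed. You correctly flagged the matching of the two descriptions as ``the main obstacle''; the resolution is that there is no matching to be done.
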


\begin{proof}Note that $\chi_{(b,b_{0,j})}$ is a unit flow from $b$ to $b_{0,j}$ on $G_{j,e_{bb_{0,j}}}$. Since the orthogonal projection does not change the divergence, we infer that $\theta_j$ is a unit current flow from $b$ to $b_{0,j}$ on $G_{j,e_{bb_{0,j}}}$. The edgewise limit $\theta$ is a unit current flow from $b$ to $b_0$ in $G_{e_{bb_{0}}}$, where $G_{e_{bb_{0}}}$ is the graph obtained from $G$ by adding an extra edge joining $b$ and $b_0$. $P_{(b,b_{0})}^{\perp}\theta$ is a current flow on $G$ with divergence $1-\langle \theta, \chi(b,b_{0})\rangle$ at $b$ and divergence $-1+\langle \theta, \chi(b,b_{0})\rangle$ at $b_0$. When $1-\langle \theta, \chi(b,b_{0})\rangle\neq 0$, (\ref{ucf}) is a unit current flow on $G$ with Dirichlet energy bounded above by $\frac{1}{[1-\langle \theta, \chi(b,b_{0})\rangle]^2}$. Then the lemma follows.
\end{proof}

\begin{lemma}\label{lle615}Let $G=(V,E)$ be a 3-connected, transient, simple proper planar graph with bounded vertex degree and locally finite dual. Suppose Assumptions \ref{ap24} and \ref{ap66} hold. Assume the edge weights satisfy (\ref{wtw}). Consider the infinite volume Gibbs measure obtained in Theorem \ref{l422}.  For each black vertex $b$ of $\ol{G}\cap V^+$, 
\begin{align*}
i_b(e):=\lim_{j\rightarrow\infty}\zeta_e\ol{D}_{\ol{G}_{j}}^{-1}(b,w_e);\ \forall (b,e)\in  (V^+,\vec{E});
\end{align*}
is a finite energy unit flow from $b$ to $\infty$
on $G$, where $w_e$ is the white vertex corresponding to the edge $e$, and $\zeta_e$ is given as in (\ref{dze}).
\end{lemma}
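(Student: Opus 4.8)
## Proof Plan for Lemma \ref{lle615}

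The plan is to mimic the structure of the proof of Lemma \ref{le87}, transferring the roles of $G$ and $G^+$. By Theorem \ref{l422}(1), for $b \in V^+$ the limit $\lim_{j\to\infty}\ol{D}_{\ol{G}_j}^{-1}(b,w_e)$ exists and equals $\frac{1}{\xi}\left(\frac{G_{G^+}(b,b_1)}{|\Delta(b_1,b_1)|} - \frac{G_{G^+}(b,b_2)}{|\Delta(b_2,b_2)|}\right)\sqrt{\frac{\nu(b_1b_2)}{\nu([b_1b_2]^+)}}$, where $(b_1,b_2)$ is the dual edge of $e$; under the weight normalization (\ref{wtw}) this is just $\frac{1}{\xi}\left(G_{G^+}(b,b_1) - G_{G^+}(b,b_2)\right)$. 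So the function $i_b$ is well-defined, and the content of the lemma is that, as an antisymmetric function on $\vec E$ (note: indexed by directed edges of $G$, even though $b \in V^+$), it is a unit flow from $b$ to $\infty$ with finite energy. First I would identify $i_b$ with the object $J_{e,\emptyset}$-type current already analyzed in the proof of Theorem \ref{l422}: there it is shown that the antisymmetric function $s \mapsto K_{\underline e}(s) - K_{\overline e}(s)$ built from Green's function differences equals $I_{e,\emptyset} = P_{\diamondsuit^\perp}\chi_e$, via the divergence computation $\mathrm{div}\,K_v(w) = \delta_v(w)/|\Delta(v,v)|$ together with the membership argument $J_e \in \bigstar$. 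The analogous computation here (with $G^+$ in place of $G$, so the random walk of Definition \ref{df12} on $G^+$) identifies $i_b$, after the appropriate change of indexing from $\vec E^+$ to $\vec E$ via the orthogonality of dual edges, as a finite-energy current flow on $G^+$ from $b$ to $\infty$.

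The key steps, in order, would be: (i) invoke Theorem \ref{l422}(1) to get the explicit closed form of $i_b$; (ii) verify it is a flow from $b$ to $\infty$ by computing its divergence — since $\mathrm{div}\,K_v(w) = \delta_v(w)/|\Delta(v,v)|$, the difference $K_{b_1} - K_{b_2}$ has divergence supported on $\{b_1,b_2\}$, and after accounting for the geometry of the double circle packing (each directed edge $e$ of $G$ corresponds to a directed dual edge rotated by $\pi/2$, so $\chi_e$ on $\vec E$ corresponds to $\pm\chi_{e^+}$ on $\vec E^+$) the divergence collapses to a single unit source at $b$; (iii) to see finiteness of energy, use transience of $G^+$ (equivalently, $G$ is CP hyperbolic, hence transient, and so is $G^+$ under the same hypotheses) plus Lemma \ref{ls83}: there exists a finite-energy unit flow from $b$ to $\infty$ on $G^+$, and the minimal-energy such flow is $I_{b,W}$ or a limit of wired currents $P_{\bigstar_{G_{n,W}}}\chi$, which by Proposition \ref{pp16}/\ref{pp17} converges; identifying $i_b$ with the free current $P_{\diamondsuit^\perp}\chi$ (the "limit current") from the proof of Theorem \ref{l422}, which has energy $\le$ that of any competing flow with the same divergence; (iv) conclude finite energy, unit strength, flow to $\infty$.

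The main obstacle I expect is bookkeeping the correspondence between the primal edge indexing $\vec E$ appearing in the statement of $i_b$ and the dual edge indexing $\vec E^+$ natural to Green's functions on $G^+$: the lemma asserts $i_b$ is a flow "on $G$" with $(b,e) \in (V^+, \vec E)$, which is only meaningful because in the double circle packing each edge of $G$ and its dual share a tangency point and are perpendicular, so the constant $\frac{1}{\xi}$ (with $\xi$ the unit vector along the primal edge $e$, while the Green's function difference naturally lives along the dual direction) encodes exactly the $90^\circ$ rotation converting a dual current into a primal antisymmetric function. One must check this rotation is an isometry of the relevant $\ell^2_-$ spaces (which it is, under (\ref{wtw}), since $R(e) = \nu(e^+)/\nu(e) = 1 = R(e^+)$), so that finite energy on the $G^+$-side transfers to finite energy as stated. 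A secondary point is confirming the hypotheses of Theorem \ref{l422} are in force — in particular that $b_0 = \lim b_{0,j}$ avoids the exceptional null families — but this is assumed by the ambient setup (Assumptions \ref{ap24}, \ref{ap66}) and the already-established Theorem \ref{l422}.
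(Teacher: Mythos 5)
Your overall plan is sound and is close to what the paper intends (the paper's proof is the single sentence ``Same arguments as in the proof of Lemma~\ref{le87}''): identify $i_b$ with a unit current flow from $b$ to $\infty$, then invoke transience of $G^+$ together with the minimality of the current flow (Thomson's principle / Lemma~\ref{ls83}, Lemma~\ref{ls86}, Proposition~\ref{pp17}) to get finite energy. Your observation that $G^+$ is itself transient under the standing hypotheses, and that this is the right analogue of the transience of $G\setminus U_{\sA_1}$ used in Lemma~\ref{le87}, is correct and is the key simplification.

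There is, however, a real confusion in step (iii) that would derail the write-up if committed to. You identify $i_b$ with ``the free current $P_{\diamondsuit^\perp}\chi$ (the `limit current')'' and say it ``equals $I_{e,\emptyset}$,'' while in the same breath attributing to it the minimal-energy property. This is inconsistent: the free current $I_{e,\emptyset}=P_{\diamondsuit^\perp}\chi_e$ is \emph{not} the minimizer, the wired current $I_{e,W}=P_{\bigstar}\chi_e$ is. You actually have the identification backwards relative to the proof of Theorem~\ref{l422}: there the Green's-function difference $J_e=K_{\underline e}-K_{\ol e}$ built from the \emph{infinite-graph} Green's function is shown to equal $I_{e,\partial\HH^2}=P_{\bigstar}\chi_e$, i.e.\ the \emph{wired} current, while the \emph{free} current $I_{e,\emptyset}$ arises only as the limit of the \emph{Neumann} Green's function quantities $J_{e,N,j}$ on $G_j$ (the case $b\in V$). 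For $b\in V^+$, $\ol{D}_{\ol G_j}^{-1}(b,\cdot)$ involves the Dirichlet Green's function on $G_j^+$, whose limit is the infinite-graph Green's function on $G^+$ (Lemma~\ref{l17}), so $i_b$ is the \emph{wired} current on $G^+$, and it is precisely Proposition~\ref{pp17} and Lemma~\ref{ls86}, not Proposition~\ref{pp16}, that apply. Once you replace ``free'' by ``wired'' throughout step (iii), the minimality argument you state is exactly right, and combined with Lemma~\ref{ls83} (transience $\Rightarrow$ some finite-energy unit flow exists) it gives the conclusion.

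A secondary remark: your worry about a $90^\circ$ rotation from $\vec E$ to $\vec E^+$ is unnecessary. In Theorem~\ref{l422}(1) and Lemma~\ref{l18}(1), the vector $\xi$ is already the unit vector along the \emph{dual} edge $(b_2,b_1)\in E^+$, so reading $e$ in the statement as a directed dual edge with $\underline e=b_2$, $\ol e=b_1$ gives $\zeta_e=\xi$ directly, and $i_b$ is naturally an antisymmetric function on $\vec E^+$; no isometry argument is needed (the ``on $G$'' in the lemma statement is evidently a typo for ``on $G^+$,'' mirroring Lemma~\ref{le87} where $b\in V$ and the flow is on $G$).
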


\begin{proof}Same arguments as in the proof of Lemma \ref{le87}.
\end{proof}

\begin{proposition}\label{lle614}Let $G=(V,E)$ be a 3-connected, transient, simple proper planar graph with bounded vertex degree and locally finite dual. Suppose Assumptions \ref{ap24},  \ref{ap615} hold. Let $\mu$ be the probability measure for dimer coverings on $\ol{G}$ obtained as in Lemma \ref{l422}. Assume the edge weights satisfy
(\ref{wtw}).
Then the measure $\mu$ is tail-trivial. 
\end{proposition}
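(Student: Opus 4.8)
The plan is to mirror the proof of Proposition \ref{lle83}, replacing the role of the two-convex-corner exhaustion by the Temperley exhaustion $\{\mathcal{G}_j\}$, and using the unit flows $i_b$ supplied by Lemma \ref{lle615} (for $b\in V^+$) and by Assumption \ref{ap615} together with the preceding lemma (for $b\in V$). Concretely, fix a nonempty finite edge set $F$ of $\ol{G}$ and a nonempty finite set $K$ of white vertices whose incident-edge sets are disjoint from those of $F$; let $\ol{E}_K$ be the edges of $\ol{G}$ incident to $K$. For $A_1\in\mathcal{F}(F)$ and $A_2\in\mathcal{F}(\ol{E}_K)$ the goal is the decoupling inequality
\begin{align}
|\mu(A_1\cap A_2)-\mu(A_1)\mu(A_2)|\leq 4^{2|F|}|F|\left[\sum_{b\in B(F)\cap V}\|P_{\langle\chi_{K_1}\rangle}i_b\|_R^2+\sum_{b\in B(F)\cap V^+}\|P_{\langle\chi_{K_2}\rangle}i_b\|_R^2\right],\label{tpcovd}
\end{align}
after which tail-triviality is immediate: for $K$ a set of white vertices receding to infinity relative to the edges of $F$, each projection $\|P_{\langle\chi_{K_1}\rangle}i_b\|_R\to 0$ because $i_b$ has finite energy (Lemma \ref{lle615} and the unit-current-flow lemma) and $\chi_{e_w}\to 0$ weakly as $w\to\infty$; then \eqref{tpcovd} and the characterization \eqref{dctt} give triviality of the tail $\sigma$-field, hence by Lemma \ref{l74}(1) extremality.

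The structure of the argument is identical to the proof of Proposition \ref{lle83}: first, using Corollary \ref{c18}(2), write $\mu[F\subset M\mid M\cap\ol{E}_K]=|\det[\ol{D}^K]^{-1}_F|$ where $\ol{D}^K$ is obtained from $\ol{D}_{\mathcal{G}_j}$ (or its exhaustion) by deleting the matched vertices in $M\cap\ol{E}_K$; then express each entry $[\ol{D}^K]^{-1}(b,w)$ as an inner product $\frac{1}{\zeta_{e_w}}\langle Q_{j,K}i_b,\chi_{e_w}\rangle$ against the appropriate projected flow, using Lemma \ref{l18} and the definitions in Definition \ref{df17}. Second, bound $\mathrm{Var}(\mu[F\subset M\mid M\cap\ol{E}_K])$ by $\mathrm{Var}(\det[\langle Q_{j,K}i_b,\chi_{e_{w,b}}\rangle])$ and apply Lemma \ref{l810}: the hypothesis $\mathbb{E}\det A=\det\mathbb{E}A$ is verified exactly as in \eqref{ssa}–\eqref{sss} via Temperley's bijection (the branch-of-the-tree interpretation of $\langle Q_{j,K}i_b,\chi_{e_w}\rangle$ is unchanged, with the spanning tree on $G_j$ and its dual on $G_j^+$), giving $\mathrm{Var}\det A\leq|F|\sum\mathrm{Var}\langle Q_{j,K}i_b,\chi_{e_{w,b}}\rangle$. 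Third, invoke the analogues of Claims \ref{cl612} and \ref{cl613} — namely $\mathbb{E}Q^K_{j}=P_{K_1}^\perp P_{\bigstar}P_{K_1}^\perp$ and $\mathrm{Var}(Q_{j,K}\xi)=\|P_{K_1}P_{\bigstar}P_{K_1}^\perp\xi\|_R^2$ — where now $\bigstar$ is taken on the graph $G_{j,e_{bb_{0,j}}}$ (with the extra edge to $b_{0,j}$) rather than on $G_{j,s}^\bullet$; the proofs of these two identities are the same as the proofs of Claims \ref{cl612} and \ref{cl613}, resting on the Temperley-bijection expression \eqref{iez} for $P_{\bigstar}\chi_e$ in terms of Neumann Green's functions and on Lemma 8.6 of \cite{BLPS01}. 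Chaining these, $\sum_{w\in W(F)}\mathrm{Var}\langle Q_{j,K}i_b,\chi_{e_w}\rangle\leq 2\|P_{K_1}i_b\|_R^2$ (resp.\ $2\|P_{K_2}i_b\|_R^2$ for $b\in V^+$), and a union bound over the at most $4^{|F|}$ events comprising $A_1$ yields \eqref{tpcovd}; finally pass $j\to\infty$ using the convergence of $\ol{D}_{\mathcal{G}_j}^{-1}$ to the infinite-volume kernel established in Theorem \ref{l422}.

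The only genuinely new point relative to Proposition \ref{lle83} — and the main obstacle — is the verification that the relevant flows $i_b$ for $b\in V$ are well-defined unit flows of finite energy; unlike the two-convex-corner case, where Lemma \ref{le87} produced them directly from transience of $G\setminus U_{\sA_1}$, here one must go through Assumption \ref{ap615}: the projections $\theta_j=P_{\bigstar_{G_{j,e_{bb_{0,j}}}}}\chi_{(b,b_{0,j})}$ converge edgewise to a flow $\theta$ with $\langle\theta,\chi_{(b,b_0)}\rangle\neq 1$, and then $i_b=\frac{P^\perp_{(b,b_0)}\theta}{1-\langle\theta,\chi_{(b,b_0)}\rangle}$ is the required finite-energy unit current flow from $b$ to $b_0$; crucially $b_0\in\partial\HH^2$ so this is effectively a flow to infinity. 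The inner-product identities $[\ol{D}^K]^{-1}(b,w)=\frac{1}{\zeta_{e_w}}\langle Q_{j,K}i_b,\chi_{e_w}\rangle$ must be reconciled with this normalization (the factor $1-\langle\theta,\chi_{(b,b_0)}\rangle$ and the removed edge $e_{bb_{0,j}}$), which is a bookkeeping point but needs care since it is exactly here that the Temperley boundary condition differs from the general boundary condition. With $i_b$ in hand for both $b\in V$ (Assumption \ref{ap615}) and $b\in V^+$ (Lemma \ref{lle615}), the rest of the proof is the line-by-line transcription of the proof of Proposition \ref{lle83} described above, and we omit the repeated computations.
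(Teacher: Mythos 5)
Your plan is structurally sound and correctly identifies that the only genuinely new ingredient relative to Proposition~\ref{lle83} is a substitute source of finite energy for the primal flows, supplied by Assumption~\ref{ap615}. However, you handle the case $b\in V$ by a different route than the paper, and the difference is worth flagging.

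You propose to reuse the bound
$\sum_{w\in W(F)}\mathrm{Var}\langle Q_{j,K}i_b,\chi_{e_w}\rangle\le 2\|P_{K_1}i_b\|_R^2$
for $b\in V$ exactly as for $b\in V^+$, which requires first that the limit flow
$i_b=\lim_j i_{b,j}=\lim_j\zeta_\bullet\ol{D}_{\ol{G}_j}^{-1}(b,\cdot)$
exists and has finite energy, and then that $\|P_{K_1}i_b\|_R\to 0$ as $K_1$ recedes. To get finite energy you invoke the renormalized flow $\frac{P^{\perp}_{(b,b_0)}\theta}{1-\langle\theta,\chi_{(b,b_0)}\rangle}$ from the lemma following Assumption~\ref{ap615}, and you claim the reconciliation with $i_b$ is ``bookkeeping.'' It is a bit more than that: the lemma concerns the unit current flow on the augmented graph $G_{j,e_{bb_{0,j}}}$, whereas $i_{b,j}$ coming out of Lemma~\ref{l18}(2) is the gradient of the \emph{Neumann} Green's function on $G_j$, i.e.\ the unit current flow on $G_j$ itself; one must argue (via uniqueness of current flows on finite networks and the fact that adding a parallel source--sink edge only rescales the current by $1-\langle\theta_j,\chi_{(b,b_{0,j})}\rangle$) that $i_{b,j}=\frac{P^{\perp}_{e_{bb_{0,j}}}\theta_j}{1-\langle\theta_j,\chi_{(b,b_{0,j})}\rangle}$. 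This identification is correct, so your route does work, but it should be carried out rather than dismissed.

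The paper takes a different and in fact shorter path for $b\in V$, one that sidesteps this identification and even the question of whether $\|P_{K_1}i_b\|_R\to0$: it applies a ``transposed'' Cauchy--Schwarz. Starting from
$\mathrm{Var}\langle Q_{j,s,K}i_{b,j},\chi_{e_w}\rangle=\mathrm{Var}\langle i_{b,j},Q_{j,s,K}\chi_{e_w}\rangle$,
it bounds this by $\|i_{b,j}\|_R^2\,\mathbb{E}\|Q_{j,s,K}\chi_{e_w}-\mathbb{E}Q_{j,s,K}\chi_{e_w}\|_R^2$ and then applies Claim~\ref{cl613} to the right factor, yielding
$\|i_{b,j}\|_R^2\,\|P_{K_1}P_{\bigstar G_j}P_{K_1}^{\perp}\chi_{e_w}\|_R^2.$
Assumption~\ref{ap615} is then used only for the scalar bound $\lim_j\|i_{b,j}\|_R^2<\infty$, while the decay comes from $\|P_{K_1}P_{\bigstar G_j}\chi_{e_w}\|_R^2\to0$ as $d(K_1,e_w)\to\infty$, which holds because the free current $P_{\bigstar G_j}\chi_{e_w}$ has finite energy. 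Thus the paper never needs to identify the limiting $i_b$ with the renormalized $\theta$-flow nor to project $i_b$ onto $\langle\chi_{K_1}\rangle$; it works with the finite-$j$ flows throughout and moves the projection over to $\chi_{e_w}$. The paper explicitly notes that the Lemma~\ref{le87} argument for finite energy is unavailable here (since $\sA_0=\{b_0\}$ makes $\sA_1=\partial\HH^2$ violate condition~(a) of Theorem~\ref{le39}), which is why this workaround is used; your plan is a valid alternative route around the same obstacle, but it leans on an identification that must be proved, whereas the paper's argument avoids it entirely.
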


\begin{proof}
 Let $F,K$, $K_1$, $K_2$ be given as in the proof of Proposition \ref{lle83}. 

Compared to Proposition \ref{lle83} in this case we may consider $\sA_0=\{b_0\}\in \partial\HH^2$. In this case we no longer have condition (a) in Lemma \ref{le39} holds with $\sA$ replaced by $\sA_1$. Therefore we cannot prove the $i_b$ for $b\in V$ in this case has finite energy as in Lemma \ref{le87}. 
However, we can still follow the computations as in the proof of Proposition \ref{lle83} and obtain
(\ref{lss}). Following the same arguments as in the proof of Claim \ref{cl89} and applying Lemma \ref{lle615}, we obtain (\ref{qq2}).
It remains to give an upper bound for the first term in the right hand side of (\ref{ss}) in the parentheses. Let
\begin{align*}
i_{b,j}(e):=\zeta_e\ol{D}_{\ol{G}_{j}}^{-1}(b,w_e)
\end{align*}

By Claim \ref{cl612} we obtain
\begin{align*}
&\sum_{b\in B(F)\cap V,w\in W(F)}\mathrm{Var}\langle Q_{j,s,K}i_{b,j},\chi_{e_w}\rangle
=\sum_{b\in B(F)\cap V,w\in W(F)}
\mathbb{E}[\langle Q_{j,s,K}i_{b,j},\chi_{e_w} \rangle 
-\mathbb{E}\langle Q_{j,s,K}i_{b,j},\chi_{e_w} \rangle
]^2\\
&=\sum_{b\in B(F)\cap V,w\in W(F)}
\mathbb{E}[\langle i_{b,j},Q_{j,s,K}\chi_{e_w} \rangle 
-\mathbb{E}\langle i_{b,j},Q_{j,s,K}\chi_{e_w} \rangle
]^2\\
&=\sum_{b\in B(F)\cap V,w\in W(F)}
\mathbb{E}[\langle i_{b,j},Q_{j,s,K}\chi_{e_w}-\mathbb{E}Q_{j,s,K}\chi_{e_w} \rangle 
]^2\leq \|i_{b,j}\|_R^2\mathbb{E}\|Q_{j,s,K}\chi_{e_w}-\mathbb{E}Q_{j,s,K}\chi_{e_w}\|^2
\end{align*}
When Assumption \ref{ap615} holds, we have
\begin{align*}
\lim_{j\rightarrow\infty}\|i_{b,j}\|_R^2<\infty
\end{align*}
and by Claim \ref{cl613} we have
\begin{align*}
\mathbb{E}\|Q_{j,s,K}\chi_{e_w}-\mathbb{E}Q_{j,s,K}\chi_{e_w}\|^2=\|P_{K_1}P_{\bigstar G_j}P_{K_1}^{\perp}\chi_{e_w}\|^2
\end{align*}
When $e_w\notin K_1$ we have 
\begin{align*}
\lim_{j\rightarrow\infty}\|P_{K_1}P_{\bigstar G_j}P_{K_1}^{\perp}\chi_{e_w}\|^2\|
=\|P_{K_1}P_{\bigstar G_j}\chi_{e_w}\|^2
\end{align*}
which goes to 0 as the distance of $K_1$ and $e_w$ goes to infinity. Then the proposition follows.
\end{proof}

\begin{example}\label{ex619}We shall give examples where Assumption \ref{ap615} holds and where Assumption \ref{ap615} does not hold.

First consider the graph $G$ with vertex set $\NN$ and edges joining two nonnegative integers with distance 1. Let $G_j$ be the graph with vertex set $\{0,1,\ldots,j\}$. Let $b=0$ and $b_{0,j}=j$. Then 
\begin{align*}
&\theta_j(i-1,i)=\frac{1}{j},\qquad\forall 1\leq i\leq j;\\
&\theta_j(0,j)=\frac{j-1}{j}
\end{align*}
We have
\begin{align*}
\lim_{j\rightarrow\infty}\theta_j(0,j)=1.
\end{align*}
Assumption \ref{ap615} does not hold in this case.

Now consider $G_j$ to be constructed from two depth $j$ trees $T_1$ and $T_2$ as follows
\begin{itemize}
\item The root of $T_1$ is $b$;
\item The root of $T_2$ is $b_{0,j}$;
\item Every vertex in $T_1$ and $T_2$ other than the leaves has two children; such that both $T_1$ and $T_2$ have exactly $2^j$ leaves;
\item Identify each leave with $T_1$ with a unique leave of $T_2$ gives $G_j$.
\end{itemize}
In this case we have
\begin{align*}
\theta_j(b,b_{0,j})=\frac{4-2^{2-j}}{6-2^{2-j}}
\end{align*}
If $x$ is a depth $k-1$ vertex of $T_1$ while $y$ is a depth $k$ vertex of $T_1$ (or if $x$ is a depth $k$ vertex of $T_2$ and $y$ is a depth $k-1$ vertex of $T_2$), we have
\begin{align*}
\theta_j(x,y)=\frac{2^{1-k}}{6-2^{2-j}};\qquad \forall 1\leq k\leq j
\end{align*}
In this case we have
\begin{align*}
\lim_{j\rightarrow\infty}\theta_j(b,b_{0,j})=\frac{2}{3}\neq 1;
\end{align*}
Assumption \ref{ap615} holds in this case.
\end{example}

In Example \ref{ex619}, we have an nonamenable graph where Assumption \ref{ap615} holds. However, as we shall see in the next proposition, nonamenability is not a necessary condition for Assumption \ref{ap615}.

\begin{proposition}\label{p620}Let $G_j$ be finite subgraphs of $G$ as in section \ref{sect42}. Let $b_{0,j}$ be the removed vertex of $G_j$ to obtain $\mathcal{G}_j$ from $\ol{G}_j$.  Assume for each vertex $b$ of $G$ and all sufficiently large $j$, there exists a subtree $T_{j,b}$ of an infinite transient tree $T$ rooted at $b$ and a subtree $T_{j,b_{0,j}}$ of $T$ rooted at $b_{0,j}$ such that
\begin{itemize}
\item $T_{j,b}$ and $T_{j,b_{0,j}}$ are edge-disjoint subgraphs of $G$; and
\item there exists an isomorphism $\sigma: T_{j,b}\rightarrow T_{j,b_{0,j}}$, such that $\sigma(b)=b_{0,j}$; and if $v$ is a leave of $T_{j,b}$ then $\sigma(v)=v$; and
\item $\lim_{j\rightarrow\infty }T_{j,b}$ is isomorphic to $T$. 
\end{itemize}
Let $G=(V,E)$ be a 3-connected, transient, simple proper planar graph with bounded vertex degree and locally finite dual. Suppose Assumptions \ref{ap24},  \ref{ap615} hold. Let $\mu$ be the probability measure for dimer coverings on $\ol{G}$ obtained as in Lemma \ref{l422}. Assume the edge weights satisfy
(\ref{wtw}).
Then the measure $\mu$ is tail-trivial.
\end{proposition}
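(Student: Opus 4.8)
The plan is to reduce Proposition~\ref{p620} to Proposition~\ref{lle614} by verifying that the flow hypothesis of Assumption~\ref{ap615} is automatically satisfied under the stated tree-symmetry condition. The key observation is that the quantity $\langle\theta_j,\chi_{(b,b_{0,j})}\rangle$ — where $\theta_j = P_{\bigstar_{G_{j,e_{bb_{0,j}}}}}\chi_{(b,b_{0,j})}$ — is, by the standard electrical-network interpretation (Thomson's principle, Lemma~\ref{ls85}, together with Lemma~\ref{ls86}), equal to the probability that a random walk on $G_{j,e_{bb_{0,j}}}$ started at $b$ uses the extra edge $e_{bb_{0,j}}$ before returning to $b$; equivalently it is expressible via effective resistances as $\mathcal{R}_{\mathrm{eff}}(b\leftrightarrow b_{0,j}\ \mathrm{in}\ G_j)/\big(1+\mathcal{R}_{\mathrm{eff}}(b\leftrightarrow b_{0,j}\ \mathrm{in}\ G_j)\big)$. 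So $\langle\theta,\chi_{(b,b_0)}\rangle\neq 1$ in the limit is equivalent to $\liminf_j \mathcal{R}_{\mathrm{eff}}(b\leftrightarrow b_{0,j}\ \mathrm{in}\ G_j) < \infty$, i.e.\ to a uniform lower bound on the effective conductance between $b$ and $b_{0,j}$ inside $G_j$.

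First I would make this electrical reduction precise: add the edge $e_{bb_{0,j}}$, note $\chi_{(b,b_{0,j})}$ is a unit flow from $b$ to $b_{0,j}$, and that $\theta_j$ is its orthogonal projection onto $\bigstar$, hence the unique unit \emph{current} flow; then $\langle\theta_j,\chi_{(b,b_{0,j})}\rangle R(e_{bb_{0,j}})$ is the voltage drop across the added edge, which by series/parallel reasoning equals $\mathcal{R}_{\mathrm{eff}}(b\leftrightarrow b_{0,j}\text{ in }G_j)/(1+\mathcal{R}_{\mathrm{eff}}(b\leftrightarrow b_{0,j}\text{ in }G_j))$ since $R(e_{bb_{0,j}})=1$ by (\ref{wtw}). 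Second, I would use the hypothesis: the edge-disjoint subtrees $T_{j,b}$ and $T_{j,b_{0,j}}$, glued along their common leaves by the isomorphism $\sigma$, form a subnetwork of $G_j$ connecting $b$ to $b_{0,j}$. By Rayleigh monotonicity (deleting all edges of $G_j$ not in $T_{j,b}\cup T_{j,b_{0,j}}$ only increases effective resistance), it suffices to bound $\mathcal{R}_{\mathrm{eff}}(b\leftrightarrow b_{0,j})$ in the glued-tree network. By the series law and the symmetry $\sigma$, this effective resistance is exactly twice the effective resistance from the root $b$ of $T_{j,b}$ to its leaf set; and since $T_{j,b}\subset T$ with $T$ transient, this is bounded above uniformly in $j$ by $2\,\mathcal{R}_{\mathrm{eff}}(b\leftrightarrow\infty\text{ in }T) < \infty$ (the leaf-set resistances increase to the resistance-to-infinity, which is finite by transience of $T$ and Lemma~\ref{ls83}). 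Hence $\liminf_j\langle\theta_j,\chi_{(b,b_{0,j})}\rangle < 1$, and passing to the edgewise limit $\theta$ gives $\langle\theta,\chi_{(b,b_0)}\rangle \neq 1$, so Assumption~\ref{ap615} holds. Proposition~\ref{lle614} then gives tail-triviality of $\mu$.

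The one point requiring care — and the main obstacle — is the interchange of limits: Assumption~\ref{ap615} asks for the \emph{edgewise} limit $\theta$ of $\theta_j$ and then a statement about $\langle\theta,\chi_{(b,b_0)}\rangle$, whereas the resistance bound above controls $\langle\theta_j,\chi_{(b,b_{0,j})}\rangle$ for finite $j$. I would handle this by first showing $\theta_j$ has a convergent subsequence: the energies $\mathcal{E}(\theta_j)=\langle\theta_j,\chi_{(b,b_{0,j})}\rangle R(e_{bb_{0,j}})$ are uniformly bounded (by the resistance bound just established, together with $\mathcal{E}(\theta_j)\le \mathcal{E}(\chi_{(b,b_{0,j})})\le 1$ trivially), so a diagonal argument over edges extracts an edgewise limit $\theta$ of finite energy; and one checks that $\langle\theta_j,\chi_{(b,b_{0,j})}\rangle\to\langle\theta,\chi_{(b,b_0)}\rangle$ because $b_{0,j}\to b_0$ and the contributions near $b_{0,j}$ are controlled by the tail energy of $\theta$. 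Here I would invoke the fact that $b_0\in\partial\HH^2$ lies off the null family of paths / off the a.s.\ limit set of the random walk, exactly as in Theorem~\ref{l422}, so that the limit $\theta$ is genuinely a flow from $b$ to $b_0$ with the strength deficit $1-\langle\theta,\chi_{(b,b_0)}\rangle$ strictly positive. Once Assumption~\ref{ap615} is verified, the conclusion is immediate from Proposition~\ref{lle614}: the measure $\mu$ obtained in Theorem~\ref{l422} is tail-trivial.
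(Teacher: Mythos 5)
Your proof is correct, but it takes a genuinely different route from the paper's. The paper constructs a finite-energy unit flow directly: starting from a finite-energy unit flow $\theta$ from the root to infinity in the transient tree $T$ (which exists by Lemma~\ref{ls83}), it restricts to $T_{j,b}$ and glues with the reversal of the corresponding flow on $T_{j,b_{0,j}}$ (transported by $\sigma$), producing a unit flow from $b$ to $b_{0,j}$ on $G_j$ of energy at most $2\,\mathcal{E}(\theta)$; passing to the limit $j\to\infty$ and projecting onto $\bigstar$ yields the finite-energy unit current flow from $b$ to $b_0$, which is the only input the proof of Proposition~\ref{lle614} actually draws from Assumption~\ref{ap615}. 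You instead bound the effective resistance $\mathcal{R}_{\mathrm{eff}}(b\leftrightarrow b_{0,j};G_j)$ using Rayleigh monotonicity (deleting all edges outside $T_{j,b}\cup T_{j,b_{0,j}}$), and then exploit the $\sigma$-symmetry of the glued tree network (the leaves are an equipotential cut at half voltage) to split the resistance in series, each half dominated by $\mathcal{R}_{\mathrm{eff}}(b\leftrightarrow\infty;T)$. By Thomson's principle this is equivalent to the paper's flow construction --- indeed the paper's glued flow is exactly a witness to your resistance bound --- but the equipotential-leaves observation is an extra geometric step that the paper doesn't need. One point to watch: you frame the goal as literally verifying Assumption~\ref{ap615}, which formally requires edgewise convergence of the whole sequence $\theta_j$, and your subsequential-limit argument does not by itself give this. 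The paper sidesteps this by not invoking \ref{ap615} as a black box: it says ``following the same arguments as in the proof of Proposition~\ref{lle614}'', using only the finite-energy unit current flow. Your resistance bound already supplies that flow (by Thomson's principle and Lemma~\ref{ls86}), so the conclusion goes through; but it would be cleaner to state it this way rather than as a verification of \ref{ap615} verbatim.
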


\begin{proof}Since $T$ is an infinite transient tree, there exists a finite energy unit flow $\theta$ from the root of $T$ to $\infty$. Let $\theta|_{T_{j,b}}$ be the restriction of the flow to the subtree $T_{j,b}$.
Then $\left[\theta|_{T_{j,b}}\right]\cup \left[-\theta|_{T_{j,b_{0,j}}}\right]$ gives a unit flow on $G_j$ from $b$ to $b_{0,j}$. As $j\rightarrow\infty$ this gives a finite energy unit flow from $b$ to $b_0$; where $b_0=\lim_{j\rightarrow\infty}b_{0,j}$.
Then
\begin{align*}
P_{\bigstar}\lim_{j\rightarrow\infty}\left[\theta|_{T_{j,b}}\right]\cup \left[-\theta|_{T_{j,b_{0,j}}}\right]
\end{align*}
gives a finite energy unit current flow from $b$ to $b_0$. Following the same arguments as in the proof of Proposition \ref{lle614}, we infer that the measure $\mu$ is tail-trivial.
\end{proof}

\section{Double Dimer Contours}\label{sect:ddc}

In this section, we prove that for the infinite-volume Gibbs measure given in Theorem \ref{le67} for uniformly weighted perfect matchings on $\ol{G}$, if $G$ satisfies certian isoperimetric inequalities, then the variance of the height difference of two i.i.d.~perfect matchings is always finite, given that the the height difference on the boundary is 0. This is in contrast with the 2D Euclidean case, where variance of the difference at two points of height differences of two i.i.d. uniformly weighted perfect matchings grows like $\log n$, where $n$ is the distance between two points. See Theorems \ref{le96} and \ref{le85} for precise statements.
The proof depends on the fact that the Dirichlet Green's function on such graphs has certain decay rate with respect to the distance of two points due to isoperimetric inequalities.

For any two dimer configurations $M_1$, $M_2$ on the same graph, the symmetric difference $M_1\triangle M_2$ is a disjoint union of cycles and doubly infinite self-avoiding paths.

\begin{definition}\label{df81}Let $\ol{G}$ be a bipartite, planar graph.  Let $M_1$, $M_2$ be two dimer configurations on $\ol{G}$. 
\begin{enumerate}
\item
The height function $h_{M_1,M_2}$ of double dimer configurations is a function from faces of $\ol{G}$ to the set of all integers, such that for any pair of faces $f_1,f_2$ sharing an edge $e$,
\begin{enumerate}
\item If $e\notin M_1\triangle M_2$, $h_{M_1,M_2}(f_1)=h_{M_1,M_2}(f_2)$;
\item If $e\in M_1\triangle M_2$, 
\begin{enumerate}
\item If $e\in M_1\setminus M_2$, and moving from $f_1$ to $f_2$, the white vertex is on the left, then $h_{M_1,M_2}(f_1)-h_{M_1,M_2}(f_2)=1$;
\item If $e\in M_2\setminus M_1$, and moving from $f_1$ to $f_2$, the white vertex is on the left, then $h_{M_1,M_2}(f_1)-h_{M_1,M_2}(f_2)=-1$.
\end{enumerate}
\end{enumerate}
\item  We define a cluster in $M_1\triangle M_2$ to be a maximal connected subgraph of $\ol{G}^+$ in which every vertex has the same height. A finite (resp.\ infinite) cluster is a cluster consisting of finitely many (resp.\ infinitely many) vertices.

Let $k\in \ZZ$. A level-$k$ cluster in $M_1\triangle M_2$ is a cluster in which every vertex of $\ol{G}^+$ in the cluster has height $k$. 
\end{enumerate}
\end{definition}

One can see that the height function defined in Definition \ref{df81} satisfies
\begin{align*}
h_{M_1,M_2}=h_{M_2}-h_{M_1}
\end{align*}
where $h_M$ is defined as in Definition \ref{df71}.

Recall the following definition:

\begin{definition}We say a graph $G$ satisfies an isoperimetric inequality of dimension $d$ if there exists some constant $c>0$ such that 
\begin{align}
|\partial K|\geq c|K|^{\frac{d-1}{d}}\ \forall\ \mathrm{finite}\ K\subset V.\label{isi}
\end{align}
where $\partial K:=\{(x,y)\in E: x\in K,y\notin K\}$ is the edge boundary of $K$. The isoperimetric dimension of $G$ which we denote by $\mathrm{Dim}(G)$, is defined as the supreme over all $d$ such that such that (\ref{isi}) holds.
\end{definition}

\subsection{General Boundary Conditions}

\begin{theorem}\label{le85} Let $G,G^+,\ol{G}$ be given as in Proposition \ref{lle83}. Let $\mu$ be the probability measures for dimer coverings on $\ol{G}$ obtained as in Theorem \ref{le67}, or Proposition \ref{lle614}, or Proposition \ref{p620}.  Let $M_1$, $M_2$ be two independent dimer configurations on $\ol{G}$ with distribution $\mu$.  Let $\mathcal{N}$ be the number of doubly-infinite self-avoiding paths in $M_1\triangle M_2$.  
Then
\begin{align}
\mu\times \mu(\mathcal{N}=0)=1.\label{pn01}
\end{align}
\end{theorem}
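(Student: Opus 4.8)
The plan is to deduce the absence of doubly-infinite self-avoiding paths in $M_1\triangle M_2$ from the tail-triviality of $\mu$ together with a mass-transport / counting argument on the planar graph $\ol{G}$. First I would set $\mathcal{N}$ to be the (random) number of doubly-infinite self-avoiding paths in $M_1\triangle M_2$, where $(M_1,M_2)$ has law $\mu\times\mu$. Since $M_1\triangle M_2$ is always a disjoint union of finite cycles and doubly-infinite self-avoiding paths, a doubly-infinite path separates the disk $\HH^2$ into two pieces and determines a partition of the ideal boundary $\partial\HH^2$ into two nondegenerate arcs; two distinct doubly-infinite paths are disjoint and one must lie in a complementary component of the other. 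The first step is the observation that the event $\{\mathcal{N}\ge 1\}$ and more generally the event that some doubly-infinite path crosses ``infinitely far out'' on both ends is a \emph{tail event} for the product measure $\mu\times\mu$: whether a given doubly-infinite self-avoiding path exists is unchanged by modifying $M_1$ and $M_2$ on any finite set of edges (one can always locally repair a finite alteration by rerouting along finite cycles), so $\{\mathcal{N}\ge 1\}$ lies in the tail $\sigma$-field of $M_1\triangle M_2$, hence in the product tail $\sigma$-field.

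Next, I would use that $\mu$ is tail-trivial --- established in Proposition \ref{lle83} for the measure of Theorem \ref{le67}, and in Propositions \ref{lle614} and \ref{p620} for the Temperley case --- to conclude that $\mu\times\mu$ is also tail-trivial (a product of tail-trivial measures is tail-trivial, by the usual approximation criterion \eqref{dctt}: approximate $A_1\in\mathcal F(E\times E)$ by cylinder events in finitely many coordinates of each factor and apply tail-triviality in each coordinate). Therefore $\mu\times\mu(\mathcal N\ge 1)\in\{0,1\}$, and it suffices to rule out the value $1$. To do this I would argue that if $\mathcal N\ge 1$ almost surely then in fact $\mathcal N=\infty$ almost surely --- indeed, by tail-triviality the number $\mathcal N$ of doubly-infinite paths is an almost-sure constant $c\in\{0,1,\dots,\infty\}$, and planarity rules out $1\le c<\infty$ by a symmetry/indistinguishability argument: if there were a finite positive number of them, consider the ``innermost'' one with respect to a fixed basepoint; but the construction of $\mu$ in Theorem \ref{le67} is invariant under the (infinite) subgroup of $\mathrm{Aut}(G)$ stabilizing $\sA_0$, and no finite nonempty collection of doubly-infinite paths can be invariant under such a group unless it is empty. (For the Temperley measures of Theorem \ref{l422} one uses instead the removable-basepoint description and the exchangeability of the two i.i.d. copies.) So the only two possibilities consistent with tail-triviality are $c=0$ and $c=\infty$.

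To exclude $c=\infty$ I would invoke the transience/finite-energy input that drives the whole paper. A doubly-infinite self-avoiding path in $M_1\triangle M_2$ on the circle-packed graph, viewed through the double-dimer height function $h_{M_1,M_2}=h_{M_2}-h_{M_1}$ of Definition \ref{df81}, is a level line of the height-difference function; having infinitely many disjoint such level lines forces the gradient of $h_{M_1,M_2}$ to carry infinite energy in each of the infinitely many disjoint ``strips'' they bound. But $h_{M_1}$ and $h_{M_2}$ are, via the Temperley bijection and the identities of Theorem \ref{l422}/Theorem \ref{le67}, governed by the difference-of-Green's-functions plus harmonic-Dirichlet-function formula; in particular the relevant anti-symmetric functions $i_b$ (Lemma \ref{le87}, Lemma \ref{lle615}) have \emph{finite energy} on the transient graph $G$. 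Quantitatively, the expected signed number of crossings of any fixed edge $e$ by the difference flow decays (by the Green's-function decay estimates of Lemmas \ref{ap31}, \ref{le426} in the nonamenable / isoperimetric setting), so the $\ell^2_-(E)$-norm of the double-dimer flow is finite; infinitely many disjoint doubly-infinite paths would contribute infinitely to this norm, a contradiction. Hence $c\neq\infty$, so $c=0$, i.e. $\mu\times\mu(\mathcal N=0)=1$, which is \eqref{pn01}.

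\textbf{Main obstacle.} The delicate point is the exclusion of $c=\infty$: I have to turn ``infinitely many disjoint doubly-infinite double-dimer contours'' into a genuine energy divergence for a concrete anti-symmetric function on $G$ and contradict the finite-energy statements of Lemma \ref{le87}/Lemma \ref{lle615}. This requires (i) identifying the correct anti-symmetric function --- essentially $i_{b}-i_{b'}$ type differences or, in the double-dimer picture, the superposition flow whose divergence encodes the two matchings --- and checking it is exactly the object shown to have finite energy, and (ii) a clean combinatorial lemma that each doubly-infinite contour forces a unit of flow to be transported ``across'' it, so that $k$ disjoint contours force energy $\gtrsim k$ in disjoint edge sets. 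Getting (ii) uniform in the contours, and making sure the basepoint/symmetry argument that upgrades $1\le c<\infty$ to $c\in\{0,\infty\}$ is valid for each of the three measures in the statement (with their different invariance groups), is where the real work lies; everything else is tail-triviality bookkeeping via \eqref{dctt}.
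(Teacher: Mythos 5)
Your proposal takes a genuinely different route from the paper, and the tail-triviality bookkeeping at the start is fine, but both of your two key steps---ruling out $1\le c<\infty$ and ruling out $c=\infty$---have real gaps. For $1\le c<\infty$ you invoke invariance of $\mu$ under the stabilizer of $\sA_0$ in $\Aut(G)$ and assert that no finite nonempty collection of doubly-infinite paths can be invariant under such a group. This is not the right logic: invariance of the \emph{measure} does not give invariance of the random path collection in a fixed configuration, and the indistinguishability machinery that would upgrade it requires a unimodular quasi-transitive action and a mass-transport argument, none of which is assumed here ($G$ is merely 3-connected, transient, bounded degree). Remark \ref{rk7} already warns that $\mu$ is not invariant under the full automorphism group, and the stabilizer of a generic closed $\sA_0\subset\partial\HH^2$ may be trivial; for the Temperley measures of Propositions \ref{lle614} and \ref{p620} the basepoint destroys all invariance, and ``exchangeability of $M_1,M_2$'' does not by itself preclude a single doubly-infinite path. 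For $c=\infty$ you try to read ``infinitely many disjoint contours'' as infinite Dirichlet energy of $\triangledown h_{M_1,M_2}$, but $h_{M_1,M_2}$ is not a Dirichlet function and carries no a priori energy bound; the finite-energy objects in Lemmas \ref{le87}, \ref{lle615} are the deterministic flows $i_b$ from the transfer-current computation, and there is no identification of ``one unit of flow crossing each contour'' that would make energies add over disjoint strips. You flag this yourself as the main obstacle; I do not see how to close it.

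The paper's actual proof avoids both problems by working with the level sets of $h_{M_1,M_2}$ and a swap argument, with no case split in $c$. Let $\mathcal H_k$ be the event that $M_1\triangle M_2$ has an infinite level-$k$ cluster. Each $\mathcal H_k$ is a tail event, so $\mu\times\mu(\mathcal H_k)\in\{0,1\}$. If $\mu\times\mu(\mathcal H_m)=1$ for some $m\ge 1$, then by the $M_1\leftrightarrow M_2$ symmetry also $\mu\times\mu(\mathcal H_{-m})=1$; now exchanging $M_1$ and $M_2$ along all doubly-infinite contours bounding infinite level-$r$ clusters with $r\ge m$ is a measure-preserving involution of $\mu\times\mu$ (it only uses i.i.d.\ exchangeability) that destroys every infinite level-$m$ cluster, forcing $\mu\times\mu(\mathcal H_m)=0$, a contradiction. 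Hence a.s.\ every infinite cluster has level $0$, which is incompatible with the existence of a doubly-infinite self-avoiding path, since such a path always separates two infinite clusters whose levels differ. This argument uses only tail-triviality and exchangeability, so it applies uniformly to all three measures in the hypothesis; you should replace both of your case exclusions with it.
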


\begin{proof}For $k\in\ZZ$, let $\mathcal{H}_k$ be the event that there exists an infinite level-$k$ cluster in $M_1\triangle M_2$. Since $\mathcal{H}_k$ is measurable with respct to the tail-$\sigma$ field and $\mu$ is tail-trivial by Lemma \ref{lle83},
\begin{align*}
\mu\times \mu(\mathcal{H}_k)\in\{0,1\}.
\end{align*}

Let $m\geq 1$. Assume 
\begin{align}
\mu\times \mu(\mathcal{H}_m)=1. \label{pfm1}
\end{align}
Then interchanging the configuration $M_1$ and $M_2$ we obtain $\mu\times \mu(\mathcal{H}_{-m})=1$. We infer that $\mu\times \mu(\mathcal{H}_m\cap \mathcal{H}_{-m})=1$. Since when $m\geq 1$, $\mathcal{H}_m$ and $\mathcal{H}_{-m}$ are distinct, each infinite level-$m$ cluster has a doubly infinite self-avoiding path in $M_1\triangle M_2$ as its boundary. For any $r\geq m$, if we interchange the configurations of $M_1$ and $M_2$ along all the doubly-infinite self-avoiding paths which are boundaries of some infinite level-$r$ cluster for all $r\geq m$, we obtain a configuration with no infinite level-$m$ cluster. Then $\mu\times \mu(\mathcal{H}_m)=0$; which contradicts (\ref{pfm1}). The contradiction implies
\begin{align}
\mu\times \mu(\mathcal{H}_m)=0.\qquad\forall m\neq 0\label{ss}
\end{align}

We infer from (\ref{ss}) that all the infinite clusters have level 0 a.s. However if there exists a doubly infinite self-avoiding path in $M_1\triangle M_2$, on the two sides of the path there are two infinite clusters of different levels. The contradiction implies (\ref{pn01}).
\end{proof}

\begin{lemma}\label{l83}Let $G=(V,E)$ be a 3-connected, transient, simple proper planar graph with bounded vertex degree and locally finite dual. Suppose Assumptions \ref{ap24}, \ref{ap66} holds. Suppose Assumption \ref{ap311} holds with $\sA$ replaced by $\sA_0$.
Assume the edge weights satisfy (\ref{wtw}). 

Let $\{G_j\}_{j=1}^{\infty}$ be given as in Assumption \ref{ap311}. 
\begin{enumerate}
\item Assume 
\begin{align}
\Phi(u;G_j)>cu^{-\frac{1}{d}}\label{ipj}
\end{align}
for some $d>2$, where $\Phi(u; G_j)$ is defined as in (\ref{dpu}) for the graph $G_j$, and $c$ is a constant independent of $j$.

Then for any $b\in V$, and any white vertex $w$ of $\ol{G}$, we have
\begin{align*}
\left|\lim_{j\rightarrow\infty}\lim_{s\rightarrow\infty}\ol{D}_{\tilde{G}_{j,s}}^{-1}(b,w)\right|\leq C_1[d_{G}(u,v)]^{-\frac{d-2}{2}}
\end{align*}
where $C_1=C_1(c,d)$ is a positive constant.
\item Assume (\ref{ipj}) holds for some $d>4$ with a constant $c>0$ independent of $j$. Then $\left|\lim_{j\rightarrow\infty}\lim_{s\rightarrow\infty}\ol{D}_{\tilde{G}_{j,s}}^{-1}(b,w)\right|$ is uniformly bounded for all $b\in V^+$ and white vertices $w$ of $\ol{G}$.
\end{enumerate}
\end{lemma}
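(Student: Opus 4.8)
The plan is to combine the formulas for the limiting inverse Dirac operator entries established in Theorem \ref{le67} with the Green's function decay estimates of Lemma \ref{le426}, uniformized over the exhausting sequence $\{G_j\}$. For part (1), recall from Theorem \ref{le67}(1) that for $b\in V$,
\begin{align*}
\lim_{j\rightarrow\infty}\lim_{s\rightarrow\infty}\ol{D}^{-1}_{\tilde{G}_{j,s}}(b,w)
=\frac{1}{\zeta}\sqrt{\frac{\nu(b_3b_4)}{\nu([b_3b_4]^+)}}\left[\frac{G_{G}(b,b_3)}{\Delta(b_3,b_3)}-\frac{G_G(b,b_4)}{\Delta(b_4,b_4)}+H_{\sA_0,G,b_3b_4}(b)\right],
\end{align*}
so we must bound each of the three terms. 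The Green's function terms are handled directly by Lemma \ref{le426}: the hypothesis (\ref{ipj}) that $\Phi(u;G_j)>cu^{-1/d}$ with $d>2$ and $c$ independent of $j$ passes to the infinite graph $G$ (as $j\rightarrow\infty$ the isoperimetric profiles converge and the bound is preserved), so under (\ref{aww}) we get $\frac{G_G(b,b_i)}{|\Delta(b_i,b_i)|}\leq C''[d_G(b,b_i)]^{-(d-2)/2}$ for $i\in\{3,4\}$, and since $b_3,b_4$ are at bounded graph distance from $w$, the exponent $d_G(b,b_i)$ is comparable to $d_{\ol G}(b,w)$. The harmonic Dirichlet term $H_{\sA_0,G,b_3b_4}$ requires its own estimate: since $\triangledown H_{\sA_0,G,b_3b_4}=P_{\mathcal{S}_{\sA_0}(G)}\chi_{b_3b_4}$, I would express $H_{\sA_0,G,b_3b_4}(b)$ as an inner product (pairing $\chi_{b_3b_4}$ against an appropriate flow from $b$ of the type constructed in the proof of Lemma \ref{le87}) and bound it using the same Green's function decay; alternatively, one writes $H_{\sA_0,G,b_3b_4}=\lim_j[F^{b_3}_{j,N,G}-F^{b_4}_{j,N,G}-F^{b_3}_{j,d,G}+F^{b_4}_{j,d,G}]$ from the proof of Theorem \ref{l422} and bounds each Neumann/Dirichlet Green's function by the uniform isoperimetric estimate on $G_j$ via Lemma \ref{le425}(2) applied to the graph $G_j$ (this is where the $j$-uniformity of $c$ in (\ref{ipj}) is essential). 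Taking absolute values and summing the three contributions yields the claimed bound $C_1[d_G(u,v)]^{-(d-2)/2}$.

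For part (2), $b\in V^+$, I would use Theorem \ref{le67}(2):
\begin{align*}
\lim_{j\rightarrow\infty}\lim_{s\rightarrow\infty}\ol{D}^{-1}_{\tilde{G}_{j,s}}(b,w)=\frac{1}{\xi}\sqrt{\frac{\nu(b_1b_2)}{\nu([b_1b_2]^+)}}\left[\frac{G_{G^+}(b,b_1)}{\Delta(b_1,b_1)}-\frac{G_{G^+}(b,b_2)}{\Delta(b_2,b_2)}+H_{\sA_1,G^+,b_1b_2}(b)\right].
\end{align*}
Here the hypothesis $d>4$ on the isoperimetric dimension of $G_j$ must be transferred to the dual graph $G^+$: for a bounded-degree planar graph with bounded face sizes, an isoperimetric inequality of dimension $d$ on the primal graph implies one of dimension $d$ (up to constants) on the dual graph, so the exponent $(d-2)/2>1$, and the \emph{difference} $\frac{G_{G^+}(b,b_1)}{|\Delta(b_1,b_1)|}-\frac{G_{G^+}(b,b_2)}{|\Delta(b_2,b_2)|}$ is bounded uniformly in $b$ (indeed each term tends to $0$ as $d_{\ol G}(b,w)\to\infty$). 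The harmonic term $H_{\sA_1,G^+,b_1b_2}$ is treated as in part (1). Since there are only finitely many $b$ at bounded distance from any fixed $w$, uniform boundedness over all $b\in V^+$ and all white $w$ follows.

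The main obstacle I anticipate is controlling the harmonic Dirichlet function terms $H_{\sA_i,\cdot,\cdot}$: unlike the Green's function terms, these are defined only via an orthogonal projection and do not come with an a priori pointwise decay estimate, so the argument must realize them as limits of differences of Neumann and Dirichlet Green's functions on $G_j$ and then invoke the $j$-uniform isoperimetric inequality (\ref{ipj})—this is precisely why the hypothesis demands that the constant $c$ be independent of $j$. A secondary technical point is checking that the exponential/polynomial decay estimates from Lemma \ref{le425} applied to the finite graphs $G_j$ survive the double limit $j\to\infty$, $s\to\infty$ in the same form; this is routine given monotonicity ($G_{j,d}\leq G$) and the uniform profile bound, but deserves an explicit sentence. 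I would also make sure to absorb the ratio $\sqrt{\nu(b_3b_4)/\nu([b_3b_4]^+)}=1$ under (\ref{wtw}), which trivializes the prefactors.
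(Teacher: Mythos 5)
Your treatment of part (1) is in the right spirit. The paper's proof is terser: it uses the fact that at the finite level $\ol D^{-1}_{\tilde G_{j,s}}(b,w)$ is \emph{exactly} a difference of two entries of a finite-graph Green's function (this is the content of Lemma~\ref{le64} with no concave corners), so the polynomial heat-kernel bound of Lemma~\ref{le425}(2), applied to the finite graph $G_j$ with the $j$-uniform constant from~(\ref{ipj}), gives the decay directly, with no harmonic term to worry about. You propose the same idea as your ``fallback'': realize the harmonic term as a limit of differences of finite-graph (Neumann $-$ Dirichlet) Green's functions and bound on $G_j$. That is essentially the same mechanism, and your observation that this is \emph{the} reason the constant $c$ in~(\ref{ipj}) must be independent of $j$ is exactly right. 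You should, however, not lead with the split ``infinite Green's function $+$ $H_{\sA_0}$'' and only later abandon it; the argument should go through the finite-graph representation from the start, since $H_{\sA_0}$ on its own has no useful pointwise bound.

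Part (2) is where your proposal genuinely diverges from the paper, and where it has a gap. You propose to transfer the isoperimetric inequality to the dual graph $G^+$ and then repeat the part-(1) estimate there. This has two problems. First, the transfer is nontrivial and is asserted without proof; it is plausible for bounded-degree planar graphs with locally finite dual, but it would need an argument. Second, and more importantly, your sketch never actually uses the stronger hypothesis $d>4$ in an essential way: bounding a single Green's function difference needs only $d>2$. That should be a red flag. The paper's proof of part (2) does not touch the dual Green's function at all; it instead writes $\ol D^{-1}_{\tilde G_{j,s}}(b,w)$ for $b\in V^+$ as a telescoping sum along a dual path from $w$ to $b$ (equation~(\ref{ps3})), uses the discrete Cauchy--Riemann relation~(\ref{pss1}) to convert each dual-vertex increment into a primal-vertex increment, bounds each such increment by the part-(1) estimate, and handles the base term via $|\ol D^{-1}(v,w)|=\PP((v,w)\in M)\le 1$. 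The hypothesis $d>4$ is then used precisely so that the resulting series $\sum_k [d_G(\cdot,\cdot)]^{-(d-2)/2}$ over the path is summable, because $(d-2)/2>1$. Without this summability argument your part (2) is not complete. I would recommend replacing your dual-graph argument with the telescoping-plus-Cauchy--Riemann bootstrap from part (1).
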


\begin{proof}Let $G_{G_j}(\cdot,\cdot)$ be the discrete Green's function on the infinite graph $G$. If $b\in V$, we see from Theorem \ref{le67}(1) that 
\begin{align*}
\left|\lim_{j\rightarrow\infty}\lim_{s\rightarrow\infty}\ol{D}_{\tilde{G}_{j,s}}^{-1}(b,w)\right|=\left|\lim_{j\rightarrow\infty}\frac{G_{G_j}(b,b_3)}{\Delta(b_3,b_3)}-\frac{G_{G_j}(b,b_3)}{\Delta(b_3,b_3)}\right|
\end{align*}
Then part (1) the lemma follows from Lemma \ref{le426}.

Now we prove part (2) of the lemma. Let
 $b\in V^+$. Let $u,v\in V^+$ be adjacent vertices in $G^+$ such that the white vertex $w$ corresponds to the edge $(u,v)\in E^+$. Let  $l_{v,b}$ be a shortest path in $G^+$ joining $v$ and $b$. More precisely, let
\begin{align*}
l_{v,b}:=v_0(=v),v_1,\ldots,v_n(=b)
\end{align*}
Then we have
\begin{align}
\label{ps3}\ol{D}_{\tilde{G}_{j,s}}^{-1}(b,w)
=\sum_{k=1}^n\left[\ol{D}_{\tilde{G}_{j,s}}^{-1}(v_k,w)-\ol{D}_{\tilde{G}_{j,s}}^{-1}(u_{k-1},w)\right]+\ol{D}_{\tilde{G}_{j,s}}^{-1}(v,w)
\end{align}
For $1\leq k\leq n$, let $x_{k},y_{k}$ be the dual edge of $(v_{k-1},v_k)$. Assume $x_{k},v_{k-1},y_{k},v_k$ are adjacent to a white vertex of $\ol{G}$ in clockwise order. When edge weights satisfy (\ref{wtw}) we obtain
\begin{align}
[\ol{D}_{\tilde{G}_{j,s}}^{-1}(v_k,w)-\ol{D}_{\tilde{G}_{j,s}}^{-1}(v_{k-1},w)]=-\mathbf{i}[\ol{D}_{\tilde{G}_{j,s}}^{-1}(y_{k},w)-\ol{D}_{\tilde{G}_{j,s}}^{-1}(x_{k},w)]\label{pss1}
\end{align}
Moreover,
\begin{align}
|\ol{D}_{\tilde{G}_{j,s}}^{-1}(v,w)|
=\mathbb{P}((v,w)\in M(\tilde{G}_{j,s}))\in [0,1]\label{ps2}
\end{align}
where $\mathbb{P}((v,w)\in M(\tilde{G}_{j,s}))$ is the probability that the edge $(v,w)$ appear in a perfect matching of $\tilde{G}_{j,s}$.
Then the uniform boundedness of $\ol{D}_{\tilde{G}_{j,s}}(b,w)$ when $b\in V$ follows from (\ref{ps3}),  (\ref{pss1}), (\ref{ps2}) and part (1) of the lemma.
\end{proof}

\begin{theorem}\label{le96}Let $G=(V,E)$ be a 3-connected, transient, simple proper planar graph with bounded vertex degree and locally finite dual. Suppose Assumption \ref{ap24} holds.

Let $\mathbb{P}$ be the probability measures for dimer coverings on $\ol{G}$ obtained as in Theorem \ref{le67}. Assume the edge weights satisfy (\ref{wtw}).

Suppose
\begin{itemize}
\item  Assumption \ref{ap311} holds with $\sA$ replaced by $\sA_0$. Let $\{G_j\}_{j=1}^{\infty}$ be given as in Assumption \ref{ap311}. Assume 
\begin{align*}
\Phi(u;G_j)>c_1u^{-\frac{1}{d}}
\end{align*}
for some $d>8$, and $c_1$ is a constant independent of $j$.
\end{itemize}

Let $M_1$, $M_2$ be two independent dimer configurations, both have distribution $\mathbb{P}$. Let $f_0$ be an arbitrary a fixed face of $\ol{G}$.

Then all the following conclusions hold.
\begin{enumerate}
\item For any face $f$, $\mathrm{Var}[h_{M_1,M_2}(f)]^2<\infty$.
\item $\PP\times\PP$ a.s., each face of $\ol{G}$ is enclosed by finitely many self-avoiding cycles in $M_1\triangle M_2$.
 \end{enumerate}
\end{theorem}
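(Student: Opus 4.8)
\textbf{Proof plan for Theorem \ref{le96}.}
The plan is to reduce the statement about the height function of the double dimer to a statement about the energy of an explicit antisymmetric function on the graph, and then use the polynomial decay of the Dirichlet Green's function provided by Lemma \ref{l83}. First I would express the height difference $h_{M_1,M_2}(f)$ between a fixed face $f$ and the base face $f_0$ as a sum along a dual path from $f_0$ to $f$ of the edge-crossings of $M_1\triangle M_2$, using Definition \ref{df81}. Since $M_1$ and $M_2$ are i.i.d. with distribution $\PP$, the expectation and variance of this random variable can be written in terms of the two-point (and in fact single-edge, by independence) correlation functions of $\PP$, which by Corollary \ref{c18} and the convergence in Theorem \ref{le67} are given in terms of the entries $\lim_{j,s}\ol{D}_{\tilde{G}_{j,s}}^{-1}(b,w)$. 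Concretely, for an edge $e=(w,b)$ of $\ol{G}$ one has $\PP(e\in M)=|\lim_{j,s}\ol{D}^{-1}_{\tilde G_{j,s}}(b,w)|$, and by the Kenyon-type determinantal formula the probability that two disjoint edges both appear is a $2\times 2$ determinant in these entries; the covariance of two edge-indicators is therefore bounded by the product of the off-diagonal inverse entries.

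Next I would bound $\mathrm{Var}[h_{M_1,M_2}(f)]$. Writing $h_{M_1,M_2}(f)-h_{M_1,M_2}(f_0)=\sum_{e}\epsilon_e(\id_{e\in M_1}-\id_{e\in M_2})$ over the edges $e$ crossed by a fixed dual path $P$ from $f_0$ to $f$, with signs $\epsilon_e\in\{\pm1\}$, independence of $M_1,M_2$ gives
\begin{align*}
\mathrm{Var}[h_{M_1,M_2}(f)]=2\sum_{e,e'\in P}\epsilon_e\epsilon_{e'}\,\mathrm{Cov}_{\PP}(\id_{e\in M},\id_{e'\in M}).
\end{align*}
By the determinantal/Kenyon formula, $|\mathrm{Cov}_{\PP}(\id_{e\in M},\id_{e'\in M})|$ is controlled by $|K(b_e,w_{e'})|\cdot|K(b_{e'},w_e)|$ where $K$ denotes the limiting inverse Dirac entries. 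By Lemma \ref{l83}(1), for $b\in V$ one has $|K(b,w)|\le C_1 d_G(\cdot)^{-(d-2)/2}$, and by Lemma \ref{l83}(2) together with the telescoping argument there, for $b\in V^+$ the entries are uniformly bounded; combining, $|\mathrm{Cov}(\id_{e\in M},\id_{e'\in M})|$ decays like $d_{\ol G}(e,e')^{-(d-2)/2}$ up to a constant (only the $V$-part contributes decay, which is enough). Since the dual path $P$ can be chosen with at most a constant number of edges at each $\ol G$-distance from $f$, the double sum is bounded by $C\sum_{r,r'\ge 0}(\text{bounded number of pairs at distance }\approx|r-r'|)\cdot|r-r'+O(1)|^{-(d-2)/2}$, which converges provided $(d-2)/2>1$, i.e. $d>4$; the hypothesis $d>8$ is comfortably sufficient (and the extra room is what is needed for the second point). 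This proves (1).

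For (2), the plan is to combine (1) with a Borel--Cantelli argument over the nested cycles surrounding $f_0$. If a face $f$ is enclosed by a self-avoiding cycle $\gamma$ of $M_1\triangle M_2$, then the height $h_{M_1,M_2}$ is constant along each side of $\gamma$ and changes by $\pm1$ across it; so if $f$ is surrounded by $N$ such cycles then there is a face $f'$ (outside all of them, at bounded distance controlled by the cycles) with $|h_{M_1,M_2}(f')-h_{M_1,M_2}(f)|\ge$ a quantity growing with $N$ — more precisely the heights at faces separated by $k$ distinct surrounding cycles differ by at least $k$ in absolute value after accounting for orientation. Actually the cleanest route: if each of infinitely many faces along a dual ray from $f_0$ to the boundary were separated from $f_0$ by a surrounding cycle, the height would be unbounded along that ray, contradicting that $\mathrm{Var}[h_{M_1,M_2}(f_n)]$ stays bounded (by (1), uniformly in $n$, since the decay estimate is uniform) while the mean is bounded — hence $h_{M_1,M_2}(f_n)$ is tight, so it cannot tend to $\pm\infty$; thus only finitely many surrounding cycles occur a.s. I expect the main obstacle to be making the combinatorial geometry of ``surrounding cycles force large height'' precise on the hyperbolic double circle packing graph (where faces are quadrilaterals and cycles live in $\ol G^+$), and checking that the dual path $P$ realizing the telescoping can indeed be chosen so that the number of its edges within each $\ol G$-ball grows only polynomially — this is where bounded degree and the circle-packing geometry are used, and where the gap between $d>4$ and the assumed $d>8$ gives the slack to also control the summation needed for the tightness/Borel--Cantelli step in (2).
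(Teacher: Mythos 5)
Your plan for Part (1) has a genuine gap that makes it unusable for Part (2). You expand $\mathrm{Var}[h_{M_1,M_2}(f)]$ as $2\sum_{e,e'\in P}\epsilon_e\epsilon_{e'}\,\mathrm{Cov}(\mathbf{1}_{e\in M},\mathbf{1}_{e'\in M})$ along a single dual path $P$, and then claim the double sum converges because $\sum_{r,r'\ge 0}|r-r'|^{-(d-2)/2}$ converges for $d>4$. That last claim is false: for any exponent $a$ the band $|r-r'|\le 1$ alone contributes infinitely to $\sum_{r,r'\ge 0}|r-r'|^{-a}$, and the sum grows linearly in the length of $P$. More structurally, the normalization that matters is $h_{M_1,M_2}\equiv 0$ outside $\tilde G_{j,s}$ (the boundary), so the reference face recedes to infinity with $j,s$ and the path has unbounded length; a single-path variance expansion gives only a linear-in-path-length bound, not a uniform one. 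This defeats the Borel--Cantelli/tightness step in your Part (2), where you explicitly invoke boundedness of $\mathrm{Var}[h(f_n)]$ uniformly in $n$.

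The paper avoids this by a different bilinear decomposition: since $h$ vanishes at two boundary faces $g_0,g_1$, one writes $\mathbb{E}[h(f)^2]=\mathbb{E}\bigl[(h(f)-h(g_0))(h(f)-h(g_1))\bigr]$ and chooses the two dual paths $l_{fg_0}$, $l_{fg_1}$ so that their concatenation is a geodesic through $f$. The cross sum then runs over pairs $(\alpha,\beta)$ with $\alpha$ at distance $r$ and $\beta$ at distance $r'$ from $f$ on opposite sides, so the relevant covariance decays like $(r+r')^{-(d-2)/2}$ rather than $|r-r'|^{-(d-2)/2}$, and $\sum_{r,r'\ge 1}(r+r')^{-(d-2)/2}$ is finite once $d$ is large enough (the telescoping in the $V^+$ case, handled via identity \eqref{ps1}, eats an extra power, which is why $d>8$ is assumed). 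This two-sided, boundary-anchored splitting is the key idea you are missing; in addition, your proposal does not engage with the case where the black vertex lies in $V^+$, for which Lemma~\ref{l83} provides only uniform boundedness and not decay, and the telescoping trick is essential.

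Your Part (2) contains an independent error: the assertion that ``heights at faces separated by $k$ distinct surrounding cycles differ by at least $k$ in absolute value after accounting for orientation'' is false. Crossing a cycle of $M_1\triangle M_2$ contributes $\pm 1$ to the height with a \emph{random} sign depending on whether the cycle alternates starting with $M_1$ or $M_2$, and these signs are independent across nested cycles and do not align. The correct argument is probabilistic: conditioned on a fixed face $f$ being enclosed by $N$ nested $M_1\triangle M_2$-cycles, $h_{M_1,M_2}(f)$ is a sum of $N$ i.i.d.\ symmetric increments, so $\mathbb{E}[h_{M_1,M_2}(f)]^2\gtrsim N$; if this happens with probability bounded below uniformly in $N$, the second moment is infinite, contradicting Part (1) for that \emph{fixed} $f$. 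Your alternative ``tightness of $h(f_n)$ along a ray'' route both relies on the unproven uniformity above and conflates the variance at a moving face $f_n$ with the variance at a fixed face enclosed by many cycles.
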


\begin{proof}We first prove Part (1) of the lemma. Let $\tilde{G}_{j,s}$ be a  finite subgraph of $G$ obtained as in Assumption \ref{ap66}. Let $M_1$ and $M_2$ be two i.i.d.~dimer configurations on $\tilde{G}_{j,s}$. Without loss of generality, assume that for any face $g$  of $\ol{G}$ not in $\tilde{G}_{j,s}$, $h_{M_1,M_2}(g)=0$.

  Let $g_1$ be a face of $\ol{G}$ not in $\tilde{G}_{j,s}$ but are incident to an edge in $\tilde{G}_{j,s}$. Assume $g_0$ is a face of $\ol{G}$ not in the superposition of $G_{j,s}$ and $\hat{G}_{j,s}^{+}$ but is incident to an edge in $\hat{C}_{1,j,s}$.
Let $l_{fg_0}$ (resp.\ $l_{fg_1}$) be a path in $\ol{G}^+$ joining $f$ and $g_0$ (resp. $f$ and $g_1$)  of $\ol{G}$, such that all the faces along $l_{fg_0}$ (resp.\ $l_{fg_1}$) between $f,g_0$ (resp. $f$ and $g_1$) are faces of $\tilde{G}_{j,s}$. Assume
\begin{itemize}
\item $l_{fg_0}\cup l_{fg_1}$ is a geodesic in $\ol{G}^+$ joining $g_0$ and $g_1$; and
\item 
\begin{align*}
l_{f,g_0}&=s_0=(f),s_1,\ldots,s_m(=g_0);\\
l_{f,g_1}&=q_0=(f),q_1,\ldots,q_k(=g_1).
\end{align*}
 Let
\begin{align*}
\alpha_i&=(s_{i-1}s_{i})^+;\qquad \forall 1\leq i\leq m\\
\beta_t&=(q_{t-1},q_{t})^+;\qquad \forall 1\leq t\leq k\\
\end{align*}
such that $\{\alpha_i\}_{i=1}^{m-1}$, $\{\beta_t\}_{t=1}^k$ are edges of $\tilde{G}_{j,s}$, and $\alpha_m$ is an edge incident to a vertex along $\hat{C}_{1,j,s}$. 
\item Let $L_0$ (resp.\ $L_1$) be the path in $G^+$ (resp.\ $G$) consisting of all the vertices of $G^+$ (resp.\ $G$) in a face of $\ol{G}$ corresponding to a vertex of $\ol{G}^+$ along $l_{f,g_0}$ (resp.\ $l_{f,g_1}$). Assume
\begin{align*}
L_0:&=b_{0,0}(\in f),b_{0,1},\ldots,b_{0,m_0}(\in g_0)\\
L_1:&=b_{1,0}(\in f),b_{1,1},\ldots,b_{1,k_1}(\in g_1)\\
\end{align*}
where $b_{0,m_0}$ is a vertex along $\hat{C}_{1,j,s}$.
\end{itemize}

Let
\begin{align*}
E_j:=\{\alpha_1,\ldots,\alpha_m\};\qquad F_j:=\{\beta_1,\ldots,\beta_k\};
\end{align*}
We shall write 
\begin{itemize}
\item $E_j:=E_{j,1}\cup E_{j,2}$, and $E_{j,1}\cap E_{j,2}=\emptyset$, where for $1\leq i\leq m$, $\alpha_i\in E_{j,1}$ if and only if moving along $l_{f,g_1}$ from $f$ to $g_1$, when crossing $\alpha_i$ the white vertex of $\alpha_i$ is on the left.
\item $F_j:=F_{j,1}\cup F_{j,2}$, and $F_{j,1}\cap F_{j,2}=\emptyset$, where for $1\leq t\leq k$, $\beta_t\in F_{j,1}$ if and only if moving along $l_{f,g_2}$ from $f$ to $g_2$, when crossing $\beta_t$ the white vertex of $\beta_t$ is on the left.
\end{itemize}

Then
\begin{small}
\begin{align*}
&\mathrm{Var} [h_{M_1,M_2}(f)]^2=\EE [h_{M_1,M_2}(f)-h_{M_1,M_2}(g_1)]
[h_{M_1,M_2}(f)-h_{M_1,M_2}(g_2)]\\
&=\mathbb{E}\left[\sum_{\alpha\in E_{j,1}}[\mathbf{1}_{\alpha\in M_1}-\mathbf{1}_{\alpha\in M_2}]
-\sum_{\alpha\in E_{j,2}}[\mathbf{1}_{\alpha\in M_1}-\mathbf{1}_{\alpha\in M_2}]
\right]\left[\sum_{\beta\in F_{j,1}}[\mathbf{1}_{\beta\in M_1}-\mathbf{1}_{\beta\in M_2}]
-\sum_{\beta\in F_{j,2}}[\mathbf{1}_{\beta\in M_1}-\mathbf{1}_{\beta\in M_2}]
\right]\\
&=2\sum_{\alpha\in E_{j,1}}\sum_{\beta\in F_{j,1}}\mathbb{E}(\mathbf{1}_{\alpha\in M_1}-\mathbb{E}\mathbf{1}_{\alpha\in M_1})(\mathbf{1}_{\beta\in M_1}-\mathbb{E}\mathbf{1}_{\beta\in M_1})-2\sum_{\alpha\in E_{j,1}}\sum_{\beta\in F_{j,2}}\mathbb{E}(\mathbf{1}_{\alpha\in M_1}-\mathbb{E}\mathbf{1}_{\alpha\in M_1})(\mathbf{1}_{\beta\in M_1}-\mathbb{E}\mathbf{1}_{\beta\in M_1})\\
&+2\sum_{\alpha\in E_{j,2}}\sum_{\beta\in F_{j,2}}\mathbb{E}(\mathbf{1}_{\alpha\in M_1}-\mathbb{E}\mathbf{1}_{\alpha\in M_1})(\mathbf{1}_{\beta\in M_1}-\mathbb{E}\mathbf{1}_{\beta\in M_1})-2\sum_{\alpha\in E_{j,2}}\sum_{\beta\in F_{j,1}}\mathbb{E}(\mathbf{1}_{\alpha\in M_1}-\mathbb{E}\mathbf{1}_{\alpha\in M_1})
(\mathbf{1}_{\beta\in M_1}-\mathbb{E}\mathbf{1}_{\beta\in M_1})
\end{align*}
\end{small}
where we used the fact that $M_1$ and $M_2$ are i.i.d. Moreover, note that if 
 $\alpha$ and $\beta$ are two distinct edges sharing a vertex, 
\begin{align*}
\mathbb{E}(\mathbf{1}_{\alpha\in M_1}-\mathbb{E}\mathbf{1}_{\alpha\in M_1})(\mathbf{1}_{\beta\in M_1}-\mathbb{E}\mathbf{1}_{\beta\in M_1})=
-\mathbb{P}(\beta\in M_1)
\mathbb{P}(\alpha\in M_1).
\end{align*}
If $\alpha$ and $\beta$ are two distinct edges sharing no vertices; assume $\alpha=(w_1,b_1)$ and $\beta=(w_2,b_2)$; then by Corollary \ref{c18}(2)  we have
\begin{align*}
\left|\mathbb{E}(\mathbf{1}_{\alpha\in M_1}-\mathbb{E}\mathbf{1}_{\alpha\in M_1})(\mathbf{1}_{\beta\in M_1}-\mathbb{E}\mathbf{1}_{\beta\in M_1})\right|=\sqrt{\frac{\nu(\alpha)\nu(\beta)}{\nu(\alpha^+)\nu(\beta^+)}}\lim_{j\rightarrow\infty}\left|\ol{D}_{\tilde{G}_{j,s}}^{-1}(w_1,b_2)\ol{D}_{\tilde{G}_{j,s}}^{-1}(w_2,b_1)\right|
\end{align*}

By Lemma \ref{l83},
$|\lim_{j\rightarrow\infty}\lim_{s\rightarrow\infty}\ol{D}^{-1}_{\tilde{G}_{j,s}}(b_2,w_1)|$ is uniformly bounded for all $b_2,w_1$.
Consider $\left|\ol{D}_{\tilde{G}_{j,s}}^{-1}(w_2,b_1)\right|$. The following cases might occur:
\begin{enumerate}
\item $b_1$ is a vertex of $G$. By Lemma \ref{l422}(1) and the  decay rate of the Dirichlet Green's function $G_{G_j}$ given by Lemma \ref{le426}, we have
\begin{small}
\begin{align*}
|\ol{D}_{\tilde{G}_{j,s}}^{-1}(b_1,w)|\leq \left|\frac{G_{G_j}(b_1,v_1)}{\Delta(b_1,v_1)}\right|+\left|\frac{G_{G_j}(b_1,v_2)}{\Delta(b_1,v_2)}\right|
\leq C''\left[[d_{G}(b_1,v_1)]^{-\frac{d-2}{2}}+[d_{G}(b_1,v_2)]^{-\frac{d-2}{2}}\right]
\end{align*}
\end{small}
where $(v_1,v_2)$ is the edge of $G$ corresponding to $w$. 

\item $b_1$ is a vertex of $G^+$. Assume $b_1=b_{0,i}$. In this case we have
\begin{small}
\begin{align*}
\lim_{j\rightarrow\infty}\ol{D}_{\tilde{G}_{j,s}}^{-1}(b_1,w)
=\sum_{c=i+1}^{m_0}\left[\ol{D}_{\tilde{G}_{j,s}}^{-1}(b_{0,c-1},w)-\ol{D}_{\tilde{G}_{j,s}}^{-1}(b_{0,c},w)\right]
+\ol{D}_{\tilde{G}_{j,s}}^{-1}(b_0,w)
\end{align*}
\end{small}
Recall that $\ol{D}_{\tilde{G}_{j,s}}^{-1}(b_0,w)=0$.
For $1\leq k\leq a_0$, let $x_{k},y_{k}$ be the dual edge of $(b_{3,k-1},b_{3,k})$.
For $m_0+1\leq k\leq m_0+i$, let $x_{k},y_{k}$ be the dual edge of $(b_{1,k-m_0},b_{1,k-m_0-1})$.

Assume for $1\leq k\leq m_0$, $x_{k},b_{0,k-1},y_{k},b_{0,k}$ are adjacent to a white vertex of $\ol{G}$ in clockwise order. Then we obtain
\begin{align}
\nu(b_{0,k}b_{0,k-1})[\ol{D}_{\tilde{G}_{j,s}}^{-1}(b_{0,k},w)-\ol{D}_{\tilde{G}_{j,s}}^{-1}(b_{0,k-1},w)]=-\mathbf{i}\nu(x_{k}y_{k})[\ol{D}_{\tilde{G}_{j,s}}^{-1}(y_{k},w)-\ol{D}_{\tilde{G}_{j,s}}^{-1}(x_{k},w)]\label{ps1}
\end{align}
\end{enumerate}

 Given that $l_{f,g_0}\cup l_{f,g_1}$ is a geodesic joining $g_0$ and $g_1$, and $d>8$, we infer that 
$\EE [h_{M_1,M_2}(f)]^2$ is uniformly bounded for all $j,s$; then Part (1) follows.

Now we prove Part (2) by contradiction. Assume that with strictly positive probability, a face $f$ is enclosed by infinitely many self-avoiding cycles in $M_1\triangle M_2$. Then there exists $c_0>0$, such that for any $N$,
\begin{align*}
\lim_{j\rightarrow\infty}\mathbb{P}(f\ \mathrm{is\ enclosed\ by\ at\ least}\ N\ \mathrm{self-avoiding\ cycles\ in}\ M_1\triangle M_2\ \mathrm{of}\ \mathcal{G}_j)>c_0.
\end{align*}

Note that crossing each self-avoiding cycle enclosing $f_0$ in $M_1\triangle M_2$, with probability $\frac{1}{2}$, the $h_{M_1,M_2}$ increases by 1, and with probability $\frac{1}{2}$, $h_{M_1,M_2}$ decreases by 1. Then $f$ is enclosed by at least $N$ self-avoiding cycles in $M_1\triangle M_2$, and the boundary heights are 0, $h_{M_1,M_2}(f_n)$ is the sum of at least $N$ i.i.d.~random variables $\xi_{1},\ldots,\xi_{N},\ldots$, each of which takes value $1$ with probability $\frac{1}{2}$ and takes value $0$ with probability $\frac{1}{2}$. 

Let $\mathcal{S}_{N,j}$ be the event that $f$ is enclosed\ by\ at\ least $N$ self-avoiding cycles in $M_1\triangle M_2$ of $\mathcal{G}_j$, then we have
\begin{align*}
&\mathbb{E}[h_{M_1,M_2}(f)]^2\geq
\mathbb{E}[(\xi_{1}+\ldots+\xi_{N})^2|\mathcal{S}_{N,j}]\mathbb{P}(\mathcal{S}_{N,j})
= \sum_{j=1}^N \mathbb{E}(\xi_j)^2\mathbb{P}(\mathcal{S}_{N,j})=c_0N
\end{align*}
Since $N$ is arbitrary we obtain
\begin{align*}
\lim_{j\rightarrow\infty}\mathbb{E}[h_{M_1,M_2}(f)]^2=\infty;
\end{align*}
which contradicts Part (1). The contradiction implies Part (2).
\end{proof}

\begin{example}We now give an example in which (\ref{ipj}) holds with a universal constant $c$ for all the $G_j$'s. Consider for example, a nonamenable, vertex-transitive regular tiling of the hyperbolic plane with vertex degree $D$ and face degree $D^+$. By \cite{hjl02},
\begin{align*}
\inf\left\{\frac{|\partial_E K|}{|K|}:K\subset V\ \mathrm{finite\ and\ nonempty}\right\}=(D-2)\sqrt{1-\frac{4}{(D-2)(D^+-2)}};=i_E(G)
\end{align*}
Let $v_0$ be a fixed vertex of $G$. Let $G_{j,s}$ be the finite subgraph of $G$ constructed as follows
\begin{itemize}
\item Let $F_{j,s}$ be the union of all the faces containing at least one vertex in $G_{j,s}$. Let $\partial F_{j,s}$ be all the edges in $F_{j,s}$ incident to at least one face in $F_{j,s}$ and one face not in $F_{j,s}$ as well as their endvertices. Assume $\partial F_{j,s}$ is a simple closed curve.

the boundary $\partial^V G_{j,s}$ of $G_{j,s}$, defined to be the union of vertices in $G_{j,s}$ incident to at least a vertex not in $G_{j,s}$, as well as edges in $G_*$ (the matching graph) joining two such vertices sharing a face.  There are 4 vertices  $z_1,z_2,z_3,z_4$ in $\partial F_{j,s}\cap \partial^{V}G_{j,s}$
in clockwise order
\begin{itemize}
\item $\partial F_{j,s}$ is divided by $u_1,u_2,u_3,u_4$ into 4 segments $L_{u_1u_2}$, $L_{u_2u_3}$, $L_{u_3u_4}$, and $L_{u_4,u_1}$;
\item $d_G(u_1,v_0)=d_{G}(u_2,v_0)=j$;
\item $L_{u_1u_2}$ is a geodesic joining $u_1$ and $u_2$;
\item all the vertices in $L_{u_3u_4}\cap \partial^VG_{j,s}$ have graph distance exactly $s$ from $v_0$;
\item $L_{u_3u_3}$ is a geodesic in $G$ joining $u_1$ and $u_2$;
\item $L_{u_4u_1}$ is a geodesic in $G$ joining $u_4$ and $u_1$;
\item Let $Q_j$ be the union of all the faces containing at least one vertex with distance at most $j$ to $v_0$. Let $\partial Q_j$ be the union of all the edges incident to at least one face in $Q_j$ and one face not in $Q_j$. Let $R_{z_1z_2}$ be the portion of $\partial Q_j$ from $z_1$ to $z_2$ in clockwise order.

Let $N_j$ be the number of vertices in $R_{z_1z_2}$, then
\begin{align*}
1<\limsup_{j\rightarrow\infty}N_j^{\frac{1}{j}}< 1+i_E(G);\qquad\forall j
\end{align*}
where $\alpha>0$ is a constant independent of $j$
\end{itemize}
\end{itemize}
As in Assumption \ref{ap66}, let $\hat{G}_{j,s}^+$ be the subgraph of $G^+$ that contains $G_{j,s}$ as an interior dual graph. Let $v_{1,j,s}=z_1$ and $v_{2,j,s}=z_2$, $C_{1,j,s}=L_{z_1z_2}$. 
Let $G_j$ in Assumption \ref{ap311} be obtained from $G_{j,s}$ by letting $s\rightarrow\infty$. Then by Theorem \ref{le67}, the limit measure obtained by such an approximation with $j,s\rightarrow\infty$ is different from the one obtained with Temperley boundary conditions as in Theorem \ref{l422}. However, one can check that for any $d>8$, (\ref{ipj}) holds with a universal constant $c$ independent of $j$. By Theorem \ref{le96}, the variance of double dimer height function is finite for any face.
\end{example}

\bigskip
\noindent\textbf{Acknowledgements.}\ ZL acknowledges support from National Science Foundation DMS 1608896 and Simons Foundation grant 638143. ZL thanks Gourab Ray for discussions.

\bibliography{dimerhyp}
\bibliographystyle{plain}

\end{document}